\setlist[enumerate,1]{label=\textup{(\arabic*)}}
\numberwithin{equation}{section}
\theoremstyle{plain}
\newtheorem{thm}{Theorem}[section]
\newtheorem {lem} [thm]{Lemma}
\newtheorem {prop}[thm] {Proposition}
\newtheorem{cor}[thm]{Corollary}
\theoremstyle{definition}
\newtheorem {defn}[thm] {Definition}
\newtheorem {rem} [thm]{Remark}
\newtheorem{ex}[thm]{Example}
\newtheorem {quest} [thm]{Question}
\newcommand*{\cstar}{\texorpdfstring{\(\Cst\)\nobreakdash-\hspace{0pt}}{*-}}
\newcommand*{\nb}{\nobreakdash}
\newcommand*{\Cst}{\mathrm \cst}
\newcommand{\Mat}{\mathbb M} 
\newcommand*{\congto}{\xrightarrow\sim}
\newcommand*{\contc}{\mathrm{C_c}}
\newcommand*{\contz}{\mathrm{C_0}}
\newcommand*{\contb}{\mathrm{C_b}}
\newcommand*{\s}{s} 
\newcommand*{\rg}{r}
\newcommand{\eqn}{\begin{equation}}
\newcommand{\eqne}{\end{equation}}
\renewcommand{\star}{*}
\DeclareMathOperator{\id}{id}
\DeclareMathOperator{\red}{red}
\DeclareMathOperator{\rd}{r}
\DeclareMathOperator{\f}{f}
\DeclareMathOperator{\dashind}{-Ind}
\DeclareMathOperator{\supp}{supp}
\DeclareMathOperator{\sem}{sem}
\DeclareMathOperator{\Bis}{Bis}
\DeclareMathOperator{\Ad}{Ad}
\renewcommand{\H}{\mathcal H}
\newcommand{\B}{\mathcal B}
\newcommand{\BB}{\mathbb B}
\newcommand{\LL}{\mathcal L}
\newcommand{\M}{\mathcal M}
\newcommand{\m}{m}
\newcommand{\G}{\mathcal G} 
\newcommand{\CC}{\mathcal C}
\newcommand{\EE}{\mathcal E}
\newcommand{\x}{\widetilde x}
\newcommand{\K}{\mathcal{K}}
\renewcommand{\L}{\mathcal L}
\DeclarePairedDelimiterX{\braket}[2]{\langle}{\rangle}{#1\,\delimsize\vert\,\mathopen{}#2}
\newcommand{\FF}{\mathcal F}
\newcommand{\A}{\mathcal A}
\newcommand{\D}{\mathcal D}
\newcommand{\C}{\mathbb C}
\newcommand{\N}{\mathbb N}
\newcommand{\T}{\mathbb T}
\newcommand{\E}{\mathbb E}
\newcommand{\cst}{\ifmmode\mathrm{C}^*\else{$\mathrm{C}^*$}\fi}
\newcommand*{\into}{\rightarrowtail}
\begin{document}

\author{Alcides Buss}
    \address{Departamento de Matem\'atica, Universidade Federal de Santa Catarina, 88.040-900 Florian\'opolis SC, Brazil}
	\email{alcides.buss@ufsc.br }

\author{Bartosz Kwa\'sniewski}
    \address{ Faculty of Mathematics, University of Bia\l ystok, ul. K. Cio\l kowskiego 1M, 15-245, Bia\l ystok, Poland}
	\email{bartoszk@math.uwb.edu.pl}

\author{Andrew McKee}
    \address{ Faculty of Mathematics, University of Bia\l ystok, ul. K. Cio\l kowskiego 1M, 15-245, Bia\l ystok, Poland}
	\email{a.mckee@uwb.edu.pl}

\author{Adam Skalski}
    \address{Institute of Mathematics of the Polish Academy of Sciences, ul.~\'Sniadeckich 8, 00--656 Warszawa, Poland}
    \email{a.skalski@impan.pl}

\date{\today}

\title{\bf Fourier--Stieltjes category \\
for twisted groupoid actions} 

\begin{abstract} 
We extend the theory of Fourier--Stieltjes algebras to the category of twisted actions by \'etale groupoids on arbitrary $\cst$\nb-bundles, generalising  theories constructed previously by B\'{e}dos and Conti for twisted group actions on unital $\cst$-algebras, and by Renault and others for groupoid $\cst$\nb-algebras, in each case motivated by the classical theory of Fourier--Stieltjes algebras of discrete groups. 
To this end we develop a toolbox including, among other things, a theory of multiplier $\cst$-correspondences, multiplier $\cst$-correspondence bundles, Busby--Smith twisted groupoid actions, and the associated crossed products, equivariant representations and Fell's absorption theorems. 
For a fixed \'etale groupoid $\G$ a Fourier--Stieltjes multiplier is a family of maps acting on fibres, arising from an equivariant representation. 
It corresponds to a certain fibre-preserving strict completely bounded map between twisted full (or reduced) crossed products. 
We establish a KSGNS\nb-type dilation result which shows that the correspondence above restricts to a bijection between positive-definite multipliers and a particular class of completely positive maps.
Further, we introduce a subclass of Fourier multipliers, that  enjoys a natural absorption property with respect to Fourier--Stieltjes multipliers and   gives rise  to `reduced to full' multiplier maps on crossed products. 
Finally, we provide several applications of the theory developed, for example to the approximation properties, such as weak containment or nuclearity, of the crossed products and actions in question, and discuss outstanding open problems.
\end{abstract}

\subjclass[2010]{Primary 46L05; Secondary  20E26, 22A22, 43A22, 46L55}

\keywords{\'etale groupoids; twisted groupoid actions; crossed products; Fourier--Stieltjes algebras; multipliers; $\cst$-correspondences}

\maketitle

\setcounter{tocdepth}{1}
\tableofcontents

\section*{Introduction}

The notions of Fourier--Stieltjes and Fourier algebras of general  locally compact groups, motivated by the concepts appearing earlier in classical, abelian harmonic analysis, were first introduced sixty years ago in the thesis of Eymard \cite{Eymard}. 
Recall that the \emph{Fourier--Stieltjes algebra} $B(\Gamma)$ of a given discrete group $\Gamma$ can be viewed in at least three different guises: 
\begin{enumerate}[(B1)]
	\item\label{enu:picture1} as the algebra of functions given by coefficients of unitary representations of $\Gamma$, with the pointwise multiplication;
	\item\label{enu:picture2} as the dual of the universal group $\cst$-algebra  $\cst(\Gamma)$, equipped with the `convolution' type product induced by the comultiplication of $\cst(\Gamma)$;
	\item\label{enu:picture3} as the span of completely positive (in short cp) Herz--Schur multipliers of $\cst(\Gamma)$, with the multiplication given as the composition of maps. 
\end{enumerate}
Similarly, the \emph{Fourier algebra} $A(\Gamma)$, which is an ideal in $B(\Gamma)$, can be seen as the algebra of coefficients of the left regular representation, as the predual of the group von~Neumann algebra $\mathrm{vN}(\Gamma)$, or as a certain subclass of Herz--Schur multipliers of $\cst(\Gamma)$ (or of $\mathrm{vN}(\Gamma)$). 
Note that developing the equivalence between the first and last points of view displayed above requires proving a version of a dilation result for positive-definite functions, which in this context is nothing but the GNS construction.
On the other hand, the fact that $A(\Gamma)$ is an ideal inside $B(\Gamma)$ is a consequence of Fell's absorption principle: the tensor product of a regular representation and an arbitrary representation is unitarily equivalent to a multiple of the regular representation.

This flexibility is one of the reasons why the aforementioned Banach algebras have played a significant role in the study of operator algebras related to groups and in the noncommutative harmonic analysis in general (we refer to the recent book \cite{KaniuthLau} for an introduction to the topic and several applications). 
It is thus natural that together with the development of various generalisations of group operator algebras the literature has seen several attempts to investigate analogues of Fourier--Stieltjes algebras in these new contexts. 
We would like to mention here for example the study of Fourier--Stieltjes algebras of locally compact quantum groups \`a la Kustermans--Vaes \cite{KuV}, as discussed for example in \cite{Daws}. 
The study of Fourier algebras, Fourier--Stieltjes algebras and related multiplier maps for  measure groupoids was initiated in the articles \cite{RenaultFourier,RamsayWalter,Oty}, with the main focus on trying to provide the equivalence between the first and the third point of view offered above; in particular on proving suitable versions of dilation and absorption results. 
The two specific cases which are most relevant for our paper are however the  ones encoding various types of non-commutative topological dynamics, i.e.\ that of the \'etale groupoid $\cst$-algebras and that of the $\cst$-algebraic twisted crossed products, which we will discuss next.

Locally compact Hausdorff \'etale grupoids were introduced to the $\cst$-algebraic world in the early 1980s in \cite{Renault0}, largely motivated by the developments related to equivalence relations and von~Neumann algebra theory, that later culminated in the celebrated characterisation of Cartan inclusions \cite{Re}.
An important special case of the \'etale groupoid $\cst$-algebra construction is that of the crossed product $C(X)\rtimes \Gamma$, where $X$ is a compact space equipped with an action of a discrete group $\Gamma$. 
It has however taken relatively long until the theory of Fourier--Stieltjes algebras and related multiplier maps was developed in the context of general $\cst$-algebraic crossed products (although \cite[Section 7.6]{Pedersen} provides a description of the state space of the crossed product $A \rtimes \Gamma$, and an attempt to introduce the algebraic structure on this space was undertaken in \cite{Fujita}). 
During the last few years we have seen a flow of work, notably by B\'edos and Conti, see \cite{BedosConti2,BedosConti3} and references therein, which studied `multiplier type' maps on  crossed products by twisted actions of discrete groups on \emph{unital} $\cst$-algebras. 
In particular, in the articles \cite{BedosConti,BedosConti2} both the dilation, for `kernels of positive type', and the absorption results for the so-called `induced representations' were established. 
The new context necessitated the use of Hilbert modules and $\cst$-correspondences, similarly to the quantum group framework mentioned earlier, as seen for example in \cite{Daws2}.

The articles \cite{dong_ruan} and \cite{mstt} use multiplier maps similar to those considered by B\'edos and Conti to study approximation properties of the crossed products in question. 
We should mention here that the key technique which makes the last application possible, and goes back to the context of group $\cst$-algebras, is the so-called \emph{Haagerup trick}, which provides a way to `average' arbitrary bounded linear maps on  the crossed product algebra into multipliers. 
In the context of twisted groupoid actions a version of the Haagerup trick was applied very recently in \cite{BartoszKangAdam} by two of the authors of this work, to study the relative Haagerup property of $\cst$-algebras in connection with the UCT property. 
This was possibly the first time multiplier type maps appeared for the groupoid crossed products.

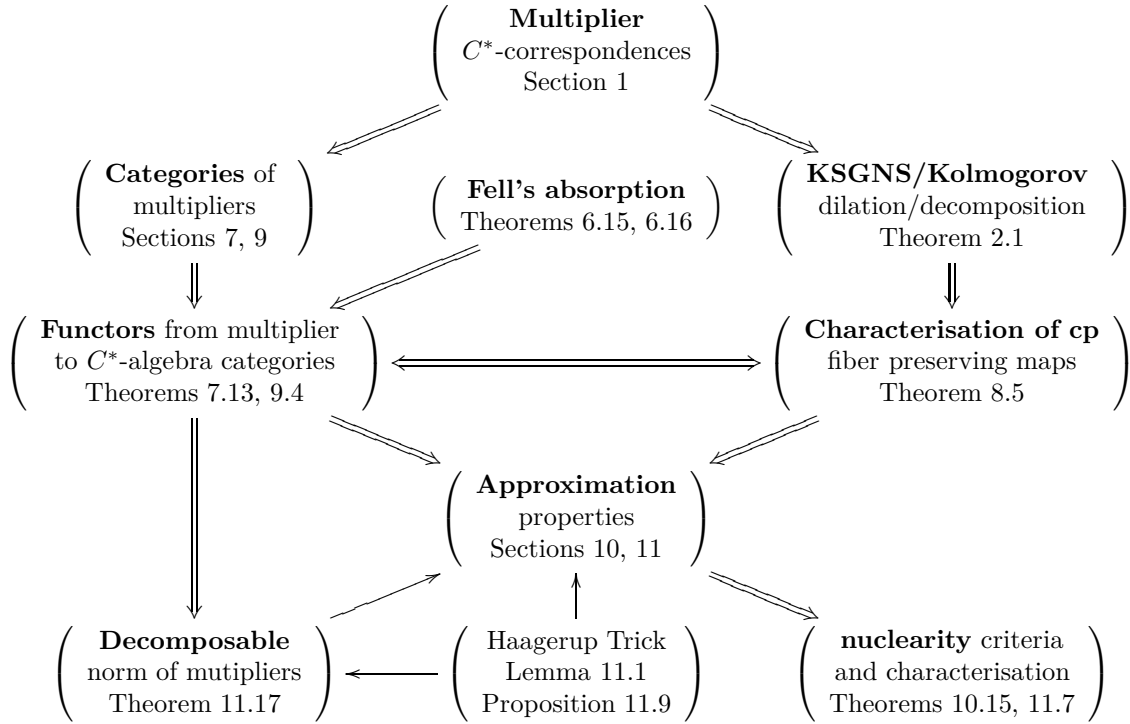
\begin{figure}[htb]
\centerline{\small
$   \xymatrixcolsep{0.8pc} \xymatrixrowsep{1.4pc}
\xymatrix{ 	
  &    
\text{$\left(\begin{array}{c} 
	\text{\textbf{Multiplier}} 
	\\
	\text{$C^*$-correspondences}
	\\
	\text{Section \ref{sec:multiplier}}
	\end{array}\right)$}  \ar@{=>}[ld] \ar@{=>}[rd] & 
	\\
\text{$\left(\begin{array}{c} 
	\text{\textbf{Categories} of  }
	\\
	\text{multipliers}
	\\
	\text{Sections \ref{Sec:FSmultipliers}, \ref{Sec:Fourierapproximation}}
	\end{array}\right)$} \ar@{=>}[d]  
	&   
			\text{$	\left(\begin{array}{c} 
	\text{\textbf{Fell's absorption}}
	\\ 	\text{Theorems \ref{thm:Fell_absorption}, \ref{thm:Fell_absorption II}}
\end{array}\right)  
	$}   \ar@{=>}[ld]
	& \text{$	\left(\begin{array}{c} 
\text{\textbf{KSGNS/Kolmogorov }}
	\\
	\text{dilation/decomposition}
\\
\text{Theorem \ref{thm:Murphy_Stinespring}}
\end{array}\right)  
	$}  \ar@{=>}[d] 
	\\
	\text{$\left(\begin{array}{c} 
	\text{\textbf{Functors} from multiplier }
	\\
	\text{to $C^*$-algebra categories}
	\\
	\text{Theorems \ref{thm:FSmultiplier_extends_to_reduced_and_full}, \ref{thm:Fmultipliers_are_reduced_to_full}}
	\end{array}\right)$} \ar@{<=>}[rr]   \ar@{=>}[rd] \ar@{=>}[dd]
		&  & \text{$	\left(\begin{array}{c} 
 \text{\textbf{Characterisation of cp}}
	\\
	\text{fiber preserving maps}
	\\
	\text{Theorem \ref{thm:FSmultiplier_positive_definite}}
	\end{array}\right)  
	$} 
		\ar@{=>}[ld]
		\\
  &  \text{$\left(\begin{array}{c} 
	\text{\textbf{Approximation}}
	\\
	\text{properties}
	\\
	\text{Sections \ref{Sec:Applicationapproximation}, \ref{Sec:Applications}}
	\end{array}\right)$}   \ar@{=>}[rd]
	& 
\\
	\text{$\left(\begin{array}{c} 
	\text{\textbf{Decomposable}}
	\\
	\text{norm of mutipliers}
	\\
	\text{Theorem \ref{thm:FSmultipliers_decomposable_discrete}}
	\end{array}\right)$} \ar@{->}[ru]  
		&
	\text{$	\left(\begin{array}{c} 
 \text{Haagerup Trick}
	\\ 	\text{Lemma \ref{lem:Haagerup trick discrete}}
	\\ \text{Proposition \ref{prop:Haagerup_trick}}
	\end{array}\right)  
	$}	\ar@{->}[l] \ar@{->}[u]		& 
		\text{$	\left(\begin{array}{c} 
 \text{\textbf{nuclearity} criteria}
	\\
	\text{and characterisation}
	\\
	\text{Theorems \ref{thm:approximation_prop_implies_weak_containment}, \ref{thm:characterisation of nuclearity}}\end{array}\right)  
	$}	
		}
	$}
	\caption{Structure and relationship between developed steps/results}\label{fig.structure}
\end{figure}

In this paper, inspired by the developments mentioned above, we undertake a systematic study of Fourier--Stieltjes and Fourier multipliers for crossed products by twisted actions of \'etale groupoids on arbitrary $\cst$-bundles. 
To include the work of B\'edos and Conti~\cite{BedosContiregular,BedosConti,BedosConti2} as a special case, we consider here twisted groupoid actions \'a la Busby--Smith~\cite{BusbySmith} with twists taking values in multiplier bundles. 
Such actions appear in \cite{BussMeyerZhu} but their crossed products, apart from the group case, have not yet been studied.
In particular, they are different from the Green--Renault twisted groupoid actions considered in \cite{Renault,Renault2}, where the twist is abelian (usually scalar valued). 
It has also occurred to us that, instead of looking at Banach algebras of multipliers for a single action, it is more natural and useful to study \emph{Banach categories of multipliers} between different actions of the same groupoid. 
The main steps in our development, as well as the relationships between them, are schematically presented in Figure~\ref{fig.structure}.

Apart from the novelties mentioned above we also for the first time consider crossed product multipliers for actions on \emph{non-unital} $C^*$-algebras.
To do so we develop systematically a theory of \emph{multiplier $C^*$-correspondences}.
It seems that so far this notion was considered only in the special case of Hilbert bimodules \cite{ER,Schweizer}, and we believe that the results of this investigation are of independent interest and will be applicable in a variety of contexts. 
With that tool in hand, establishing  a general framework of \emph{bundles of $C^*$-correspondences} and  of \emph{equivariant correspondences} that generalises notions appearing in \cite{LeGall,EKQR} and \cite{Deaconu}, we construct  Banach categories of \emph{Fourier--Stieltjes} and \emph{Fourier multipliers} between twisted actions of a fixed groupoid. 
This is a far reaching generalisation of the first picture \ref{enu:picture1} mentioned above for groups. 
We prove three different \emph{absorption results} to achieve our goals. 
The first of them implies that the Fourier category is an \emph{ideal} in the Fourier--Stieltjes category. 
The remaining two provide different generalisations of the classical \emph{Fell's absorption principle} for groups. 
The first implies that Fourier--Stieltjes multipliers act as completely bounded maps both on the reduced and universal crossed product algebras, which provides functors into the respective categories. 
The second one ensures that the Fourier multipliers even act `reduced-to-full'. 
In the context of group actions similar versions of Fell's absorption principle were proved in \cite{BedosContiregular, BedosConti}. 
For groupoid actions our second Fell's absorption principle is closely related, but not exactly comparable with, the corresponding recent results in \cite{Kranz} and \cite{BussMartinez}.

Having proved the above results we then focus on \ref{enu:picture3} --- the third of the pictures mentioned in the beginning of the introduction (as finding the right generalisation of the second, `functional' avatar is problematic even in the context of group crossed products, as can be seen by looking at \cite[Section 7.6]{Pedersen}). 
In order to obtain the desired equivalence we prove a dilation result, generalising simultaneously the KSGNS construction of Kasparov (\cite{Lance}) and Murphy's Kolmogorov decomposition of positive kernels for Hilbert modules from \cite{Mur}, and which might be of interest for its own sake. 
Using this we prove a core result of the paper.
It states that for any two twisted actions $(\alpha,u_{\alpha})$ and $(\beta, u_{\beta})$ of a given \'etale groupoid $\G$ we have natural bijections between the following objects: 
\begin{enumerate}
	\item\label{enu:intro_core_1} strict, bounded and positive-definite multipliers from $(\alpha,u_{\alpha})$ to $(\beta, u_{\beta})$;
    \item\label{enu:intro_core_2} strict completely positive fibre preserving maps between the reduced crossed products;
	\item\label{enu:intro_core_3} strict completely positive fibre preserving maps between the full crossed products;
	\item\label{enu:intro_core_4} Fourier--Stieltjes multipliers from $(\alpha,u_{\alpha})$ to $(\beta, u_{\beta})$ of the form $T_{\EE,L,\xi,\xi}$ 
	for some equivariant correspondence $(L,\EE)$ and a section $\xi\in \contb(\M(\EE))$.
\end{enumerate}	
This together with the standard polarization formula applied to \ref{enu:intro_core_4} gives  the appropriate generalisation of the equivalence between the guises \ref{enu:picture1} and \ref{enu:picture3}. 
The strictness of completely positive maps appearing in conditions \ref{enu:intro_core_1}--\ref{enu:intro_core_3} above is a very weak replacement for unitality of the algebras and maps in question. 
In fact, for the approximation properties that we study strictness plays no role, as we can always conjugate with approximate units to `strictify' a given map. 
Our categorical approach allows us to investigate \emph{approximation properties} of arbitrary multipliers. 
When applied to the identity this leads to a fruitful interaction between properties of the Fourier--Stieltjes algebra and the approximation properties of the corresponding crossed product algebra. 
For example, we provide natural approximation conditions guaranteeing that the relevant universal and reduced crossed product $\cst$-algebras coincide and are nuclear. 

We also return to the Haagerup trick. 
For general groupoids and general cp maps it is not completely clear what is the right replacement for this device, so we decided to consider two cases where we know that it works (this is reflected in Figure~\ref{fig.structure} by the use of thinner arrows). 
Namely, we assume that either the groupoid is discrete (which is still interesting, as for example it covers the case of discrete groups) or the maps in question have a certain bimodule property. 
Then we can use it to characterise the relevant approximation properties, and in the discrete case to identify the natural Fourier--Stieltjes norm with the decomposable norm of \cite{Haagerupdec} of the associated decomposable map, as well as give full characterisations of nuclearity of the associated crossed products.

The construction of the Fourier--Stieltjes category in the generality described above is not yet the end of the story. 
One could think of further extensions to multipliers of Fell bundles, with the latter already put to use for example in \cite{Exel:amenability,BussMartinez}. 
On the other hand, even the context of twisted crossed products given by groupoid actions offers many new perspectives. 
These are on one hand related to the analysis of more subtle approximation properties, and on the other to potential reconstruction results, related to the question: to what extent does the Fourier--Stieltjes or Fourier algebra remember the underlying action? 
At the end of the text we indicate several related avenues of future research and mention also specific problems left open in our work.

The specific plan of the paper is as follows: in Section~\ref{sec:multiplier}, we introduce multiplier modules, $\cst$-correspondences and Hilbert bimodules. 
In Section~\ref{sec:decomp} we prove a dilation result for a particular form of `positive kernels'. 
Next, in Section~\ref{Sec:bundles}, we return to correspondences in the context of bundles over a locally compact space and describe direct sums, tensor products and natural morphisms between the respective bundles. 
Section~\ref{Sec:Fell_bundles} passes to Fell bundles, and specifically to those which arise from twisted groupoid actions and whose cross sectional $C^*$-algebras give the relevant crossed products. 
Here we also connect the latter to twisted inverse semigroup actions in the sense of \cite{BussExel}. 
In Section~\ref{Sec:equivariant} we focus on equivariant correspondences for a pair of twisted groupoid actions, setting up the appropriate framework and proving that an \emph{equivariant correspondence induces} representations from one crossed product to another. 
The process of inducing representations from correspondences without equivariant action is introduced in Section~\ref{Sec:InducedAbsorption}. 
Such representations are called \emph{induced regular} because they always descend to reduced crossed products, while for equivariant induction this holds only if the equivariant representation is regular or if the representation from which we induce is  regular.
Our two counterparts of Fell's absorption principle describe the fundamental relationships between the two types of induction. 
Section~\ref{Sec:FSmultipliers} uses the stage set up in the first part of the paper to finally introduce the Fourier--Stieltjes Banach category and prove that Fourier--Stieltjes multipliers act both on reduced and full crossed product $\cst$-algebras. 
The following Section~\ref{Sec:positive-definite} specialises to Fourier--Stieltjes multipliers of positive type; here we also establish the advertised equivalence between different points of view on such multipliers. 
A short Section~\ref{Sec:Fourierapproximation} treats Fourier multipliers, using earlier absorption theorems to deduce that they form an ideal in the class of Fourier--Stieltjes multipliers and extend to reduced-to-full maps. 
Section~\ref{Sec:Applicationapproximation} introduces the concepts of a Fourier and Fourier--Stieltjes approximation of a given multiplier and uses it to provide applications to approximation properties of actions and the resulting crossed products. 
In Section~\ref{Sec:Applications} we discuss appropriate versions of the Haagerup trick, apply them to find other descriptions of approximation properties, as well as to provide an alternative view of the Fourier--Stieltjes norm, which in turn again has certain consequences for the approximation properties. 
Finally, in Section~\ref{Sec:QuestionsPerspectives}, we set up open problems and present further research perspectives.

\vspace*{0.3cm}

\textbf{Notation:} We will introduce the notation gradually throughout the text, but the most important conventions are as follows. 
We denote by $X$ a locally compact Hausdorff space and by $\G$ a Hausdorff \'etale groupoid with the unit space $X$. 
$C^*$-algebras are denoted by $A$, $B$, $\dots$, Fell bundles by $\A$, $\B$,  $\dots$, modules by $E$, $F$, and bundles of modules by $\EE$, $\FF$. 
If $p$ is a logical statement --- mostly an equality --- then we use the symbol $[p]$ to mean the boolean logical value of $p$, that is $[p]=1$ if $p$ is true and $[p]=0$ if $p$ is false.

\section{Multiplier C*-correspondences} 
\label{sec:multiplier}

In this section we discuss the notion of multiplier Banach bimodules, multiplier $\cst$-correspondences and multiplier Hilbert bimodules, extending the theory from \cite{Daws}.

\subsection{Multiplier Banach bimodules}

Throughout this paper $A$ and $B$ will be $\Cst$-algebras 
(although in this particular subsection it suffices to assume that $A$ and $B$ are Banach algebras with contractive approximate units).
For Banach spaces $E$, $F$, we denote by $\BB(E,F)$ the space of bounded operators $E\to F$.
Let $E$ be a Banach $A$-$B$-bimodule. 
In this paper Banach modules, by definition, will be \emph{nondegenerate}, 
i.e.\ we always assume that $AE = EB = E$. 
Note that, by the Cohen--Hewitt factorisation theorem, these products may be viewed either purely algebraically or as closed linear spans. 
Then $E$ is also naturally a Banach $\M(A)$-$\M(B)$-bimodule, 
where $\M(A)$ and $\M(B)$ denote the multiplier algebras of $A$ and $B$, respectively.
We recall the definition of the multiplier bimodule from \cite{Daws} 
(note that \cite{Daws} considers $A$-bimodules, but the constructions and results there remain valid for $A$-$B$-bimodules). 

Namely, the direct sum $\BB(A,E)\oplus\BB(B,E)$, equipped with the norm 
$$\|(R,L)\|_{\infty} = \max\{\|R\|,\|L\|\},
$$
becomes a Banach $\M(A)$-$\M(B)$-bimodule 
with module actions given by
\[
    (aR)(a_0) = R(a_0 a), \quad (Rb)(a_0) = R(a_0)b, \quad 
    (Lb)(b_0) = L(bb_0), \quad (aL)(b_0) = aL(b_0),
\]
for $(R,L)\in \BB(A,E)\oplus\BB(B,E)$, $a_0\in A$, $b_0\in B$, and $a\in\M(A)$, $b\in\M(B)$.
A \emph{multiplier} for $E$ is a pair of maps $(R,L)$, where $R:A\to E$ and $L:B\to E$, satisfying
\[
    a_0 L(b_0) = R(a_0)b_0, \qquad a_0\in A,\ b_0\in B.
\]
By \cite[Proposition~2.5]{Daws}, the set of all multipliers for $E$, denoted $\M(E)$, 
is a closed $\M(A)$-$\M(B)$-sub-bimodule of $\BB(A,E)\oplus\BB(B,E)$. 

We have an isometric $\M(A)$-$\M(B)$-bimodule map 
\[
    E\ni \xi \longmapsto (R_\xi,L_\xi), \qquad 
    R_\xi(a):=a\xi,\ L_\xi(b):=\xi b,
\]
which allows us to view $E$ as a sub-bimodule of $\M(E)$.
We call the $\M(A)$-$\M(B)$-bimodule $\M(E)$ the \emph{multiplier bimodule} of the $A$-$B$-bimodule $E$.
Below we will characterise it as the largest essential multiplier extension of~$E$.

\begin{defn}\label{defn:multiplier_extensions}
We say that an $\M(A)$-$\M(B)$-bimodule $F$ is a \emph{multiplier extension} of an $A$-$B$-bimodule $E$ if $E\subseteq F$ is a submodule such that $AF=FB=E$. 
We say that this extension is \emph{essential} if for every $\xi \in F$, we have $A\xi=0$  only if $\xi=0$ or equivalently, $\xi B=0$  only if $\xi=0$ (the equivalence follows from the equality $AF=FB=E$).
\end{defn}

\begin{rem}
If either of the algebras $A$ or $B$ is unital then $F=1_AF=F1_B=E$ for every multiplier extension $F$ of $E$.
\end{rem}

\begin{lem}\label{lem:universal_characterisation_of_multipliers}
For any $A$-$B$-bimodule $E$, the multiplier bimodule $\M(E)$ is the largest essential multiplier extension of $E$. 
More precisely, $\M(E)$ is an essential multiplier extension of $E$ and for any other multiplier extension $F$ of $E$ there is a unique (necessarily contractive) 
$A$-$B$-bimodule map $\mu:F\to \M(E)$ that extends the identity on $E$. 
Moreover, $\mu$ is injective if and only if $F$ is essential. 
\end{lem}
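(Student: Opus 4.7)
The plan is to split the argument into verifying the two claims: first that $\M(E)$ is itself an essential multiplier extension, and second the universal property together with the injectivity criterion.

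\textbf{Essential multiplier extension.} Given $(R,L)\in\M(E)$ and $a\in A$, the key preliminary observation is that the multiplier identity $a_0L(b_0) = R(a_0)b_0$, combined with the non-degeneracy of $E$ as a right $B$-module, automatically forces $R$ to be a left $A$-module map. Indeed, applying the identity with $a_0a$ in place of $a_0$, and separately using $aL(b_0) = R(a)b_0$, yields $[R(a_0a) - a_0R(a)]b_0 = 0$ for every $b_0\in B$, and right non-degeneracy cancels $b_0$; symmetrically, $L$ is a right $B$-module map. A direct computation then gives $a\cdot (R,L) = (R_{R(a)},L_{R(a)})$ and $(R,L)\cdot b = (R_{L(b)},L_{L(b)})$, so both products lie in the image of $E$ inside $\M(E)$. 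Combined with $AE = EB = E$, this yields $A\M(E) = \M(E)B = E$. For essentiality: if $A\cdot (R,L) = 0$ then $R(a)=0$ for every $a$, and the multiplier identity forces $AL(b)=0$ for every $b$, whence $L=0$ by left non-degeneracy.

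\textbf{Universal property and uniqueness.} Given another multiplier extension $F$, I define $\mu\colon F\to\M(E)$ by $\mu(f) = (R_f, L_f)$, where $R_f(a) := af$ and $L_f(b) := fb$; these land in $E$ precisely because $AF = FB = E$. The multiplier identity $aL_f(b) = afb = R_f(a)b$ and the $A$-$B$-bimodule property of $\mu$ are immediate from associativity, and $\|\mu(f)\|_\infty \le \|f\|$ gives contractivity. That $\mu$ restricts to the canonical embedding of $E$ is by inspection. For uniqueness, any $A$-$B$-bimodule map $\mu'\colon F\to\M(E)$ extending the identity on $E$ must satisfy $a\cdot\mu'(f) = \mu'(af) = af$ for every $a\in A$, since $af\in E$; the essentiality of $\M(E)$ proved above then forces $\mu'(f) = \mu(f)$.

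\textbf{Injectivity criterion.} By construction $\mu(f) = 0$ if and only if $af = 0$ for every $a\in A$, so $\mu$ is injective precisely when the left $A$-annihilator in $F$ is trivial, i.e.\ when $F$ is essential (equivalently, when the right $B$-annihilator vanishes, by the equivalence noted in Definition \ref{defn:multiplier_extensions}).

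The only subtle step is establishing the automatic module-map property of $R$ and $L$ inside an arbitrary multiplier; this is what makes the $\M(A)$-action on $\M(E)$ collapse back into $E$ and underlies both the essentiality of $\M(E)$ and the uniqueness in the universal property. Beyond that, everything is a direct unwinding of definitions.
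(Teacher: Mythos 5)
Your proof is correct and follows essentially the same route as the paper's: identify $a\cdot(R,L)$ and $(R,L)\cdot b$ with elements of $E$, map a general multiplier extension $F$ into $\M(E)$ via $f\mapsto(R_f,L_f)$, and derive uniqueness and the injectivity criterion from nondegeneracy/essentiality. The only difference is that you prove the module-map property of $R$ and $L$ directly from the multiplier identity and nondegeneracy, where the paper simply cites \cite[Lemma 2.1]{Daws}; your verification of that step is sound.
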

\begin{proof}
For any $(R,L)\in \M(E)$ and $a\in A$, $b\in B$ we have $a\cdot (R,L)=(R_{R(a)},L_{R(a)})$ and $(R,L)\cdot b=(R_{L(b)},L_{L(b)})$ (note that $L$ is a right module map and $R$ is a left module map, see \cite[Lemma 2.1]{Daws}). 
Hence $\M(E)$ is a multiplier extension of $E$ and clearly it is essential. 
For any multiplier extension $F$ we have the bimodule map $F\ni \xi \mapsto (R_\xi, L_{\xi})\in \M(E)$ where $R_{\xi}(a):=a \xi$ and $L_\xi(b):=\xi b$, for $\xi \in F$.
By nondegeneracy of bimodules this is the unique bimodule map $F\to \M(E)$ that restricts to the identity on $E$.  
By construction it is contractive, and it is injective if and only if $F$ is essential. 
\end{proof}

\begin{cor}\label{cor:universal_multipliers}
$\M(E)$ is the unique (up to an isometric isomorphism extending the identity on $E$) multiplier extension of $E$ with the property that for any multiplier extension $F$ of $E$ there is a unique bimodule map $F\to \M(E)$ that extends the identity $E\to E$. 
\end{cor}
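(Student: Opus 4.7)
The statement is the standard uniqueness-of-a-universal-object result, together with a short upgrade argument showing that the resulting bimodule isomorphism is isometric. Existence is already contained in Lemma~\ref{lem:universal_characterisation_of_multipliers}: $\M(E)$ is a multiplier extension of $E$ enjoying the stated factorisation property. Only uniqueness needs to be established.

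Suppose that $F$ is a second multiplier extension of $E$ enjoying the same universal property. Applying Lemma~\ref{lem:universal_characterisation_of_multipliers} to $F$ produces a unique contractive bimodule map $\mu\colon F\to\M(E)$ extending $\id_E$; applying the assumed universal property of $F$ to the multiplier extension $\M(E)$ produces a unique bimodule map $\nu\colon\M(E)\to F$ extending $\id_E$. The composition $\mu\circ\nu\colon\M(E)\to\M(E)$ is a bimodule map extending $\id_E$, so by the uniqueness clause of Lemma~\ref{lem:universal_characterisation_of_multipliers} applied with source $\M(E)$ itself it must equal $\id_{\M(E)}$; symmetrically the universal property of $F$ applied to itself forces $\nu\circ\mu=\id_F$. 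Thus $\mu$ and $\nu$ are mutually inverse bimodule isomorphisms extending $\id_E$.

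To upgrade this to an isometric isomorphism, I would first argue that $F$ is automatically essential. If $F_0:=\{\xi\in F:A\xi=0\}$ were non-zero, then $F_0$ is an $\M(A)$-$\M(B)$-sub-bimodule and $E\cap F_0=0$ (by nondegeneracy of $E$ and Cohen--Hewitt factorisation, which force any $\eta\in E$ with $A\eta=0$ to vanish). Hence $F/F_0$ inherits the structure of a multiplier extension of $E$, and the universal property of $F$ applied to $F/F_0$ yields a unique bimodule lift $\sigma\colon F/F_0\to F$ extending $\id_E$; the composition $\sigma\circ\pi\colon F\to F$ of $\sigma$ with the quotient projection $\pi\colon F\to F/F_0$ would then be a bimodule endomorphism of $F$ extending $\id_E$, which by the uniqueness clause in the universal property of $F$ (applied with source $F$ itself) must equal $\id_F$. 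This forces $\pi$ to be injective, hence $F_0=0$. Given essentiality of $F$, the contractive bijection $\mu\colon F\to\M(E)$ is isometric because the multiplier norm on $\M(E)$ is computed from the operators $a\mapsto a\eta$, $b\mapsto \eta b$, whose norms via $\mu$ match the corresponding Banach-bimodule quantities on $F$.

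The step I expect to be most delicate is the isometric upgrade. Contractivity of $\mu$ is automatic from Lemma~\ref{lem:universal_characterisation_of_multipliers}, but the reverse inequality $\|\eta\|_F\le\|\mu(\eta)\|_{\M(E)}$ requires using essentiality of $F$ in a substantive way --- essentially, to recognise that the Banach-bimodule norm on $F$ is pinned down by the same formula $\max(\sup_{\|a\|\le 1}\|a\eta\|_E,\ \sup_{\|b\|\le 1}\|\eta b\|_E)$ that defines the norm on $\M(E)$. The purely algebraic universal-property half of the argument is formal; the work is concentrated in showing that the norm on any extension enjoying the universal property is forced.
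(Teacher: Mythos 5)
Your algebraic half is exactly the ``usual universality considerations'' the paper has in mind: the unique maps $\mu\colon F\to\M(E)$ and $\nu\colon\M(E)\to F$ compose to bimodule endomorphisms extending $\id_E$, hence to identities by the respective uniqueness clauses, so $\mu$ is a bimodule isomorphism. (Your detour through $F_0$ to establish essentiality is correct but unnecessary: $\mu$ is bijective, in particular injective, and the last sentence of Lemma~\ref{lem:universal_characterisation_of_multipliers} already states that injectivity of $\mu$ is equivalent to essentiality of $F$.)

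The genuine gap is in the isometric upgrade, and it sits exactly where you flagged it: essentiality of $F$ does \emph{not} pin down the norm of $F$ as $\max\{\sup_{\|a\|\le 1}\|a\eta\|,\ \sup_{\|b\|\le 1}\|\eta b\|\}$. For a counterexample take $A=B=E=\contz(\N)$ and let $F=\ell^\infty(\N)$ carry the norm $\|x\|_F:=\|x\|_\infty+\limsup_n|x_n|$. This is a Banach $\M(A)$-$\M(B)$-bimodule with contractive actions, $AF=FB=E$, the norm restricts to the usual one on $E$, and the extension is essential; yet $\|(1,1,1,\dots)\|_F=2$ while the corresponding multiplier norm is $1$. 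So the reverse inequality $\|\eta\|_F\le\|\mu(\eta)\|_{\M(E)}$ cannot be extracted from essentiality, and the mechanism you propose for the ``delicate step'' is false. The correct mechanism is the other standard half of the universality argument: $\mu$ is contractive by Lemma~\ref{lem:universal_characterisation_of_multipliers}, and $\nu$ must also be taken contractive --- i.e.\ the universal property has to be read in the category of \emph{contractive} bimodule maps, since with unrestricted bounded bimodule maps the renormed $\ell^\infty$ above satisfies the stated property (the bounded set-identity $\M(E)\to F$ is the required unique map) without being isometrically isomorphic to $\M(E)$ over $E$. Two mutually inverse contractions are isometries, and no analysis of the norm on $F$ is needed.
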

\begin{proof} 
Follows from the usual universality considerations.
\end{proof}

We also need the following general morphism extension result. 
It is a common generalisation of \cite[Theorem 2.8]{Daws} and the known fact that any contractive homomorphism $\pi:A\to \M(B)$ which is \emph{nondegenerate} in the sense that $\pi(A)B=B$, extends uniquely to a homomorphism $\overline{\pi}:\M(A)\to \M(B)$ which is  necessarily contractive.

\begin{defn}\label{def:bimodule_morphism}
A \emph{morphism} from  a Banach $A$-$B$-bimodule $E$ to a Banach $C$-$D$-bimodule $F$ consists of a linear map $\pi:E\to \M(F)$ and nondegenerate homomorphisms $\pi_A:A\to \M(C)$, $\pi_B:B\to \M(D)$ such that $\pi(a\xi b)=\pi_A(a)\pi(\xi)\pi_B(b)$ for $\xi \in E$, $a\in A$, $b\in B$.
We say that this morphism is \emph{contractive} if all the maps $(\pi,\pi_A,\pi_B)$ are contractive.
\end{defn}

\begin{lem}\label{lem:extensions_lemma}
Any contractive morphism $(\pi,\pi_A,\pi_B):E\to F$ extends uniquely to a morphism $(\overline{\pi}, \overline{\pi}_A, \overline{\pi}_B):\M(E)\to \M(F)$ which is necessarily contractive.
\end{lem}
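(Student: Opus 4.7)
The extensions $\overline{\pi}_A:\M(A)\to\M(C)$ and $\overline{\pi}_B:\M(B)\to\M(D)$ are the standard contractive strict extensions of nondegenerate $\cst$-homomorphisms, so the content of the lemma is the construction of $\overline{\pi}:\M(E)\to\M(F)$. My strategy is to extend $\pi$ by a strict-continuity argument that exploits the Cohen--Hewitt factorizations $C=\pi_A(A)C=C\pi_A(A)$ and $D=\pi_B(B)D=D\pi_B(B)$, which follow from nondegeneracy of $\pi_A,\pi_B$ and allow me to represent each element of $C$, $D$ in the form $c_0\pi_A(a)$ or $\pi_B(b)d_0$.

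Fix contractive approximate units $(e_\lambda)\subset A$ and $(f_\mu)\subset B$. For $m=(R,L)\in\M(E)$ I define $\overline{\pi}(m):=(R',L')$ by
\[
R'(c):=\lim_\lambda c\,\pi(R(e_\lambda))\in F,\qquad L'(d):=\lim_\mu \pi(L(f_\mu))\,d\in F,
\]
where the products are taken inside the $\M(C)$-$\M(D)$-bimodule $\M(F)$. Writing $c=c_0\pi_A(a)$ with $c_0\in C$, $a\in A$, the bimodule morphism property of $\pi$ together with the left-module identity $R(ae_\lambda)=aR(e_\lambda)$ for $(R,L)\in\M(E)$ gives
\[
c\,\pi(R(e_\lambda))=c_0\pi_A(a)\pi(R(e_\lambda))=c_0\pi(aR(e_\lambda))=c_0\pi(R(ae_\lambda)),
\]
and continuity of $R$ and $\pi$ together with $ae_\lambda\to a$ shows that this converges in $F$ to $c_0\pi(R(a))$. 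This simultaneously establishes existence of the limit, independence of the chosen factorization, and the bound $\|R'(c)\|\le\|m\|\,\|c\|$; the analogous argument handles $L'$.

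It remains to check that $(R',L')\in\M(F)$, that $\overline{\pi}$ restricts to $\pi$ on $E$ and is a bimodule morphism, and that it is unique. The multiplier identity $cL'(d)=R'(c)d$ reduces, after factoring $c=c_0\pi_A(a)$ and $d=\pi_B(b)d_0$, to the defining relation $aL(b)=R(a)b$ for $m$ combined with the morphism identities $\pi_A(a)\pi(L(b))=\pi(aL(b))$ and $\pi(R(a))\pi_B(b)=\pi(R(a)b)$. Applied to the canonical embedding of $\xi\in E$ the formula collapses to $R'(c)=c\pi(\xi)$, confirming $\overline{\pi}|_E=\pi$. The bimodule identity $\overline{\pi}(\alpha m\beta)=\overline{\pi}_A(\alpha)\overline{\pi}(m)\overline{\pi}_B(\beta)$ for $\alpha\in\M(A)$, $\beta\in\M(B)$ is verified componentwise by inserting the description $R_{\alpha m\beta}(a_0)=R(a_0\alpha)\beta$ and using that the morphism property of $\pi$ lifts from $A,B$ to $\M(A),\M(B)$ by an analogous Cohen--Hewitt argument, together with the fact that $\overline{\pi}_A,\overline{\pi}_B$ are homomorphisms. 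Finally, uniqueness follows from Lemma~\ref{lem:universal_characterisation_of_multipliers}: any other bimodule-morphism extension $\overline{\pi}'$ satisfies $\pi_A(a)\overline{\pi}'(m)=\overline{\pi}'(am)=\pi(am)=\pi_A(a)\overline{\pi}(m)$ for $a\in A$, and essentialness of $\M(F)$ combined with nondegeneracy of $\pi_A$ forces $\overline{\pi}'=\overline{\pi}$. The main technical obstacle is just the careful bookkeeping of left and right sides in the Cohen--Hewitt factorizations while checking the multiplier and bimodule identities --- once organized, everything else is routine.
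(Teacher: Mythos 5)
Your proposal is correct and follows essentially the same route as the paper: both define the extension via $\widetilde{R}(c)=\lim_i c\,\pi(R(e_i^A))$ and $\widetilde{L}(d)=\lim_i\pi(L(e_i^B))\,d$, verify well-definedness and the multiplier identity through the factorizations $c=c_0\pi_A(a)$, $d=\pi_B(b)d_0$ and the relations $R(ae)=aR(e)$, $aL(b)=R(a)b$, and deduce uniqueness from essentialness of $\M(F)$ together with nondegeneracy of $\pi_B$ (equivalently $\pi_A$). The only difference is cosmetic bookkeeping in the order of presentation.
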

\begin{proof}
Take any $(R,L)\in \M(E)$. 
We claim that the formulas
\[
    \widetilde{R}(c\pi_A(a)):=c\pi(R(a)), \qquad \widetilde{L}(\pi_B(b)d):=\pi(L(b))d,\quad\mbox{ for $a\in A, b\in B, c\in C, d\in D$,}
\]
yield linear maps $\widetilde{R}:C\to F$, $\widetilde{L}:D\to F$. 
Indeed, letting $\{e_i^A\}_{i \in I}$ and $\{e_i^B\}_{i \in I}$ be approximate units respectively in $A$ and $B$, we get $c\pi(R(a))=\lim_{i} c\pi(R(ae_i^A))=\lim_{i} c\pi(aR(e_i^A))=\lim_{i} c\pi_A(a)\pi(R(e_i^A))$ and similarly  $\pi_B(b)d=\lim_{i}\pi(L(e_i^B))\pi_B(b)d$. 
This implies that $\widetilde{R}$ and $\widetilde{L}$ are well defined and contractive. 
We have 
\[
    \widetilde{R}(c)=\lim_{i} c \pi(R(e_i^A)), \qquad \widetilde{L}(d)=\lim_{i} \pi(L(e_i^B)) d, \qquad c\in C, d\in D.  
\]
Moreover, for $a\in A, b\in B, c\in C, d\in D$ we get
\[
    c\pi_A(a) \widetilde{L}(\pi_B(b)d) = c\pi(aL(b))d = c\pi(R(a)b)d= c\pi(R(a))\pi_B(b)d = \widetilde{R}(c\pi_A(a)) \pi_B(b)d.
\]
Hence $(\widetilde{R},\widetilde{L})\in \M(F)$. 
Thus putting $\overline{\pi}(R,L):=(\widetilde{R},\widetilde{L})$ we get a contractive  extension $\overline{\pi}:\M(E)\to \M(F)$ of $\pi$.
It can be readily checked that for $a\in A$, $b\in B$ and $\xi=(R,L)\in \M(E)$ we have $\pi_A(a)\overline{\pi}(\xi)\pi_B(b)=\pi(a\xi b)$. 
This property determines $\overline{\pi}$ uniquely, as for any other $\tilde{\pi}:\M(E)\to \M(F)$ with this property we have $(\overline{\pi}(\xi)-\tilde{\pi}(\xi))\pi_B(B)=0$ which implies $(\overline{\pi}(\xi)-\tilde{\pi}(\xi))D=0$ and therefore $\overline{\pi}(\xi)=\tilde{\pi}(\xi)$ because $\M(F)$ is an essential extension of $F$.

The above (in the special case where $E=A=B$ and $F=C=D$ are Banach algebras and $\pi=\pi_A=\pi_B$) also proves the aforementioned fact that contractive nondegenerate homomorphisms between Banach algebras extend uniquely to homomorphisms between their multiplier algebras, and they are necessarily contractive. 
Thus we have unique (contractive) homomorphic extensions $\overline{\pi}_A:\M(A)\to \M(C)$ and $\overline{\pi}_B:\M(B)\to \M(D)$ of  $\pi_A$ and $\pi_B$. We immediately get that $\overline{\pi}_A(a)\overline{\pi}(\xi)\overline{\pi}_B(b)=\overline{\pi}(a\xi b)$ for $a\in \M(A)$, $b\in \M(B)$ and $\xi\in \M(E)$. 
Thus $(\overline{\pi},\overline{\pi}_A,\overline{\pi}_B)$ is a unique morphism that extends $(\pi,\pi_A,\pi_B)$.
\end{proof}

\subsection{Multiplier C*-correspondences}

Let $E$ and $F$ be (right) $B$-Hilbert modules. 
We use the standard notation $\L(E,F)$ for the Banach space of adjointable operators from $E$ to $F$, and $\K(E,F)\subseteq \L(E,F)$ for the closed subspace of (generalised) compact operators, generated by the `rank-one' operators $\Theta_{x,y}(z):=x\langle y, z\rangle_B$, $x \in F, y, z\in E$. 
In particular, $\K(E):=\K(E,E)$ is an ideal in the $\cst$-algebra $\L(E):=\L(E,E)$. 
A $\cst$\emph{-correspondence} from $A$ to $B$ (or an $A$-$B$-$\cst$-correspondence) is a Hilbert module $E$ over $B$ together with a nondegenerate $*$-homomorphism from $A$ to $\L(E)$, which introduces a left $A$-module structure on $E$.
In particular, an $A$-$B$-$\cst$-correspondence is a Banach $A$-$B$-bimodule (where both actions are nondegenerate), so that the constructions of the previous subsection apply.
Any $\cst$-algebra $A$ can be treated as a trivial $\cst$-correspondence from $A$ to $A$ where the bimodule structure comes from $A$ and the $A$-valued inner product is given by the formula $\langle a, b\rangle:=a^*b$, $a,b\in A$. Then $\K(A)\cong A$ and $\L(A)=\M(A)$ is the multiplier algebra of $A$. 
For any $A$-$B$-$\cst$-correspondence $E$ the actions extend uniquely to multiplier algebras so that we may always treat $E$  as a $\M(A)$-$\M(B)$-$\cst$-correspondence. 
To relate it to the multiplier $\M(A)$-$\M(B)$-bimodule $\M(E)$, let us note that whenever $E_B$ is a $B$-Hilbert module, then $\L(E_B,E)$ is naturally a $\cst$-correspondence from $\M(A)$ to $\L(E_B)$ where
\[
    (a T)(x) := a T(x),\qquad (T b) (x):= Tb(x),\qquad \langle S, T\rangle_{\L(E_B)} :=S^* T,
\]
for $S, T\in  \L(E_B,E)$, $a\in M(A)$, $b\in \L(E_B)$, $x\in E_B$. 
In particular, $\L(E_B,E)$ contains $\K(E_B,E)$ as a sub-$\cst$-correspondence. 
If $E_B=B$ is the trivial $B$-Hilbert module, then $\L(B,E)$ is a $\cst$-correspondence from $\M(A)$ to $\M(B)=\L(B)$ and $\K(B,E)$ is naturally isomorphic to $E$; the isomorphism is determined by the map $\Theta_{\xi,b}\mapsto \xi b^*$, $\xi \in E, b \in B$. 
Under the identifications $B=\K(B)$ and $E=\K(B,E)$ we have $\L(B,E)\cdot B=E$, but in general $A\cdot \L(B,E)$ is not contained in $E$, unless for instance $E$ is \emph{proper}, i.e.\ $A$ acts on $E$ by compact operators. 
Hence in general $\L(B,E)$ is not a multiplier extension of $E$ in the sense of Definition~\ref{defn:multiplier_extensions}.
In fact, 
\[
    \M_{\cst}(E):=\{T\in \L(B,E):  aT\in \K(B,E) \text{ for all }a\in A\}
\]
is the largest sub-bimodule of $\L(B,E)$ which is a multiplier extension of $E=\K(B,E)$. 

\begin{lem}
For any $A$-$B$-$\cst$-correspondence $E$, the projection $\M(E)\ni (R,L)\mapsto L \in \M_{\cst}(E)$ is an isomorphism of Banach $\M(A)$-$\M(B)$-bimodules.
\end{lem}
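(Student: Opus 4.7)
The plan is to leverage the universal property of $\M(E)$ established in Lemma~\ref{lem:universal_characterisation_of_multipliers}, which reduces the task to constructing a two-sided inverse of the projection $\nu:\M(E)\to\M_{\cst}(E)$, $\nu(R,L):=L$. First I would verify that $\M_{\cst}(E)$ is an essential multiplier extension of $E\cong\K(B,E)$: the inclusion $A\cdot\M_{\cst}(E)\subseteq\K(B,E)$ is built into the definition, while $\M_{\cst}(E)\cdot B\subseteq\K(B,E)$ follows from right $B$-linearity of any $T\in\L(B,E)$, which forces $Tb=L_{T(b)}\in\K(B,E)$; essentiality follows because $A$ acts nondegenerately on $E$, so $AT=0$ forces $aT(b)=0$ for all $a\in A$, $b\in B$, hence $T=0$. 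By Lemma~\ref{lem:universal_characterisation_of_multipliers} this yields a unique injective contractive $\M(A)$-$\M(B)$-bimodule map $\mu:\M_{\cst}(E)\to\M(E)$ extending the identity on $E$, explicitly given by $\mu(T)=(R_T,L_T)$ with $L_T(b)=T(b)$ and $R_T(a)\in E$ the unique element satisfying $aT(b)=R_T(a)\,b$ for all $b\in B$.

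The main content is to show that $\nu$ lands in $\M_{\cst}(E)$, and the only non-formal step is to verify that the map $L:B\to E$ arising from $(R,L)\in\M(E)$ is \emph{adjointable}, so that it truly belongs to $\L(B,E)$. The compactness condition is immediate: the multiplier identity $aL(b)=R(a)\,b$ rewrites as $aL=L_{R(a)}\in\K(B,E)$, corresponding to $R(a)\in E$, so $aL\in\K(B,E)$ for every $a\in A$. For adjointability, I would fix a self-adjoint contractive approximate unit $(e_i)$ of $A$ and first observe that $R$ is a left $A$-module map (from $aR(a_0)b=aa_0L(b)=R(aa_0)\,b$ for all $b$, combined with nondegeneracy of the right $B$-action on $E$, one gets $R(aa_0)=aR(a_0)$). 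Given $\xi\in E$, Cohen--Hewitt factorisation applied to $AE=E$ produces $a_0\in A$ and $\eta\in E$ with $\xi=a_0\eta$, and then
\[
\langle R(e_i),\xi\rangle_B = \langle a_0^*R(e_i),\eta\rangle_B = \langle R(a_0^*e_i),\eta\rangle_B \xrightarrow{i}\langle R(a_0^*),\eta\rangle_B,
\]
since $a_0^*e_i\to a_0^*$ in $A$ and $R$ is continuous. Defining $L^*(\xi):=\lim_i\langle R(e_i),\xi\rangle_B$ therefore yields, independently of the factorisation, a bounded linear map $L^*:E\to B$ with $\|L^*\|\le\|R\|$. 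Passing to the limit in $\langle L(b),e_i\xi\rangle_E=\langle e_iL(b),\xi\rangle_E=b^*\langle R(e_i),\xi\rangle_B$ gives the adjointness relation $\langle L(b),\xi\rangle_E=b^*L^*(\xi)=\langle b,L^*(\xi)\rangle_B$, placing $L\in\L(B,E)$ with adjoint $L^*$; combined with $aL\in\K(B,E)$ this shows $L\in\M_{\cst}(E)$.

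The remaining checks are formal. Bimodule linearity of $\nu$ is immediate from the action formulas on $\M(E)$ and on $\L(B,E)$. Mutual inversion: for $(R,L)\in\M(E)$ the identity $aL=L_{R(a)}$ forces $R_L=R$, while $L_L=L$ tautologically, so $\mu(L)=(R,L)$; for $T\in\M_{\cst}(E)$, $L_T=T$ by construction, hence $\nu(\mu(T))=T$. Finally, the norm equality $\|(R,L)\|_\infty=\|L\|_{\L(B,E)}$ follows from $\|L\|\le\|(R,L)\|_\infty$ together with $\|R(a)\|_E=\|aL\|_{\L(B,E)}\le\|a\|\,\|L\|_{\L(B,E)}$, promoting $\mu$ and $\nu$ to mutually inverse isometric isomorphisms of Banach $\M(A)$-$\M(B)$-bimodules. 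The sole obstacle is the adjointability step in the middle paragraph; the rest is bookkeeping organised by the universal property.
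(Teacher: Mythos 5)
Your proof is correct and follows essentially the same route as the paper's: the substantive step in both is verifying that the $L$-component of a multiplier $(R,L)$ is adjointable with $aL\in\K(B,E)$, and your approximate-unit computation of $L^*$ is the same calculation the paper performs directly on elements of the form $a\xi$. The only organisational difference is in surjectivity, where the paper constructs $R(a):=\lim_i aL(e_i)$ explicitly from an approximate unit of $B$ while you obtain the inverse from the universal property of $\M(E)$ applied to the multiplier extension $\M_{\cst}(E)$; under the identification $\K(B,E)\cong E$ these yield the same $R$, so nothing is gained or lost.
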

\begin{proof}
If $(R,L) \in \M(E)$, then $L$ is adjointable and $L^*(a\xi)=\langle R(a^*),\xi\rangle_{B}$, for $a\in A$ and $\xi\in E$, because for any $b\in B$ we have 
\[
    \langle L(b) ,a\xi\rangle_{B}=\langle a^* L(b),\xi\rangle_{B}=\langle R(a^*)b,\xi\rangle_{B}= b^*\langle R(a^*),\xi\rangle_{B}= \langle b, \langle R(a^*),\xi\rangle_{B}\rangle_{B}.
\]
Thus $R$ is uniquely determined by $L$ and $\|R\|\leq \|L^*\|=\|L\|$. 
Also for any $a\in A$, writing $R(a)=\xi b^*$ for $\xi\in E$ and $b\in B$ we get that 
$a L=\Theta_{\xi,b}$. 
Thus $L\in \M_{\cst}(E)$ and we see that  $\M(E)\ni (R,L)\mapsto L \in \M_{\cst}(E)$ is an isometric embedding of bimodules.  
To show it is onto let $L\in \M_{\cst}(E)$. 
For any $a\in A$, $aL\in \K(B,E)$ and therefore for any approximate unit $\{e_i\}_{i \in I}$ in $B$ the limit $\lim_{i\in I} aL(e_i)\in E$ exists (because $aL$ can be  approximated in the norm by a finite sum $\sum_{k=1}^n\Theta_{\xi_k,b_k}$ and $\lim_{i\in I} \sum_{k=1}^n\Theta_{\xi_k,b_k}(e_i)=\sum_{k=1}^n \xi_kb_k^*\in E$). 
Putting $R(a):=\lim_{i} aL(e_i)$ one gets a multiplier $(R,L)$. 
We leave the remaining straightforward details to the reader.
\end{proof}

In the sequel we shall use the above lemma to identify $\M(E)=\M_{\cst}(E) \subseteq \L(B,E)$. 
The advantage is that then we have also the following equality
\[
    \M(E)=\{T\in \L(B,E): \langle S, aT\rangle_{\M(B)}\in B \text{ for }a\in A,\, S\in \L(B,E)\},
\]
see for instance \cite[Proposition 1.3]{katsura2}, which readily implies that $\M(E)$ is in fact a sub-$\cst$-correspondence of the  $\M(A)$-$\M(B)$-$\cst$-correspondence $\L(B,E)$.

\begin{rem} 
A $\cst$-correspondence multiplier extension of an $A$-$B$-$\cst$-correspondence $E$ is a $\M(A)$-$\M(B)$-$\cst$-correspondence $F$ such that $E=FB$ and 
\[
    \langle \xi, a \eta\rangle_{\M(B)}\in B, \qquad \text{ for all } a\in A,\  \xi, \eta \in F,
\]
because the displayed condition is equivalent to $AF=FB$, by \cite[Proposition 1.3]{katsura2}. 
Any $\cst$-correspondence multiplier extension is automatically essential.
\end{rem}

\begin{prop}\label{prop:universal_multiplier_description}
The multiplier bimodule $\M(E)$ of an $A$-$B$-$\cst$-correspondence $E$ has a uniquely determined structure of an $\M(A)$-$\M(B)$-$\cst$-correspondence extending the one from $E$. 
Moreover, $\M(E)$ is the largest among $\cst$-correspondences which are multiplier extensions of $E$. 
Namely, for any $\cst$-correspondence multiplier extension $F$ of $E$, the unique bimodule map $F\to \M(E)$ that extends the identity on $E$ is necessarily an isometry (preserves inner products). 
\end{prop}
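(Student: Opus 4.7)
The plan is to first exhibit a $\cst$-correspondence structure on $\M(E)$ extending that on $E$, then verify its uniqueness, and finally establish the universal property through an inner-product preservation argument.

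For existence, I would use the identification $\M(E)=\M_{\cst}(E)\subseteq\L(B,E)$ from the preceding lemma. Since $\L(B)=\M(B)$, the bimodule $\L(B,E)$ is naturally an $\M(A)$-$\M(B)$-$\cst$-correspondence, and by the remark above $\M_{\cst}(E)$ is a sub-$\cst$-correspondence of it. Under the canonical embedding $E\cong\K(B,E)\hookrightarrow\L(B,E)$, the module actions and $B$-valued inner product on $\K(B,E)$ coincide with those of $E$, so the inherited structure on $\M(E)$ extends that on $E$. For uniqueness, the left and right module actions are already fixed by the underlying multiplier bimodule structure. For the inner product, suppose $\langle\cdot,\cdot\rangle'$ is another $\M(B)$-valued correspondence inner product on $\M(E)$ extending the one on $E$. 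For $\xi,\eta\in\M(E)$ and $b\in B$, the element $\eta b$ lies in $\M(E)B=E$, and hence $\langle\xi,\eta\rangle' b=\langle\xi,\eta b\rangle'=\langle\xi,\eta b\rangle_E=\langle\xi,\eta\rangle b$; essentiality of $B$ as an ideal in $\M(B)$ then forces $\langle\cdot,\cdot\rangle'=\langle\cdot,\cdot\rangle$.

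For the universal property, let $F$ be a $\cst$-correspondence multiplier extension of $E$. By Lemma~\ref{lem:universal_characterisation_of_multipliers} there is a unique contractive bimodule map $\mu:F\to\M(E)$ extending $\id_E$, and under the identification $\M(E)\subseteq\L(B,E)$ it is given explicitly by $\mu(\xi)=L_\xi$, where $L_\xi:B\to E$ sends $b$ to $\xi b$. The crux is to show $\mu$ preserves inner products. For $\xi\in F$ and $e\in E$, write $e=\eta b'$ with $\eta\in F$, $b'\in B$; the defining property of a $\cst$-correspondence multiplier extension gives $\langle\xi,e\rangle_F=\langle\xi,\eta\rangle_F b'\in\M(B)\cdot B=B$. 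A short computation from $\langle L_\xi(b),e\rangle_B=b^*\langle\xi,e\rangle_F$ then shows that $L_\xi$ is adjointable with $L_\xi^*(e)=\langle\xi,e\rangle_F$. Consequently, for any $\xi,\eta\in F$ and $b\in B$,
\[
(L_\xi^*L_\eta)(b)=L_\xi^*(\eta b)=\langle\xi,\eta b\rangle_F=\langle\xi,\eta\rangle_F\cdot b,
\]
so the multiplier $\langle\mu(\xi),\mu(\eta)\rangle=L_\xi^*L_\eta\in\M(B)$ acts on $B$ by left multiplication by $\langle\xi,\eta\rangle_F$. Since $B$ is essential in $\M(B)$, this forces $\langle\mu(\xi),\mu(\eta)\rangle=\langle\xi,\eta\rangle_F$, proving $\mu$ preserves inner products.

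I expect the only delicate step to be this inner-product calculation, and the delicacy is purely in bookkeeping: one has to track the embeddings $E\cong\K(B,E)\subseteq\L(B,E)$ and $\M(E)=\M_{\cst}(E)\subseteq\L(B,E)$ and invoke essentiality of $B$ in $\M(B)$ at two different points. Everything else reduces to the structural results already established in the preceding lemma and in Lemma~\ref{lem:universal_characterisation_of_multipliers}.
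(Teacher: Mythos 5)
Your proposal is correct and follows essentially the same route as the paper: identify $\M(E)$ with $\M_{\cst}(E)\subseteq \L(B,E)$ to get the correspondence structure, invoke Lemma~\ref{lem:universal_characterisation_of_multipliers} for the unique bimodule map, and reduce the inner-product computations to elements of $E$ by multiplying into $B$ and using essentiality of $B$ in $\M(B)$ (the paper phrases this via two-sided strict limits $\langle\xi,\eta\rangle_{\M(B)}=\text{s-}\lim_{i,j}\langle\xi e_j,\eta e_i\rangle_B$; your adjointability computation $L_\xi^*L_\eta(b)=\langle\xi,\eta\rangle_F b$ is an equivalent packaging of the same fact). One small repair is needed in your uniqueness step: the chain $\langle\xi,\eta\rangle'b=\langle\xi,\eta b\rangle'=\langle\xi,\eta b\rangle_E$ uses agreement of $\langle\cdot,\cdot\rangle'$ with the given inner product on a pair $(\xi,\eta b)$ with $\xi\in\M(E)$ possibly outside $E$, whereas ``extending the one from $E$'' only constrains pairs in $E\times E$; this is fixed by also compressing the left argument, e.g.\ $e_j^*\langle\xi,\eta b\rangle'=\langle\xi e_j,\eta b\rangle'$ with $\xi e_j\in E$, and then passing to the limit in $j$, exactly as in the paper's two-sided strict-limit formula.
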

\begin{proof} 
We already observed that $\M(E)$ has the $\cst$-correspondence structure inherited from $\L(B,E)$.
By Lemma~\ref{lem:universal_characterisation_of_multipliers}, the Banach $A$-$B$-bimodule structure of $\M(E)$ uniquely extends the one from $E$, and the  inner product $\langle \cdot, \cdot \rangle_{\M(B)}$ in $\M(E)$ is determined by the one $\langle \cdot, \cdot \rangle_{B}$ in $E$ because for any approximate unit $\{e_i\}_{i \in I}$ in $B$, and any $\xi,\eta \in F$ we have 
\[
    \langle \xi, \eta\rangle_{\M(B)}=\text{s-}\!\lim_{i,j\in I} e_j\langle \xi, \eta\rangle_{\M(B)} e_i=\text{s-}\!\lim_{i,j\in I} \langle \xi e_j , \eta e_i \rangle_{B},
\] 
where $\text{s-}\!\lim$ denotes the strict limit. 
Let $F$ satisfy the conditions in the second part of the assertion. 
Then $F$ is an essential multiplier extension of $E$, and so the identity on $E$ uniquely extends to a bimodule map $u:F\to \M(E)$ by Lemma~\ref{lem:universal_characterisation_of_multipliers}.
It necessarily preserves the inner products as for an approximate unit $\{e_i\}_{i \in I}$ in $B$, and  $\xi,\eta \in F$, as above we have $\langle u\xi, u\eta\rangle_{\M(B)}=\text{s-}\!\lim_{i,j\in I} e_j\langle u(\xi), u(\eta)\rangle_{\M(B)} e_i=\text{s-}\!\lim_{i,j\in I} \langle u(\xi e_j) , u(\eta e_i) \rangle_{B}=\text{s-}\!\lim_{i,j\in I} \langle \xi e_j , \eta e_i \rangle_{B}=\langle \xi, \eta\rangle_{\M(B)}$. 
\end{proof}


\begin{rem}  
An $A$-$B$-\emph{Hilbert bimodule} is an $A$-$B$-$\cst$-correspondence $E$ which is also a left $A$-Hilbert module and the inner products satisfy ${_{A}\langle} \xi, \zeta \rangle \cdot \eta =  \xi \cdot \langle \zeta,  \eta \rangle_{B}$, $\xi, \zeta, \eta\in E$. 
Equivalently, $E$ is an $A$-$B$-$\cst$-correspondence such that the left action  $\varphi:A \to \LL(E)$ restricts to a $*$-isomorphism $I\cong \K(E)$ where $I$ is an ideal in $A$ (we then necessarily have $I=(\ker\varphi)^\bot$). 
If $E$ is an  $A$-$B$-Hilbert bimodule, then  $M(E)$ is  $\M(A)$-$\M(B)$-Hilbert bimodule that agrees with the multiplier Hilbert bimodule constructed in \cite{Schweizer},  and earlier for equivalence bimodules in \cite{ER}. 
In particular, Proposition~\ref{prop:universal_multiplier_description} together with Corollary~\ref{cor:universal_multipliers}, generalise and improve \cite[Proposition 3.6]{Schweizer}, \cite[Proposition 1.2]{ER}.
\end{rem}

\begin{ex}[Matrix $\cst$-correspondences]\label{ex:matrixcorr}
Consider a $\cst$-$A$-$B$-correspondence $E$ and $n \in \N$. 
Then, as described in \cite[Section 3]{Ble}, the space
\[
    \Mat_n(E):=\{[\xi_{ij}]_{i,j=1}^n: \xi_{ij} \in E ,\ i,j=1,\ldots,n\}.
\]
becomes a Hilbert $\Mat_n(B)$-\cst-module with the natural right matrix multiplication and the inner product of $\xi=[\xi_{ij}]_{i,j=1}^n$,  $\zeta=[\zeta_{ij}]_{i,j=1}^n \in \Mat_n(E)$ given by 
\[ 
    \langle \xi, \zeta\rangle_{M_n(B)}= \left[\sum_{k=1}^n \langle \xi_{ki}, \zeta_{kj} \rangle_{B} \right]_{i,j=1,\ldots,n}.
\]
If the left action of $A$ on $E$ is given by $\phi:A\to \L(E)$, then the left action of $\Mat_n(A)$ on $\Mat_n(E)$ is given by $\phi^{(n)}:\Mat_n(A) \to \Mat_n(\L(E)) \subseteq \L(\Mat_n(E))$, where the last inclusion is given by $T=[T_{ij}]_{i,j=1}^n \in \Mat_n(\L(E))$ and $\xi \in \Mat_n (E)$ is as above, then 
\[ 
    T \xi = \left[\sum_{k=1}^n  T_{ik} (\xi_{kj}) \right]_{i,j=1,\ldots,n} . 
\]
Furthermore, we have the expected identification $\M(\Mat_n(E)) \cong \Mat_n(\M(E))$, compatible with the standard isomorphisms $\M(\Mat_n(A)) \cong \Mat_n(\M(A))$, $\M(\Mat_n(B)) \cong \Mat_n(\M(B))$. 
This is easiest to verify via Corollary~\ref{cor:universal_multipliers}. 
Indeed, if $F$ is a multiplier extension of the $\Mat_n(A)$-$\Mat_n(B)$-bimodule $\Mat_n(E)$, the desired map $F\to \Mat_n(\M(E))$ can be constructed from entrywise embeddings $\psi_{ij}:E \to \Mat_n(E) \subseteq F$, which are compatible with the `diagonal' actions of $A$ and $B$ --- so that via the universal property of $\M(E)$ one obtains unique maps $\gamma_{ij}:F \to E$. 
Naturally combining these yields a bimodule map $\gamma:F \to \Mat_n(E)$ which is a unique extension of the identity. 
\end{ex}

Let $E$ and $F$ be  $\cst$-correspondences from $A$ to $B$ and from $B$ to $C$, respectively. 
The (internal) \emph{tensor product} $E\otimes_B F$ is a $\cst$-correspondence from $A$ to $C$ constructed as the completion of the algebraic tensor product of $E$ and $F$ balanced over $B$, with inner product determined by 
\[
    \langle \xi_1\otimes \zeta_1,   \xi_2\otimes \zeta_2\rangle_{C}=\langle \zeta_1,   \langle \xi_1, \xi_2\rangle_{B}\cdot  \zeta_2\rangle_{C},
\] 
$\xi_i\in E$, $\zeta_i\in F$, $i=1,2$. 
This readily implies that we have a natural $\cst$-correspondence isomorphism $\M(E)\otimes_{\M(B)} F\cong E\otimes_{B} F$ that sends $\xi\otimes b\zeta$ to $\xi b\otimes \zeta$ for $\xi \in\M(E)$, $\zeta \in F$ and $b\in B$. 
Similarly, we also have the isomorphism $E\otimes_{\M(B)} \M(F)\cong E\otimes_{B} F$. Therefore we may and we will assume the identifications
\[
    E\otimes_{B} F= \M(E)\otimes_{\M(B)} F = E\otimes_{\M(B)} \M(F) \subseteq   \M(E)\otimes_{\M(B)}\M(F).
\]
Then $\M(E)\otimes_{\M(B)}\M(F)$ is clearly a multiplier extension of $E\otimes_{B} F$. 

\begin{lem}\label{lem:multiplier_tensor_product}
For $\cst$-correspondences $E$ from $A$ to $B$ and $F$ from $B$ to $C$ we have an embedding of $\cst$-correspondences
\[
    \M(E)\otimes_{\M(B)}\M(F)\into \M(E\otimes_B F)
\]
which is given by the unique bimodule map that extends the identity on $E\otimes_{B} F$. 
More specifically, the formula  $(\xi\otimes \zeta)(c):=\xi\otimes \zeta(c)\in \M(E)\otimes_{\M(B)} F=E\otimes_B F$ for $c\in C, \xi \in \M(E)$ and $\zeta \in \M(F)$ defines an operator $\xi\otimes \zeta \in\L(C, E\otimes_B F)$ and this gives a map
\[
    \M(E)\times \M(F)\ni (\xi,\zeta)\longmapsto \xi\otimes \zeta \in \M(E\otimes_B F) \subseteq \L(C, E\otimes_B F)
\]
that induces the aforementioned embedding.
\end{lem}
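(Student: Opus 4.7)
The plan is to rely on Proposition~\ref{prop:universal_multiplier_description}: once $\M(E) \otimes_{\M(B)} \M(F)$ is identified as a $\cst$-correspondence multiplier extension of $E \otimes_B F$, the unique bimodule map to $\M(E \otimes_B F)$ that extends the identity on $E \otimes_B F$ is automatically isometric, hence an embedding of $\cst$-correspondences. By the uniqueness in Lemma~\ref{lem:universal_characterisation_of_multipliers}, this map must coincide with any explicit identity-extending bimodule map, which will be the one supplied by the stated formula. Thus the proof splits into two tasks: (i) verifying the multiplier-extension property of $\M(E) \otimes_{\M(B)} \M(F)$, and (ii) showing that $(\xi,\zeta)\mapsto \xi\otimes\zeta$ as defined in the statement lands in $\M(E\otimes_B F)\subseteq \L(C, E\otimes_B F)$ and extends the identity on $E\otimes_B F$.

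For (i), the equality $(\M(E)\otimes_{\M(B)}\M(F))\cdot C = E \otimes_B F$ is immediate from the identifications recorded just above the lemma together with $\M(F)\cdot C = F$. For the required condition $\langle \xi, a\eta\rangle_{\M(C)}\in C$ with $a\in A$, by continuity and linearity it suffices to handle pure tensors $\xi=\xi_1\otimes\xi_2$ and $\eta=\eta_1\otimes\eta_2$; the inner product then equals $\langle\xi_2,\langle\xi_1,a\eta_1\rangle_{\M(B)}\,\eta_2\rangle_{\M(C)}$, and one applies Cohen--Hewitt factorisation twice, using $A\cdot\M(E)=E$ and $B\cdot\M(F)=F$, to land inside $C$.

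For (ii), given $\xi\in\M(E)$ and $\zeta\in\M(F)\subseteq\L(C,F)$, the map $c\mapsto \xi\otimes\zeta(c)$ is linear of norm at most $\|\xi\|\|\zeta\|$ with values in $\M(E)\otimes_{\M(B)}F = E\otimes_B F$. Adjointability follows from the formula $(\xi\otimes\zeta)^*(\eta\otimes\phi)=\zeta^*(\langle\xi,\eta\rangle_{\M(B)}\phi)$, which defines a bounded operator $E\otimes_B F\to C$ because $\langle\xi,\eta\rangle_{\M(B)}\phi\in \M(B)F=F$. Bilinearity and $\M(B)$-balancing are routine, so the assignment descends to an $\M(A)$-$\M(C)$-bimodule map $\M(E)\otimes_{\M(B)}\M(F)\to \L(C, E\otimes_B F)$ that restricts to the identity on $E\otimes_B F$. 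The decisive step, which I regard as the main obstacle, is to verify that $\xi\otimes\zeta$ actually belongs to $\M_{\cst}(E\otimes_B F)=\M(E\otimes_B F)$, i.e.\ that $a(\xi\otimes\zeta)\in \K(C,E\otimes_B F)=E\otimes_B F$ for every $a\in A$. For this I write $a\xi = \xi'b$ via Cohen--Hewitt (available because $A\cdot\M(E)=E$) and compute
\[
    (a(\xi\otimes\zeta))(c) = (a\xi)\otimes\zeta(c) = \xi'\otimes (b\zeta)(c),
\]
where $b\zeta\in B\cdot\M(F)=F$; hence $a(\xi\otimes\zeta)$ coincides with the elementary tensor $\xi'\otimes (b\zeta)\in E\otimes_B F$ under the identification $E\otimes_B F=\K(C,E\otimes_B F)$. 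Combined with (i) and Proposition~\ref{prop:universal_multiplier_description}, this gives the desired isometric embedding.
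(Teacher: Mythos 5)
Your proposal is correct and follows essentially the same route as the paper: the first part via Proposition~\ref{prop:universal_multiplier_description} applied to $\M(E)\otimes_{\M(B)}\M(F)$ as a $\cst$-correspondence multiplier extension of $E\otimes_B F$, and the second part by checking adjointability of $\xi\otimes\zeta$ and that $a(\xi\otimes\zeta)$ lands in $E\otimes_B F$. The only (harmless) difference is that you spell out the multiplier-extension verification and use an explicit Cohen--Hewitt factorisation $a\xi=\xi'b$ where the paper instead invokes the identification $E\otimes_{\M(B)}\M(F)=E\otimes_B F$ directly.
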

\begin{proof}
The first part follows from Proposition~\ref{prop:universal_multiplier_description}. 
For the second part, note that the operator $\xi\otimes \zeta$, for $\xi\in \M(E)$, $\zeta \in \M(F)$ is adjointable, where $(\xi\otimes \zeta)^*(\xi_0\otimes \zeta_0)=\zeta^*(\langle\xi,\xi_0\rangle_{\M(B)} \zeta_0)$ for $\xi_0\in \M(E)$, $\zeta_0 \in \M(F)$. 
Hence  $\xi\otimes \zeta\in \L(C, E\otimes_B F)$. 
Clearly, for $b\in B$ we have $\xi b\otimes \zeta= \xi\otimes b \zeta\in E\otimes_B F$. 
In particular, for any $a\in A$ we have $a(\xi\otimes \zeta)=(a\xi\otimes \zeta)\in E\otimes_{\M(B)} \M(F)=E\otimes_B F$. 
Thus the map $\M(E)\times \M(F)\ni (\xi,\zeta)\longmapsto \xi\otimes \zeta \in \M(E\otimes_B F)$ is well defined. Since it is bilinear, bimodule and $B$-balanced, it descends to a bimodule map on the algebraic $B$-balanced tensor product of $\M(E)$ and $\M(F)$, which in turn extends to the embedding $\M(E)\otimes_{\M(B)}\M(F)\into \M(E\otimes_B F)$ by the first part of the assertion. 
\end{proof}

\begin{rem}
The above embedding is, in general, not surjective. 
For instance, let $A$ be a non-unital $\cst$-algebra, and let $E=A$ and $F=A$ be viewed as $\cst$-correspondences from $A$ to $\M(A)$ and from $\M(A)$ to $A$, respectively, so that $M(E)=M(F) = A$. 
Then $\M(E) \otimes_{\M(A)} \M(F) = E \otimes_{\M(A)} F = A \subsetneq \M(A) = \M(E\otimes_{\M(A)}F)$ as $\cst$-correspondences from $\M(A)$ to $\M(A)$.
\end{rem}

\section{Dilation/Decomposition theorem} 
\label{sec:decomp}

Let $A, B$ be $\cst$-algebras. 
In the sequel we will study maps $T:A\to B$ that are of the form $T(a)=\langle \xi, a \zeta \rangle_{\M(B)}= \xi^* a \zeta$ for $\xi, \zeta \in \M(E)\subseteq \L(B,E)$, with $E$ a $\cst$-correspondence from $A$ to $B$. 
To this end, we develop in this section the following common generalisation of Kasparov's Stinespring dilation theorem (called the KSGNS construction in \cite[Theorem 5.6]{Lance}) and  Murphy's Kolmogorov decomposition of positive kernels for Hilbert modules \cite[Theorem 2.3]{Mur}.

\begin{thm}\label{thm:Murphy_Stinespring}
Let $k:S \times S \to \BB(A,\L(E_B))$ be a map where $A$ and $B$ are $\cst$-algebras, $E_B$ is a right $B$-Hilbert module, and $S$ is a set. 
The following are equivalent:
\begin{enumerate}
    \item\label{enu:Murphy_Stinespring2} there exists an $A$-$B$-$\cst$-correspondence $E$  and a map $\zeta:S \to \L(E_B,E)$ such that 
    \[ 
        k(s,t)(a) =  \zeta(s)^* \varphi_E(a) \zeta(t),\qquad   s, t \in S,\  a \in A,
    \]
    where $\varphi_E:A\to \L(E)$ is the $^*$-homomorphism defining the left action of $A$ on $E$.
    \item\label{enu:Murphy_Stinespring1} $k$ is a \emph{positive-definite kernel} in the sense that for all $n \in \N$, $s_1,\ldots, s_n \in S$, $a_1, \ldots ,a_n\in A$ and $\xi_1, \ldots, \xi_n \in E_B$, we have
    \[
        \sum_{i,j=1}^n \langle \xi_i, k(s_i,s_j) (a_i^*a_j)\xi_j\rangle_B \geq 0,
    \]
    and in addition $k$ is \emph{strict} in the sense that for each $s\in S$ the map $k(s,s):A\to \L(E_B)$ is strict, i.e.\ for an approximate unit $\{e_i\}_{i \in I}$ in $A$, the net $\{k(s,s)(e_i)\}_{i \in I}$ converges strictly in $\L(E_B)$.
\end{enumerate}
If the above conditions hold, the $\cst$-correspondence $E$ in \ref{enu:Murphy_Stinespring2} may be chosen so that  $\{a\zeta(s)\xi: a\in A, s \in S, \xi \in E_B\}$ is linearly dense in $E$; we call such pairs $(\zeta,E)$ \emph{minimal}. 
Given another minimal pair $(\zeta',E')$ as above we have a uniquely determined unitary isomorphism of $\cst$-correspondences $U:E \to E'$ such that $U\zeta(s) = \zeta'(s)$, $s \in S$.
\end{thm}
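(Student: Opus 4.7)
The direction \ref{enu:Murphy_Stinespring2}$\Rightarrow$\ref{enu:Murphy_Stinespring1} is routine: expanding
\[
\sum_{i,j}\langle \xi_i, \zeta(s_i)^*\varphi_E(a_i^*a_j)\zeta(s_j)\xi_j\rangle_B = \Big\|\sum_i \varphi_E(a_i)\zeta(s_i)\xi_i\Big\|^2\geq 0
\]
gives positive-definiteness, while strictness of $k(s,s)=\zeta(s)^*\varphi_E(\cdot)\zeta(s)$ follows from nondegeneracy of $\varphi_E$: for any approximate unit $\{e_\lambda\}$ of $A$ the net $\varphi_E(e_\lambda)$ converges strictly to $1_{\L(E)}$, whence $\zeta(s)^*\varphi_E(e_\lambda)\zeta(s)\to \zeta(s)^*\zeta(s)$ strictly in $\L(E_B)$.

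For the substantive direction \ref{enu:Murphy_Stinespring1}$\Rightarrow$\ref{enu:Murphy_Stinespring2}, the plan is to run a GNS/Kolmogorov-type construction. I would form the algebraic tensor product $V_0:=A\otimes\C[S]\otimes E_B$ and equip it with the $B$-valued sesquilinear form $\langle a\otimes s\otimes \xi, a'\otimes s'\otimes \xi'\rangle := \langle \xi, k(s,s')(a^*a')\xi'\rangle_B$, which is positive semidefinite precisely by the positive-definiteness hypothesis. Quotienting $V_0$ by its null submodule and completing yields a Hilbert $B$-module $E$ on which $\varphi_E(a)[a'\otimes s\otimes \xi]:=[aa'\otimes s\otimes \xi]$ extends to a nondegenerate $^*$-homomorphism $A\to \L(E)$ with $\|\varphi_E(a)\|\leq\|a\|$. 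The norm bound uses the matrix inequality $[a_i^*(a^*a)a_j]\leq \|a\|^2[a_i^*a_j]$ in $\Mat_n(A)$ together with the fact that $[k(s_i,s_j)]:\Mat_n(A)\to\Mat_n(\L(E_B))$ is positive (the matrix form of kernel positivity, which in particular shows each diagonal $k(s,s)$ to be cp).

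The main technical step is defining $\zeta(s):E_B\to E$ as $\zeta(s)\eta:=\lim_\lambda [e_\lambda\otimes s\otimes \eta]$: existence of the limit reduces to the estimate $\|\langle \eta, k(s,s)((e_\lambda-e_\mu)^2)\eta\rangle_B\|\to 0$, and this is precisely where strictness enters --- being cp and strict, $k(s,s)$ extends uniquely to a strictly continuous cp map $\M(A)\to\L(E_B)$ (standard, see \cite{Lance}), so the strict convergence $(e_\lambda-e_\mu)^2\to 0$ in $\M(A)$ forces $k(s,s)((e_\lambda-e_\mu)^2)\to 0$ strictly in $\L(E_B)$, delivering the required bound. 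Boundedness of the off-diagonal $k(s,t)$, needed for adjointability of $\zeta(s)$, follows from the Cauchy--Schwarz inequality for positive-definite kernels derived from the $n=2$ case of positivity; one then verifies $\zeta(s)^*[a\otimes t\otimes \xi]=k(s,t)(a)\xi$ and deduces $\zeta(s)^*\varphi_E(a)\zeta(t)=k(s,t)(a)$ from the identity $\varphi_E(a)\zeta(t)\xi=[a\otimes t\otimes \xi]$. Minimality is tautological, as $\{\varphi_E(a)\zeta(s)\xi\}=\{[a\otimes s\otimes \xi]\}$ spans a dense subset of $E$ by construction. For uniqueness, given another minimal pair $(\zeta',E')$, the assignment $U(\varphi_E(a)\zeta(s)\xi):=\varphi_{E'}(a)\zeta'(s)\xi$ preserves the $B$-valued inner product (both sides yield the same expression in $k$), and so extends by density to a unitary isomorphism of $\cst$-correspondences with $U\zeta(s)=\zeta'(s)$, uniquely determined by this intertwining property.
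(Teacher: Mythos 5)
Your proof is correct, but it takes a genuinely different route from the paper's. The paper does not build the module from scratch: it forms the auxiliary kernel $\tilde k((s,a),(t,b)):=k(s,t)(a^*b)$ on $(S\times A)\times(S\times A)$ and invokes Murphy's Kolmogorov decomposition \cite[Theorem~2.3]{Mur} as a black box to obtain $E$ and the maps $\tilde\zeta(s,a)$; the left action is then produced not by a direct norm estimate but by the Russo--Dye theorem --- each unitary $u\in\M(A)$ turns $\tilde\zeta_u(s,a):=\tilde\zeta(s,ua)$ into another minimal Kolmogorov dilation of $\tilde k$, and the uniqueness clause of Murphy's theorem supplies the unitary $\varphi_E(u)\in\L(E)$. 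You instead run the full GNS construction on $A\otimes\C[S]\otimes E_B$ and establish the bound $\|\varphi_E(a)\|\le\|a\|$ by hand via the factorisation $\|a\|^2-a^*a=c^*c$ and the observation that entrywise positivity on matrices $[a_i^*a_j]$ propagates to all positive matrices in $\Mat_n(A)$. Both proofs hinge on the same crucial use of strictness, namely showing that $s\mapsto\lim_\lambda[e_\lambda\otimes s\otimes\eta]$ exists: the paper gets the Cauchy estimate from the elementary inequality $(e_i-e_j)^2\le e_i-e_j$ for $i\ge j$ together with strict Cauchyness of $k(s,s)(e_i)$, whereas you appeal to the strict extension of strict cp maps to $\M(A)$ from \cite{Lance} applied to $(e_\lambda-e_\mu)^2\to 0$; both are valid. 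What the paper's route buys is brevity and an explicit reduction exhibiting the theorem as a common generalisation of Murphy's result and the KSGNS construction (which is how the paper frames it); what your route buys is a self-contained argument that does not rely on the uniqueness statement of Murphy's theorem to manufacture the left action, at the cost of redoing the Kolmogorov construction. The minimality and uniqueness arguments at the end are essentially identical in both versions.
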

\begin{proof} 
Assume \ref{enu:Murphy_Stinespring2}. 
Using the notation in \ref{enu:Murphy_Stinespring1} we have
\begin{align*}
    \sum_{i,j=1}^n \langle \xi_i, k(s_i,s_j) (a_i^*a_j)\xi_j\rangle_B
        &=\langle  \sum_{i=1}^n a_i\zeta(s_i)\xi_i, \sum_{i=1}^n  a_i\zeta(s_i) \xi_i\rangle_B \geq 0,
\end{align*}
so $k$ is positive-definite. 
Also for any $\xi\in E_B$, $s \in S$ and an approximate unit $(e_i)_{i \in I}$ of $A$, the net $k(s,s)(e_i)\xi=\zeta(s)^* e_i\zeta(s)\xi$ converges to $\zeta(s)^* \zeta(s)\xi$. 
Hence $k$ is strict.

Now assume \ref{enu:Murphy_Stinespring1}. 
Begin by defining a new positive-definite kernel, now in the usual sense of \cite{Mur}, $\tilde{k}:(S \times A) \times (S \times A) \to \L(E_B)$, as follows:
\[ 
    \tilde{k} ((s,a) , (t,b)):= k(s,t)(a^*b), \qquad s, t \in S,\ a,b \in A.
\]	
By \cite[Theorem 2.3]{Mur} there exists a Hilbert $B$-module $E$ and a map $\tilde{\zeta}: S\times A\to \L(E_B,E)$ (with  $\|\tilde{\zeta}(s,a)\|\leq \|k(s,s)\|^{1/2}\|a\|$ for all $s \in S, a \in A$) such that for each $s,t\in S, a,b \in A$ we have  
\begin{equation}\label{eq:Murphy_given}
    k(s,t)(a^*b)=\tilde{k} ((s,a),(t,b)) =\tilde{\zeta}(s,a)^* \tilde{\zeta}(t,b). 
\end{equation}
Moreover, we may and we do assume that $\tilde{\zeta}$ is minimal, i.e.\ the set $\bigcup_{s\in S, a \in A}\tilde{\zeta}(s,a)E_B$ is linearly dense in $E$.
We claim that there is a natural $*$-homomorphism $\varphi_E:\M(A)\to \L(E)$ 
determined by the formula 
\[
    \varphi_E(u)\tilde{\zeta}(s,a)\xi :=\tilde{\zeta}(s,ua)\xi, \qquad   u \in \M(A),\  s\in S,\ a\in A,\ \xi \in E_B .
\]
To see that $\varphi_E(u)$ exists, by linearity and the Russo--Dye theorem (\cite{RussoDye}) it suffices to consider the case when $u\in \M(A)$ is unitary. 
Let then $u \in \mathcal{UM}(A)$. 
It is then easy to see that the map $\tilde\zeta_u: S \times A\to \L(E_B,E) $ given by 
\[
    \tilde\zeta_u (s,a) = \tilde\zeta (s, ua), \qquad s \in S,\ a \in A,
\]
yields another minimal Kolmogorov dilation of the kernel $\tilde k$. 
Hence, by \cite[Theorem 2.3]{Mur}, there exists a unitary $\varphi_E(u) \in \L(E)$ such that $\varphi_E(u)\tilde\zeta (s, a) = \tilde\zeta (s, ua)$ for all $s \in S$ and  $a \in A$.
It is then routine to check that the prescription $u \mapsto \varphi_E(u)$  extends to a unital $^*$-homomorphism from $\M(A)$ to $\L(E)$.
Restriction of $\varphi_E$ to $A$  gives $E$ the  structure of a $\cst$-correspondence from $A$ to $B$. 
Indeed, as $k$ is bounded-operator valued, we have for any   approximate unit $\{e_i\}_{i \in I}$ in $A$, $s, t \in S$ and $a,b \in A$ that $(k(s,t)(be_i a))_{i \in I}$ converges (pointwise) to $k(s,t) (ba)$. 
This it suffices to verify that $\varphi_E|_A:A \to \L(E)$ is nondegenerate.

Fix an approximate unit $\{e_i\}_{i \in I}$ in $A$ and $s\in S$. 
For any $\xi\in E_B$ and $i, j \in I, i\geq j$, using \eqref{eq:Murphy_given}, we get (noting that $k(s,s)$ is positive)
\[
    \|\tilde{\zeta}(s,e_i)\xi- \tilde{\zeta}(s,e_j)\xi\|^2=\|\langle \xi, k(s,s)\big((e_i-e_j)^2\big)\xi\rangle_B\|\leq \|\langle \xi, k(s,s)(e_i-e_j)\xi\rangle_B\|.
\]
Since  $\{k(s,s)(e_i)\}_{i \in I}$ is strictly Cauchy it follows that $\{\tilde{\zeta}(s,e_i)\}_{i \in I}$ is strictly Cauchy and hence it is convergent to an operator  $\zeta(s)\in  \L(E_B,E)$. 
In this way we get a map $\zeta:S \to \L(E_B,E)$, and for any $a\in A$ 
\[
    k(s,t)(a)=\lim_{i,j\in I}k(s,t)(e_iae_j)=\lim_{i,j\in I}\tilde{\zeta}(s,e_i)^*\varphi_E(a) \tilde{\zeta}(t,e_j)=\zeta(s)^* \varphi_E(a) \zeta(t).
\]
This proves \ref{enu:Murphy_Stinespring2}. 
Once again using the convergence of $\{k(s,t)(ae_i b)\}_{i \in I}$ to $k(s,t)(ab)$ we deduce that $a\zeta(s)\xi=\tilde{\zeta}(s,a)\xi$ and hence the pair $(\zeta,E)$ is minimal in the sense described in the last part of the assertion.

Suppose now that we are given another minimal pair $(\zeta',E')$ that dilates $k$ as in \ref{enu:Murphy_Stinespring2}. 
Putting $\tilde{\zeta'}(s,a):= a \cdot\zeta'(s)$ for all $s \in S$, $a \in A$,
we get a map $\tilde{\zeta'}: S\times A \to E'$ which is a minimal Kolmogorov decomposition for the kernel $\tilde{k}$. 
So once again we have a unique unitary $U\in \L(E;E')$ such that $U(\tilde{\zeta}(s,a)) = \tilde{\zeta'}(s,a)$ for all $s \in S$, $a \in A$. 
It is routine to check that such a $U$ necessarily intertwines the respective left actions of $A$ on $E$ and $E'$.
\end{proof}

\begin{rem}
When $S$ is a singleton, Theorem~\ref{thm:Murphy_Stinespring} reduces to the KSGNS dilation of strict completely positive maps $\varrho:A\to \L(E_B)$, \cite[Theorem 5.6]{Lance}. 
When $A=\C$ is trivial, then under the identification $\BB(\C,\L(E_B))\cong \L(E_B)$, via the map $\BB(\C,\L(E_B))\ni T\mapsto T(1)\in \L(E_B)$, Theorem~\ref{thm:Murphy_Stinespring} reduces to Murphy's Kolmogorov decomposition, \cite[Theorem 2.3]{Mur} (which was used in the proof above).
\end{rem}

\begin{rem}\label{rem:compact_valued_kernels}
If $k(s,s)\in \BB(A,\K(E_B))$ for all $s\in S$, then for any (minimal) decomposition $\zeta:S \to \L(E_B,E)$ as in Theorem~\ref{thm:Murphy_Stinespring} and any $a\in A$ we have $\varphi_E(a)\zeta:S \to \K(E_B,E)$. 
This follows for instance from \cite[Proposition 1.3]{katsura2} applied to  each $\varphi_E(a)\zeta(s)$.
In particular, if the kernel in Murphy's theorem takes values in compact operators $\K(E_B)$, then its Kolmogorov decomposition is also compact valued. 
\end{rem}

We will need yet another version of dilation that follows from  Theorem~\ref{thm:Murphy_Stinespring}. 

\begin{cor}\label{cor:Murphy_Stinespring}
Let $k:S \times S \to \BB(A,B)$ be a map where $A$ and $B$ are $\cst$-algebras and $S$ is a set.
The following are equivalent:
\begin{enumerate}
    \item\label{enu:cor_Murphy_Stinespring2} there exists a $\cst$-correspondence $E$ from $A$ to $B$ and a map $\zeta:S \to \M(E)$ such that 
    \[
        k(s,t)(a) = \langle   \zeta(s), a\cdot \zeta(t) \rangle_{\M(B)}, \qquad s, t \in S,\ a \in A;
    \]

    \item\label{enu:cor_Murphy_Stinespring1} $k$ is a \emph{positive-definite kernel} in the sense that for all $n \in \N$, $s_1,\ldots, s_n \in S$ and $a_1, \ldots, a_n \in A$ the matrix
    \[
        [k(s_i,s_j) ( a_i^*a_j)]_{i,j=1}^n \in \Mat_n(B)
    \]
    is positive, 
    and $k$ is \emph{strict} in the sense that for each $s\in S$ the map $k(s,s):A\to B$ is strict, i.e.\ for an approximate unit $\{e_i\}_{i \in I}$ in $A$, the net $\{k(s,s)(e_i)\}_{i \in I}$ converges strictly in $\M(B)$.
\end{enumerate}
Moreover, if the conditions above hold, then the $\cst$-correspondence $E$ in \ref{enu:Murphy_Stinespring2} may be chosen so that the set $\{a\zeta(s)b:s \in S, a\in A, b\in B\}$ is dense in $E$; we call such pairs $(\zeta,E)$ \emph{minimal}. 
Given another minimal pair $(\zeta',E')$ as above, we have a uniquely determined unitary isomorphism of $\cst$-correspondences $U:E \to E'$ such that $U\zeta(s) = \zeta'(s)$, $s \in S$.
\end{cor}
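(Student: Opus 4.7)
The plan is to derive this corollary by specialising Theorem~\ref{thm:Murphy_Stinespring} to the case $E_B=B$, viewed as the trivial right Hilbert $B$-module over itself, for which $\L(E_B)=\M(B)$ and $\K(E_B)=B$. Via post-composition with the inclusion $B\hookrightarrow\M(B)$, a map $k:S\times S\to\BB(A,B)$ is then automatically a map into $\BB(A,\L(E_B))$, and the two hypotheses match on the nose: with $E_B=B$, the positivity inequality of Theorem~\ref{thm:Murphy_Stinespring}\ref{enu:Murphy_Stinespring1} becomes $\sum_{i,j}b_i^*\,k(s_i,s_j)(a_i^*a_j)\,b_j\geq 0$ for all $b_i\in B$, which is the standard characterisation of positivity of $[k(s_i,s_j)(a_i^*a_j)]$ in $\Mat_n(B)$; the two strictness conditions are verbatim the same.

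For the direction \ref{enu:cor_Murphy_Stinespring2}$\Rightarrow$\ref{enu:cor_Murphy_Stinespring1}, I would compute directly using the $\M(B)$-valued inner product on $\M(E)\subseteq\L(B,E)$: the matrix $[k(s_i,s_j)(a_i^*a_j)]$ is the Gram matrix of the vectors $a_i\zeta(s_i)\in\L(B,E)$ and hence positive, while strictness of $k(s,s)$ follows from nondegeneracy of the left $A$-action on $E$ applied to $k(s,s)(e_i)b=\langle\zeta(s),e_i\zeta(s)b\rangle_{\M(B)}$, together with the symmetric computation on the right.

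For the converse \ref{enu:cor_Murphy_Stinespring1}$\Rightarrow$\ref{enu:cor_Murphy_Stinespring2}, I would apply Theorem~\ref{thm:Murphy_Stinespring} to obtain an $A$-$B$-$\cst$-correspondence $E$ and a map $\zeta:S\to\L(B,E)$ with $k(s,t)(a)=\zeta(s)^*\varphi_E(a)\zeta(t)=\langle\zeta(s),a\zeta(t)\rangle_{\M(B)}$, the last equality being the tautological inner-product structure on $\L(B,E)$. The main obstacle, and essentially the only non-routine point, is to upgrade $\zeta(s)\in\L(B,E)$ to $\zeta(s)\in\M(E)$. Using the identification $\M(E)=\M_{\cst}(E)=\{T\in\L(B,E):aT\in\K(B,E)\text{ for all }a\in A\}$ recorded just before Proposition~\ref{prop:universal_multiplier_description}, this reduces to showing $a\zeta(s)\in E$ for every $a\in A$; but the hypothesis that $k$ takes values in $B=\K(E_B)$ is exactly the compact-valued-kernel assumption of Remark~\ref{rem:compact_valued_kernels}, which delivers $\varphi_E(a)\zeta(s)\in\K(B,E)=E$ as needed.

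Finally, the minimality and uniqueness statements transfer mechanically: with $E_B=B$, the dense set $\{a\zeta(s)\xi:a\in A,\ s\in S,\ \xi\in E_B\}$ provided by Theorem~\ref{thm:Murphy_Stinespring} is literally $\{a\zeta(s)b:a\in A,\ s\in S,\ b\in B\}$ under the identification $E=\K(B,E)$, and a second minimal pair $(\zeta',E')$ yields a minimal decomposition of the same auxiliary kernel $\tilde k$, so the uniqueness clause of Theorem~\ref{thm:Murphy_Stinespring} produces the unique unitary $\cst$-correspondence isomorphism $U:E\to E'$ with $U\zeta(s)=\zeta'(s)$.
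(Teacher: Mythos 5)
Your proposal is correct and follows essentially the same route as the paper: specialise Theorem~\ref{thm:Murphy_Stinespring} to $E_B=B$ (so $\L(E_B)=\M(B)$, $\K(E_B)=B$) and use Remark~\ref{rem:compact_valued_kernels} together with the identification $\M(E)=\M_{\cst}(E)$ to land $\zeta(s)$ in $\M(E)$. The paper's own proof is just a terser version of exactly this argument, and your treatment of minimality and uniqueness transfers from the theorem in the same way.
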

\begin{proof} 
Let $E_B:=B$ be the trivial Hilbert module over $B$, so that $B\cong \K(E_B)\subseteq \L(E_B)$. 
Then clearly \ref{enu:Murphy_Stinespring1} in Theorem~\ref{thm:Murphy_Stinespring} and \ref{enu:cor_Murphy_Stinespring1} in Corollary~\ref{cor:Murphy_Stinespring} are equivalent. 
Also by Remark~\ref{rem:compact_valued_kernels}, one readily sees that  \ref{enu:Murphy_Stinespring2} in Theorem~\ref{thm:Murphy_Stinespring} is equivalent to \ref{enu:cor_Murphy_Stinespring2} in Corollary~\ref{cor:Murphy_Stinespring}. 
\end{proof}


%
%
\section{Bundles of C*-correspondences} 
\label{Sec:bundles}

Throughout this paper $X$ will stand for a locally compact Hausdorff space.
In this section we introduce bundles of $\cst$-correspondences between two $\cst$-bundles over $X$ and present several relevant constructions.

A \emph{Banach bundle} $\EE$ over $X$ is a topological space with an open continuous surjection $p:\EE\to X$ such that each fibre $E_x=p^{-1}(x)$ is a Banach space whose topology agrees with that of $\EE$, the linear operations are continuous, and the norm $\EE\ni a\mapsto \|a\| \in [0,\infty)$ is \emph{upper semicontinuous} and nondegenerate, see, for instance, \cite[Definition 2.1]{BussExel} for a detailed definition. 
If the norm is continuous then we say $\EE$ is a \emph{continuous Banach bundle}. 
We will often write $\EE=\{E_x\}_{x\in X}$ for a Banach bundle. 
We denote by $\contc(\EE)$,  $\contz(\EE)$, $\contb(\EE)$  the spaces of continuous sections that are compactly supported,  vanishing at infinity, and bounded, respectively. 
By \emph{Fell's reconstruction theorem}, any of these spaces is enough to reconstruct the topology of $\EE$. 
More specifically (see \cite[Theorem II.13.18]{Fell_Doran} or \cite[Proposition 2.4]{BussExel}), for any family of Banach spaces $\EE=\{E_x\}_{x\in X}$ and any linear space $\Gamma$ of sections of $\EE$ such that $\overline{\{\xi (x): \xi \in \Gamma\}}=E_x$ for each $x\in X$ and $X\ni x\mapsto \|\xi(x)\|\in [0,\infty)$ is upper semicontinuous for each $\xi\in \Gamma$, there exists a unique topology on $\EE$ turning it into a Banach bundle with $\Gamma\subseteq C(\EE)$. 
A net $\{f_i\}_{i \in I}$ of elements of $ \EE$ converges in this topology to $f\in \EE$ if and only if $\lim_{i \in I}p(f_i)= p(f)$ and for every $\varepsilon>0$ there is $\xi\in \Gamma$  such that $\|f-\xi(p(f))\| < \varepsilon$ and $\|f_i-\xi(p(f_i))\|<\varepsilon$ for large enough $i \in I$.

It is well known that (upper semicontinuous) bundles of $\cst$-algebras $\A=\{A_x\}_{x\in X}$ are equivalent to $\contz(X)$-$\cst$-algebras, see, for instance, \cite[Appendix C]{WLbook}. 
More specifically, a $\contz(X)$-$\cst$-algebra is a $\cst$-algebra $A$ equipped with a nondegenerate homomorphism $\contz(X)\to Z(\M(A))$ turning $A$ into a $\contz(X)$-module. 
Given a bundle of $\cst$-algebras $\A$ we write $A=\contz(\A)$ for its $\cst$-algebra of $\contz$-sections, equipped with the $\contz(X)$-module structure given by
\[
    (fa) (x)= f(x)a(x), \qquad f\in \contz(X) , a\in \contz(\A).
\]
We now briefly explain how this equivalence extends to $\cst$-correspondences. 
We begin by introducing a bundle version of Le Gall's notion of a $\contz(X)$-representation \cite[Definition 4.1]{LeGall}, which was renamed a \emph{$\contz(X)$-$\cst$-correspondence} by Deaconu in \cite[Definition 5.8]{Deaconu}.

\begin{defn}\label{def:Correspondence_bundle}
A \emph{$\cst$-correspondence bundle} from a $\cst$-bundle $\A=\{A_x\}_{x\in X}$ to a  $\cst$-bundle $\B=\{B_x\}_{x\in X}$ is a Banach bundle $\EE=\{E_x\}_{x\in X}$, where each $E_x$ is a $\cst$-correspondence from $A_x$ to $B_x$, and the sections
\[
    X\ni x \mapsto  \langle \zeta(x),\xi(x)\rangle_{E_x}\in  \B, \qquad X\ni x \mapsto  a(x)\xi(x)\in  \EE, \qquad X\ni x \mapsto  \xi(x)b(x)\in  \EE
\]
are continuous for every $\zeta, \xi \in \contc(\EE)$, $a\in  \contc(\A)$, $b\in \contc(\B)$. 
\end{defn}

\begin{prop}\label{prop:correspondence_bundles_vs_C(X)_correspondence} 
For any $\cst$-correspondence bundle $\EE=\{E_x\}_{x\in X}$ from  $\A = \{ A_x \}_{x\in X}$ to $\B=\{B_x\}_{x\in X}$ the space $E := \contz(\EE)$ is a $\cst$-correspondence from $A:=\contz(\A)$ to $B:=\contz(\B)$ with the additional property that 
\begin{equation}\label{eq:Deaconu_homogenity_condition}
    (f a)\cdot   \xi\cdot  b=  a \cdot \xi\cdot  (f b)\quad  \text{for all $f\in \contz(X),\ a\in A,\ b\in B,\ \xi \in E$,}
\end{equation}
i.e.\ $E$ is a $\contz(X)$-$\cst$-correspondence from $A$ to $B$ in the sense of \cite{Deaconu}. 
Moreover, every $\contz(X)$-$\cst$-correspondence from $A$ to $B$ is of the above form for some $\cst$-correspondence bundle $\EE=\{E_x\}_{x\in X}$.
\end{prop}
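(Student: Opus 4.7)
The plan is to extend the standard equivalence between $\cst$-bundles and $\contz(X)$-$\cst$-algebras to the setting of $\cst$-correspondences, following the strategy used in \cite{LeGall,Deaconu} for the $\contz(X)$-$\cst$-correspondence side.

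For the forward direction, given a correspondence bundle $\EE$, I equip $E:=\contz(\EE)$ with pointwise defined left and right actions and $B$-valued inner product. The continuity requirements in Definition~\ref{def:Correspondence_bundle} ensure these operations yield continuous sections when applied to $\contc$-sections, and density extends them to $\contz$-sections. Vanishing at infinity of $x\mapsto\langle\zeta(x),\xi(x)\rangle_{E_x}$ follows from the fibrewise Cauchy--Schwarz inequality combined with upper semicontinuity of the norm on $\EE$. Positivity, the $\cst$-module identity, and the bimodule compatibility relations are all checked fibrewise. The bilinearity condition \eqref{eq:Deaconu_homogenity_condition} is immediate because for each $x$ the function $f$ acts as the scalar $f(x)$. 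Nondegeneracy $AE=EB=E$ is the only substantive point: given $\xi\in\contc(\EE)$ supported on a compact $K$ and $\varepsilon>0$, I combine fibrewise approximate units in $\A$ (resp.\ $\B$) with a partition of unity subordinate to a suitable finite cover of $K$ on which $\xi$ is well-approximated by translates of fibre elements, yielding $a\in A$ (resp.\ $b\in B$) with $\|a\xi-\xi\|<\varepsilon$ (resp.\ $\|\xi b-\xi\|<\varepsilon$).

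For the backward direction, given a $\contz(X)$-$\cst$-correspondence $E$ from $A$ to $B$, I set $I_x:=\{f\in\contz(X):f(x)=0\}$ and $E_x:=E/\overline{I_x\cdot E}$, with quotient map $\ev_x\colon E\to E_x$. Condition \eqref{eq:Deaconu_homogenity_condition} implies $\overline{I_x\cdot E}=\overline{E\cdot I_x}$, so $E_x$ inherits commuting left $A_x$- and right $B_x$-actions. The $B$-valued inner product descends to a $B_x$-valued pairing on $E_x$ because $\langle\xi,f\zeta\rangle_B=f\langle\xi,\zeta\rangle_B\in I_x\cdot B$ for $f\in I_x$. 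Positivity and the norm identity $\|\ev_x(\xi)\|^2=\|\ev_x(\langle\xi,\xi\rangle_B)\|$ are then deduced from the corresponding property for the $\contz(X)$-$\cst$-algebra $B$ combined with the $C^*$-module identity on $E$; this makes $E_x$ a genuine $A_x$-$B_x$-$\cst$-correspondence. Fell's reconstruction theorem, recalled just before Definition~\ref{def:Correspondence_bundle}, then equips $\bigsqcup_x E_x$ with a Banach bundle topology for which the family $\Gamma:=\{x\mapsto\ev_x(\xi):\xi\in E\}$ consists of continuous sections; upper semicontinuity of $x\mapsto\|\ev_x(\xi)\|$ is the standard property of $\contz(X)$-modules. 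Continuity of the algebraic operations required by Definition~\ref{def:Correspondence_bundle} is inherited directly from the corresponding continuity on $E$, $A$, $B$, while the identification $\contz(\EE)=E$ follows by the same Cohen factorisation argument used in the algebra case (see \cite[Appendix C]{WLbook}).

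The hard part is verifying that the fibre $E_x$ genuinely inherits a $\cst$-correspondence structure with the expected norm formula $\|\ev_x(\xi)\|^2=\|\ev_x(\langle\xi,\xi\rangle_B)\|$; equivalently, that $\overline{I_x\cdot E}$ coincides with $\{\xi\in E:\langle\xi,\xi\rangle_B\in\overline{I_x\cdot B}\}$. All subsequent claims, including the fact that the construction is inverse to the $\contz$-section functor of the first half, follow routinely from the corresponding statements for $\contz(X)$-$\cst$-algebras applied to $A$ and $B$.
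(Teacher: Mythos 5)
Your proposal follows essentially the same route as the paper: the forward direction is a fibrewise verification (the paper dismisses it as straightforward), and the backward direction passes to quotient fibres over $I_x=\{f\in\contz(X):f(x)=0\}$, invokes Fell's reconstruction theorem, and closes with the density argument adapted from \cite[Proposition C.24]{WLbook}. Your quotient $E/\overline{I_x\cdot E}$ agrees with the paper's $E/(EI_xB)$ precisely because of \eqref{eq:Deaconu_homogenity_condition} together with nondegeneracy, as you note.

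The one place your write-up is thinner than the paper is exactly the step you yourself flag as ``the hard part'': that $\overline{E\cdot I_xB}=\{\xi\in E:\langle\xi,\xi\rangle_B\in \overline{I_xB}\}$ and that the quotient norm on $E_x$ is computed by $\|\xi(x)\|^2=\|\langle\xi,\xi\rangle_B(x)\|$, so that $E_x$ is a genuine Hilbert $B_x$-module. You assert this follows from the $\cst$-module identity on $E$ but give no argument. The paper does not prove it from scratch either; it identifies $E_x$ canonically with the interior tensor product $E\otimes_B B_x$ and cites \cite[Lemma 2.3]{FMR}, for which the kernel of $E\to E\otimes_B B/J$ is known to be $EJ$ and the induced norm is the Hilbert-module norm. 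You should either reproduce that standard tensor-product argument or cite it explicitly; once that is in place, the upper semicontinuity of $x\mapsto\|\xi(x)\|$ that you need for Fell's reconstruction theorem follows from the corresponding property of the $\contz(X)$-algebra $B$, exactly as in the paper. Everything else in your proposal is fine.
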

\begin{proof} 
The first part of the assertion is straightforward. 
Let $E$ be any $\cst$-correspondence from $A$ to $B$ satisfying \eqref{eq:Deaconu_homogenity_condition}.
Recall that the fibres of the corresponding bundles $\A$ and $\B$ are given by $A_x:=A/I_x A$ and $B_x:=B/I_x B$ where $I_x:=\{f\in \contz(X): f(x)=0\}$, $x\in X$.
By \eqref{eq:Deaconu_homogenity_condition} for each $x\in X$, we have $(I_{x}A) E\subseteq E(I_{x}B)$. 
Thus the quotient $E_{x}:=E/(E I_x B)$ is naturally a $\cst$-correspondence from $A_x=A/I_x$ to $B_x=B/I_{x}$, cf.\ \cite[Lemma 2.3]{FMR} (note that $E_{x}$ is canonically isomorphic to the $A_x$-$B_x$-$\cst$-correspondence $E\otimes_B B_x$ considered in \cite{LeGall, Deaconu}). 
Denoting by $\xi(x)$ the image of $\xi$ in the quotient $E_{x}=E/(E I_x B)$, we get $\|\xi(x)\|^2=\|\langle \xi (x), \xi (x) \rangle_{B_x}\|=\|\langle \xi , \xi\rangle  (x) \|$, where $\langle \xi , \xi\rangle$ is in the $\contz(X)$-algebra $B$. 
Hence the map $X\ni x\mapsto \|\xi(x)\|$ is upper semicontinuous, vanishes at infinity and its maximum is $\|\xi\|$.
Thus by Fell's reconstruction theorem, there is a unique topology on  $\EE:=\{E_x\}_{x\in X}$ such that sections $X \ni x \mapsto \xi(x)\in E_x$ for $\xi\in E$ are continuous. 
The proof of \cite[Proposition C.24]{WLbook} can be readily adapted to show that the image of $E$ in $\contz(\EE)$ is dense. 
Since the embedding $E\hookrightarrow \contz(\EE)$ is isometric, and $E$ is complete, we conclude it must be an isomorphism.
\end{proof}

\begin{rem}
For any $\cst$-correspondence bundle $\EE=\{E_x\}_{x\in X}$ from  $\A=\{A_x\}_{x\in X}$ to $\B=\{B_x\}_{x\in X}$ the space $\contb(\EE)$ is naturally a $\cst$-correspondence from $\contb(\A)$ to $\contb(\B)$, and so $\contb(\EE)$ is also a $\contb(X)$-$\cst$-correspondence. 
Moreover, $\contz(\EE)$ can also be treated as a $\cst$-correspondence from $\contb(\A)$ to $\contb(\B)$, a $\cst$-subcorrespondence of $\contb(\EE)$.
\end{rem}

\begin{ex}
Every $\cst$-bundle $\A=\{A_x\}_{x\in X}$ can be viewed as a (trivial) $\cst$-correspondence bundle over $\A$, by viewing each fibre $A_x$ as the trivial $\cst$-correspondence. 
Proposition~\ref{prop:correspondence_bundles_vs_C(X)_correspondence} applied to such  $\cst$-correspondence bundles recovers the correspondence between $\contz(X)$-C$^*$-algebras and $\cst$-bundles, extending the arguments used to establish the latter fact.
\end{ex}

\begin{ex}
If $\A=\{\C\}_{x\in X}$ is the trivial one-dimensional $\cst$-bundle, so that $\contz(\A)=\contz(X)$, then a $\cst$-correspondence bundle from $\A$ to a $\cst$-bundle $\B=\{B_x\}_{x\in X}$ (with the associated $\cst$-algebra $B=\contz(\B)$), is simply a \emph{Hilbert module bundle} over $\B=\{B_x\}_{x\in X}$, 
see \cite[1.7]{Kumjian}, because we may ignore the left action of $\A$ (it carries no new information). 
In view of Proposition~\ref{prop:correspondence_bundles_vs_C(X)_correspondence} such Hilbert module bundles correspond to $\contz(X)$-Hilbert modules, 
where by a \emph{$\contz(X)$-Hilbert module}  over a $\contz(X)$-$\Cst$-algebra $B$ we mean  a right Hilbert module $E$ over $B$ which is also 
 a $\contz(X)$-module such that 
 $ f\cdot \xi \cdot b = \xi\cdot  (f b)$, $f\in \contz(X)$,  $b\in B$, $\xi \in E$. 
If, in addition, $\B=\{\C\}_{x\in X}$ is a trivial bundle, then all Hilbert modules over $B=\contz(X)$ are $\contz(X)$-Hilbert modules and they correspond to \emph{continuous bundles of Hilbert spaces} $\H=\{H_x\}_{x\in X}$, see \cite[Definition~II.13.5]{Fell_Doran} or \cite{RenaultFourier}. 
\end{ex}

\begin{ex} \label{ex:matrixbundles}
Let $\A=\{A_x\}_{x \in X}$ be a \cst-bundle and let $n \in \N$. 
The $\contz(X)$-$\cst$-algebra picture yields immediately that $\Mat_n(\A)= \{\Mat_n(A_x)\}_{x \in X}$ is again a \cst-bundle; and if $A$ is the $\contz(X)$-$\cst$-algebra associated to $\A$, then $\Mat_n(A)$ is the one associated to $\Mat_n(\A)$. 
Next, if $\EE=\{E_x\}_{x \in X}$ is a $\cst$-correspondence bundle from $\A$ to $\B$ as in Definition~\ref{def:Correspondence_bundle}, then $\Mat_n(\EE)=\{\Mat_n(E_x)\}_{x \in X}$, with $\Mat_n(E_x)$ constructed as in Example~\ref{ex:matrixcorr} and the natural topology on $\Mat_n(\EE)$, becomes a $\cst$-correspondence bundle from $\Mat_n(\A)$ to $\Mat_n(\B)$.
\end{ex}

\begin{defn}
Let $\EE=\{E_x\}_{x\in X}$ be a $\cst$-correspondence bundle from  $\A=\{A_x\}_{x\in X}$ to $\B=\{B_x\}_{x\in X}$. We say that a section $\xi$ of the bundle $\M(\EE):=\{\M(E_x)\}_{x\in X}$  of multiplier $\cst$\nb-correspondences is \emph{strictly continuous} if $a\cdot \xi, \xi \cdot b\in \contz(\EE)$ for any $a\in \contz(\A)$, $b\in \contz(\B)$. 
We denote by $\contb(\M(\EE))$ the set of all bounded strictly continuous sections of $\M(\EE)$.
\end{defn} 

For a $\cst$-bundle  $\A=\{A_x\}_{x\in X}$, treated as a trivial correspondence bundle from $\A$ to $\A$, the above definition of the multiplier bundle $\M(\A)=\{\M(A_x)\}_{x\in X}$ is consistent with the one given in \cite[Section 3]{Multipliers}. 
In particular, the following proposition generalises \cite[Theorem 3.3]{Multipliers} from $\cst$-bundles to $\cst$-correspondence bundles.

\begin{prop}\label{prop:Akemman_Pedersen_Tomiyama_generalization}
Let $\EE=\{E_x\}_{x\in X}$ be a $\cst$-correspondence bundle from $\A=\{A_x\}_{x\in X}$ to $\B=\{B_x\}_{x\in X}$. 
Equipped with pointwise operations, the section algebras $\contb(\M(\A))$, $\contb(\M(\B))$ are $\cst$-algebras and $\contb(\M(\EE))$ is a  $\contb(\M(\A))$-$\contb(\M(\B))$-$\cst$-correspondence isomorphic to the $\M(\contz(\A))$-$\M(\contz(\B))$-$\cst$-correspondence $\M(\contz(\EE))$, where the isomorphism
\[
    \contb(\M(\EE))\cong \M(\contz(\EE))
\]
is the unique morphism that extends the identities on $\contz(\EE)$, $\contz(\A)$ and $\contz(\B)$. 
In particular, $\contb(\M(\A))\cong \M(\contz(\A))$ and $\contb(\M(\B))\cong \M(\contz(\B))$ as $\cst$-algebras. 
\end{prop}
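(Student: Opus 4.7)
The plan is to first invoke \cite[Theorem~3.3]{Multipliers} to dispense with the $\cst$-algebra case, so that the pointwise operations make $\contb(\M(\A))$ and $\contb(\M(\B))$ into $\cst$-algebras and give canonical isomorphisms $\contb(\M(\A))\cong \M(\contz(\A))$ and $\contb(\M(\B))\cong \M(\contz(\B))$. Under these identifications, I will construct a bijective bimodule map $\Phi:\contb(\M(\EE))\to \M(\contz(\EE))$ extending the identity on $\contz(\EE)$, and then transport the $\cst$-correspondence structure on $\M(\contz(\EE))$ furnished by Proposition~\ref{prop:universal_multiplier_description} through it.

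Defining $\Phi$ is straightforward: for $\xi\in \contb(\M(\EE))$ the assignments
\[
    R_\xi(a)(x):=a(x)\xi(x),\qquad L_\xi(b)(x):=\xi(x)b(x), \qquad a\in\contz(\A),\ b\in\contz(\B),\ x\in X,
\]
produce sections in $\contz(\EE)$ by the very definition of strict continuity, the multiplier identity holds fibrewise, and the obvious norm estimates give $\Phi(\xi):=(R_\xi,L_\xi)\in \M(\contz(\EE))$ with $\|\Phi(\xi)\|\leq\|\xi\|_\infty$. Injectivity is immediate: if $\Phi(\xi)=0$, then for each $x\in X$ the surjectivity of the evaluation $\mathrm{ev}_x:\contz(\B)\to B_x$ yields $\xi(x)B_x=0$, and essentiality of $\M(E_x)$ over $B_x$ forces $\xi(x)=0$.

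The main obstacle is surjectivity, which requires a fibrewise descent. Given $(R,L)\in \M(\contz(\EE))$, I would use that $L$ is a right $\contz(\B)$-module map (hence a right $\M(\contz(\B))$-module map, by strict extension of adjointable operators) and that $\contz(X)\subseteq Z(\M(\contz(\B)))$ acts on $\contz(\EE)$ via the $\contz(X)$-$\cst$-correspondence structure of Proposition~\ref{prop:correspondence_bundles_vs_C(X)_correspondence}; this forces $L$, and analogously $R$, to be $\contz(X)$-linear. Combined with the standard identification $\ker(\mathrm{ev}_x)=I_x\contz(\B)$, where $I_x:=\{f\in\contz(X):f(x)=0\}$, this ensures that $L(b)(x)$ depends only on $b(x)\in B_x$, so the prescriptions
\[
    \xi(x)\cdot b(x):=L(b)(x),\qquad a(x)\cdot \xi(x):=R(a)(x)
\]
well-define maps $L_x:B_x\to E_x$ and $R_x:A_x\to E_x$ satisfying the multiplier identity and the bound $\|(R_x,L_x)\|\leq\|(R,L)\|_\infty$, and hence an element $\xi(x):=(R_x,L_x)\in \M(E_x)$. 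The resulting section lies in $\contb(\M(\EE))$ because $\xi\cdot b=L(b)\in \contz(\EE)$ and $a\cdot \xi=R(a)\in \contz(\EE)$ by construction, and plainly $\Phi(\xi)=(R,L)$.

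Hence $\Phi$ is an isometric $\contz(\A)$-$\contz(\B)$-bimodule bijection that extends the identity on $\contz(\EE)$; under the Akemann--Pedersen--Tomiyama identifications it is automatically $\M(\contz(\A))$-$\M(\contz(\B))$-linear, so transporting the $\cst$-correspondence structure of $\M(\contz(\EE))$ through $\Phi$ gives $\contb(\M(\EE))$ the claimed $\cst$-correspondence structure. Evaluating the identity $\langle\xi,\zeta\rangle\cdot b=\langle\xi,\zeta b\rangle\in\contz(\B)$ at $x$ and invoking surjectivity of $\mathrm{ev}_x$ pins down the resulting inner product as the pointwise one, $\langle\xi,\zeta\rangle(x)=\langle\xi(x),\zeta(x)\rangle_{\M(B_x)}$. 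Uniqueness of the morphism extending the identities is then immediate from Lemma~\ref{lem:universal_characterisation_of_multipliers} applied to the essential multiplier extension $\contz(\EE)\subseteq\contb(\M(\EE))$.
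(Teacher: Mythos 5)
Your proof is correct and follows essentially the same route as the paper: the isometric embedding $\xi\mapsto(R_\xi,L_\xi)$ is identical, and surjectivity is in both cases obtained by evaluating an abstract multiplier fibrewise to produce a bounded strictly continuous section of $\M(\EE)$. The only divergence is that the paper gets the fibrewise descent for free by applying Lemma~\ref{lem:extensions_lemma} to the evaluation morphisms $\pi_x:\contz(\EE)\to E_x$, whereas you re-derive it by hand from the $\contz(X)$-linearity of $L$ and $R$ together with the Cohen factorisation $\ker(\mathrm{ev}_x)=I_x\contz(\B)$ --- which is exactly the mechanism underlying the proof of that lemma.
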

\begin{proof}
It is immediate that bounded strictly continuous sections of $\M(\EE)$ form a Banach $\contz(\A)$-$\contz(\B)$-bimodule with pointwise operations and norm  $\|\xi\|:=\sup\|\xi(x)\|$. 
When this construction is applied to the $\cst$-bundle $\A$, we also see that $\contb(\M(\A))$ is closed under pointwise multiplication, involution, and contains the unit section. So $\contb(\M(\A))$ is naturally a $\cst$-algebra that contains $\contz(\A)$ as a $\cst$-subalgebra, in fact, an ideal. We have a unital $*$-homomorphism $\pi_A:\contb(\M(\A))\to \M(\contz(\A))$ where $[\pi_A(b)a](x)=b(x)\cdot a(x)$, $b\in \contb(\M(\A))$, $a\in \contz(\A)$, $x\in X$.
It is isometric and extends the identity on $\contz(\A)$. 
To show it is surjective, let $x\in X$ and note that the evaluation  map $\pi_{A,x}:\contz(\A)\to A_x$, $\pi_{A,x}(a)=a(x)$, extends (uniquely) to a $*$-homomorphism $\overline{\pi}_{A,x}:\M(\contz(\A))\to \M(A_x)$. For $a\in \M(\contz(\A))$ the formula $\widehat{a}(x)=\overline{\pi}_{A,x}(a)$ defines $\widehat{a}\in \contb(\M(\A))$ with $\pi_A(\widehat{a})=a$. 
Hence we have $*$-isomorphisms 
\[
    \contb(\M(\A))\cong \M(\contz(\A)),\qquad \contb(\M(\B))\cong \M(\contz(\B))
\] 
which are in fact given by unique homomorphisms extending the identities on $\contz(\A)$ and $\contz(\B)$, respectively.
A similar argument works for the $\contb(\M(\A))$-$\contb(\M(\B))$-bimodule $\contb(\M(\EE))$. 
Namely, the map  $\contb(\M(\EE))\ni \xi\mapsto (R_\xi, L_{\xi}) \in \M(\contz(\EE))$ where $R_{\xi}(a):=a \xi$ and $L_\xi(b):=\xi b$, is a  linear isometry  $\pi:\contb(\M(\EE))\to \M(\contz(\EE))$ that extends the identity on $\contz(\EE)$. 
To see that $\pi$ is surjective, note that, by Lemma~\ref{lem:extensions_lemma}, for any $x\in X$, the  map $\pi_x:\contz(\EE)\to E_x$, $\pi_x(\xi)=\xi(x)$, extends (uniquely) to a linear map $\overline{\pi}_x:\M(\contz(\EE))\to \M(E_x)$ such that $\overline{\pi}_x(a\xi b)=\overline{\pi}_{A,x}(a)(x)\overline{\pi}_x(\xi)\overline{\pi}_{B,x}$ for $a\in \M(\contz(\A))$, $b\in \M(\contz(\B))$. 
For $\xi\in M(\contz(\EE))$ the formula $\widehat{\xi}(x)=\overline{\pi}_{x}(\xi)$ defines $\widehat{\xi}\in \contb(\M(\A))$ with $\pi(\widehat{\xi}):=\xi$.
Hence $(\pi,\pi_A,\pi_B)$ is an isometric isomorphism from  the $\contb(\M(\A))$-$\contb(\M(\B))$-bimodule $\contb(\M(\EE))$ to the $\M(\contz(\A))$-$\M(\contz(\B))$-bimodule $\M(\EE)$. 
It is the unique morphism that extends the identity morphism on $\contz(\EE)$.
Its inverse maps the inner product in $\M(\EE)$ to the inner product in $\contb(\M(\EE))$ defined pointwise. 
\end{proof}

For a $\cst$-bundle $\A=\{A_x\}_{x\in X}$, the canonical identification $\contb(\M(\A))\cong\M(\contz(\A))$ from Proposition~\ref{prop:Akemman_Pedersen_Tomiyama_generalization}
allows us to regard $\contb(\M(\A))$ as the multiplier algebra of $\contz(\A)$. In particular, this justifies writing informally $\M(\contz(\A))=\contb(\M(\A))$ in the sequel.

\begin{rem}
For a $\cst$-bundle $\A=\{A_x\}_{x\in X}$, the multiplier bundle $\M(\A)=\{\M(A_x)\}_{x\in X}$ is usually not a Banach bundle with the `strict topology'. 
It may also happen that $\contb(\M(\A))\neq \contb(\A)$ even when $\M(\A)=\A$ as sets. 
For instance, let $\A$ be the bundle corresponding to the $C([0,1])$-algebra:
\[
    A=\left\{a\in C([0,1],\Mat_2(\C)): a(0)=\left(\begin{array}{cc} \lambda & 0 \\ 0  &  0 \end{array}\right),\ \lambda \in \C\right\}.
\] 
Then $\A=\M(\A)$ because all the fibres $A_x$, $x\in [0,1]$ are unital ($A_0\simeq\C$, $A_x\simeq \Mat_2(\C)$ for $x>0$).
But $A=\contz(\A)=\contb(\A)$ is not unital and so it is smaller than $\contb(\M(\A))=\M(A)$. In fact we have,
\[
  \contb(\M(\A))=\M(A)=\left\{a\in C([0,1],\Mat_2(\C)): a(0)=\left(\begin{array}{cc}
        \lambda & 0 \\
        0  &  \mu
        \end{array}\right),\ \lambda, \mu \in \C\right\}.
\]
In general, the strictly continuous sections of $\M(\A)$ coincide with continuous sections of $\A$ if and only if the bundle $\A$ has a \emph{continuous unital section}, i.e.\ each fibre $A_x$, $x\in X$ is unital and the unit section $X \ni x\mapsto 1_x\in A_x\subseteq \A$ is continuous.
This latter condition was made a standing assumption in \cite{BartoszKangAdam}.
\end{rem}

The tensor product of $\contz(X)$-$\cst$-correspondences is a $\contz(X)$-$\cst$-correspondence, cf.\ \cite[Proposition 4.1]{LeGall}. 
In terms of bundles this corresponds to the following construction.

\begin{ex}[Tensor product of $\cst$-correspondence bundles]\label{ex:tensor_bundles}
If $\EE=\{E_x\}_{x\in X}$ and $\FF=\{F_x\}_{x\in X}$ are $\cst$-correspondence bundles from  $\A=\{A_x\}_{x\in X}$ to $\B=\{B_x\}_{x\in X}$ and from $\B=\{B_x\}_{x\in X}$ to $\CC=\{C_x\}_{x\in X}$ respectively, then  the collection of $\cst$-correspondence tensor products $\EE\otimes_\B \FF:=\{E_x\otimes_{B_x} F_x\}_{x\in X}$ is naturally a  $\cst$-correspondence bundle from  $\A$  to $\CC$. 
Indeed, for every $\xi_i\in \contc(\EE)$, $\zeta_i\in \contc(\FF)$, $i=1, \ldots ,n$, consider the section $x\mapsto  (\sum_{i=1}^n\xi_i\otimes \zeta_i)(x) := \sum_{i=1}^n \xi_i(x)\otimes \zeta_i(x)\in \EE\otimes_\B \FF$. 
It follows from Definition~\ref{def:Correspondence_bundle} that the map $x\mapsto \|\sum_{i=1}^n\xi_i\otimes \zeta_i(x)\|^2=\|\sum_{i,j=1}^n\langle \zeta_i(x), \langle \xi_i(x),\xi_j(x)\rangle_{B_x} \zeta_j(x)\rangle_{C_x}\|$ is upper semicontinuous. 
Hence $\EE\otimes \FF$ can be equipped with a unique topology such that the sections $\xi\otimes \zeta$, $\xi\in \contc(\EE)$, $\zeta\in \contc(\FF)$, are continuous.
Then $\EE\otimes_\B \FF$ is a $\cst$-correspondence bundle from  $\A$ to $\CC$ and  we have a canonical isomorphism 
\[
    \contz(\EE\otimes_\B \FF)\cong \contz(\EE)\otimes_{\contz(\B)} \contz(\FF). 
\]
of $\cst$-correspondences from $\contz(\A)$ to $\contz(\CC)$. 
Combining Lemma~\ref{lem:multiplier_tensor_product} and Proposition~\ref{prop:Akemman_Pedersen_Tomiyama_generalization} we also have the embedding $\contb(\M(\EE))\otimes_{\contb(\M(\B))} \contb(\M(\FF))\into  \contb(\M(\EE\otimes_{\B} \FF))$ of $\cst$-correspondences from $\contb(\M(\A))\cong \M(\contz(\A))$ to $\contb(\M(\CC)) \cong \M(\contz(\CC))$. 
\end{ex}

\begin{ex}[Direct sums of $\cst$-correspondence bundles] 
Let $\EE=\{E_x\}_{x\in X}$ and $\FF=\{F_x\}_{x\in X}$ be $\cst$-bundles from $\A=\{A_x\}_{x\in X}$ to $\B=\{B_x\}_{x\in X}$. 
We equip the family $\EE\oplus\FF:=\{E_x\oplus F_x\}_{x\in X}$ of direct sums of $\cst$-correspondence bundles with the unique topology such that for all $\xi\in \contc(\EE)$, $\zeta\in \contc(\FF)$ the section $ x\mapsto\xi\oplus \zeta(x):=\xi(x)\oplus \zeta(x)$ is in $\contc(\EE\oplus\FF )$. 
This is a $\cst$-correspondence bundle  from $\A=\{A_x\}_{x\in X}$ to $\B=\{B_x\}_{x\in X}$ and we have a canonical isomorphism of $\cst$-correspondences 
\[
    \contz(\EE\oplus\FF)\cong \contz(\EE)\oplus \contz(\FF), \qquad \contb(\EE\oplus\FF)\cong \contb(\EE)\oplus\contb(\FF).
\]
More generally, if $\{\EE_i\}_{i\in I}$ is a family of $\cst$-correspondence bundles from $\A$ to $\B$, with $\EE_i=\{E_x^i\}_{x \in X}$ for each $i \in I$, then we define the direct sum $\cst$-correspondence bundle by putting $\oplus_{i\in I} \EE_i:= \{\oplus_{i\in I} E_x^i\}_{x\in X}$ and equipping it with the unique topology such that for sections $\xi_i\in \contc(\EE_i)$, $i\in I$, such that $\sum_{i\in I} \|\xi_i(x)\|^2<\infty$ for every $x\in X$, the section $X\ni x \to \sum_{i\in I} \xi_i(x)\in \oplus_{i\in I} \EE_i$ is continuous.
\end{ex}

Finally we comment on morphisms between $\cst$-correspondence bundles.
	
\begin{lem}\label{lem:morphisms_disintegration} 
Let $\EE=\{E_x\}_{x\in X}$ and $\FF=\{F_x\}_{x\in X}$ be Banach bundles. 
The relation
\[
    L(\xi)(x)=L_{x}(\xi(x)) , \qquad x\in X,\ \xi \in \contz(\EE),
\]
establishes a bijective correspondence between bounded $\contz(X)$-linear maps $L:\contz(\EE)\to \contz(\FF)$ and bundles $\{L_{x}\}_{x\in X}$ of linear operators $L_x:E_x\to F_x$, such that $\sup_{x\in X}\|L_x\|<\infty$ and the map $\EE\supseteq E_x\ni \xi \longmapsto L_x(\xi)\in F_x\subseteq \FF$ is continuous. 
\end{lem}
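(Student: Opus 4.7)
The plan is to establish the two directions separately and then observe that the constructions are mutually inverse. For the easy direction, suppose we are given a uniformly bounded family $\{L_x\}_{x\in X}$ such that the map $\EE\ni \xi\mapsto L_{p(\xi)}(\xi)\in \FF$ is continuous (where $p:\EE\to X$ is the bundle projection). Then $L(\xi)(x):=L_x(\xi(x))$ is continuous as the composition of the section $x\mapsto \xi(x)\in \EE$ with the assumed continuous map $\EE\to \FF$, and vanishes at infinity by the estimate $\|L(\xi)(x)\|\le (\sup_y\|L_y\|)\,\|\xi(x)\|$ together with $\xi\in \contz(\EE)$. Linearity, the bound $\|L\|\le \sup_x\|L_x\|$, and $\contz(X)$-linearity are immediate from the corresponding properties of each $L_x$.

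For the converse, given a bounded $\contz(X)$-linear map $L:\contz(\EE)\to \contz(\FF)$, the first step is to show that $L$ descends to fibres, i.e.\ $\xi(x)=0$ implies $L(\xi)(x)=0$. Writing $I_x:=\{f\in \contz(X):f(x)=0\}$, I would establish that $\{\xi\in \contz(\EE):\xi(x)=0\}=\overline{I_x\cdot \contz(\EE)}$. The inclusion $\supseteq$ is clear; for $\subseteq$, given $\xi(x)=0$ and $\varepsilon>0$, upper semicontinuity of $\|\xi(\cdot)\|$ makes $K:=\{y:\|\xi(y)\|\ge \varepsilon\}$ compact with $x\notin K$, so Urysohn's lemma in the locally compact Hausdorff space $X$ yields $f\in \contc(X)$ with $f\equiv 1$ on $K$, $f(x)=0$, and $0\le f\le 1$; then $\|\xi-f\xi\|<\varepsilon$ with $f\xi\in I_x\contz(\EE)$. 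Combining this with $\contz(X)$-linearity and boundedness of $L$ gives $L(\xi)(x)=0$, so $L_x(\xi(x)):=L(\xi)(x)$ is a well-defined linear map on the image of the evaluation $\contz(\EE)\to E_x$. This image exhausts $E_x$ by the standard surjectivity of fibrewise evaluation for Banach bundles, which is itself obtained by an analogous bump-function argument. The same technique near $x$ yields $\|L_x\|\le \|L\|$: given $a\in E_x$ and $\varepsilon>0$, pick $\xi\in \contz(\EE)$ with $\xi(x)=a$, then cut off by $f\in \contc(X)$ with $f(x)=1$ supported in a neighbourhood where $\|\xi(\cdot)\|\le \|a\|+\varepsilon$, giving $\|L_x(a)\|=\|L(f\xi)(x)\|\le \|L\|\,\|f\xi\|\le \|L\|(\|a\|+\varepsilon)$.

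The main obstacle is then verifying that the fibrewise family $\{L_x\}$ so obtained defines a continuous map $\EE\to \FF$. For this I would apply the characterisation of convergence recalled after Fell's reconstruction theorem: given a net $f_i\to f$ in $\EE$ and $\varepsilon>0$, choose $\eta\in \contz(\EE)$ with $\|f-\eta(p(f))\|<\varepsilon$ and $\|f_i-\eta(p(f_i))\|<\varepsilon$ eventually, and set $\theta:=L(\eta)\in \contz(\FF)$. Using $\theta(y)=L_y(\eta(y))$ and $\|L_y\|\le \|L\|$, one obtains
\[
\|L_{p(f)}(f)-\theta(p(f))\|\le \|L\|\,\varepsilon \qquad\text{and}\qquad \|L_{p(f_i)}(f_i)-\theta(p(f_i))\|\le \|L\|\,\varepsilon \text{ eventually,}
\]
which verifies the Fell criterion for the image net and proves $L_{p(f_i)}(f_i)\to L_{p(f)}(f)$ in $\FF$.

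Finally, mutual inverseness of the two constructions is immediate from the definitions, using surjectivity of the evaluation $\contz(\EE)\to E_x$ to conclude that the recovered fibrewise family coincides with the original on all of $E_x$. The delicate pieces of the argument are the bump-function localisations, which require a careful combination of upper semicontinuity of the norm on $\EE$ with Urysohn's lemma on $X$; everything else reduces to a routine compatibility check.
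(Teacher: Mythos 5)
Your proof is correct. The paper does not actually write out an argument here --- it simply declares the lemma standard and cites the proofs of Propositions 3.2 and 3.5 of \cite{Bartosz} --- and what you have written is precisely that standard argument: the easy composition/norm-estimate direction, the identification $\{\xi\in\contz(\EE):\xi(x)=0\}=\overline{I_x\cdot\contz(\EE)}$ via upper semicontinuity and a Urysohn cut-off to descend $L$ to fibres with $\|L_x\|\le\|L\|$, and the Fell net criterion applied to the section $L(\eta)$ to get continuity of the fibrewise family. The only cosmetic quibble is that surjectivity of the evaluation $\contz(\EE)\to E_x$ comes from the fibrewise-density axiom together with a completeness (geometric series) argument rather than from ``an analogous bump-function argument,'' but this is a standard fact either way and does not affect the proof.
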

\begin{proof}
This is standard, see for instance (the proofs of) \cite[Propositions 3.2, 3.5]{Bartosz}. 
\end{proof}

\section{Fell bundles and twisted actions} 
\label{Sec:Fell_bundles}

In this section we introduce twisted actions of groupoids on $\cst$-bundles and describe the associated crossed products. From now on, throughout this paper, 
\begin{quote}
	$\G$ is a \emph{locally compact Hausdorff \'{e}tale groupoid} with the \emph{unit space} $X:=\G^{(0)}$. 
\end{quote}
\noindent
We denote by $r,s:\G\to X\subseteq \G$ the \emph{range} and \emph{source} maps and by $\G^{(2)}=\{(g,h)\in \G\times \G: s(g)=r(h)\}$ the set of \emph{composable pairs}. 
Similarly we define \emph{composable triples} $\G^{(3)}$, and so on. 
We refer to \cite{Renault0,Exel:combinatorial,Sims} for more information on \'etale groupoids. 
In this section we will define their (twisted) actions on $\cst$-bundles. 
We will however begin the discussion in the context of general Fell bundles.

Let $\A$ be a \emph{Fell bundle over the groupoid~$\G$}, see \cite[Section 2]{BussExel} or \cite{Kumjian,Takeishi}. 
For the convenience of the reader we recall some basic facts here. 
By definition, $\A$ is an upper-semicontinuous Banach bundle $\A=\{A_g\}_{g\in \G}$ equipped with a  continuous multiplication ${\cdot}: \{(a,b)\in \A\times \A : a\in A_{g},\ b \in A_{h},\ (g,h)\in \G^{(2)}\} \to \A$ and a continuous involution
$ ^{\star}: \A\to \A$ such that for all $g,h \in \G$ with $(g,h) \in \G^{(2)}$
\[
    A_{g}\times A_h\ni (a,b)\mapsto a b\in A_{gh}, \qquad 	\A\ni A_g\ni a\mapsto a^{\star}\in A_{g^{-1}},  
\]
satisfying the standard set of axioms (note that we write simply $ab$ in lieu of $a \cdot b$). 
The linear space $\contc(\A)$ is naturally a $*$-algebra with algebraic operations coming from the Fell bundle: for $a,b\in \contc(\A)$ we put
\[
    (a*b)(g) := \sum_{h\in \G_{s(g)}} a(gh^{-1}) b(h)=\sum_{h\in \G^{\rg(g)}} a(h) b(h^{-1}g) \quad \text{and} \quad  a^*(g):= a(g^{-1})^{\star}.
\]
Here, for $x \in X$, we put $\G_x:=\{g \in \G : s(g) = x \}$ and $\G^x := \{ g\in \G : \rg(g) = x \}$.

\begin{defn} 
The \emph{full section $\cst$-algebra} $\cst(\A)$ of the Fell bundle $\A$ is the completion of $\contc(\A)$ in the maximal $\cst$-norm.
\end{defn} 

One shows that the largest $\cst$-(semi)norm indeed exists on $\contc(\A)$, see \cite{BussExel0}. 
To prove that it is a $\cst$-norm, one constructs the reduced norm as follows. 
The restriction $\A|_X = \{ A_x \}_{x\in X}$ of $\A$ to $X$ is a $\cst$-bundle and $\contc(\A|_X)\subseteq \contc(\A)$ is a $*$-subalgebra. 
The inclusion $\contc(\A|_X)\subseteq \contc(\A)$ extends to a $\cst$-inclusion $\contz(\A|_X)\subseteq \cst(\A)$. 
The restriction map $a \mapsto a|_{X}$, $a\in \contc(\A)$, extends to a conditional expectation $\mathbb{E}: \cst(\A)\to \contz(\A|_X)$. 
Consider the Hilbert $A_{x}$-module direct sum
\[
    \ell^2(\A)_x:=\Big\{\xi:\G_x\to \A: \xi(g)\in A_g,\, g \in \G_x,\; \sum_{g \in \G_x} \xi(g)^*\xi(g) \text{ converges in $A_x$} \Big\} = \bigoplus_{g \in \G_x} A_g.
\]
Then we have a representation $\Lambda_x:\cst(\A) \to \LL(\ell^2(\A)_x)$ determined by the formula
\[
    [\Lambda_x(a) \xi ](g) = \sum_{h \in \G_x} a(g h^{-1}) \xi (h), \qquad a \in \contc(\A),\ \xi \in \ell^2(A_x),\ g \in \G_x .
\]
There is a unique topology on $\ell^2(\A) = \{\ell^2(\A)_x\}_{x\in X}$ making it a Hilbert module bundle over $\A|_{X}$, and such that $\contz(\ell^2(\A))$ contains  $x\mapsto \bigoplus_{g \in \G_x} a(g)$, for all $a\in \contc(\A)$, cf.\ \cite[3.3]{Kumjian}. 
The field of representations $\{\Lambda_{x}\}_{x\in X}$ induces a representation $\Lambda:\cst(\A)\to \LL ( \contz(\ell^2(\A)) )$ 
\[
    \big( \Lambda(a)\xi \big) (x) = \Lambda_x(a)\xi(x) , \qquad  a\in \contc(\A) , \ \xi\in \contz(\ell^2(\A)), \ x\in X,
\]
that  we call the \emph{regular representation} of $\cst(\A)$. 
This representation is faithful on $\contc(\A)$, but not on $\cst(\A)$, in general. 
When $\Lambda$ is faithful we say that $\A$ has the \emph{weak containment property}. 
This is implied by \emph{amenability} of $\G$ \cite{Sims_Williams} or some sort of amenability of $\A$ \cite{Kranz, BussMartinez}, but there are examples of non-amenable groupoids with the weak containment property \cite{Willett}.

Notice that $\contz(\ell^2(\A))$ is a $\cst(\A)$-$\contz(\A|_X)$-$\cst$-correspondence that we call the \emph{regular $\cst$-correspondence} for $\A$. 
It can be defined equivalently as the completion of the pre-Hilbert module $\contc(\A)$ with respect to the inner product $\langle a, b\rangle_{\contz(\A|_X)} := \E(a^*b)$. 
Then $\Lambda:\cst(\A)\to \LL\left(\contz(\ell^2(\A))\right)$ is determined by the multiplication in $\contc(\A)$: $\Lambda(a)b=a*b$  for $a \in \contc(\A)\subseteq \cst(\A)$ and $b\in \contc(\A)\subseteq  \contz(\ell^2(\A))$. 

\begin{defn}
The \emph{reduced section $\cst$-algebra} $\cst_{\red}(\A)$ is the completion of $\contc(\A)$ in the norm  $\|a\|_{\red}:=\sup_{x\in X}\|\Lambda_x(a)\|$. 
Thus $\cst_{\red}(\A)\cong \Lambda(\cst(\A))$ and the restriction map $\E:\contc(\A)\to \contc(\A|_X)$ extends to a faithful conditional expectation $\E:\cst_r(\A)\to \contz(\A|_X)$.	
\end{defn}

We briefly comment on the relationship between Fell bundles over the groupoid $\G$ and Fell bundles over inverse semigroups. 
This will be useful when studying representations of the section $\cst$-algebras and also checking that twisted groupoid actions yield  Fell bundles.
To this end recall that the family of (open) bisections of the groupoid $\G$, i.e.\ 
\[
    \Bis(\G) = \{ U\subseteq \G : \text{$U$ is open and $r|_U , s|_U$ are injective} \} .
\]
with operations induced by composition and inverse from $\G$ forms naturally a unital inverse semigroup. 
The unit space $X$ is the unit for the inverse semigroup $\Bis(\G)$.

Let now $S$ be any unital inverse semigroup with unit $1$. 
A \emph{Fell bundle over the inverse semigroup} $S$ is a family $\{ A_{t} \}_{t\in S}$ of Hilbert $A$-$A$-bimodules, where $A := A_1$ is a $\cst$-algebra, and hence also a trivial Hilbert $A$-bimodule, equipped with a multiplication given by bimodule embeddings $A_{t}\otimes_A A_{u}\to A_{tu}$, $t,u\in S$, which are associative in an appropriate sense and induce isomorphisms $A_{t}\otimes_A A_{t^*}\otimes_A A_{t}\cong A_t$, $t\in S$. 
See \cite{BussMartinez} for details and relationship with previous equivalent definitions. 
In particular, this data induces inclusion maps $j_{t,u}:A_{t}\to A_{u}$ and involutions $*:A_{t}\to A_{t^*}$ compatible with  the above structures. 

\begin{ex}[Inverse semigroup Fell bundle from a groupoid Fell bundle]\label{ex:Inverse semigroup Fell bundle from a groupoid Fell bundle}
Every Fell bundle $\A=\{A_g\}_{g\in \G}$ over $\G$ \emph{induces} a Fell bundle $\A_{\sem}=\{A_U\}_{U\in \Bis(\G)}$ over $\Bis(\G)$, where the fibres are the spaces $A_{U}:=\contz(\A|_{U})$, $U\in \Bis(\G)$, and multiplication maps $A_{U}\times A_{V}\to A_{UV}$ and involution maps $A_{U}\to A_{U^*}$ are given by multiplication and involution on $\A$. 
We have natural inclusions $A_{U}\subseteq A_{V}$ for $U\subseteq V$. 
Moreover, $\A$ is saturated, that is $\overline{A_g\cdot A_h}=A_{gh}$
for all $(g,h)\in \G^{(2)}$, if and only if $\A_{\sem}$ is saturated, that is $\overline{A_{U}\cdot A_{V}}=A_{UV}$ for all $U,V\in \Bis(\G)$. 
\end{ex}

In the saturated case we have the following elegant characterisation of inverse semigroup Fell bundles coming from groupoid Fell bundles as described in Example~\ref{ex:Inverse semigroup Fell bundle from a groupoid Fell bundle}:

\begin{thm}[{\cite[Theorem 6.1]{BussMeyer}}] \label{thm:groupoid_Fell_bundle_from_inverse_semigroup_Fell_bundle}
Let $S:=\Bis(\G)$ be the inverse semigroup of bisections of $\G$.
A saturated Fell bundle $\A_{\sem}=\{A_U\}_{U\in S}$ over $S$ is induced from a Fell bundle $\A=\{A_{g}\}_{g\in \G}$ over $\G$, as described in Example~\ref{ex:Inverse semigroup Fell bundle from a groupoid Fell bundle}, if and only if the map $X\supseteq U\mapsto A_{U} \triangleleft A_X$ from the lattice of open sets in $X$ to the lattice  of ideals in $A_X$ preserves arbitrary suprema.

If this is the case, the Fell bundle $\A$ over $\G$ is unique up to an isomorphism.
\end{thm}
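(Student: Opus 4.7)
The plan is to establish the two implications separately, invoking the bundle/$\contz(X)$-algebra dictionary already developed in Section~\ref{Sec:bundles}. For necessity, assume $\A_{\sem}$ is induced by a groupoid Fell bundle $\A = \{A_g\}_{g\in\G}$. Then $A_X = \contz(\A|_X)$ is a $\contz(X)$-$\cst$-algebra in the standard way, and for any open $U \subseteq X$ the ideal $A_U = \contz(\A|_U)$ is exactly $\overline{\contz(U)\cdot A_X}$. Preservation of arbitrary suprema then reduces to the identity $\contz(\A|_{\bigcup_i U_i}) = \overline{\sum_i \contz(\A|_{U_i})}$, which one verifies by a partition-of-unity argument applied to compactly supported sections supported in $\bigcup_i U_i$, combined with upper semicontinuity of the norm.

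For sufficiency, first apply the necessity direction in reverse inside $X$: the hypothesis that $U \mapsto A_U$ is sup-preserving on open subsets of $X$ turns $A_X$ into a $\contz(X)$-$\cst$-algebra, so by the bundle/$\contz(X)$-algebra equivalence recorded in Proposition~\ref{prop:correspondence_bundles_vs_C(X)_correspondence} there is a unique $\cst$-bundle $\A|_X = \{A_x\}_{x\in X}$ with $\contz(\A|_X) = A_X$ and $A_U = \contz(\A|_U)$ for all open $U \subseteq X$. To extend $\A|_X$ off the unit space, fix a bisection $V \in \Bis(\G)$. Saturation gives $A_V \otimes_{A_X} A_{V^*} \cong A_{r(V)}$ and $A_{V^*} \otimes_{A_X} A_V \cong A_{s(V)}$, making $A_V$ a Hilbert $A_{r(V)}$-$A_{s(V)}$-bimodule. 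Since $s|_V : V \to s(V)$ is a homeomorphism, pulling back the right $\contz(s(V))$-action endows $A_V$ with a natural $\contz(V)$-module structure, and we define
\[
    A_g := A_V / (I_g \cdot A_V), \qquad I_g := \{f \in \contz(V) : f(g) = 0\}, \qquad g \in V .
\]

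The next step is to show this fibre does not depend on the choice of bisection: for open bisections $V' \subseteq V$ both containing $g$, the inclusion $A_{V'} \hookrightarrow A_V$ must descend to an isomorphism at $g$, and the multiplication and involution on $\A_{\sem}$ must descend to well-defined operations $A_g \otimes A_h \to A_{gh}$ and $A_g \to A_{g^{-1}}$ compatible with the bundle axioms. This is the main technical obstacle and is precisely where the sup-preservation hypothesis must be used in combination with saturation: any $a \in A_V$ needs to be approximated near $g$ by elements of $A_{V'}$, which is ensured once $A_{s(V)} = \sup_{W \ni s(g)} A_W$ and the sup-preservation propagates across the bimodule $A_V$ via the tensor identity above.

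Finally, equip $\A := \bigsqcup_{g\in \G} A_g$ with the unique Banach bundle topology given by Fell's reconstruction theorem using, as distinguished sections on each bisection $V$, the maps $V \ni g \mapsto a + I_g A_V \in A_g$ for $a \in A_V$; upper semicontinuity of the norm is automatic from $\|a + I_g A_V\|^2 = \|\langle a,a\rangle_{A_X}(s(g))\|$. Continuity of multiplication and involution is a local question on products of bisections, where it reduces to continuity of the distinguished sections in $A_{UV}$. By construction $\contz(\A|_V) = A_V$ for every $V \in \Bis(\G)$, so the induced inverse semigroup Fell bundle of $\A$ recovers $\A_{\sem}$. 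Uniqueness follows immediately: any two Fell bundles over $\G$ inducing $\A_{\sem}$ have $\contz$-sections over every bisection canonically identified, hence their fibres and bundle structures coincide.
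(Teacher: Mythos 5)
Your construction is essentially the one the paper itself records: the statement is cited from Buss--Meyer rather than proved, and the accompanying Remark~\ref{rem:Buss_Meyer_theorem_bundle_description} describes exactly your route --- Stone duality turning $A_X$ into a $\contz(X)$-algebra, fibres over a bisection $V$ defined as the quotients $A_V/(A_V I_{s(g)})$ using the Morita $A_{r(V)}$-$A_{s(V)}$-bimodule structure, independence of the bisection via the inclusion maps and saturation, and Fell's reconstruction theorem for the topology. The step you flag as the main technical obstacle is resolved by the observation you already name (saturation gives $A_{V'}=\overline{A_V\cdot A_{s(V')}}$ for $V'\subseteq V$, so the two quotients at $g$ coincide), so there is no gap in the approach, only detail left to the reader at the same level as the paper's own sketch.
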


\begin{rem}\label{rem:Buss_Meyer_theorem_bundle_description} 
For further reference we describe how the bundle $\A$ over $\G$ in Theorem~\ref{thm:groupoid_Fell_bundle_from_inverse_semigroup_Fell_bundle} is constructed from $\A_{\sem}$ in Theorem~\ref{thm:groupoid_Fell_bundle_from_inverse_semigroup_Fell_bundle}. 
If the map $X\supseteq U\mapsto A_{U} \triangleleft A_X$ preserves suprema, then (since it automatically preserves infima) by general Stone duality $A_X$ is a $\contz(X)$-algebra where $A_U= A_X \contz(U)$ for every open $U\subseteq X$.
In particular, $A_X\cong \contz(\{A_x\}_{x\in X})$ where $A_x:=A_X/I_x$ and  $I_x:=A_X \contz(X\setminus \{x\})$ for $x\in X$.
To extend this construction to  $A_U$ for $U\in S$, we use that $A_U$ is naturally a Morita equivalence $A_{r(U)}$-$A_{s(U)}$-bimodule. 
Namely, for any $g\in U$ we have $A_{U}I_{ s(g)}=I_{r(g)}A_{U}=I_{r(g)}A_{U}I_{ s(g)}$ and this subspace of $A_{U}$ denoted by $I_{U,g}$ is naturally an $I_{r(g)}$-$I_{s(g)}$-Hilbert bimodule.
Hence  the quotient $A_{U,g}:=A_{U}/I_{U,g}$ is naturally an $A_{r(g)}$-$A_{s(g)}$-Hilbert bimodule. 
Writing $a_{U}(g)$ for the image of $a\in A_U$ in $A_{U,g}$ we have a unique topology on the bundle $\{A_{U,g}\}_{g\in U}$ such that we have an isomorphism
\[
    A_{U}\cong \contz(\{A_{U,g}\}_{g\in U})\quad \text{ where }\quad A_U\ni a\longmapsto a_{U}\in \contz(\{A_{U,g}\}_{g\in U}).
\]
Using the inclusion maps from $\A_{\sem}$ one may piece together the above structures to obtain a Banach bundle $\A=\{A_g\}_{g\in \G}$ where $A_g\cong A_{U,g}$ canonically for any $U\in \Bis(\G)$ containing $g\in \G$. 
Then the formulas 
\[
    a_{U}(g)^{\star}:=a^*_{U^*}(g^{-1}), \qquad 
    a_{U}(g)\cdot b_{V}(h):= (a\cdot b)_{UV}(gh),\qquad \text{ for }a\in A_U, b\in A_{V},
\]
induce the desired Fell bundle structure on $\A$.
\end{rem}

\subsection*{Twisted actions}

Most of the literature on groupoids only deals with the so-called Green--Renault twisted actions, with the abelian twist, see \cite{Renault,Renault2}. 
We need to consider here twisted actions of $\G$ \'a la Busby--Smith (\cite{BusbySmith}), where the twist is not necessarily abelian. 
This type of twisted action appeared in \cite{BussMeyerZhu}, where they are interpreted as certain functors between $2$-categories. 

If $A$ is a $\cst$-algebra we denote by $U\M(A)$ the unitary group in $\M(A)$, and if $\A$ is a $\cst$-bundle we write $U\M(\A):=\{U\M(A_{x})\}_{x\in X}\subseteq \M(\A)$ for the corresponding group bundle.
A twisted groupoid action on a $\cst$-bundle $\A$ should be viewed as a `continuous twisted functor' $\alpha$ from $\G$ to the category of $*$-isomorphisms between the fibres of $\A$ and the twist $u$ is a `strictly continuous 2-cocycle in  $U\M(\A)$'. 
A more precise interpretation is given in \cite{BussMeyerZhu} where one views twisted actions of $\G$ as `continuous morphisms' (or weak functors) from $\G$ to the $2$-category of \cstar{}algebras whose morphisms are nondegenerate homomorphisms $A\to \M(B)$, and $2$-morphisms are certain unitary multiplier intertwiners between these homomorphisms. 
We now make precise exactly what we need. 

\begin{defn}\label{def:twisted_groupoid_action}
A \emph{twisted action of  $\G$ on a $\cst$-bundle} $\A=\{A_x\}_{x\in X}$ 
is a pair $(\alpha, u)$ where 
\begin{enumerate}
    \item\label{enu:twisted_groupoid_action1} $\alpha=\{\alpha_{g}\}_{g\in \G}$ is a family of $*$-isomorphisms $\alpha_g:A_{s(g)}\to  A_{r(g)}$, $g\in \G$, such that the map $s^*\A \ni (a,g) \mapsto (\alpha_{g}(a),g) \in r^*\A$ is continuous, where $r^*\A :=\{ (a,g)\in  \A\times \G:  a\in A_{r(g)}\}$ and $s^*\A :=\{ (a,g)\in  \A\times \G:  a\in A_{s(g)}\}$ are the pullback bundles;

    \item\label{enu:twisted_groupoid_action2} $u=\{u(g,h)\}_{(g,h)\in \G^{(2)}}$ is a  section of the pullback group bundle $r^{(2)*}U\M(\A) = \{ (w,(g,h)) \in \M(\A) \times \G^{(2)}: w\in U\M(A_{r(g)}) \},$ 
    such that for every continuous section $a$ of the pullback bundle $r^{*}\A^{(2)}:=\{((a,g), (b,h))\in r^*\A \times r^*\A: s(g)=r(h)\}$ the formula $au(g,h):=a(g,h)u(g,h)$ for $g, h \in \G$ defines a continuous section of $r^{*}\A^{(2)}$.
\end{enumerate}
In addition, we require this pair to satisfy the following algebraic conditions
\begin{enumerate}[(a)]
    \item\label{enu:twisted_groupoid_action3} $\alpha_x = \id_{A_x}$, $u(x,x) = 1_{\M(A_x)}$ for all $x\in X$;
    \item\label{enu:twisted_groupoid_action4} $\Ad_{u(g,h)}\circ \alpha_{gh}=\alpha_{g}\circ\alpha_{h}$ for all $(g,h)\in \G^{(2)}$;
    \item\label{enu:twisted_groupoid_action5} $\overline{\alpha}_f(u(g,h)) u(f,gh) = u(f,g) u(fg,h)$, for all $(f,g,h)\in \G^{(3)}$;
\end{enumerate} 
where $\overline{\alpha}_{f} : \M(A_{s(f)})\to \M(A_{r(f)})$ is the unique homomorphic extension of $\alpha_f$ for $f\in \G$.
\end{defn}

\begin{rem}\label{rem:consequnces_twists_in_groupoid_actions}
Relations \ref{enu:twisted_groupoid_action3}--\ref{enu:twisted_groupoid_action5} have some standard consequences that we will sometimes use without warning. 
For instance, $\alpha_g^{-1}=\Ad_{u(g^{-1},g)^*}\circ \alpha_{g^{-1}}$, $u(g,s(g))=u(r(g),g)=1_{\M(A_{r(g)})}$ and $\alpha_g(u(g^{-1},g))=u(g,g^{-1})$, $g\in \G$.
\end{rem}

We first note that a twisted action of $\G$ induces a twisted inverse semigroup action of $\Bis(\G)$ in the sense of Sieben~\cite{Sieben98}. 
This is a special case of the inverse semigroup twisted actions of Buss--Exel~\cite[Section 5]{BussExel0}, which were also considered in \cite[Section 2]{BartoszKangAdam}. 
The latter actions are more general and cover  twists given by groupoid circle extensions,  see \cite[Theorem~7.2]{BussExel0} or \cite[Theorem~2.16]{BartoszKangAdam}.

\begin{lem}\label{lem:inverse_semigroup_twisted_action}
Let $(\alpha,u)$ be a twisted action of $\G$ on $\A = \{ A_x \}_{x\in X}$. 
For any open $U\subseteq X$ put $A_U:=\contz(\A|_{U})$. 
Then for any $U,V\in \Bis(\G)$, the  maps $\alpha_{U}:A_{s(U)}\to A_{r(U)}$ and  sections $\omega(U, V)\in U\M(A_{r(UV)})=U\contb(\A|_{r(UV)})$ given by the formulas
\[
    \alpha_{U}(a)(r(g)):= \alpha_{g}(a (s(g))),\qquad   \omega(U, V)(r(g)):=u(g,h), \qquad g\in U, h\in V, r(h)=s(g),
\]
form a twisted inverse semigroup action of $\Bis(\G)$ on $A=\contz(\A)$, in the sense of \cite{Sieben98} and hence also of \cite{BussExel0}. 
In particular, it gives rise to a saturated inverse semigroup Fell bundle $\A_{\sem}^{(\alpha,u)} = \{ (a, U) : a\in A_{r(U)} ,\ U\in \Bis(\G) \}$, where writing $a\delta_U$ for $(a,U)\in \A_{\sem}$ we put
\[
    (a\delta_U)\cdot (b\delta_V):= \alpha_U(\alpha_{U}^{-1}(a)b)\omega(U,V) \delta_{UV}, \quad  (a\delta_U)^*=\alpha_U^{-1}(a^*)\omega(U^*,U)^*\delta_{U^*},
\]
for all $a\in A_{r(U)}$, $b\in  A_{r(V)}$, $U,V\in \Bis(\G)$.
\end{lem}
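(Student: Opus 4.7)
The plan is to verify everything pointwise on $X$, leveraging the pointwise definition of $\alpha_U$ and $\omega(U,V)$ together with the fact that bisections give rise to local homeomorphisms $r|_U$ and $s|_U$, and then invoke \cite{BussExel0} to get the Fell bundle structure.

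First, I would check that $\alpha_U$ is a well-defined $*$-isomorphism $A_{s(U)} \to A_{r(U)}$. For $y \in r(U)$ there is a unique $g \in U$ with $r(g) = y$; writing $\gamma_U := (r|_U)^{-1}: r(U) \to U$, continuity of $\gamma_U$ (since $r|_U$ is a homeomorphism onto an open set) together with condition \ref{enu:twisted_groupoid_action1} of Definition~\ref{def:twisted_groupoid_action} (continuity of the pullback map $(a,g)\mapsto (\alpha_g(a),g)$) shows that $\alpha_U(a)$ is a continuous section vanishing at infinity on $r(U)$ whenever $a \in A_{s(U)}$. Surjectivity and the fact that $\alpha_U$ is a $*$-homomorphism follow pointwise from the corresponding properties of each $\alpha_g$, with inverse given by $\alpha_{U^*}$ using Remark~\ref{rem:consequnces_twists_in_groupoid_actions}. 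Similarly, $\omega(U,V)$ is a bounded strictly continuous unitary section of $U\M(\A|_{r(UV)})$: unitarity is pointwise, and the strict continuity follows by applying condition \ref{enu:twisted_groupoid_action2} of Definition~\ref{def:twisted_groupoid_action} to continuous pullback sections, hence $\omega(U,V) \in U\M(A_{r(UV)})$ as claimed.

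Next, I would verify the axioms of Sieben's twisted inverse semigroup action: for $x \in X$ (as a unit bisection), $\alpha_X = \id$ and $\omega(X,U) = \omega(U,X) = 1$ on the appropriate domain, all following from Definition~\ref{def:twisted_groupoid_action}\ref{enu:twisted_groupoid_action3} together with Remark~\ref{rem:consequnces_twists_in_groupoid_actions}. The covariance identity $\Ad_{\omega(U,V)} \circ \alpha_{UV} = \alpha_U \circ \alpha_V$ on $A_{s(V)U^*U \cap s(V)}$ --- or rather the appropriate restriction dictated by composability --- is obtained by evaluating at $y = r(gh)$ for $(g,h) \in U \times V$ composable and using Definition~\ref{def:twisted_groupoid_action}\ref{enu:twisted_groupoid_action4}. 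The cocycle identity $\overline{\alpha}_U(\omega(V,W)) \omega(U,VW) = \omega(U,V) \omega(UV,W)$ is obtained pointwise from Definition~\ref{def:twisted_groupoid_action}\ref{enu:twisted_groupoid_action5} applied to a composable triple $(f,g,h) \in U \times V \times W$. These are routine computations; the only subtlety is carefully tracking the domains $r(UV), r(UVW)$, and the fact that the identities only need hold on the appropriate ideals, in accordance with Sieben's formalism.

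Finally, given a twisted inverse semigroup action in the sense of Sieben (equivalently, of \cite{BussExel0}), the Fell bundle construction there immediately provides the bundle $\A_{\sem}^{(\alpha,u)}$ over $\Bis(\G)$ with the multiplication and involution formulas stated. Saturation amounts to the identity $A_{r(U)} \cdot (A_{r(U)} \delta_U \cdot A_{r(V)} \delta_V) \cdot A_{s(V)} = A_{r(UV)} \delta_{UV}$, which in turn reduces to $\alpha_U(A_{s(U) \cap r(V)}) = A_{r(U) \cap r(UV)}$, and this follows because $\alpha_U$ restricts to an isomorphism from $A_{s(U) \cap r(V)}$ onto $A_{r(UV)}$, via the bijection $r|_U$. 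The main obstacle in this verification is purely bookkeeping --- keeping track of the numerous partial domains and making sure the pointwise identities assemble into the genuine inverse semigroup identities --- but no analytic difficulty arises beyond what is already encoded in the continuity assumptions of Definition~\ref{def:twisted_groupoid_action}.
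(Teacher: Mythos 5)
Your proposal is correct and follows essentially the same route as the paper: well-definedness of $\alpha_U$ and $\omega(U,V)$ is read off from the continuity conditions \ref{enu:twisted_groupoid_action1}--\ref{enu:twisted_groupoid_action2} of Definition~\ref{def:twisted_groupoid_action}, the Sieben axioms (unit conditions, the covariance identity $\alpha_U\circ\alpha_V=\Ad_{\omega(U,V)}\alpha_{UV}$ on the appropriate ideal, and the cocycle identity) are verified by pointwise evaluation at composable pairs and triples using conditions \ref{enu:twisted_groupoid_action3}--\ref{enu:twisted_groupoid_action5}, and the saturated Fell bundle is then obtained from the general construction in \cite{BussExel0}. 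The only place where the paper is more explicit than you is in pinning down the domain ideal, namely deriving $A_{s(UV)}=A_{s(V)}\cap\alpha_V^{-1}(A_{s(U)})$ from the pointwise computation, but this is exactly the bookkeeping you flag and does not affect correctness.
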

\begin{proof} 
By \ref{enu:twisted_groupoid_action1} and \ref{enu:twisted_groupoid_action2} in Definition~\ref{def:twisted_groupoid_action}, the maps $\alpha_{U}$ are well-defined isometries, and $\omega(U, V)$ are well defined elements of $U\M(A_{UV})$. 
Thus we need to check that for any $U,V,W \in S$: 
\begin{enumerate}[(T1)]
    \item\label{enu:inverse_semigroup_action1} $A_{s(UV)} = A_{s(V)} \cap \alpha_V^{-1}(A_{s(U)})$ and  \(\alpha_U\circ \alpha_V= Ad_{\omega(U,V)}\alpha_{UV}\) on this ideal; 
    \item\label{enu:inverse_semigroup_action2}  \(\alpha_U\big(a \omega(V,W)\big) \omega(U,V W) = \alpha_U(a)\omega(U,V)\omega(UV,W)\) for \(a\in A_{s(V)}\cap
    A_{r(VW)}\); 
    \item\label{enu:inverse_semigroup_action3} \(\omega(U,V) = 1_{\M(A_{r(UV)})}\) if either $U\subseteq X$ or $V\subseteq Y$.
\end{enumerate}
Let $a\in A_{s(UV)}$, and $g\in U$, $h\in V$ with $s(g)=r(h)$. 
By Definition~\ref{def:twisted_groupoid_action}\ref{enu:twisted_groupoid_action4} we get 
\[
\begin{split}
	Ad_{\omega(U,V)}\alpha_{UV}(a)(r(g)) &=u(g,h) \alpha_{gh}(a(s(h))) u(g,h)^*=\alpha_{g}(\alpha_h(a(s(h))))
	\\
	&=\alpha_{g}(\alpha_V(a)(s(g)))=\alpha_U (\alpha_V(a))(r(g)) . 
\end{split}
\]
This implies \ref{enu:inverse_semigroup_action1}, as the above calculation implies that $A_{r(UV)}=\alpha_U(A_{r(V)}\cap A_{s(U)})=\alpha_U(A_{r(V)})\cap A_{r(U)}$, which by taking inverses is equivalent to $A_{s(UV)} = A_{s(V)} \cap \alpha_V^{-1}(A_{s(U)})$.

To show \ref{enu:inverse_semigroup_action2} let \(a\in A_{s(V)} \cap A_{r(VW)}\) and $f\in U$, $g\in V$, $h\in W$ with $(f,g,h)\in \G^{(3)}$.
Using Definition~\ref{def:twisted_groupoid_action}\ref{enu:twisted_groupoid_action5} we get 
\[
\begin{split}
    [\alpha_U\big(a \omega(V,W)\big) \omega(U,V W)] (r(f)) &=\alpha_{f}\Big(a(s(f))  u( g,h)\Big) u(f,gh) \\
        &= \alpha_{f}\Big(a(s(f))\Big) u(f,g)u(fg,h) \\
        &= \big( \alpha_U(a)\omega(U,V)\omega(UV,W) \big) (r(f)).
\end{split}
\]
Property \ref{enu:inverse_semigroup_action3} follows from Definition~\ref{def:twisted_groupoid_action}\ref{enu:twisted_groupoid_action3}, 
see also Remark~\ref{rem:consequnces_twists_in_groupoid_actions}.

Every twisted inverse semigroup action naturally yields a saturated Fell bundle over the inverse semigroup, see \cite[page 250]{BussExel0}, which in the present context is as described in the assertion.
\end{proof}

\begin{cor}\label{cor:Fell_bundle_from_twisted_action} 
Let $(\alpha, u)$ be a twisted action of $\G$ on $\A = \{A_x\}_{x\in X}$. 
Write the pullback bundle $\A^{(\alpha, u)} := r^{*}\A$, as $\A^{(\alpha, u)} = \bigsqcup_{ g\in \G } A_g$ where $A_g:= A_{r(g)}$ for $g\in \G$. 
Then $\A^{(\alpha, u)}$ is a Fell bundle over the groupoid $\G$ with multiplication and involution given by  
\begin{gather*}
    A_g\times A_h \ni (a,b) \longmapsto a\cdot \alpha_g(b)u(g,h)\in  A_{gh}, \\
    A_g \ni a\longmapsto u(g^{-1},g)^*\alpha_{g^{-1}}(a^*)=\alpha_{g}^{-1}(a^*)u(g^{-1},g)^*\in A_{g^{-1}}.
\end{gather*}
\end{cor}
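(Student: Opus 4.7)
My plan is to deduce this corollary from Lemma~\ref{lem:inverse_semigroup_twisted_action} and Theorem~\ref{thm:groupoid_Fell_bundle_from_inverse_semigroup_Fell_bundle}, rather than verifying the Fell bundle axioms directly. Lemma~\ref{lem:inverse_semigroup_twisted_action} already produces a saturated inverse semigroup Fell bundle $\A_{\sem}^{(\alpha,u)}=\{A_U\}_{U\in \Bis(\G)}$ with $A_U=\contz(\A|_{r(U)})\delta_U$, so the task reduces to checking that this bundle descends to a groupoid Fell bundle whose underlying Banach bundle is $r^*\A$.

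First I would verify the hypothesis of Theorem~\ref{thm:groupoid_Fell_bundle_from_inverse_semigroup_Fell_bundle}: the map $U\mapsto A_U$ from open subsets of $X$ to ideals of $A_X=\contz(\A)$ preserves arbitrary suprema. For open $U\subseteq X$ one has $A_U=\contz(\A|_U)= \contz(U)\cdot\contz(\A)$, so this map is precisely the standard $\contz(X)$-module correspondence between open sets and ideals, which is well known to preserve arbitrary unions. Theorem~\ref{thm:groupoid_Fell_bundle_from_inverse_semigroup_Fell_bundle} then yields a unique Fell bundle $\A^{(\alpha,u)}=\{\widetilde A_g\}_{g\in\G}$ over $\G$ inducing $\A_{\sem}^{(\alpha,u)}$.

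Next I would identify $\A^{(\alpha,u)}$ fibrewise with $r^*\A$ using the construction recalled in Remark~\ref{rem:Buss_Meyer_theorem_bundle_description}. For a bisection $U\ni g$, the fibre $\widetilde A_g$ equals the quotient $A_U/I_{U,g}$, where $I_{U,g}=I_{r(g)}A_U$. Since $A_U=\contz(\A|_{r(U)})$ is already a $\contz(X)$-module via the range map, evaluation at $r(g)$ furnishes a natural isomorphism $A_U/I_{U,g}\congto A_{r(g)}$, compatible with the transition maps $j_{U,V}$ coming from the inverse semigroup Fell bundle. Writing the elements of $A_U$ as continuous sections $a$ of $\A|_{r(U)}$ and composing with $r|_U$ (a homeomorphism onto $r(U)$), one sees that the topology produced by Theorem~\ref{thm:groupoid_Fell_bundle_from_inverse_semigroup_Fell_bundle} agrees with the pullback topology on $r^*\A$, because both topologies are determined (via Fell's reconstruction theorem) by requiring exactly these sections to be continuous.

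Finally I would verify that the multiplication and involution formulas from Remark~\ref{rem:Buss_Meyer_theorem_bundle_description} translate into the stated ones. For $a\in A_U$ and $b\in A_V$, the inverse semigroup Fell bundle product is $\alpha_U(\alpha_U^{-1}(a)b)\omega(U,V)\delta_{UV}$. Evaluating at $gh$ with $g\in U$, $h\in V$, $s(g)=r(h)$, and applying the definitions of $\alpha_U$ and $\omega(U,V)$ from Lemma~\ref{lem:inverse_semigroup_twisted_action}, one obtains $a(r(g))\,\alpha_g(b(r(h)))\,u(g,h)\in A_{r(g)}=A_{gh}$, matching the stated multiplication. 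Similarly, the inverse semigroup involution $\alpha_U^{-1}(a^*)\omega(U^*,U)^*\delta_{U^*}$ evaluated at $g^{-1}$ yields $\alpha_{g^{-1}}(a(r(g))^*)u(g^{-1},g)^*\in A_{s(g)}=A_{g^{-1}}$; using $\alpha_g^{-1}=\Ad_{u(g^{-1},g)^*}\circ\alpha_{g^{-1}}$ from Remark~\ref{rem:consequnces_twists_in_groupoid_actions} gives the equivalent form $u(g^{-1},g)^*\alpha_{g^{-1}}(a^*)=\alpha_g^{-1}(a^*)u(g^{-1},g)^*$. The main bookkeeping obstacle is the identification of the fibres $A_U/I_{U,g}\cong A_{r(g)}$ compatibly with the Hilbert bimodule structure in Remark~\ref{rem:Buss_Meyer_theorem_bundle_description}; once this is pinned down, the algebraic verifications above are direct pointwise computations.
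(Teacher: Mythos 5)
Your proposal is correct and follows essentially the same route as the paper: the paper's proof likewise avoids the direct axiom check by appealing to Lemma~\ref{lem:inverse_semigroup_twisted_action} and Theorem~\ref{thm:groupoid_Fell_bundle_from_inverse_semigroup_Fell_bundle}, identifying the resulting Banach bundle with $r^*\A$ via Remark~\ref{rem:Buss_Meyer_theorem_bundle_description} and noting that the multiplication and involution are then forced. You simply spell out the suprema-preservation hypothesis, the fibre identification, and the pointwise translation of the formulas in more detail than the paper does.
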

\begin{proof} 
One could check the conditions in \cite[Definition 2.6]{BussExel}, which is a tedious verification. 	
To avoid that we  appeal to Lemma~\ref{lem:inverse_semigroup_twisted_action} and Theorem~\ref{thm:groupoid_Fell_bundle_from_inverse_semigroup_Fell_bundle}. 
Namely, by Theorem~\ref{thm:groupoid_Fell_bundle_from_inverse_semigroup_Fell_bundle} there is  a unique Fell bundle $\A^{(\alpha,u)}$ over $\G$, such that $\A_{\sem}^{(\alpha,u)}$ is induced by  $\A^{(\alpha,u)}$.
By the description in Remark~\ref{rem:Buss_Meyer_theorem_bundle_description}, the bundle $\A^{(\alpha,u)}$ as a Banach bundle can be naturally identified with the pullback bundle $\A^{(\alpha, u)}:=r^{*}\A$, and then the multiplication and involution have to be as in the statement.
\end{proof}

Note that we have in particular $\A^{(\alpha, u)}|_X=\A$.

\begin{defn}\label{def:Groupoid_action_and_related_objects}
If $(\alpha, u)$ is a twisted action of $\G$ on $\A = \{ A_x \}_{x\in X}$, we define the \emph{full} and the \emph{reduced crossed product} for $\alpha$ as the cross-sectional \cstar{}algebras $\cst(\A^{(\alpha,u)})$ and $C_{\red}^*(\A^{(\alpha,u)})$ for the Fell bundle $\A^{(\alpha,u)}$ described in Corollary~\ref{cor:Fell_bundle_from_twisted_action}.	
\end{defn}

More specifically, the full crossed product $\cst(\A^{(\alpha,u)})$ is the maximal $\cst$-completion of the $*$-algebra $\contc(\A^{(\alpha,u)})$ where
for $a, b \in \contc(\A^{(\alpha,u)})$, $g \in \G$, we have
\[
    (a*b)(g)=\sum_{h\in \G_{s(g)}} a(gh^{-1})\alpha_{g h^{-1}}(b(h))u(gh^{-1},h),\quad  \quad  a^*(g)= u(g,g^{-1})^*\alpha_{g}(a(g^{-1})^*).
\]
The \emph{regular representation} $\Lambda$  of $\cst(\A^{(\alpha,u)})$ on $\ell^2(\A^{(\alpha,u)})=\ell^2(r^*\A)=\{\bigoplus_{g \in \G_x} A_{r(g)}\}_{x\in X}$ is determined by the representations $\Lambda_x$ on the Hilbert $A_x$-modules 
$\ell^2(r^*\A)_x=\bigoplus_{g \in \G_x} A_{r(g)}$, $x\in X$, where $[\xi \cdot a_x](g)= \xi(g)\alpha_{g}(a_x)$, $\left\langle \xi, \zeta\right\rangle_{A_x}=\sum_{h \in \G_x} \alpha_{h}^{-1}(\xi(h)^*\zeta(h))$, for $a_x\in A_x$, $\xi, \zeta \in \bigoplus_{h \in \G_x} A_{r(h)}$, $g \in \G_x$ and
\begin{equation}\label{eq:regular-rep-twisted-actions}
    \big( \Lambda_x(a)\xi \big)(g) = \sum_{h \in \G_x} a(g h^{-1}) \alpha_{g h^{-1}}(\xi(h))u(gh^{-1},h) , \qquad a \in \contc(\A^{(\alpha,u)}) ,\ g \in \G_x .
\end{equation}


\begin{ex} \label{ex:groupoid crossed product}
When $\G=G$ is a discrete group then a twisted action $(\alpha, u)$ of $\G$ is the same as a twisted group action on a $\cst$-algebra $\A=A$ in the sense of Busby--Smith~\cite{BusbySmith}; this has been considered in many other papers, for example \cite{PackerRaeburn,BedosContiregular,BedosConti,BedosConti2}, and the algebras $\cst(\A^{(\alpha,u)})$ and $C_{\red}^*(\A^{(\alpha,u)})$ are the usual full and reduced crossed products, respectively, usually also denoted by $A\rtimes_{(\alpha,u)}G$ and $A\rtimes_{(\alpha,u),r}G$ in the literature.  
\end{ex}

\begin{ex}\label{ex:continuous-cocycles} 
Twisted actions of $\G$ on a trivial bundle $\A=\{\C\}_{x\in X}$ correspond to continuous groupoid cocycles, i.e.\ for any such action $(\alpha, u)$ we necessarily have $\alpha=\{ \mathrm{id}_{\C}\}_{x\in X}$, and $u$ is a \emph{continuous  normalised $2$-cocycle} on $\G$, that is, $u : \G^{(2)} \to \mathbb{T}$ is a continuous map satisfying $u\big( r(g),g \big) = 1 = u\big( g , s(g) \big)$  and $u(f,g)u(fg,h) = u(g,h)u(f,gh)$ for every composable triple $(f,g,h) \in \G^{(3)}$. 
The algebras $\cst(\A^{(\alpha,u)})$ and $C_{\red}^*(\A^{(\alpha,u)})$ are then the usual full and reduced twisted groupoid $\cst$-algebras, usually also written as $\cst(\G,u)$ and $\cst_{\red}(\G,u)$, see \cite{Renault0}.	 
In particular, the (left) regular representation ~\eqref{eq:regular-rep-twisted-actions} for $x\in X$ specialises in this case to a homomorphism $\lambda^u_x\colon \contc(\G,u)\to \LL(\ell^2(\G_x))$, which  on the canonical basis $\{\delta_g\}_{g\in \G_x}\subseteq \ell^2(\G_x)$ is given by
\begin{equation}\label{eq:regular-rep-twisted-groupoid}
    \lambda^u_x(f)\delta_h=\sum_{g\in \G_{r(h)}}f(g)u(g,h)\delta_{gh}.
\end{equation}
We use the superscript $u$ above in order to differentiate $\lambda^u$ from the ordinary (left) regular representation $\lambda$, which coincides with $\lambda^u$ if $u=1$ is the trivial $2$-cocycle. 
\end{ex}

\begin{ex}\label{ex:matrixtwistedactions}
Consider a twisted action $(\alpha, u)$ of a groupoid $\G$ on a \cst-bundle $\A$ and let $n \in \N$. 
The lifted action $(\alpha^{(n)}, u^{(n)})$ on $\Mat_n(\A)$ is given by the isomorphisms $\alpha_g^{(n)}$ from $\Mat_n(A_{s(g)})$ to $\Mat_n(A_{r(g)})$ and unitaries $u(g,h)^{(n)} = u(g,h) \otimes I_n \in  \Mat_n (\M(A_{r(g)})) \cong \M(\Mat_n(A_{r(g)}))$. 
The algebraic and topological conditions appearing in Definition~\ref{def:twisted_groupoid_action} are clearly satisfied by the lifted action. 
Further checks show that we have the isomorphisms $\Mat_n(\cst(\A^{(\alpha,u)})) \cong  \cst(\Mat_n(\A)^{(\alpha^{(n)},u^{(n)})})$ and $\Mat_n(C_{\red}^*(\A^{(\alpha,u)})) \cong C_{\red}^*(\Mat_n(\A)^{(\alpha^{(n)},u^{(n)})})$.
Defining a matrix lift of an abstract Fell bundle is more complicated. 
The question relates to the norm on the matrices over fibres, and it can addressed via the \cst-condition, but we will not need it in this paper.
\end{ex}

\subsection{Representations of crossed products} 
\label{sec:representations_of_crossed_products}

Let us fix a twisted action $(\alpha, u)$ of $\G$  on $\A = \{ A_x \}_{x\in X}$, and recall the inverse semigroup Fell bundle $\A_{\sem}^{(\alpha,u)}$ from Lemma~\ref{lem:inverse_semigroup_twisted_action}.
For every  $U\in \Bis(\G)$, and $a\in \contc(\A|_{r(U)})$ we may identify $a\delta_U\in \A_{\sem}^{(\alpha,u)}$ with an element of $\contc(\A^{(\alpha,u)})$ given by 
\[
    (a\delta_U)(g) := [g\in U] a(r(g)),\qquad  g\in \G.
\] 
The product and involutions in $\A_{\sem}^{(\alpha,u)}$ and $\contc(\A^{(\alpha,u)})$ are compatible, and every element in $\contc(\A^{(\alpha,u)})$ is of the form $\sum_{U\in F}a_U\delta_U$ where $F\subseteq \Bis(\G)$ is finite and $a_U\in \contc(\A|_{r(U)})$, $U\in F$. 
This extends to an isomorphism between the algebras $\cst(\A^{(\alpha,u)})$ and  $C_{\red}^*(\A^{(\alpha,u)})$ and the full and reduced algebras associated to the inverse semigroup Fell bundle $\A_{\sem}^{(\alpha,u)}$, respectively, see \cite[Theorem~5.5]{BussMeyer} and \cite[Theorem~2.15, Theorem~4.11]{BussExel}. 
In particular, the relation 
\[
    \psi(\sum_{U\in F}a_U\delta_U)=\sum_{U\in F} \psi_{U}(a_U\delta_U)
\]
gives a bijective correspondence between representations $\psi : \cst(\A^{(\alpha,u)}) \to B$ in a $\cst$-algebra $B$ and families $\{ \psi_{U} \}_{ U\in\Bis(\G) }$ of linear maps $\psi_{U} : A_{r(U)} \to B$ such that for all $a\in A_{r(U)}$, $b \in A_{r(V)}$, $U,V\in \Bis(\G)$, we have
\begin{enumerate}[labelindent=40pt,label={(SR\arabic*)},itemindent=1em] 
    \item\label{item:semigroup_representation1'} $\psi_U(a\delta_U) \psi_V(b\delta_{V}) = \psi_{UV}( \alpha_U(\alpha_{U}^{-1}(a_U)a_V ) \omega(U,V) \delta_{UV})$;
    \item \label{item:semigroup_representation2'} $\psi_U(a\delta_U)^* = \psi_{U^*}(\alpha_U^{-1}(a^*) \omega(U^*,U) \delta_{U^*})$. 
\end{enumerate}
If $H$ is a Hilbert space then the relation 
\[
    \psi(\sum_{U\in F}a_U\delta_U)=\sum_{U\in F} \pi(a_U) v_{U}
\] 
gives a bijective correspondence between representations $\psi:\cst(\A^{(\alpha,u)})\to \BB(H)$ and \emph{covariant representations of $(\alpha, u)$ on $H$}, i.e.\ pairs $(\pi,v)$ where $\pi: A \to \BB(H)$ is a representation (recall that $A$ denotes the \cst-algebra associated with the bundle $\A$) and $\{v_U\}_{U\in \Bis(\G)}\subseteq \BB(H)$ is a family of partial isometries satisfying: 
\begin{enumerate}[labelindent=40pt,label={(CR\arabic*)},itemindent=1em] 
    \item\label{item:covariant_representation1'} $v_U \pi(a) v_{U}^* = \pi(\alpha_U(a))$ for all $a\in A_{s(U)} $, $U\in \Bis(\G)$;
    \item \label{item:covariant_representation2'} the ranges of $v_U$ and $v_U^*$ are  $\overline{\pi(A_{r(U)})H}$ and $\overline{\pi(A_{s(U)})H}$, for all $U\in \Bis(\G)$;
    \item\label{item:covariant_representation3'} $\pi(a)v_U v_V = \pi(a\omega(U,V)) v_{UV}$ for all $a\in A_{r(UV)}$, $U,V\in \Bis(\G)$. 
\end{enumerate}


%
%
\section{Equivariant correspondences} 
\label{Sec:equivariant}

Throughout this section $\A$ and $\B$ will be $\cst$-bundles over $X:=\G^{(0)}$, carrying twisted actions $(\alpha,u_\alpha)$ and $(\beta,u_\beta)$ of $\G$ 
(where $\G$ is an \'{e}tale locally compact Hausdorff groupoid). 
In the untwisted case, the following notion was called an \emph{equivariant representation} of $\G$ by Le Gall in \cite[Definition 4.6]{LeGall}. 
B\'{e}dos and Conti~\cite{BedosConti, BedosConti2} follow this naming, while 
Echterhoff, Kaliszewski, Quigg and Raeburn~\cite[Definition 2.5]{EKQR} call such an object a \emph{compatible right-Hilbert module} (in the context of group actions).  
Deaconu~\cite[Definition 5.10]{Deaconu} calls it a \emph{groupoid action on a $\contz(X)$-$\cst$-correspondence}. 
Here we extend this notion to twisted actions, and call it an equivariant correspondence.
	
\begin{defn}\label{def:equivariant_representation} 
Let $(\alpha, u_{\alpha})$ and $(\beta,u_{\beta})$ be twisted actions of $\G$ on $\A$ and $\B$, respectively. 
An $(\alpha, u_{\alpha})$-$(\beta,u_{\beta})$ \emph{equivariant correspondence} is a $\cst$-correspondence bundle $\EE$ from $\A$ to $\B$ endowed with an \emph{equivariant representation} of $\G$, meaning a family $L=\{L_g\}_{g\in \G}$ satisfying  
\begin{enumerate}
    \item\label{enu:equivariant_representation1} for each $g\in \G$ the map $L_g:E_{s(g)}\to E_{r(g)}$ is an invertible isometry, $L_x = \mathrm{id}_{E_x}$ for $x\in X$, and 
    \[
        L_{g}(L_{h}(\xi)) = u_{\alpha}(g,h)L_{gh}(\xi)u_{\beta}(g,h)^*, \qquad \xi\in E_{s(h)},\, (g,h)\in \G^{(2)};
    \]
    \item\label{enu:equivariant_representation2} for $g \in \G$, $\xi,\zeta \in E_{s(g)}$, $a\in A_{s(g)}$, $b\in B_{s(g)}$ we have 
    \[
        \qquad L_{g} (a \xi)=\alpha_{g}(a)  (L_{g}\xi), \qquad L_{g}(\xi b)=L_{g}(\xi) \beta_{g}(b),\qquad \beta_{g}(  \langle  \xi,\zeta\rangle_{B_{s(g)}})= \langle  L_{g}\xi, L_{g}\zeta\rangle_{B_{r(g)}};
    \]
    \item\label{enu:equivariant_representation3} for each $\xi, \zeta \in \contc(\EE)$ the section $\G\ni g \longmapsto \langle \xi(r(g)),L_g \zeta(s(g))\rangle_{B_{r(g)}}\in r^*\B$ is continuous.
\end{enumerate}
\end{defn}

\begin{rem}\label{rem:relations_equivariant_reps} 
The relations in Definition~\ref{def:equivariant_representation} imply that $L_g^{-1}(\cdot) = u_{\alpha}(g^{-1},g)^* L_{g^{-1}}(\cdot)u_{\beta}(g^{-1},g)$, $L_{g}^{-1} (\alpha_{g}(a) \xi) = a L_{g}^{-1}(\xi)$, $L_{g}^{-1}(\xi \beta_{g}(b))=L_{g}^{-1}(\xi) b$ and  $\beta_{g}^{-1}( \langle \xi,\zeta\rangle_{B_{s(g)}}) = \langle  L_{g}^{-1}\xi, L_{g}^{-1}\zeta\rangle_{B_{r(g)}}$ for all $\xi, \zeta \in E_{s(g)}$, $a\in A_{s(g)}$, $b\in B_{s(g)}$ and $g\in \G$. 
\end{rem}

\begin{lem}\label{lem:L_is_continuous}
Condition \ref{enu:equivariant_representation3} in Definition~\ref{def:equivariant_representation}, assuming \ref{enu:equivariant_representation2}, is equivalent to continuity of 
\begin{equation}\label{eq:continuous_bundle_map}
    s^*\EE\supseteq E_{s(g)}\ni \xi \longmapsto L_g(\xi)\in E_{r(g)}\in r^*\EE.
\end{equation}
\end{lem}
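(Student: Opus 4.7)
The plan is to prove the equivalence by exploiting Fell's reconstruction theorem (quoted in the excerpt) and the fact that, in a $\cst$\nobreakdash-correspondence bundle, norms are controlled by inner products which take values in a known (upper semicontinuous) Banach bundle. One direction is essentially by composition; the substantive content of the lemma is the reverse direction, where from continuity of inner products against arbitrary sections one has to recover the continuity of the bundle map itself.

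\textbf{Easy direction ($\Leftarrow$).} Assume \eqref{eq:continuous_bundle_map} is continuous. For any $\xi,\zeta\in\contc(\EE)$, the maps $g\mapsto \xi(r(g))$ and $g\mapsto \zeta(s(g))$ are continuous sections of the pullback bundles $r^*\EE$ and $s^*\EE$, respectively. Composing the latter with the continuous bundle map yields that $g\mapsto L_g\zeta(s(g))$ is a continuous section of $r^*\EE$. Since $r^*\EE$ is a $\cst$\nobreakdash-correspondence bundle from $r^*\A$ to $r^*\B$, the inner product $g\mapsto \langle \xi(r(g)), L_g\zeta(s(g))\rangle_{B_{r(g)}}$ is then a continuous section of $r^*\B$ by Definition~\ref{def:Correspondence_bundle}.

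\textbf{Main direction ($\Rightarrow$).} The key intermediate claim is that \textbf{for every $\zeta\in\contc(\EE)$ the section $\widehat\zeta\colon g\mapsto L_g\zeta(s(g))$ of $r^*\EE$ is continuous}. Fix $g_0\in\G$ and $\varepsilon>0$; by Fell's reconstruction theorem it suffices to produce a continuous section $\eta$ of $r^*\EE$ with $\|\widehat\zeta(g_0)-\eta(g_0)\|<\varepsilon$ and $\|\widehat\zeta(g)-\eta(g)\|<\varepsilon$ for $g$ in a neighbourhood of $g_0$. Using density of $\{\xi(r(g_0)):\xi\in\contc(\EE)\}$ in $E_{r(g_0)}$, choose $\xi_0\in\contc(\EE)$ with $\|\xi_0(r(g_0))-L_{g_0}\zeta(s(g_0))\|<\varepsilon/2$, and take $\eta(g):=\xi_0(r(g))$. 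Write
\[
\|\xi_0(r(g))-L_g\zeta(s(g))\|^2=\|\phi(g)\|,
\]
where, suppressing evaluations,
\[
\phi(g)=\langle\xi_0,\xi_0\rangle-\langle\xi_0,L_g\zeta\rangle-\langle L_g\zeta,\xi_0\rangle+\langle L_g\zeta,L_g\zeta\rangle.
\]
The first term is continuous as a section of $r^*\B$ since $\xi_0\in\contc(\EE)$; the two cross terms are continuous by hypothesis~\ref{enu:equivariant_representation3} (the second is the involution of the first); and the last equals $\beta_g(\langle\zeta(s(g)),\zeta(s(g))\rangle_{B_{s(g)}})$ by~\ref{enu:equivariant_representation2}, which is continuous by the continuity of $\beta$ in Definition~\ref{def:twisted_groupoid_action}\ref{enu:twisted_groupoid_action1} composed with the continuous section $g\mapsto \langle\zeta(s(g)),\zeta(s(g))\rangle_{B_{s(g)}}$ of $s^*\B$. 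Hence $\phi$ is a continuous section of $r^*\B$, so $\|\phi(\cdot)\|$ is upper semicontinuous, and the bound $\|\phi(g_0)\|<\varepsilon^2/4$ propagates to a neighbourhood of $g_0$, proving continuity of $\widehat\zeta$.

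\textbf{From $\widehat\zeta$ to the bundle map.} With $\widehat\zeta$ continuous, the continuity of \eqref{eq:continuous_bundle_map} follows again from Fell's theorem. Given a convergent net $(g_i,\xi_i)\to(g_0,\xi_0)$ in $s^*\EE$ and $\varepsilon>0$, pick $\zeta\in\contc(\EE)$ with $\|\zeta(s(g_0))-\xi_0\|<\varepsilon/2$. Since $L$ is fibrewise isometric,
\[
\|\widehat\zeta(g_0)-L_{g_0}\xi_0\|=\|\zeta(s(g_0))-\xi_0\|<\varepsilon/2,\qquad \|\widehat\zeta(g_i)-L_{g_i}\xi_i\|=\|\zeta(s(g_i))-\xi_i\|.
\]
Continuity of $\zeta$ and of the difference operation in the Banach bundle $\EE$ give $\zeta(s(g_i))-\xi_i\to \zeta(s(g_0))-\xi_0$ in $\EE$, and upper semicontinuity of the norm yields $\|\zeta(s(g_i))-\xi_i\|<\varepsilon$ eventually; thus $L_{g_i}\xi_i\to L_{g_0}\xi_0$ in $r^*\EE$, as required.

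\textbf{Main obstacle.} The only real technical step is the intermediate claim on continuity of $\widehat\zeta$: one must assemble the four terms of $\|\xi_0-L_g\zeta\|^2$ into a continuous $r^*\B$-valued section, which is where hypothesis~\ref{enu:equivariant_representation3}, the compatibility relations in~\ref{enu:equivariant_representation2}, and continuity of the action $\beta$ all come together. The rest is routine reconstruction via Fell's theorem and isometry of $L_g$.
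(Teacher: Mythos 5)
Your proof is correct and follows essentially the same route as the paper's: in both arguments the heart of the matter is the four-term expansion of the squared norm, with the cross terms handled by condition \ref{enu:equivariant_representation3}, the term $\langle L_g\zeta, L_g\zeta\rangle$ rewritten via \ref{enu:equivariant_representation2} and continuity of $\beta$, together with isometry of $L_g$ and upper semicontinuity of the norm to pass from a continuous $r^*\B$-valued section to the required norm estimate on a neighbourhood. The only difference is organisational: you first establish continuity of the section $g\mapsto L_g\zeta(s(g))$ for $\zeta\in\contc(\EE)$ and then approximate a general convergent net in $s^*\EE$, whereas the paper runs both steps in a single triangle-inequality estimate along the net (your two-step version has the mild advantage that hypothesis \ref{enu:equivariant_representation3} is only ever invoked for sections of $\contc(\EE)$, exactly as stated).
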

\begin{proof}
Clearly, continuity of \eqref{eq:continuous_bundle_map} implies \ref{enu:equivariant_representation3} in Definition~\ref{def:equivariant_representation}.  
Conversely, assume \ref{enu:equivariant_representation3} and the last relation in \ref{enu:equivariant_representation2} in Definition~\ref{def:equivariant_representation}. 
The latter implies that all maps $L_g:E_{s(g)}\to E_{r(g)}$, $g\in \G$, are isometries and for all $\xi, \zeta \in \contc(\EE)$, the section  
\[
    \G\ni g \longmapsto   \beta_{g}(  \langle  \xi,\zeta\rangle_{B_{s(g)}})=\langle L_{g}\xi, L_{g}\zeta\rangle_{B_{r(g)}} \in B_{r(g)}\subseteq  r^*\B
\]
is continuous (because $\beta$ and the inner product are `continuous'). 
Let $(\xi_i)_{i \in I}$ be a net of elements  of $s^*\EE$ convergent to $\xi_0 \in s^*\EE$. 
This means that $g_i:=p(\xi_i)\to g:=p(\xi_0)$ in $\G$ and for every $\xi\in  \contz(s^*\EE)$ we have $\|\xi_i -\xi(g_i)\|\to \|\xi_0 -\xi(g)\|$. 
Choose $\xi\in \contz(s^*\EE)$ and  $\zeta \in \contz(r^*\EE)$ such that $\xi(g)=\xi_0$ and $\zeta(g)=L_g(\xi_0)$. 
Then 
\[
    \| L_{g_i}(\xi_i) -\zeta(g_i) \| \leq \|\xi_i -\xi(g_i)\| + \|L_{g_i}(\xi(g_i)) -\zeta(g_i)\|
\]
where the first summand tends to zero because $\xi_0 =\xi(g)$. 
To see the same for $\|L_{g_i}(\xi(g_i)) -\zeta(g_i)\|$ note that its square is the norm of the following element of $B_{g_i} \subseteq \B$:
\[
    \langle L_{g_i}(\xi(g_i)) , L_{g_i}(\xi(g_i)) \rangle_{B_{r(g)}} - \langle L_{g_i}(\xi(g_i)) , \zeta(g_i), \rangle_{B_{r(g)}} -\langle \zeta(g_i), L_{g_i}(\xi(g_i)) \rangle_{B_{r(g)}} +\langle \zeta(g_i), \zeta(g_i) \rangle_{B_{r(g)}} , 
\]
which tends to 
\[
    \langle \zeta(g), \zeta(g) \rangle_{B_{r(g)}}-\langle \zeta(g), \zeta(g) \rangle_{B_{r(g)}}-\langle \zeta(g), \zeta(g) \rangle_{B_{r(g)}}+\langle \zeta(g), \zeta(g) \rangle_{B_{r(g)}}=0
\] 
by continuity of all the maps involved. 
This proves that $L_{g_i}(\xi_i) \to  L_{g}(\xi_0)$ in $r^*\EE$.
\end{proof}

For a twisted groupoid action $(\alpha, u_{\alpha})$ on $\A$ we may think of a pair $(\overline{\alpha}, u_{\alpha})$, where $\overline{\alpha}:=\{\overline{\alpha}_{g}\}_{g\in \G}$ is a family of standard strict extensions of the $*$-isomorphisms $\alpha=\{\alpha_{g}\}_{g\in \G}$, as a `strictly continuous' twisted action on $\M(\A)$. 
Then any equivariant representation of $\G$ extends to a `strictly continuous' equivariant representation between the extended actions as follows. 

\begin{prop}\label{prop:strict_extension_of_groupoid_action}
Any $(\alpha, u_{\alpha})$-$(\beta,u_{\beta})$ equivariant representation $L=\{L_g\}_{g\in \G}$ of $\G$ on $\EE$ extends to a `strictly continuous' $(\overline{\alpha}, u_{\alpha})$-$(\overline{\beta}, u_{\beta})$ equivariant representation $\overline{L}=\{\overline{L}_g\}_{g\in \G}$ on  $\M(\EE)$, in the sense that each isometry $L_g$ extends uniquely to an isometry $ \overline{L}_{g} : \M(E_{s(g)}) \to \M(E_{r(g)})$, $g\in \G$, such that $\overline{L} = \{ \overline{L}_g \}_{g\in \G}$ satisfies the obvious analogues of conditions \ref{enu:equivariant_representation1} and \ref{enu:equivariant_representation2} in Definition~\ref{def:equivariant_representation}, and the following analogue of \ref{enu:equivariant_representation3}:
\begin{enumerate}[label={(\arabic*')}]  \setcounter{enumi}{2}
    \item\label{enu:equivariant_representation3'} the section $\G\ni g \longmapsto  \langle  \xi(r(g)),a(r(g)) \overline{L}_g \zeta(s(g))\rangle_{\M(B_{r(g)})}\in B_{r(g)}\subseteq r^*\B$ is continuous for all strictly continuous sections  $\xi, \zeta \in \contb(\M(\EE))$ and any $a\in \contc(\A)$.
\end{enumerate}
\end{prop}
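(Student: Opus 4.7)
The plan is fiberwise extension via Lemma~\ref{lem:extensions_lemma}, algebraic verification by uniqueness, and reduction of \ref{enu:equivariant_representation3'} to a pairing to which Proposition~\ref{prop:Akemman_Pedersen_Tomiyama_generalization} applies.

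For the extension: each $L_g\colon E_{s(g)}\to E_{r(g)}$ is a contractive morphism of Banach bimodules with coefficient isomorphisms $\alpha_g,\beta_g$, so Lemma~\ref{lem:extensions_lemma} yields a unique contractive extension $\overline{L}_g\colon \M(E_{s(g)})\to \M(E_{r(g)})$ intertwining $\overline{\alpha}_g$ and $\overline{\beta}_g$. Applying the same construction to $L_g^{-1}$ and invoking uniqueness shows $\overline{L}_g$ is an isometric isomorphism with inverse $\overline{L_g^{-1}}$; inner-product preservation at the multiplier level follows by strict approximation $\xi\approx \xi b$, $\eta\approx \eta b'$ (for $b,b'\in B_{s(g)}$), combined with nondegeneracy and the already-known identity on $E_{s(g)}$. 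The analogues of \ref{enu:equivariant_representation1} and \ref{enu:equivariant_representation2} for $\overline{L}$ are then immediate: in each case both sides of the desired identity are strict extensions of maps agreeing on $E_{s(h)}$ or $E_{s(g)}$, and so coincide by the uniqueness part of Lemma~\ref{lem:extensions_lemma}.

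For \ref{enu:equivariant_representation3'}, put $\xi'(y):=a(y)^*\xi(y)$, so that $\xi'=a^*\xi\in \contc(\EE)$ (compact support from $a\in\contc(\A)$; continuity from strict continuity of $\xi$). Then the section in question equals
\[
    \nu(g) = \langle \xi'(r(g)),\, \overline{L}_g\zeta(s(g))\rangle_{\M(B_{r(g)})} = \overline{\beta}_g\bigl(\mu(g)\bigr),\qquad \mu(g):=\langle L_g^{-1}\xi'(r(g)),\, \zeta(s(g))\rangle_{\M(B_{s(g)})},
\]
where the second equality uses the $\overline{\beta}_g$-equivariance of the inner product under $\overline{L}_g$ (derived from Definition~\ref{def:equivariant_representation}\ref{enu:equivariant_representation2} by strict density). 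Since $L_g^{-1}\xi'(r(g))\in E_{s(g)}$ and $\zeta(s(g))\in\M(E_{s(g)})$, the identification $\M(E)\subseteq\L(B,E)$ from Section~\ref{sec:multiplier} forces $\mu(g)\in B_{s(g)}$ and hence $\nu(g)\in B_{r(g)}$. Fix $g_0\in\G$ and choose $\chi\in\contc(\G)$ with $\chi\equiv 1$ on a neighbourhood $V$ of $g_0$. Using Remark~\ref{rem:relations_equivariant_reps} to rewrite $L_g^{-1} = u_\alpha(g^{-1},g)^* L_{g^{-1}}(\cdot)u_\beta(g^{-1},g)$, the continuity of the bundle map $s^*\EE\to r^*\EE$ from Lemma~\ref{lem:L_is_continuous}, combined with the inversion homeomorphism $g\mapsto g^{-1}$ and the strict continuity of the cocycles $u_\alpha,u_\beta$ (Definition~\ref{def:twisted_groupoid_action}\ref{enu:twisted_groupoid_action2}), show that $\eta\colon g\mapsto L_g^{-1}\xi'(r(g))$ is a continuous bounded section of $s^*\EE$. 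Then $\chi\eta\in\contc(s^*\EE)$, while $\zeta\circ s$ is a strictly continuous bounded section of $\M(s^*\EE)$; the pullback version of Proposition~\ref{prop:Akemman_Pedersen_Tomiyama_generalization} identifies the pointwise pairing $g\mapsto \langle(\chi\eta)(g),\zeta(s(g))\rangle_{\M(B_{s(g)})}$ with an element of $\contz(s^*\B)$, hence a continuous section of $s^*\B$. On $V$ this section coincides with $\mu$, so $\mu$ is continuous at $g_0$; as $g_0$ was arbitrary, $\mu$ is continuous on $\G$, and $\nu=\beta\circ\mu$ is continuous as a section of $r^*\B$ by the continuity of $\beta$ in Definition~\ref{def:twisted_groupoid_action}\ref{enu:twisted_groupoid_action1}.

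The main obstacle is establishing continuity of $\eta$: it requires three ingredients to fit together cleanly --- the continuity of $L$ (Lemma~\ref{lem:L_is_continuous}), the inversion homeomorphism of $\G$, and the fact that unitary multiplier sections such as $g\mapsto u_\alpha(g^{-1},g)^*$ act by left multiplication as continuous operators on continuous sections of $s^*\EE$. The last point should be handled by first verifying that $g\mapsto u_\alpha(g^{-1},g)$ is strictly continuous as a section of $s^*\M(\A)$, by pulling back along $g\mapsto (g^{-1},g)$ the continuous sections appearing in Definition~\ref{def:twisted_groupoid_action}\ref{enu:twisted_groupoid_action2}, and then applying the bundle-theoretic principle that strictly continuous multipliers act continuously on nondegenerate correspondence bundle sections --- the same principle that underpins Proposition~\ref{prop:Akemman_Pedersen_Tomiyama_generalization}.
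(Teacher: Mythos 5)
Your proposal is correct. The first two steps coincide with the paper's argument: the fibrewise extension is obtained from Lemma~\ref{lem:extensions_lemma} applied to the contractive morphism $(L_g,\alpha_g,\beta_g)$, and the algebraic identities for $\overline{L}$ follow because both sides of each identity are strict (hence unique) extensions of maps agreeing on $E_{s(g)}$. Where you diverge is in the verification of \ref{enu:equivariant_representation3'}. The paper factors $a=bc$ with $b,c\in\contc(\A)$ and rewrites the pairing as $\langle b^*(r(g))\xi(r(g)),\,L_g\bigl(\alpha_{g^{-1}}(c(r(g)))\zeta(s(g))\bigr)\rangle$: the point is that $b^*\xi\in\contc(\EE)$ and, locally on a bisection $U$, $s(g)\mapsto\alpha_{g^{-1}}(c(r(g)))\zeta(s(g))$ is an honest continuous section of $\EE|_{s(U)}$ (since $A\cdot\M(E)\subseteq E$ and $\zeta$ is strictly continuous), so continuity follows directly from condition \ref{enu:equivariant_representation3} of Definition~\ref{def:equivariant_representation} via Lemma~\ref{lem:L_is_continuous}, with no cocycle manipulation. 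You instead transport everything to the source fibre, writing $\nu=\overline{\beta}\circ\mu$ with $\mu(g)=\langle L_g^{-1}(a^*\xi)(r(g)),\zeta(s(g))\rangle$, and pair a continuous section of $s^*\EE$ against the strictly continuous multiplier section $\zeta\circ s$. This is valid, but it costs you the extra step of showing $g\mapsto L_g^{-1}(\kappa(r(g)))$ is continuous for $\kappa\in\contc(\EE)$ — which requires the inversion homeomorphism, Lemma~\ref{lem:L_is_continuous}, and the strict continuity of $g\mapsto u_\alpha(g^{-1},g)$, exactly the ingredients you flag. That observation is true and is in fact used later in the paper (in the proof of Theorem~\ref{thm:Fell_absorption}, where the section $g\mapsto L_{g^{-1}}(\xi(r(g)))$ is asserted to lie in $\contb(s^*\EE)$), so your route is sound; the paper's factorisation trick simply bypasses it.
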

\begin{proof} 
For each $g\in \G$, $(L_{g},\alpha_g,\beta_g)$ is an isomorphism from the $A_{s(g)}$-$B_{s(g)}$-bimodule $E_{s(g)}$ onto the $A_{r(g)}$-$B_{r(g)}$-bimodule $E_{r(g)}$. 
Hence by Lemma~\ref{lem:extensions_lemma}, $L_g$ extends uniquely to a linear map $\overline{L}_{g}:\M(E_{s(g)})\to \M(E_{r(g)})$ such that  
\[
    \overline{L}_{g} (a \xi)=\overline{\alpha}_{g}(a)  (\overline{L}_{g}\xi) , \qquad 
    \overline{L}_{g}(\xi b)=\overline{L}_{g}(\xi) \overline{\beta}_{g}(b),
\]
for all $a\in \M(A_{s(g)})$, $b\in \M(B_{s(g)})$, $\xi \in \M(E_{s(g)})$.
Moreover, $\overline{L}_{g}$ is necessarily an invertible isometry, and it is strictly continuous. 
Strict continuity readily implies that the remaining conditions in Definition~\ref{def:equivariant_representation}\ref{enu:equivariant_representation1}, \ref{enu:equivariant_representation2} remain valid when $L$ is replaced by $\overline{L}$. 
To show \ref{enu:equivariant_representation3'} note that any $a\in \contc(\A)$ may be written as the product $a=bc$ for $b,c\in \contc(\A)$. 
Then for $\xi, \zeta \in \contb(\M(\EE))$ we have
\[
    \langle \xi(r(g)),a \overline{L}_g \zeta(s(g))\rangle_{\M(B_{r(g)})}=\langle b^*(r(g))\xi(r(g)), L_g(\alpha_{g^{-1}}(c(r(g)))\zeta(s(g)))\rangle_{\M(B_{r(g)})}\in B_{r(g)}
\]
and  the section $\G\ni g \longmapsto \langle \xi(r(g)),a(r(g)) \overline{L}_g \zeta(s(g))\rangle_{\M(B_{r(g)})} \in r^*\B$ is continuous because $b^*\xi\in \contc(\EE)$ and for each open bisection $U\subseteq \G$ the map $s(U)\ni s(g) \mapsto \alpha_{g^{-1}}(a(r(g)))\zeta(s(g))$ is a continuous section of $\EE|_{s(U)}$.
\end{proof}

\begin{ex}
For a twisted group action $(A,G,\alpha,u)$, that is, when $X=\{e\}$ and $\G=G$ is a discrete group, the $\cst$-correspondence bundle is a single $\cst$-correspondence $E$ over $A$ and  $(\alpha,u)$-$(\alpha,u)$ equivariant representations of $G$ on $E$, as in Definition~\ref{def:equivariant_representation}, generalise equivariant representations considered in \cite{BedosConti} to the non-unital case. 
\end{ex}

\begin{ex}\label{ex:continuous_Hilbert_G-bundles}
When the bundle $\A=\{\C\}_{x\in X}$ is trivial, so that a twisted action $(\alpha,u)$ on $\A$ reduces to a continuous groupoid $2$-cocycle $u$ (i.e.\ $\alpha$ is trivial, see Example~\ref{ex:continuous-cocycles}), then $(\alpha,u)$-$(\alpha,u)$ equivariant correspondences are precisely \emph{continuous Hilbert $\G$-bundles}, that is, continuous bundles $\H=\{H_x\}_{x\in X}$ of Hilbert spaces together with a continuous groupoid homomorphism $L$ from $\G$ to the unitary groupoid of $\H$, meaning that $L_g\colon H_{s(g)}\!\to H_{r(g)}$ are unitary isomorphisms satisfying $L_g L_h=L_{gh}$ for all $(g,h)\in \G^2$, and such that for continuous sections $\xi,\eta\in \contc(\H)$, the map 
\[
g\longmapsto \braket{\xi(r(g))}{L_g(\eta(s(g)))}
\]
is continuous on~$\G$. 

In particular, we have the \emph{left} and \emph{right regular continuous Hilbert $\G$-bundles}  
$(\{\ell^2(\G^x)\}_{x\in X}, \lambda)$ and $(\{\ell^2(\G_x)\}_{x\in X}, \rho)$, 
where the topology is induced by the functions from $\contc(\G)$, and for each $g\in\G$ the unitaries 
\[
\lambda_g\colon \ell^2(\G^{s(g)})\!\to\! \ell^2(\G^{r(g)}),
\qquad 
\rho_{g}\colon \ell^2(\G_{s(g)})\!\to\! \ell^2(\G_{r(g)}),
\]
are given by
\[
    \lambda_g(\delta_h):=\delta_{gh},
    \qquad 
    \rho_g (\delta_{h}):=\delta_{hg^{-1}}.
\] 
The inverse operation in $\G$ induces a unitary equivalence between these bundles. 
The relationship between the equivariant regular representations of $\G$ introduced here and the usual regular representations of the associated $\cst$-algebra, as discussed in Example~\ref{ex:continuous-cocycles}, does not appear to be fully understood. This issue is related to the next remark.
\end{ex}

\begin{rem}\label{rem:do-not-depend-on-twist}
Equivariant correspondences in Example~\ref{ex:continuous_Hilbert_G-bundles}  do not depend on the cocycle $u$. 
This is compatible with \cite[Remark~3.2]{BedosConti2}.
We also note that our notion of equivariant representation only deals with the “continuous part” of the representation theory of $\G$. 
This idea was already considered by Paterson in \cite{Paterson}, while in \cite{RamsayWalter} general measurable Hilbert bundles are also taken into account by Ramsay and Walter in order to define Fourier–Stieltjes algebras of general locally compact groupoids.
\end{rem}

\begin{ex}
Consider a $2$-cocycle $u\in Z^2(G,\T)$ on a group $G$ and the trivial action $\id$ of $G$ on $A=\C$, so that $(\id,u)$ is a twisted action of $G$ on $\C$. 
To the cocycle $u$ we can assign its left $u$-representation $\lambda^u\colon G\to \LL(\ell^2(G))$ by $\lambda^u_g(\delta_h) := u(g,h)\delta_{gh}$ (which is a special case of~\eqref{eq:regular-rep-twisted-groupoid}). 
This is a unitary (projective) $u$-representation in the sense that $\lambda^u_g\lambda^u_h=u(g,h)\lambda^u_{gh}$ for all $g,h\in G$.
Viewing $\ell^2(G)$ as a correspondence from $\K(\ell^2(G))$ to $\C$, we may view $(\ell^2(G),\lambda^u)$ as an equivariant correspondence from the (untwisted) $G$-action $(\K(\ell^2(G)),\Ad\lambda^u)$ to the twisted action $(\C,\id,u)$. 
This is an \emph{equivariant Morita equivalence}. 
Indeed, at least in the separable case every twisted group action is Morita equivalent to an ordinary action by the Packer--Raeburn stabilisation trick \cite{PackerRaeburn}. 
A version of this also holds for twisted groupoid actions, see \cite[Theorem~5.3]{BussMeyerZhu}.
\end{ex}

The following canonical constructions apply to equivariant correspondences: 

\begin{ex}\label{ex:trivial_equivariant_action} 
Any twisted action $(\alpha,u)$ of $\G$ on $\A$ can be viewed as the  $(\alpha,u)$-$(\alpha,u)$-equivariant representation of $\G$ on the trivial bundle of $\cst$-correspondences $\EE=\A$  where $L=\alpha$. 
We call it \emph{the trivial equivariant representation} of $(\alpha,u)$.
\end{ex}

\begin{ex}[Tensoring with Hilbert bundles]\label{ex:TensorActionHilbertBundle}
We may tensor any $(\alpha,u_{\alpha})$-$(\beta,u_{\beta})$ equivariant representation $L=\{L_g\}_{g\in \G}$  of $\G$ on $\EE$ with any continuous Hilbert $\G$-bundle $(\H,\ell)=(\{H_x\}_{x\in X}, \{\ell_g\}_{g\in \G})$ to get another $(\alpha,u_{\alpha})$-$(\beta,u_{\beta})$ equivariant representation $L\otimes \ell$  on $\EE\otimes \H$. 
Namely, for each $x\in X$ the external tensor product $E_x\otimes H_x$ is naturally a $\cst$-correspondence from $A_x\otimes \C\cong A_x$ to $B_x\otimes \C\cong B_x$, see \cite{Lance}. 
We equip $\EE\otimes \H:=\{E_x\otimes H_x\}_{x\in X}$ with the unique topology such that the sections $X\ni x\to \xi(x)\otimes h(x)\in E_x\otimes H_x$ are continuous for all $\xi\in \contz(\EE)$ and $h\in \contz(\H)$. 
Then the maps 
\[
    L_{g}\otimes \ell_g (\xi\otimes h):= L_{g}(\xi)\otimes \ell_g (h), \qquad \xi \in E_{s(g)}, h\in H_{s(g)},\ g \in \G,
\]
satisfy the conditions in Definition~\ref{def:equivariant_representation}.  
Tensoring $(\EE,L)$  with left or right regular Hilbert bundles, viewed as a $\G$-bundle, see Example~\ref{ex:continuous_Hilbert_G-bundles}, we get equivariant correspondences
equivalent to a \emph{regularised} version of $L$, see Definition~\ref{def:regular_reps} and Example~\ref{ex:regularized_equivariant_correspondences_revisited} below.
%
\end{ex}

\begin{ex}[Amplifications of equivariant correspondences]\label{ex:matrixreps} 
Let $(\alpha, u_{\alpha})$ and $(\beta,u_{\beta})$ be twisted actions of $\G$ on \cst-bundles $\A$ and $\B$, respectively. 
Let $L=\{L_g\}_{g\in \G}$ be an $(\alpha, u_{\alpha})$-$(\beta,u_{\beta})$ \emph{equivariant representation} of $\G$ on a $\cst$-correspondence bundle $\EE$   from $\A$ to $\B$. 
Fix $n \in \N$. 
Then $L^{(n)}= \{L_g^{(n)}\}_{g\in \G}$, with $L_g^{(n)}:\Mat_n(E_{s(g)}) \to \Mat_n(E_{r(g)})$ being the entry-wise matrix lifting of $L_g$ becomes an  $(\alpha^{(n)}, u_{\alpha}^{(n)})$-$(\beta^{(n)},u_{\beta}^{(n)})$ \emph{equivariant representation} of $\G$ on a $\cst$-correspondence bundle $\Mat_n(\EE)$ from $\Mat_n(\A)$ to $\Mat_n(\B)$ (see Examples~\ref{ex:matrixbundles} and \ref{ex:matrixtwistedactions}). 
Once again checking the algebraic and topological conditions is straightforward; moreover the strict extensions $\overline{L_g^{{}_{(n)}}}$ constructed in Proposition~\ref{prop:strict_extension_of_groupoid_action} are the expected matrix liftings of $\overline{L}_g$. 
\end{ex}

\begin{ex}[Tensor product of equivariant correspondences] \label{ex:tensor_product_groupoid_actions}
Let $L$ be an $(\alpha, u_{\alpha})$-$(\beta,u_{\beta})$ equivariant representation of $\G$ on $\EE$  and $K$ a $(\beta,u_{\beta})$-$(\gamma, u_{\gamma})$ equivariant representation of $\G$ on $\FF$ where $(\alpha, u_{\alpha})$, $(\beta,u_{\beta})$ and $(\gamma, u_{\gamma})$ are twisted actions on  $\A$, $\B$, and  $\CC$ respectively. 
In such a situation we will say that $L$ and $K$ are `composable'. 
We have an $(\alpha, u_{\alpha})$-$(\gamma, u_{\gamma})$ equivariant representation $L \otimes K$ on the tensor product $\cst$-correspondence bundle $\EE\otimes_{\B} \FF$,
discussed in Example~\ref{ex:tensor_bundles}, determined by 
\[
    (L_g\otimes K_g )(\xi\otimes \zeta) := (L_g\xi)\otimes (K_g \zeta), \qquad \xi \in E_{s(g)},\ \zeta\in  F_{s(g)},\ g \in \G.
\]
Indeed, let us first note that $L_g\otimes K_g$ extends to an isometry from $E_{s(g)}\otimes_{B_{s(g)}} F_{s(g)}$ to $E_{r(g)}\otimes_{B_{r(g)}} F_{r(g)}$ because it satisfies the last relation from~\ref{enu:equivariant_representation2} in Definition~\ref{def:equivariant_representation} on elementary tensors:
\[
\begin{split}
    \gamma_{g}(  \langle\xi_1\otimes \zeta_1,  \xi_2\otimes \zeta_2\rangle_{C_{s(g)}}) &=\gamma_{g}(  \langle \zeta_1,  \langle \xi_1,\xi_2\rangle_{B_{s(g)}}  \zeta_2\rangle_{C_{s(g)}}) \\
    &=\langle K_{g}\zeta_1,  K_{g}(\langle \xi_1,\xi_2\rangle_{B_{s(g)}}  \zeta_2)\rangle_{C_{r(g)}}\\
    &=\langle K_{g}\zeta_1, \beta_{g}(\langle \xi_1,\xi_2\rangle_{B_{s(g)}} ) K_{g} \zeta_2\rangle_{C_{r(g)}} \\
    &=\langle K_{g}\zeta_1, \langle L_{g}\xi_1,L_{g}\xi_2\rangle_{B_{r(g)}}  K_{g} \zeta_2\rangle_{C_{r(g)}} \\
    &=\langle (L_g\xi_1)\otimes (K_g \zeta_1), (L_g\xi_2)\otimes (K_g \zeta_2) \rangle_{C_{r(g)}} \\
    &=\langle (L_g\otimes K_g )\xi_1\otimes \zeta_1, (L_g\otimes K_g )\xi_2\otimes \zeta_2 \rangle_{C_{r(g)}}.
\end{split}
\]
The remaining relations in \ref{enu:equivariant_representation2} in Definition~\ref{def:equivariant_representation} are immediate, and the relation in \ref{enu:equivariant_representation1} holds because we consider balanced tensor products.
Condition \ref{enu:equivariant_representation3} holds because the section of $ r^*\CC$ sending $g\in \G$ to $\langle (L_g\otimes K_g )\xi_1\otimes \zeta_1, \xi_2\otimes \zeta_2 \rangle_{A_{r(g)}} =\langle L_{g}\zeta_1, \langle K_{g}\xi_1,\xi_2\rangle_{B_{r(g)}}  \zeta_2\rangle_{C_{r(g)}}$ is continuous for every $\xi_1, \xi_2\in \contc(\EE)$, $\zeta_1,\zeta_2\in \contc(\FF)$.  

Moreover, it is straightforward to see that the extension  $\overline{L_g \otimes K_g}:\M(E_{s(g)}\otimes_{B_{s(g)}} F_{s(g)})\to \M(E_{r(g)}\otimes_{B_{r(g)}} F_{r(g)})$ of $L_g \otimes K_g$, as described in Proposition~\ref{prop:strict_extension_of_groupoid_action}, is given on elementary tensors defined in Lemma~\ref{lem:multiplier_tensor_product} by
\[
    \overline{L_g \otimes K_g}(\xi\otimes \zeta) = \overline{L}_g(\xi)\otimes  \overline{K}_g(\zeta), \qquad \xi \in \M(E_{r(g)}), \, \zeta \in \M(F_{r(g)}).
\]
\end{ex}

\begin{ex}[Direct sums of of equivariant correspondences] 
For any family of $(\alpha, u_{\alpha})$-$(\beta,u_{\beta})$ equivariant representations $\{L_i\}_{i\in I}$ of the groupoid $\G$ on $\cst$-bundles $\{\EE_i\}_{i\in I}$ from $\A$ to $\B$ the direct sum $\oplus_{i\in I} L_i := \{ \oplus_{i\in I} (L_i)_g \}_{g\in \G}$ is a well-defined $(\alpha, u_{\alpha})$-$(\beta,u_{\beta})$ equivariant representation of $\G$ on the direct sum  $\cst$-bundle $\oplus_{i\in I} \EE_i$. 
Also we have a natural embedding $\oplus_{i\in I} \M(\EE_i)\subseteq \M(\oplus_{i\in I} \EE_i)$, which is equality when $I$ is finite, and  the extended action  $\overline{\oplus_{i\in I}L_i}$ restricted to $\oplus_{i\in I} \M(\EE_i)$ coincides with $\{\oplus_{i\in I}(\overline{L_i})_g\}_{g\in \G}$. 
\end{ex}

\subsection{Representations induced by equivariant correspondences}

Equivariant representations allow us to transfer representations of the crossed product for $(\beta,u_{\beta})$ to the crossed product for $(\alpha, u_{\alpha})$. 
Firstly, let us notice that any such equivariant representation $(\EE, L)$ can be interpreted in terms of inverse semigroups as follows. 
For every open $V\subseteq X$ we put $E_{V}:=\contz(\EE|_{V})$. 
For every $U\in \Bis(\G)$ we have an isometry $L_U: E_{s(U)}\to E_{r(U)}$ where
\[
    L_U(\xi)(r(g)) = L_{g}( \xi(s(g)) ) , \qquad g\in U , \ \xi \in E_{s(U)},
\]
see Lemma~\ref{lem:L_is_continuous}. 
Conditions \ref{enu:equivariant_representation1} and \ref{enu:equivariant_representation2} in Definition~\ref{def:equivariant_representation} imply that 
\[
    L_{U}(L_{V}(\xi)) = \omega_{\alpha}(U,V)L_{UV} (\xi) \omega_{\beta}(U,V)^*
\]
for $\xi \in E_{s(UV)}$, and for $\xi,\zeta \in E_{s(U)}$, $a\in A_{s(U)}$ and  $b\in B_{s(U)}$ we have 
\[
    L_{U} (a \xi)=\alpha_{U}(a)  L_{U}(\xi), \qquad 
    L_{U}(\xi b)=L_{U}(\xi) \beta_{U}(b) , \qquad 
    \beta_{U}(  \langle \xi, \zeta\rangle_{B_{s(U)}}) = \langle  L_{U}(\xi), L_{U}(\zeta)\rangle_{B_{r(U)}}.
\]
In particular, for any $\xi\in \contz(\EE)$ and $a\in A_{r(U)}$ we have a naturally defined element $aL_U(\xi)\in E_{r(U)}$, which agrees with $L_U(\alpha_{U}^{-1}(a)\xi)$.

\begin{prop}\label{prop:representation_induced_by_equivariant_action}
Let $L=\{L_g\}_{g\in \G}$ be an $(\alpha, u_{\alpha})$-$(\beta,u_{\beta})$ equivariant representation of $\G$ on $\EE$ and let $\psi : \cst(\B^{(\beta,u_{\beta})})\to \LL(\H)$ be a representation of $\cst(\B^{(\beta,u_{\beta})})$ on a right Hilbert $C$-module $\H$.
Let $\contz(\EE)\otimes_{\psi} \H$ be the tensor product of $\cst$-correspondences where the left action of $\contz(\B)$ on $\H$ is given by $\psi$. 
Then we get a representation $L\dashind(\psi) : \cst(\A^{(\alpha,u_{\alpha})}) \to \LL(\contz(\EE)\otimes_{\psi} \H)$ determined by the formula
\[
    L\dashind(\psi)(a\delta_U)\, \xi\otimes \zeta :=  aL_U(\xi) \otimes \psi( \delta_U)\zeta , 
    \qquad \xi \in \contz(\EE), \zeta \in \H, a\in A_{r(U)}, U\in \Bis(\G) ,
\]
where $aL_U(\xi) \otimes \psi( \delta_U)\zeta := \lim_{i} L_U(\alpha_{U}^{-1}(a)\xi) \otimes \psi(e_{i}^{r(U)} \delta_U)\zeta$ for an approximate unit $\{e_{i}^{r(U)}\}_{i}$ in $B_{r(U)}$. 
This limit exists and does not depend on the choice of $\{e_i\}_i$.
\end{prop}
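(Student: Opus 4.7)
My plan is to split the proof into three parts: checking the approximate-unit limit exists and is independent of the net; extending the resulting formula to a bounded adjointable operator on the internal tensor product; and verifying the inverse semigroup relations of Section~\ref{sec:representations_of_crossed_products} so that the family assembles to a representation of $\cst(\A^{(\alpha,u_\alpha)})$.

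For the first two parts, I would fix $a\in A_{r(U)}$, $\xi\in\contz(\EE)$ and $\zeta\in\H$, and put $\eta:=aL_U(\xi)=L_U(\alpha_U^{-1}(a)\xi)\in E_{r(U)}$. Using the inner product on the internal tensor product,
\[
\bigl\|\eta\otimes \psi\bigl((e_i^{r(U)}-e_j^{r(U)})\delta_U\bigr)\zeta\bigr\|^{2}
=\bigl\langle \zeta, \psi\bigl(((e_i^{r(U)}-e_j^{r(U)})\delta_U)^{*}\langle\eta,\eta\rangle ((e_i^{r(U)}-e_j^{r(U)})\delta_U)\bigr)\zeta\bigr\rangle,
\]
and the inverse semigroup multiplication from Lemma~\ref{lem:inverse_semigroup_twisted_action} show that the argument of $\psi$ lies in $\contz(\B)\subseteq\cst(\B^{(\beta,u_\beta)})$ with norm dominated by $\|(e_i^{r(U)}-e_j^{r(U)})\langle\eta,\eta\rangle(e_i^{r(U)}-e_j^{r(U)})\|$, which tends to zero in $B_{r(U)}$. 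Hence the net is Cauchy, the limit $T_{a,U}(\xi\otimes\zeta)$ exists and is independent of the approximate unit (the same computation applied to a difference of two approximate units), and satisfies $\|T_{a,U}(\xi\otimes\zeta)\|\le\|a\|\|\xi\|\|\zeta\|$. To extend $T_{a,U}$ to the internal tensor product I would use the equivariance identity $L_U(\alpha_U^{-1}(a)\xi\cdot c)=L_U(\alpha_U^{-1}(a)\xi)\beta_U(c|_{s(U)})$ for $c\in\contz(\B)$ coming from Definition~\ref{def:equivariant_representation}\ref{enu:equivariant_representation2}, combined with the observation that $\psi(\beta_U(c|_{s(U)}))\lim_i\psi(e_i^{r(U)}\delta_U)\zeta$ and $\lim_i\psi(e_i^{r(U)}\delta_U)\psi(c)\zeta$ both reduce to $\psi(\beta_U(c|_{s(U)})\delta_U)\zeta$ after absorbing the approximate unit into the adjacent element of $B_{r(U)}$; this gives compatibility with the $\contz(\B)$-balance, and together with the norm bound yields a bounded operator on $\contz(\EE)\otimes_\psi\H$.

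Finally I would set $L\dashind(\psi)_U(a\delta_U):=T_{a,U}$ and verify relations \ref{item:semigroup_representation1'} and \ref{item:semigroup_representation2'}. The composition identity reduces to $L_U(L_V(\xi))=\omega_\alpha(U,V)L_{UV}(\xi)\omega_\beta(U,V)^{*}$ from Definition~\ref{def:equivariant_representation}\ref{enu:equivariant_representation1}; the extra factor $\omega_\beta(U,V)^{*}$ is transported to the $\H$-side through the balance $\eta\,\omega_\beta(U,V)^{*}\otimes v=\eta\otimes\psi(\omega_\beta(U,V)^{*})v$ and then cancels via
\[
\psi(\omega_\beta(U,V)^{*})\bigl(\lim_i\psi(e_i^{r(U)}\delta_U)\bigr)\bigl(\lim_j\psi(e_j^{r(V)}\delta_V)\bigr)\zeta=\lim_k\psi(e_k^{r(UV)}\delta_{UV})\zeta ,
\]
which follows from $(e_i^{r(U)}\delta_U)(e_j^{r(V)}\delta_V)=e_i^{r(U)}\beta_U(e_j^{r(V)}|_{s(U)})\omega_\beta(U,V)\delta_{UV}$ and an interchange of approximate-unit limits. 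The involutive relation is a shorter parallel computation, after which Section~\ref{sec:representations_of_crossed_products} converts the family $\{L\dashind(\psi)_U\}$ into the desired representation of $\cst(\A^{(\alpha,u_\alpha)})$. I expect the main obstacle to be this cocycle bookkeeping: both twists enter the composition formula simultaneously, and the proof hinges on using the tensor product balance to shuttle $\omega_\beta$ from the $\EE$-side to the $\H$-side before matching it with the inverse semigroup product.
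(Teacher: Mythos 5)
Your overall architecture (existence of the limit, extension to the tensor product, then the inverse semigroup relations \ref{item:semigroup_representation1'}--\ref{item:semigroup_representation2'}) matches the paper's, and your Cauchy-net estimate for the limit is a perfectly good alternative to the paper's shortcut of factoring $aL_U(\xi)=\kappa b$ with $b\in B_{r(U)}$ and absorbing $b$ into $\psi(b\delta_U)\zeta$. The cocycle bookkeeping you describe for \ref{item:semigroup_representation1'} is also essentially what the paper does.

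There is, however, a genuine gap at the extension step. You establish $\|T_{a,U}(\xi\otimes\zeta)\|\le\|a\|\,\|\xi\|\,\|\zeta\|$ only on \emph{elementary} tensors and then claim that this, together with compatibility with the $\contz(\B)$-balancing, ``yields a bounded operator on $\contz(\EE)\otimes_\psi\H$.'' That inference fails: a map that respects the balancing relations and is bounded on elementary tensors need not be bounded for the Hilbert-module norm on finite sums, since $\|\sum_i\xi_i\otimes\zeta_i\|$ can be far smaller than $\sum_i\|\xi_i\|\,\|\zeta_i\|$. What is actually needed is the estimate on arbitrary finite sums, and this is where the paper works: one computes
\[
\Big\langle L\dashind(\psi)(a\delta_U)\sum_j\xi_j\otimes\zeta_j,\ L\dashind(\psi)(a\delta_U)\sum_k\xi_k\otimes\zeta_k\Big\rangle_C=\sum_{j,k}\big\langle\zeta_j,\psi\big(\langle\xi_j,\alpha_U^{-1}(a^*a)\xi_k\rangle\big)\zeta_k\big\rangle_C
\]
and then uses positivity of the operator matrix $\big[\langle\xi_j,(\|a\|^2-\alpha_U^{-1}(a^*a))\xi_k\rangle\big]_{j,k}$ to dominate this by $\|a\|^2\|\sum_i\xi_i\otimes\zeta_i\|^2$. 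This computation (or an equivalent one) must be supplied; once it is, both well-definedness on the balanced tensor product and the bound follow at once, making your separate balancing check redundant. A second, smaller omission: to land in $\LL(\contz(\EE)\otimes_\psi\H)$ you must exhibit an adjoint, and the verification that $T_{(a\delta_U)^*}$ is adjoint to $T_{a,U}$ is not a ``shorter parallel computation'' --- in the paper it requires the identity $\lim_i(e_i^{r(U)}\delta_U)^*\langle aL_U(\xi),\xi'\rangle_B=\langle\xi,\alpha_U^{-1}(a^*)\omega_\alpha(U^*,U)^*L_{U^*}(\xi')\rangle_{B_{s(U)}}\delta_{U^*}$, which involves the same twist gymnastics as \ref{item:semigroup_representation1'} and should be carried out explicitly.
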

\begin{proof} 
If $H$ were a Hilbert space (equivalently $C=\C$), then we could disintegrate $\psi$ into a covariant representation $(\pi,v)$, as described in Subsection~\ref{sec:representations_of_crossed_products}, and then put $L\dashind(\psi)(a\delta_U)\, \xi\otimes \zeta =  aL_U(\xi) \otimes v_U\zeta$ where the corresponding partial isometry $v_U\in\BB(H)$ is given by the strong limit of $\psi(e_{i}^{r(U)} \delta_U)$.
Here we work with a general Hilbert module $\H$ (with a view towards one of our versions of the Fell absorption principle --- see Theorem~\ref{thm:Fell_absorption} below) where such a strong limit does not necessarily exist, so we need to struggle with approximate units. 

Let us first note that the limit $\lim_{i} L_U(\alpha_{U}^{-1}(a)\xi) \otimes \psi(e_{i}^{r(U)} \delta_U)\zeta$ exists and does not depend on the choice of the approximate unit. 
Indeed, we may write $aL_U(\xi)=\kappa b$ where $\kappa \in E_{r(U)}$ and $b\in B_{r(U)}$, and then $\lim_{i} aL_U(\xi) \otimes \psi(e_{i}^{r(U)} \delta_U)\zeta = \lim_{i} \kappa\otimes \psi(b e_{i}^{r(U)} \delta_U)\zeta = \kappa\otimes \psi(b \delta_U)\zeta$.
Let $\xi_j\in \contz(\EE)$ and $\zeta_j\in \H$ for $j=1, \ldots ,n$. 
A simple calculation shows that for $j,k \in \{1, \ldots,n\}$ we have
\[
    \langle L\dashind(\psi)(a\delta_U)\, \xi_j\otimes \zeta_j, L\dashind(\psi)(a\delta_U)\, \xi_k\otimes \zeta_k\rangle_{C} = \langle \zeta_j, \psi(\langle \xi_j, \alpha_{U}^{-1}(a^*a)\xi_k\rangle )\zeta_k\rangle_C .
\]
Since $\sum_{j,k=1}^n\langle \zeta_j, \psi(\langle \xi_j, \alpha_{U}^{-1}(a^*a)\xi_k\rangle )\zeta_k\rangle_C \leq \sum_{j,k=1}^n\langle \zeta_j, \psi(\langle \xi_j, \|a^*a\|\xi_k\rangle_C) \zeta_k \rangle$ it follows that
\[
    \Bigg\|L\dashind(\psi)(a\delta_U)\left(\sum_{i=1}^n \xi_i\otimes \zeta_i\right)\Bigg\|^2\leq \|a\|^2\Bigg\|\sum_{i=1}^n \xi_i\otimes \zeta_i\Bigg\|^2.
\]
Hence $L\dashind(\psi)(a\delta_U)$ is a well defined contractive map on $\contz(\EE)\otimes_{\psi} \H$. 
As in Remark~\ref{rem:relations_equivariant_reps} we get $L_{U}^{-1}(\xi) = \omega_{\alpha}(U^*,U)^* L_{U^*}(\xi )\omega_{\beta}(U^*,U)$ for $\xi\in E_{r(U)}$. 
Thus for $\xi, \xi'\in E=\contz(\EE)$ we have
\[
\begin{split}
    \langle aL_U(\xi) ,\xi' \rangle_{B} &= \lim_{i}\langle aL_U(\xi) ,e_{i}^{r(U)}\xi' \rangle_{B} =\lim_{i} \langle L_U(\alpha_{U}^{-1}(a)\xi) ,  L_U(L_{U}^{-1}(e_i^{r(U)}\xi')) \rangle_{B_{r(U)}} \\
        &=\lim_{i}\beta_U\Big(\langle \xi ,  \alpha_U^{-1}(a^*)L_{U}^{-1}(e_i^{r(U)}\xi') \rangle_{B_{s(U)}}\Big) \\
        &=\beta_U\Big(\langle \xi ,  \alpha_U^{-1}(a^*)\omega_{\alpha}(U^*,U)^* L_{U^*}(\xi') \rangle_{B_{s(U)}}\omega_{\beta}(U^*,U)\Big).
\end{split}
\]
Hence
\[
\begin{split}
    \lim_{i}(e_i^{r(U)}\delta_U)^* \langle aL_U(\xi), \xi'\rangle_B &= \lim_{i}\beta_U^{-1}(e_i^{r(U)})\omega_{\beta}(U^*,U)^*\delta_{U^*}\langle aL_U(\xi), \xi'\rangle_B \\
        &= \beta_{U}^{-1}(\langle aL_U(\xi), \xi'\rangle_B) \omega_{\beta}(U^*,U)^*\delta_{U^*} \\
        &=\langle \xi ,  \alpha_U^{-1}(a^*)\omega_{\alpha}(U^*,U)^* L_{U^*}(\xi') \rangle_{B_{s(U)}}\delta_{U^*} .
\end{split}
\]
Using this for $\zeta,\zeta'\in \H$ we get
\[
\begin{split}
    \langle aL_U(\xi)\otimes \psi(\delta_U)\zeta ,\xi'\otimes \zeta' \rangle_{C} &= \lim_{i}\langle \psi(e_i^{r(U)}\delta_U) \zeta, \psi( \langle aL_U(\xi), \xi'\rangle_B)\zeta' \rangle_{C} \\
        &= \lim_{i}\langle  \zeta, \psi((e_i^{r(U)}\delta_U)^* \langle aL_U(\xi), \xi'\rangle_B)\zeta' \rangle_{C} \\
        &=\langle  \zeta, \psi(\langle \xi , \alpha_U^{-1}(a^*)\omega_{\alpha}(U^*,U)^* L_{U^*}(\xi') \rangle_{B_{s(U)}}\delta_{U^*}) \zeta' \rangle_{C} \\
        &= \langle\xi\otimes \zeta,  \alpha_U^{-1}(a^*)\omega_{\alpha}(U^*,U)^* L_{U^*}(\xi') \otimes \psi(\delta_{U^*})\zeta' \rangle_{C}.
\end{split}
\]
This shows that $L\dashind(\psi)(a\delta_U)\in \LL(\contz(\EE)\otimes_{\psi} \H)$ and
\[
    L\dashind(\psi)(a\delta_U)^* = L\dashind(\psi)(\alpha_U^{-1}(a^*)\omega_{\alpha}(U^*,U)^* \delta_{U^*}) = L\dashind(\psi)((a\delta_U)^*) .
\]
Now if $b\in A_{r(V)}$ for $V\in \Bis(\G)$, then
\[
\begin{split}
    L\dashind(\psi)&(a\delta_U) [L\dashind(\psi)(b\delta_V) \xi\otimes \zeta] = \lim_{i}\lim_{j} aL_U(bL_V(\xi)) \otimes \psi(e_{i}^{r(U)} \delta_U) \psi(e_{j}^{r(V)} \delta_V)\zeta \\
        &= \lim_{i} \lim_{j} \alpha_U(\alpha_{U}^{-1}(a)b)L_U(L_V(\xi)) \otimes  \psi \Big( \beta_U\big(\beta_{U}^{-1}(e_{i}^{r(U)})e_{j}^{r(V)}\big)\omega_{\beta}(U,V)\delta_{UV} \Big) \zeta \\
        &= \lim_{i} \alpha_U(\alpha_{U}^{-1}(a)b)\omega_{\alpha}(U,V) L_{UV}(\xi) \otimes \psi\Big(\omega_{\beta}(U,V)^* e_{i}^{r(UV)}\omega_{\beta}(U,V)\delta_{UV}\Big) \zeta \\
        &= L\dashind(\psi)[ (a\delta_U)\cdot (b\delta_V)] (\xi\otimes \zeta). 
\end{split}
\]
Thus $L\dashind(\psi)(a\delta_U) \cdot L\dashind(\psi)(b\delta_V) 
=L\dashind(\psi)(   (a\delta_U)\cdot (b\delta_V))$, 
and the assertion now follows, cf.\ Subsection~\ref{sec:representations_of_crossed_products}.
\end{proof}

\begin{ex}
For group actions on unital \cstar{}algebras as in \cite{BedosContiregular,BedosConti,BedosConti2,BedosConti3}, the above proposition takes the following form, see \cite[Lemma 4.9]{BedosContiregular} for its progenitor. 
Let $(\alpha,u_\alpha)$ and $(\beta,u_\beta)$ be two twisted actions  of a group $G$ on unital \cstar{}algebras $A$ and $B$, let $L$ be an equivariant representation on an $A$-$B$-\cstar{}correspondence $E$. 
Given a representation $\psi\colon B\rtimes_{(\beta,u_\beta)}G\to \LL(\H)$ on a Hilbert $C$-module $\H$, there is a representation $L\dashind(\psi)\colon A\rtimes_{(\alpha,u_\alpha)}G\to \LL(E\otimes_\psi \H)$ which on the generators $a\delta_g\in A\rtimes_{(\alpha,u_\alpha)}G$, $a\in A$, $g\in G$, is given by the formula
\[
    L\dashind(\psi)(a\delta_g)(\xi\otimes\zeta)=aL_g(\xi)\otimes\psi(\delta_g)\zeta,\quad \xi\in E,\eta\in \H.
\]
\end{ex}

\section{Induced regular representations and absorption principles} 
\label{Sec:InducedAbsorption}

In this section we will discuss induced/regular representations of bundle $\cst$-algebras related to twisted groupoid actions. 
Once again we first discuss the case of general Fell bundles.

Let $\A=\{A_g\}_{g\in \G}$ be a Fell bundle over $\G$. 
We begin this section by introducing various forms of ``regular induced representations'' for $\A$ using the $\cst(\A)$-$\contz(\A|_X)$-$\cst$-correspondence $\contz(\ell^2(\A))$, and a $\cst$-correspondence bundle from $\A|_X=\{A_x\}_{x\in X}$ to another $\cst$-bundle $\B=\{B_x\}_{x\in X}$. 
These do not seem to have appeared explicitly in the literature. 

\begin{defn}\label{def:regular induced rep}
Let $\EE=\{E_x\}_{x\in X}$ be a $\cst$-correspondence bundle from $\A|_X=\{A_x\}_{x\in X}$ to $\B=\{B_x\}_{x\in X}$. 
The \emph{regular representation of $\cst(\A)$ induced by $\EE$} is given by the left action $\Lambda^{\EE} := \Lambda\otimes \mathrm{id}_{\contz(\EE)}$ of $\cst(\A)$ on the $\cst(\A)$-$\contz(\B)$-$\cst$-correspondence $\contz(\ell^2(\A)) \otimes_{\contz(\A|_X)} \contz(\EE)$ (see Example~\ref{ex:tensor_bundles}).
\end{defn}

\begin{rem} 
By construction $\ker \Lambda \subseteq \ker \Lambda^{\EE}$ and we have  $\ker\Lambda = \ker\Lambda^{\EE}$ whenever $\EE$ is \emph{faithful}, i.e.\ the left action of $\contz(\A|_X)$ on $\contz(\EE)$ is faithful (this holds when there is a dense set of points $X_0\subseteq X$ such that the $A_x$-$B_x$-$\cst$-correspondence $E_x$ is faithful for all $x \in X_0$). 
Hence $\Lambda^{\EE}$ factors through a representation $\Lambda^{\EE}_{\red}$ of  $\cst_{\red}(\A)$ and $\Lambda^{\EE}_{\red}$ is faithful if $\EE$ is so.
\end{rem}

\begin{rem}\label{rem:induced_regular_alternative}
We give an alternative description of $\Lambda^{\EE}$ as follows. 
For each $x\in X$, we have a $\cst$-correspondence $\ell^2(\A|_{\G_x})\otimes_{A_x} E_x$ from $\cst(\A)$ to $B_x$. 
As a right Hilbert module it is equal to the direct sum $\ell^2(\A\otimes_{A_x} E_x)$ of Hilbert $B_x$-modules $\A\otimes_{A_x} E_x := \{ A_g\otimes_{A_x}E_x \}_ {g\in \G_x}$. 
The multiplication of $\A$ yields bounded linear maps $A_{gh^{-1}} \otimes_{A_x} A_h \otimes_{A_x} E_x \to A_g \otimes_{A_x} E_x$, $g,h\in \G_x$, that we write again as `multiplication' maps whenever $s(g)=s(h)$. 
Then the formula
\begin{equation}\label{eq:Alcides_formula}
    [\Lambda_{x}^{\EE}(a) \xi](g) := \sum_{h \in \G_x} a(g h^{-1})\cdot \xi(h),\qquad  a\in \contc(\A), \ \xi \in \ell^2(\A\otimes_{A_x} E_x),\ g \in \G_x ,
\end{equation}
makes sense and gives the $*$-homomorphism $\Lambda_{x}^{\EE} : \cst(\A)\to \LL(\ell^2(\A\otimes_{A_x} E_x))$ induced from $\Lambda_x$ by $E_x$. 
We equip the bundle $\ell^2(\A\otimes_{\A|_X} \EE) := \{ \ell^2(\A\otimes_{A_x} E_x)\}_{x\in X}$ with the topology generated by declaring sections of the form $ x\mapsto \oplus_{g\in \G_x}\xi(g)$ to be continuous for all $\xi\in \contz(s^*\EE)$, i.e.\ requesting that $\G\ni g\mapsto \xi(g)\in E_{s(g)}\subseteq \EE$ is continuous. 
Then we get an isomorphism of Hilbert $\contz(\B)$-modules
\[
    \contz(\ell^2(\A)) \otimes_{\contz(\A|_X)} \contz(\EE) \cong \contz(\ell^2(\A\otimes_{\A|_X} \EE))
\]
that maps $a\otimes \xi \in \contz(\ell^2(\A))\otimes_{\contz(\A|_X)}\contz(\EE)$ to $\widehat{a\otimes \xi}\in \contz(\ell^2(\A\otimes_{\A|_X} \EE))$ where $\widehat{a\otimes \xi}(x)=a(x)\otimes \xi(x)$. 
Under this isomorphism the \emph{induced regular representation} is $\Lambda^\EE : \cst(\A)\to \LL(\contz(\ell^2(\A\otimes_{\A|_X} \EE)))$, where
\[
    (\Lambda^\EE(a)\xi)(x) = \Lambda_{x}^{\EE}(a) \xi(x) = \bigoplus_{g\in \G_x}\sum_{h\in \G_x}a(gh^{-1})\cdot\xi(h),
\]
$a\in \contc(\A), \xi\in \contz(\ell^2(\A\otimes_{\A|_X} \EE))$, $x\in X$. 
\end{rem}

We will now specify the context to Fell bundles coming from twisted groupoid actions. 
Assume then now that $\A$ is a $\cst$-bundle over $X$ which carries a twisted action $(\alpha,u)$ of $\G$.
Induced regular representations of $\cst(\A^{(\alpha,u)})$ can be described as follows. 
To any $\A$-$\B$-$\cst$-correspondence bundle $\EE$ we associate the \emph{regularised $\A$-$\B$-$\cst$-correspondence bundle} $\ell^2(s^*\EE) = \{ \ell^2(s^*\EE)_x \}_{x\in X}$ whose fibres are direct sums
\[
    \ell^2(s^*\EE)_x = \bigoplus_{g \in \G_x} E_x
\]
of $A_x$-$B_x$-$\cst$-correspondences $E_x$. 
We equip $\ell^2(s^*\EE)$ with the unique topology making it a Banach bundle and such that $\contc(\ell^2(s^*\EE))$ contains $x\mapsto \bigoplus_{g \in \G_x} \xi(g)$ for $\xi\in \contc(s^*\EE)$. 
Then $\ell^2(s^*\EE)$ is an $\A$-$\B$-$\cst$-correspondence bundle. 
Alternatively, the $\cst$-correspondence bundle $\ell^2(s^*\EE)$ is determined by the $\contz(X)$-$\cst$-correspondence from $\contz(\A)$ to $\contz(\B)$ given by the completion of $\contc(s^*\EE)$ with structure determined by the operations 
\[
    \langle\xi, \zeta\rangle_{\contz(\B)}(x) := \sum_{h\in \G_x}\langle \xi(h),\zeta(h)\rangle_{B_x}, \qquad 
    a\cdot \xi\cdot b(g) := a(s(g))\xi(g) b(s(g))
\]
for $\xi, \zeta \in \contc(s^*\EE)$, $a\in\contz(\A)$, $b\in\contz(\B)$, $x\in X$, $g \in \G$.

\begin{prop}\label{prop:induced_regular_representations_for_actions}
Let $(\alpha, u)$ be a twisted action of $\G$ on $\A$ and let $\EE$ be an $\A$-$\B$-$\cst$-correspondence bundle for another $\cst$-bundle $\B$ over $X$. 
The induced regular representation $\Lambda^{\EE}$ of $\cst(\A^{(\alpha,u)})$ is naturally unitarily equivalent to a representation $\pi^{\EE}:\cst(\A^{(\alpha,u)})\to \LL(\contz(\ell^2(s^*\EE)))$, where
\[
    (\pi^\EE(a)\xi)(x)= \pi_{x}^{\EE}(a)\xi(x), \qquad a\in \contc(\A^{(\alpha,u)}) ,\ \xi\in \contz(\ell^2(s^*\EE)) ,\ x \in X ,
\]
and $\pi_{x}^{\EE} : \cst(\A^{(\alpha,u)})\to \LL(\bigoplus_{g \in \G_x} E_x)$ is a representation of $\cst(\A^{(\alpha,u)})$ on the  Hilbert $B_{x}$-module $\bigoplus_{g \in \G_x} E_x$, given by
\[
    [\pi_{x}^{\EE}(a) \xi](g) := \sum_{h \in \G_x} \alpha_{g}^{-1} \left( a(g h^{-1})u(gh^{-1},h) \right) \xi(h) , \qquad a\in \contc(\A^{(\alpha,u)}),\ \xi \in \bigoplus_{h \in \G_x} E_x ,\ g \in \G_x.
\]
In particular, $\pi^{\EE}$ descends to a representation of $\cst_{\red}(\A^{(\alpha,u)})$, which is faithful if $\pi^{\EE}$ is faithful on $\contz(\A)$.
\end{prop}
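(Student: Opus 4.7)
The plan is to leverage the alternative description of the induced regular representation given in Remark~\ref{rem:induced_regular_alternative}, which realises $\Lambda^{\EE}$ fibrewise on $\ell^2(\A^{(\alpha,u)}\otimes_{\A|_X}\EE)$ with fibres $\bigoplus_{g\in\G_x}(A_{r(g)}\otimes_{A_x}E_x)$. Here $A_{r(g)}=A_g$ is a right Hilbert $A_x$-module for $A_x=A_{s(g)}$ via the Fell bundle structure of $\A^{(\alpha,u)}$ from Corollary~\ref{cor:Fell_bundle_from_twisted_action}: the right action is $a\cdot b=a\alpha_g(b)$, and the inner product is $\langle a,a'\rangle_{A_x}=\alpha_g^{-1}(a^*a')$.

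The central step is to build a unitary bundle isomorphism $U\colon\ell^2(\A^{(\alpha,u)}\otimes_{\A|_X}\EE)\to\ell^2(s^*\EE)$, given fibrewise by the $B_x$-linear maps
\[
U_{x,g}\colon A_{r(g)}\otimes_{A_x}E_x\longrightarrow E_x,\qquad a\otimes\eta\longmapsto \alpha_g^{-1}(a)\eta.
\]
This is well defined on the balanced tensor product and preserves inner products, since
\[
\langle U_{x,g}(a\otimes\eta),U_{x,g}(a'\otimes\eta')\rangle_{B_x}=\langle\eta,\alpha_g^{-1}(a^*a')\eta'\rangle_{B_x}=\langle a\otimes\eta,a'\otimes\eta'\rangle_{B_x}.
\]
Surjectivity onto $E_x$ is immediate, so each $U_x=\bigoplus_g U_{x,g}$ is a unitary of Hilbert $B_x$-modules. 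Continuity across fibres, which by Lemma~\ref{lem:morphisms_disintegration} yields a unitary of the section modules, is checked on generating sections of the form $x\mapsto\bigoplus_g a(g)\otimes\xi(x)$ with $a\in\contc(\A^{(\alpha,u)})$ and $\xi\in\contz(\EE)$; they are sent to the continuous sections $x\mapsto\bigoplus_g\alpha_g^{-1}(a(g))\xi(x)$ of $\ell^2(s^*\EE)$.

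The computational heart is the intertwining identity $U_x\Lambda_x^{\EE}(a)=\pi_x^{\EE}(a)U_x$ for $a\in\contc(\A^{(\alpha,u)})$. Using the Fell bundle multiplication from Corollary~\ref{cor:Fell_bundle_from_twisted_action}, for $\xi(h)=b\otimes\eta$ one finds
\[
[\Lambda_x^{\EE}(a)\xi](g)=\sum_{h\in\G_x}\bigl(a(gh^{-1})\alpha_{gh^{-1}}(b)u(gh^{-1},h)\bigr)\otimes\eta,
\]
and applying $U_{x,g}$ yields $\sum_h\alpha_g^{-1}\!\bigl(a(gh^{-1})\alpha_{gh^{-1}}(b)u(gh^{-1},h)\bigr)\eta$. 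The cocycle identity $\Ad_{u(gh^{-1},h)}\circ\alpha_g=\alpha_{gh^{-1}}\circ\alpha_h$ from Definition~\ref{def:twisted_groupoid_action}\ref{enu:twisted_groupoid_action4}, applied to $\alpha_h^{-1}(b)\in A_x$, rewrites $\alpha_{gh^{-1}}(b)u(gh^{-1},h)=u(gh^{-1},h)\alpha_g(\alpha_h^{-1}(b))$, which transforms the expression into $\sum_h\alpha_g^{-1}(a(gh^{-1})u(gh^{-1},h))\,\alpha_h^{-1}(b)\,\eta=[\pi_x^{\EE}(a)U_x\xi](g)$. Hence each $\pi_x^{\EE}$ is indeed a $*$-homomorphism unitarily equivalent to $\Lambda_x^{\EE}$, and the formula for $\pi^{\EE}$ assembles across fibres.

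Finally, since $\Lambda^{\EE}=\Lambda\otimes\id$ factors through $\cst_\red(\A^{(\alpha,u)})$ by construction, so does the unitarily equivalent $\pi^{\EE}$. For faithfulness on $\cst_\red(\A^{(\alpha,u)})$ under the stated hypothesis, I would combine the canonical faithful conditional expectation $\E\colon\cst_\red(\A^{(\alpha,u)})\to\contz(\A)$ with the observation that, for $a\in\contz(\A)\subseteq\cst_\red(\A^{(\alpha,u)})$, the formula for $\pi_x^{\EE}$ collapses to the diagonal action of $\alpha_g^{-1}(a(r(g)))$ on the $g$-th summand; compressing $\pi_x^{\EE}(b^*b)$ to the unit coordinate recovers left multiplication by $\E(b^*b)$, so a standard conditional-expectation argument promotes faithfulness of $\pi^{\EE}|_{\contz(\A)}$ to global faithfulness. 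The main obstacle is keeping track of the twist $u$ in the intertwining step, which nonetheless reduces to a single clean application of the $2$-cocycle condition.
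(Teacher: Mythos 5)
Your proposal is correct and follows the same route as the paper's proof: invoke Remark~\ref{rem:induced_regular_alternative} and conjugate by the fibrewise unitaries $A_g\otimes_{A_x}E_x\to E_x$, $a\otimes\eta\mapsto\alpha_g^{-1}(a)\eta$, under which \eqref{eq:Alcides_formula} becomes the stated formula for $\pi_x^{\EE}$. You simply spell out the cocycle computation in the intertwining step and the conditional-expectation argument for faithfulness, both of which the paper leaves implicit, and both of which check out.
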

\begin{proof}
We use Remark~\ref{rem:induced_regular_alternative} to treat $\Lambda^{\EE}$ as a   representation on the bundle $\ell^2(\A\otimes_{\A|_X} \EE) := \{ \ell^2( \A\otimes_{A_x} E_x)\}_{x\in X}$ where $\A\otimes_{A_x} E_x:=\{A_g\otimes_{A_x}E_x\}_ {g\in \G_x}$ for $x\in X$. For each $g\in \G_x$, we have an isomorphism $A_g\otimes_{A_x}E_x\cong E_x$ of $B_x$-Hilbert modules given by $a\otimes \xi \mapsto \alpha_g^{-1}(a)\xi$.
These isomorphisms give a unitary isomorphism $\ell^2(\A\otimes_{A_x} E_x)\cong \bigoplus_{g \in \G_x} E_x$ under which \eqref{eq:Alcides_formula} translates to the formula for $\pi_{x}^{\EE}(a)$ as in the assertion.  
These unitaries yield an isomorphism $\ell^2(\A\otimes_{\A|_X} \EE)\cong \ell^2(s^*\EE)$ of Hilbert module bundles that intertwines $\pi^\EE$ and $\Lambda^{\EE}$.
\end{proof}

\begin{ex}[Regular representation revisited] \label{ex:regular_rep_revisited}
Suppose that $(\alpha, u)$ is a twisted $\G$-action on $\A = \{A_x\}_{x\in X}$. 
Then the bundle $\A=\{A_x\}_{x\in X}$ can be viewed as a faithful $\cst$-correspondence bundle from $\A$ to $\A$.
Hence, by Proposition~\ref{prop:induced_regular_representations_for_actions}, it induces a representation $\pi^{\A}:\cst(\A^{(\alpha,u)})\to \LL(\contz(\ell^2(s^*\A)))$, where $\ell^2(s^*\A):=\{\bigoplus_{g \in \G_x} A_x\}_{x\in X}$, which factors through a faithful representation of $\cst_{\red}(\A^{(\alpha,u)})$. 
In fact, the formula
\[
	W(\xi)(x) = \bigoplus_{g\in \G_x } \alpha_g(\xi(g)), \qquad  \xi\in \contz(\ell^2(s^*\A)),\ x \in X,
\]
defines a Hilbert $\contz(\A)$-module unitary $W:\contz(\ell^2(s^*\A))\to \contz(\ell^2(\A^{(\alpha, u)}))$ that intertwines $\pi^{\A}$ and the regular representation $\Lambda : \cst(\A^{(\alpha,u)}) \to \LL(\contz(\ell^2(\A^{(\alpha, u)})))$ from~\eqref{eq:regular-rep-twisted-actions}. 
%
\end{ex}

\begin{ex} \label{ex:groupoid regular}
For the trivial bundle $\A=\{\C\}_{x\in X}$ we are treating a twisted groupoid $(\G,u)$ as in Example~\ref{ex:continuous-cocycles}. 
In this case, the induced regular representation $\pi^\EE\colon C^*(\G,u)\to \LL(\contz(\ell^2(s^*\EE)))$ associated with the $\A$-$\B$-correspondence $\EE$ is given by: 
\[
    (\pi^\EE_x(f)\xi)(g) = \sum_{h\in \G_x}f(gh^{-1})\xi(h) u(gh^{-1},h) = \sum_{h\in \G^{r(g)}}f(h)\xi(h^{-1}g)u(h,h^{-1}g) 
\]
for $f\in \contc(\G)$, $\xi\in \oplus_{g\in \G_x} E_x=\ell^2(\G_x,E_x)$ and $g\in \G_x$. 
Here each $E_x$ is just a Hilbert module over $B_x$ because $A_x=\C$. 
Of course, this is a special case of Proposition~\ref{prop:induced_regular_representations_for_actions} where the action $\alpha$ is trivial. 
The above representation generalises the ordinary regular representation $\lambda^u$ from~\eqref{eq:regular-rep-twisted-groupoid} by adding the coefficient bundle $\EE$. 
For an equivariant representation $L$ on $\EE$, its associated regularised equivariant representation $L^\G$ acts on $\ell^2(s^*\EE)$ by the formula~\eqref{eq:induced-regular-rep}. 
There is no further simplification here, except that each $E_x$ is just a Hilbert $B_x$-module.
Further assuming that also $\B=\{\C\}_{x\in X}$ is the one-dimensional trivial bundle, then equivariant representations are described by continuous Hilbert $\G$-bundles, that is, continuous Hilbert bundles $\H=\{H_x\}_{x\in X}$ together with a continuous family of unitaries $L_g\colon H_{s(g)}\congto H_{r(g)}$ satisfying $L_gL_h=L_{gh}$ for all $(g,h)\in \G^{(2)}$, as in Example~\ref{ex:continuous_Hilbert_G-bundles}.
\end{ex}

Note that the (regularised) $\contz(\A)$-$\contz(\B)$-$\cst$-correspondence $\contz(\ell^2(s^*\EE))$ contains $\contz(\EE)$ as a sub-$\cst$-correspondence (the embedding is given by the inclusions $\contc(\EE)\subseteq \contc(s^*\EE)\subseteq \ell^2(s^*\EE)$). 
Any equivariant representation of $\G$ on $\EE$ extends naturally to $\ell^2(s^*\EE)$ as follows.

\begin{lem}\label{lem:induced_equivariant_representation}
Let $\A$, $\B$ be $\cst$-bundles over $X := \G^{(0)}$, carrying twisted actions $(\alpha,u_\alpha)$ and $(\beta,u_\beta)$ of $\G$. 
An $(\alpha,u_{\alpha})$-$(\beta,u_{\beta})$ equivariant representation $L = \{ L_g \}_{g \in \G}$ of $\G$ on a $\contz(\A)$-$\contz(\B)$-$\cst$-correspondence $\EE$ induces an $(\alpha,u_{\alpha})$-$(\beta,u_{\beta})$ equivariant representation $L^\G := \{ L^{\G}_g \}_{g \in \G}$ of $\G$ on $\ell^2(s^*\EE)$, where for $g \in \G$  the map $L^{\G}_{g}:\ell^2(s^*\EE)_{s(g)}\to \ell^2(s^*\EE)_{r(g)}$ is given by  
\begin{equation}\label{eq:induced-regular-rep}
    L^{\G}_{g}(\xi)(t) = L_g (\xi (t g)), \qquad 
    \xi\in \ell^2(s^*\EE)_{s(g)},\,\, t \in \G_{r(g)}.    
\end{equation}
\end{lem}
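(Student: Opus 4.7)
The plan is to verify the three conditions of Definition~\ref{def:equivariant_representation} for the family $L^\G = \{L^\G_g\}_{g\in\G}$. Each $L^\G_g$ is a reindexing (by the bijection $t \mapsto tg$ between $\G_{r(g)}$ and $\G_{s(g)}$) followed by pointwise application of $L_g$, so I would first observe that it is an invertible isometry between $\bigoplus_{h \in \G_{s(g)}} E_{s(g)}$ and $\bigoplus_{t \in \G_{r(g)}} E_{r(g)}$. The conditions in Definition~\ref{def:equivariant_representation}\ref{enu:equivariant_representation1} and \ref{enu:equivariant_representation2} would then follow by pointwise computations from the corresponding conditions for $L$. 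For instance, the cocycle identity reduces to
\[
L^\G_g(L^\G_h \xi)(t) = L_g(L_h(\xi(tgh))) = u_\alpha(g,h) L_{gh}(\xi(tgh)) u_\beta(g,h)^* = u_\alpha(g,h) L^\G_{gh}(\xi)(t) u_\beta(g,h)^* ,
\]
while the inner product identity (which simultaneously shows $L^\G_g$ is isometric) follows by reindexing $h = tg$:
\[
\langle L^\G_g \xi, L^\G_g \zeta \rangle_{B_{r(g)}} = \sum_{t \in \G_{r(g)}} \beta_g(\langle \xi(tg), \zeta(tg)\rangle_{B_{s(g)}}) = \beta_g(\langle \xi, \zeta\rangle_{B_{s(g)}}) .
\]

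The main obstacle is verifying the continuity condition \ref{enu:equivariant_representation3}. By Lemma~\ref{lem:L_is_continuous} this is equivalent to continuity of the map $s^*\ell^2(s^*\EE) \ni \eta \mapsto L^\G_{p(\eta)}(\eta) \in r^*\ell^2(s^*\EE)$. I would use that the topology on $\ell^2(s^*\EE)$ is generated by the distinguished continuous sections $\tilde\xi_0(x) := \bigoplus_{h \in \G_x} \xi_0(h)$, $\xi_0 \in \contc(s^*\EE)$: the uniform bound $\|L^\G_g\|\leq 1$ combined with the Fell reconstruction criterion reduces the continuity property to showing that for each $\xi_0, \zeta_0 \in \contc(s^*\EE)$, the section
\[
g \longmapsto \sum_{t \in \G_{r(g)}} \langle \xi_0(t), L_g(\zeta_0(tg)) \rangle_{B_{r(g)}}
\]
is a continuous section of $r^*\B$.

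A partition-of-unity argument subordinate to finite open bisection covers of $\supp\xi_0$ and $\supp\zeta_0$ would then allow me to assume that $\xi_0$ and $\zeta_0$ are supported in single open bisections $U$ and $W$ respectively. In this reduced situation, the sum for each $g$ has at most one nonzero term, corresponding to $t = s_U(r(g))$ where $s_U := (s|_U)^{-1} : s(U) \to U$, and this contribution is nonzero only for $g$ in the open set $\Omega := \{g\in\G : r(g) \in s(U),\ s_U(r(g))\cdot g \in W\}$. On $\Omega$, the maps $g \mapsto \xi_0(s_U(r(g)))$ and $g \mapsto \zeta_0(s_U(r(g))g)$ are continuous sections of $r^*\EE$ and $s^*\EE$ respectively; by Lemma~\ref{lem:L_is_continuous} applied to $L$, the section $g \mapsto L_g(\zeta_0(s_U(r(g))g))$ of $r^*\EE$ is also continuous, hence so is the $B_{r(g)}$-valued pointwise inner product. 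Since the compact supports of $\xi_0$ and $\zeta_0$ confine the nonvanishing values to a closed subset of $\Omega$, the extension by zero to the complement of $\Omega$ gives a continuous section on all of $\G$, completing the verification.
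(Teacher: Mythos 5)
Your proposal is correct and follows essentially the same route as the paper: pointwise verification of the algebraic conditions \ref{enu:equivariant_representation1}--\ref{enu:equivariant_representation2} via the reindexing $h=tg$, followed by checking condition \ref{enu:equivariant_representation3} on the distinguished sections $x\mapsto \xi_0|_{\G_x}$ with $\xi_0\in\contc(s^*\EE)$. The only difference is that the paper disposes of the continuity step in one line (the sum is locally finite and $L$ is continuous), whereas you spell out the reduction to single bisections and the extension by zero --- a more detailed rendering of the same argument, not a different one.
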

\begin{proof}
Given $g \in \G$ and $\xi,\zeta \in  \ell^2(s^*\EE)_{s(g)} = \bigoplus_{h \in \G_{s(g)}} E_{s(g)}$, we have 
\[
\begin{split} 
    \langle L^{\G}_g(\xi), L^{\G}_g(\zeta) \rangle_{B_{r(g)}} &= \sum_{t \in \G_{r(g)}} \langle L^{\G}_g(\xi)(t), L^{\G}_g(\zeta)(t)\rangle_{B_{r(g)}} = \sum_{t \in \G_{r(g)}} \langle L_g (\xi(t g)) , L_g (\zeta (t g))  \rangle_{B_{r(g)}} \\
        &= \beta_g\left(\sum_{t \in \G_{r(g)}} \langle \xi (t g), \zeta(t g) \rangle_{B_{s(g)}} \right) = \beta_g\left(\langle \xi, \zeta\rangle_{B_{s(g)}} \right). 
\end{split}
\]
This implies that $L^{\G}_{g} : \ell^2(s^*\EE)_{s(g)}\to \ell^2(s^*\EE)_{r(g)}$ is a well-defined isometry. 
Furthermore, for $b \in B_{s(g)}$ and $t \in \G_{r(g)}$, we have
\[ 
    L^{\G}_g ( \xi b)(t) = L_g ((\xi b)(t g)) = L_g (\xi (t g) b) = L_g (\xi (t g))  \beta_g (b) = L^{\G}_g ( \xi )(t)\beta_g (b) = (L^{\G}_g ( \xi ) \beta_g (b) )(t),  
\]
so that $L^{\G}_g (\xi b ) = L^{\G}_g (\xi) \beta_g (b)$. 
Similarly, we get $L^{\G}_g ( a \xi ) = \alpha_g (a) L^{\G}_g (\xi) $ for $a\in A_{s(g)}$.
If $g, h, t \in \G$ with $s(g)=r(h), s(t) = r(g)$ and $\xi \in \ell^2(s^*\B)_{s(h)}$ then we have
\[
\begin{split} 
    L^{\G}_g (L^{\G}_h \xi)(t) &= L_g ((L^{\G}_h \xi) (t g)) = L_g \big(L_h (\xi(t g h))\big) = u_{\alpha}(g,h) L_{g h} (\xi (t g h)) u_{\beta}(g,h)^* \\
        &= u_{\alpha}(g,h) L^\G_{g h}(\xi)(t) u_{\beta}(g,h)^*. 
\end{split}
\]
This shows conditions \ref{enu:equivariant_representation1}, \ref{enu:equivariant_representation2} in Definition~\ref{def:equivariant_representation}.
To check condition \ref{enu:equivariant_representation3}, we take sections $\xi_0,\zeta_0\in \contc(\s^*\EE)$ and consider the corresponding continuous sections $\xi, \zeta \in \contc(\ell^2(\s^*\EE))$ given by $x\mapsto \xi_0|_{\G_x}, \zeta_0|_{\G_x}$ as in the definition of the topology of $\ell^2(\s^*\EE)$. 
Then for $g \in \G$ we get
\[
    \langle \xi(r(g)), L^{\G}_g \zeta(s(g)) \rangle_{B_{r(g)}}  
	=\sum_{t \in \G_{r(g)}} \langle  \xi_0(t), L_g(\zeta_0(tg))\rangle_{B_{r(g)}} . 
\]
This is a continuous section of the pullback $r^*\B$ because $L$ is continuous and  $\xi_0,\zeta_0\in \contc(s^*\EE)$, so in particular the sum above is finite. 
\end{proof}

\begin{defn}\label{def:regular_reps} 
We call equivariant representations of $\G$ of the form $L^{\G}$ \emph{regular} (or, if we want to stress the original equivariant representation $L$, \emph{regularised}).
\end{defn}

\begin{ex}\label{ex:trivial_equivariant_action2} 
Given a twisted action $(\alpha,u)$ of $\G$ on $\A$ we can view the collection of maps $(\alpha_g)_{g \in \G}$ as an equivariant representation of $\G$ on $\EE=\A$, see Example~\ref{ex:trivial_equivariant_action}. 
Its regularisation $\alpha^{\G}$ acts on $\ell^2(s^*\A)=\{\ell^2(s^*\A)_x\}_{x\in X}$ by the formula $(\alpha^{\G}_{g}(a))_t = \alpha_g (a_{t g})$, $a \xi\in \ell^2(s^*\A)_{s(g)}$, $t \in \G_{r(g)}$. 
\end{ex}

\begin{ex}
For a twisted  group action $(A,G,\alpha, u)$, i.e.\ when $\G=G$ is a discrete group, the regular $(\alpha,u)$-$(\alpha,u)$ equivariant representations of $\G$ coincide with induced equivariant representations introduced in \cite[Example 4.8]{BedosContiregular}, and $\alpha^G$ is called the regular equivariant representation of $\alpha$ in \cite[Example 4.7]{BedosContiregular}.
\end{ex}

\begin{defn}
Let $\A$, $\B$ be $\cst$-bundles over $X$, carrying twisted actions $(\alpha,u_\alpha)$ and $(\beta,u_\beta)$ of $\G$.
Two $(\alpha,u_{\alpha})$-$(\beta,u_{\beta})$-equivariant representations of $\G$, $L$ and $L'$, defined respectively on $\cst$-correspondence bundles $\EE$ and $\FF$ from $\A$ to $\B$, are \emph{unitarily equivalent} if there is a $\contz(X)$-unitary  $\Gamma:\contz(\EE) \to \contz(\FF)$ such that $\Gamma_{r(g)} L_g = L'_g \Gamma_{s(g)}$ for all $g \in \Gamma$, where $\{\Gamma_x\}_{x \in X}$ is the associated continuous family of unitaries $\Gamma_x:E_x \to F_x$, $x\in X$, see Lemma~\ref{lem:morphisms_disintegration}.
\end{defn}

\begin{ex}[Regularised equivariant representations revisited]\label{ex:regularized_equivariant_correspondences_revisited}
Instead of using the source map $s$ in the construction of a regular equivariant representation, we could also use the range map $r$. 
More specifically, let $(\EE,L)$ be an $(\alpha,u_{\alpha})$-$(\beta,u_{\beta})$-equivariant correspondence. 
We have an $(\alpha,u_{\alpha})$-$(\beta,u_{\beta})$-equivariant correspondence $(\ell^2(r^*\EE), {^{\G}L})$ where the fibres of $\ell^2(r^*\EE) = \{ \ell^2(r^*\EE)_x \}_{x\in X}$ are direct sums $\ell^2(r^*\EE)_x = \bigoplus_{g \in \G^x} E_x$ and for each $g \in \G$  the map ${^{\G}L}_{g}:\ell^2(r^*\EE)_{s(g)}\to \ell^2(r^*\EE)_{r(g)}$ is given by  
\[
    ^{\G}L_{g}(\xi)(t) = L_g (\xi (g^{-1}t)), \qquad 
    \xi\in \ell^2(r^*\EE)_{s(g)},\,\, t \in \G^{s(g)}.    
\]
Then the regularised equivariant representations $(\ell^2(s^*\EE), L^{\G})$ and $(\ell^2(r^*\EE), {^{\G}L})$  are unitarily equivalent, with equivalence given by the bundle of unitaries $\ell^2(s^*\EE)_x\congto \ell^2(r^*\EE)_x$, where $\delta_h\mapsto \delta_{h^{-1}}$ for $h\in \G_{x}$.
Also unitary equivalence of Hilbert bundles mentioned in Example~\ref{ex:continuous_Hilbert_G-bundles} is a special case of the above notion of unitary equivalence, and tensoring $(\EE,L)$  with the left and right regular Hilbert bundles $(\{\ell^2(\G^x)\}_{x\in X}, \lambda)$ and  $(\{\ell^2(\G_x)\}_{x\in X}, \rho)$, see Examples~\ref{ex:continuous_Hilbert_G-bundles}, \ref{ex:TensorActionHilbertBundle}, we get equivariant correspondences $(\EE\otimes \{\ell^2(\G^x)\}_{x\in X},L\otimes \lambda)$ and $(\EE\otimes \{\ell^2(\G_x)\}_{x\in X}, L\otimes\rho)$ where, for each $g\in \G$ and $\xi \in E_{s(g)}$, we have
\begin{align*}
    L_g\otimes \lambda_g(\xi \otimes \delta_h)&=L_g(\xi)\otimes \delta_{gh}, \qquad\quad h\in \G^{s(g)},
    \\
    L_g\otimes \rho_g(\xi \otimes \delta_h)&=L_g(\xi)\otimes \delta_{hg^{-1}}, \qquad h\in \G_{s(g)}.
\end{align*}
Indeed, for instance the unitaries $E_x\otimes \ell^2(\G_x)\ni \sum_{h\in \G_x} \xi_h\otimes \delta_h \longmapsto \oplus_{h\in \G_x}\xi_h \in \ell^2(s^*\EE)_x=\bigoplus_{h\in \G_x} E_x$, for $x\in X$, establish the equivalence $(\ell^2(s^*\EE), L^{\G})\cong (\EE\otimes \{\ell^2(\G_x)\}_{x\in X}, L\otimes\rho)$. 
Concluding, we have the following unitary equivalences
\[
    (\ell^2(s^*\EE), L^{\G})\cong (\EE\otimes \{\ell^2(\G_x)\}_{x\in X}, L\otimes\rho)\cong (\EE\otimes \{\ell^2(\G^x)\}_{x\in X},L\otimes \lambda)\cong (\ell^2(r^*\EE), {^{\G}L}) , 
\]
 yielding equivalent pictures of the regularisation of the equivariant correspondence $(\EE,L)$.
\end{ex}

 
\begin{prop}[Absorption for equivariant representations] \label{pr:RegularActionsTensor} 
Let $\A$, $\B$  be $\cst$-bundles over $X$, carrying twisted actions $(\alpha,u_\alpha)$ and $(\beta,u_\beta)$ of $\G$.
Let $L$ be an $(\alpha,u_{\alpha})$-$(\beta,u_{\beta})$ equivariant representation of $\G$ on a $\cst$-correspon\-dence bundle $\EE$ from $\A$ to $\B$. 
The tensor product equivariant representations $L \otimes \beta^\G$ and $\alpha^\G \otimes L$, acting on $\EE \otimes_{\B} \ell^2(s^*\B)$ and $\ell^2(s^*\A)\otimes_\A \EE$, respectively, are both unitarily equivalent to the induced regular equivariant representation $L^\G$ on $ \ell^2(s^*\EE)$. 
\end{prop}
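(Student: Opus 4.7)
The plan is to produce, for each of the two claimed equivalences, an explicit $\contz(X)$-unitary between the underlying $\cst$-correspondence bundles coming from canonical fibrewise Hilbert-module identifications, and then check that this unitary intertwines the equivariant representations. The bundle morphisms are obtained via the disintegration result of Lemma~\ref{lem:morphisms_disintegration}, and all intertwining checks are done on elementary tensors.

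For the equivalence $L\otimes \beta^{\G}\cong L^{\G}$, for each $x\in X$ I would take the standard unitary
\[
U_x\colon E_x\otimes_{B_x}\!\!\bigoplus_{g\in \G_x}B_x\ \longrightarrow\ \bigoplus_{g\in \G_x}E_x,
\qquad \xi\otimes(b_g)_g\longmapsto (\xi b_g)_g,
\]
implementing the usual isomorphism $E_x\otimes_{B_x}\ell^2(B_x,\G_x)\cong \ell^2(E_x,\G_x)$. Continuity of the family $\{U_x\}_{x\in X}$ in the sense required by Lemma~\ref{lem:morphisms_disintegration} follows from the definitions of the bundle topologies in Example~\ref{ex:tensor_bundles} and in the construction preceding Proposition~\ref{prop:induced_regular_representations_for_actions}: for $\xi\in\contz(\EE)$ and $\eta_0\in\contc(s^*\B)$, the elementary section $x\mapsto \xi(x)\otimes\bigl(\bigoplus_{g\in \G_x}\eta_0(g)\bigr)$ is sent by $U$ to $x\mapsto \bigoplus_{g\in \G_x}\xi(s(g))\eta_0(g)$, which is continuous in $\ell^2(s^*\EE)$ since $g\mapsto \xi(s(g))\eta_0(g)$ lies in $\contc(s^*\EE)$.

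The intertwining then reduces to a direct comparison on an elementary tensor $\xi\otimes (b_h)_{h\in \G_{s(g)}}$: using the formula~\eqref{eq:induced-regular-rep} for $L^{\G}_g$ (applied to $\beta$ this also yields $\beta^{\G}_g$) and the tensor-product rule from Example~\ref{ex:tensor_product_groupoid_actions}, both $U_{r(g)}\circ(L_g\otimes \beta^{\G}_g)$ and $L^{\G}_g\circ U_{s(g)}$ produce the element of $\bigoplus_{t\in \G_{r(g)}}E_{r(g)}$ whose $t$-component equals $L_g(\xi)\,\beta_g(b_{tg})$. The analogous scheme handles $\alpha^{\G}\otimes L\cong L^{\G}$, via the fibrewise unitaries
\[
V_x\colon \bigoplus_{g\in \G_x}A_x\otimes_{A_x}\!E_x\ \longrightarrow\ \bigoplus_{g\in \G_x}E_x,
\qquad (a_g)_g\otimes \xi\longmapsto (a_g\xi)_g,
\]
arising from $A_x\otimes_{A_x}E_x\cong E_x$; both $V_{r(g)}\circ(\alpha^{\G}_g\otimes L_g)$ and $L^{\G}_g\circ V_{s(g)}$ send $(a_h)_h\otimes \xi$ to the element with $t$-component $\alpha_g(a_{tg})\,L_g(\xi)$.

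The only delicate point is to verify that the fibrewise data actually define a morphism of $\cst$-correspondence bundles in the sense of Lemma~\ref{lem:morphisms_disintegration}, i.e.\ that $U$ and $V$ are continuous bundle maps with respect to the specified topologies on $\EE\otimes_\B \ell^2(s^*\B)$, $\ell^2(s^*\A)\otimes_\A\EE$, and $\ell^2(s^*\EE)$. This is routine once one tests on the specific elementary sections above, since the relevant continuity was built into those bundle topologies by construction; there is no genuine analytic obstacle, as the identifications are algebraic on fibres and the $\ell^2$-direct-sum topology is precisely the one for which the sections $x\mapsto\bigoplus_{g\in\G_x}\zeta(g)$, $\zeta\in\contc(s^*\EE)$, are continuous.
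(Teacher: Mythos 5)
Your proposal is correct and follows essentially the same route as the paper's proof: the fibrewise unitaries $U_x$ and $V_x$ are exactly the maps $\Gamma_x^{\beta}$ and $\Gamma_x^{\alpha}$ used there, the continuity of the bundle of unitaries is checked on the same elementary sections, and the intertwining computation on elementary tensors reduces to the identical identity $L_g(\xi\, b(tg))=L_g(\xi)\beta_g(b(tg))$ (resp.\ $L_g(a_{tg}\xi)=\alpha_g(a_{tg})L_g(\xi)$). No gaps.
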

\begin{proof} 
We construct the desired $\contz(X)$-unitary $\Gamma^{\beta} : \contz(\EE \otimes_\B \ell^2(s^*\B)) \to \contz(\ell^2(s^*\EE))$. 
Given $x \in X$, there is a unique unitary $\Gamma_x^{\beta} : E_x \otimes_{B_x} \ell^2(s^*\B)_x\to \ell^2(s^*\EE)_x$ satisfying
\begin{equation}\label{eq:unitary_map_tensor} 
    \Gamma_x^{\beta}(\xi \otimes b)(t) =\xi \cdot b (t), \qquad \xi \in E_x, \ b \in \ell^2(s^*\B)_x ,\ t \in \G_x.
\end{equation}
Indeed, for $\xi, \xi' \in E_x$, $b, b' \in \ell^2(s^*\B)_x$ we have  $\xi \otimes b, \xi' \otimes b'\in E_x \otimes_{B_x} \ell^2(s^*\B)_x$ and
\[
\begin{split}
    \langle \Gamma_x^{\beta}(\xi \otimes b), \Gamma_x^{\beta}(\xi' \otimes b')\rangle_{B_{x}} &=\sum_{t \in \G_x} \langle\xi b(t),  \xi'b'(t)\rangle_{B_x} = \sum_{t \in \G_x} b(t)^*\langle\xi , \xi'\rangle_{B_x} b'(t) \\
        &= \sum_{t \in \G_x}  \langle \xi \otimes b(t), \xi' \otimes b'(t) \rangle_{B_x} = \langle \xi \otimes b, \xi' \otimes b'\rangle_{B_x} . 
\end{split}
\]
This implies that \eqref{eq:unitary_map_tensor} extends uniquely to an adjointable isometry, which is invertible as its range clearly contains the dense subspace $\contc(s^*\EE|_{\G_x})\subseteq  \ell^2(s^*\EE)_x$. 
To see that the bundle $\{\Gamma_x^{\beta}\}_{x\in X}$ of unitaries is continuous take $\xi\in \contc(\EE)$ and $b\in \contc( \ell^{2}(s^*\B))$ of the form $b(x)=\oplus_{g\in \G_x} b_0(g)$ for some $b_0\in \contc( s^*\B)$. 
Then the formula $\zeta(g):=\xi(s(g))\cdot b_0(g)$ defines an element $\zeta \in\contc( s^*\EE)$ and so
\[
    \Gamma^{\beta}(\xi\otimes b)(x) := \Gamma_x^{\beta}(\xi(x)\otimes b(x))= \bigoplus_{g\in \G_x} \xi(x)\cdot b_0(g), \qquad x \in X, 
\]
defines an element of $\contz(\ell^{2}(s^*\EE))$. 
Hence $\{\Gamma_x^{\beta}\}_{x\in X}$ defines the $\contz(X)$-unitary $\Gamma^{\beta}$. 
It intertwines the equivariant representations as for $\xi \in E_x, b \in \ell^2(s^*\B)_x$, $t,g \in \G_x$, we have
\[
\begin{split} 
    (\Gamma_{r(g)}^{\beta}  \circ (L \otimes \beta^\G)_g) (\xi \otimes b)(t) &= \Gamma_{r(g)} (L_g \xi \otimes \beta^\G_g(b))(t) = L_{g} (\xi)\cdot  \beta^\G_g(b)(t ) = L_g (\xi) \beta_g (b (t g)) \\
        &= L_g (\xi b (tg)) = L_g (\Gamma_{s(g)}^{\beta} (\xi \otimes b (tg)))=(L^{\G}_{g} \circ \Gamma_{s(g)}^{\beta}) (\xi \otimes b)(t).
\end{split}
\]
This proves the unitary equivalence between $L \otimes \beta^\G$ and $L^\G$. 
The equivalence between $\alpha^\G \otimes L$ and $L^\G$ can be proved analogously, and it is implemented by the $\contz(X)$-unitary $\Gamma^{\alpha}:\contz(\ell^2(s^*\A \otimes_{\A} \EE ) \to \contz(\ell^2(s^*\EE))$, where for $x \in X$ we set
\[ 
    \Gamma_x^{\alpha}(a \otimes\xi)(t) = a(t) \xi, \qquad \xi \in E_x ,\ a \in \ell^2(s^*\A)_x ,\ t \in \G_x.
\]
We leave the details to the reader. 
\end{proof}

\begin{cor}\label{cor:regular_equivariant_absorbtion}
Let $\A$, $\B$, $\CC$ be $\cst$-bundles over $X$, carrying twisted actions $(\alpha,u_\alpha)$, $(\beta,u_\beta)$ and $(\gamma, u_\gamma)$ of $\G$. 
Let $L$ and $K$ be two composable equivariant representations of $\G$, so $L$ is   $(\alpha,u_{\alpha})$-$(\beta,u_{\beta})$ equivariant and $K$ is $(\beta,u_{\beta})$-$(\gamma,u_{\gamma})$ equivariant, see Example~\ref{ex:tensor_product_groupoid_actions}. 
Then 
\[
    L^\G\otimes K\cong  L\otimes K^\G \cong (L\otimes K)^\G .
\]
\end{cor}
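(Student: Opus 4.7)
The plan is to deduce this as a direct consequence of Proposition~\ref{pr:RegularActionsTensor}, applied both to the individual equivariant representations $L$, $K$ and to their composite $L\otimes K$, together with the associativity of the tensor product of composable equivariant representations constructed in Example~\ref{ex:tensor_product_groupoid_actions}.

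First, observe that $L\otimes K$ is an $(\alpha,u_{\alpha})$-$(\gamma,u_{\gamma})$ equivariant representation on $\EE\otimes_{\B}\FF$, so Proposition~\ref{pr:RegularActionsTensor} gives the unitary equivalences
\[
    (L\otimes K)^\G \;\cong\; (L\otimes K)\otimes \gamma^\G \;\cong\; \alpha^\G \otimes (L\otimes K).
\]
Applying the same proposition to $K$ (viewed as a $(\beta,u_{\beta})$-$(\gamma,u_{\gamma})$ equivariant representation) yields
\[
    K^\G \;\cong\; K\otimes \gamma^\G \;\cong\; \beta^\G \otimes K,
\]
and applying it to $L$ yields $L^\G\cong L\otimes \beta^\G\cong \alpha^\G\otimes L$.

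Next I would combine these using associativity of the (internal) tensor product of $\cst$-correspondence bundles, together with the compatible behaviour of the tensor products of equivariant actions on elementary tensors. Concretely, the chain
\[
    L\otimes K^\G \;\cong\; L\otimes (K\otimes \gamma^\G) \;\cong\; (L\otimes K)\otimes \gamma^\G \;\cong\; (L\otimes K)^\G
\]
gives the second equivalence, while
\[
    L^\G \otimes K \;\cong\; (L\otimes \beta^\G)\otimes K \;\cong\; L\otimes(\beta^\G\otimes K) \;\cong\; L\otimes K^\G
\]
gives the first. The implementing $\contz(X)$-unitaries are just the natural associators on the fibre tensor products $E_x\otimes_{B_x}F_x\otimes_{C_x}\ell^2(s^*\CC)_x$ (and similarly on $\ell^2(s^*\A)_x\otimes_{A_x}E_x\otimes_{B_x}F_x$), combined with the unitaries $\Gamma^{\beta}$, $\Gamma^{\gamma}$ built in the proof of Proposition~\ref{pr:RegularActionsTensor}; continuity in $x$ and the intertwining with the equivariant structures follow fibrewise from the identities verified there.

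The only non-automatic point is really the compatibility of associativity with the equivariant structure, i.e.\ verifying that the canonical associator intertwines $(L\otimes K)\otimes M$ with $L\otimes(K\otimes M)$ for composable $L$, $K$, $M$. This I expect to be the main (albeit routine) technical step, checked on elementary tensors $\xi\otimes\zeta\otimes\eta$ using the formula $(L_g\otimes K_g)(\xi\otimes\zeta)=L_g\xi\otimes K_g\zeta$ from Example~\ref{ex:tensor_product_groupoid_actions}, and then extended to the completion by continuity and nondegeneracy.
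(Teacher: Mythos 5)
Your proposal is correct and follows essentially the same route as the paper: the paper's proof likewise applies Proposition~\ref{pr:RegularActionsTensor} four times and chains $L^\G\otimes K\cong L\otimes \beta^\G \otimes K \cong L\otimes K^\G \cong L\otimes K \otimes \gamma^\G \cong (L\otimes K)^\G$ via associativity of the tensor product. Your extra remark about checking that the associator intertwines the equivariant structures is a reasonable elaboration of a step the paper treats as implicit.
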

\begin{proof}
Using the `associativity' of the tensor product and applying Proposition~\ref{pr:RegularActionsTensor} four times we get $L^\G\otimes K\cong  L\otimes \beta^\G \otimes K \cong L\otimes K^\G \cong L\otimes K \otimes \gamma^\G \cong (L\otimes K)^\G$. 
\end{proof}

We will use the above absorption statement to infer that our Fourier multipliers form an ideal in the category of Fourier--Stieltjes multipliers (to be defined in the next section). 
Also, Proposition~\ref{pr:RegularActionsTensor} implies that 
\[
    \contz(\EE) \otimes_{\contz(\B)} \contz(\ell^2(s^*\B)) \cong\contz(\ell^2(s^*\EE)).
\]
To show that Fourier--Stieltjes multipliers yield completely bounded maps on reduced crossed products we will need yet another form of absorption, where we change the left multiplication in the regular $\cst$-correspondence $\ell^2(s^*\B)$ from the pointwise to the one given by the regular representation. 
The following versions of the Fell absorption principle are what we need; they are related to similar results appearing, for instance, in \cite[Lemma 2]{Sims_Williams} and \cite[Theorem 3.20]{BussMartinez}. 
The first one is a generalisation of \cite[Theorem 4.10]{BedosConti}.

\begin{thm}[Fell's absorption I]\label{thm:Fell_absorption}
Let $\A$, $\B$ be $\cst$-bundles over $X$, carrying twisted actions $(\alpha,u_\alpha)$ and $(\beta,u_\beta)$ of $\G$. 
Let $L$ be an $(\alpha,u_{\alpha})$-$(\beta,u_{\beta})$ equivariant representation of $\G$ on a $\cst$-correspondence bundle $\EE$ from $\A$ to $\B$. 
The representation $L\dashind(\pi^\B)$ induced by $L$ (as in Proposition~\ref{prop:representation_induced_by_equivariant_action}) from the regular representation $\pi^\B:\cst(\B^{(\beta,u_{\beta})})\to \LL(\contz(\ell^2(s^*\B)))$ (see Example~\ref{ex:regular_rep_revisited}) is unitarily equivalent to the regular representation $\pi^{\EE}\cong\Lambda^\EE$ 
induced by $\EE$. 
This equivalence is implemented by the isomorphism of right Hilbert $\contz(\B)$-modules 
\[
    W : \contz(\EE)\otimes_{\pi^{\B}} \contz(\ell^2(s^*\B)) \stackrel{\cong}{\longrightarrow} \contz(\ell^2(s^*\EE))
\]
where $W(\xi\otimes b )(x):= W_x(\xi \otimes b(x) )$, $\xi \in \contz(\EE)$, $b\in \contz(\ell^2(s^*\B))$, $x \in X$, and 
\[
    W_x : \contz(\EE) \otimes_{\pi^{\B}_x} \ell^2(s^*\B)_x \stackrel{\cong}{\longrightarrow} \ell^2(s^*\EE)_x, \quad
    W_x \left(\xi\otimes b_x\right)(g) := L_{g}^{-1}\left(\xi(r(g))\right) b_x(g),
\]
for $\xi\in \contz(\EE)$, $b_x\in \ell^2(s^*\B)_x$. 
\end{thm}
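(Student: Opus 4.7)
The plan is to construct $W$ by specifying the family of fibrewise maps $W_x$, verify each $W_x$ is inner-product-preserving, assemble the family into a $\contz(X)$-linear Hilbert module isomorphism, and finally verify the intertwining relation on generators $a\delta_U$ of $\cst(\A^{(\alpha,u_{\alpha})})$.

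First I would show that for each $x\in X$ the prescription $W_x(\xi\otimes b_x)(g) := L_g^{-1}(\xi(r(g))) b_x(g)$ gives a well-defined inner-product-preserving linear map from $\contz(\EE)\otimes_{\pi^{\B}_x}\ell^2(s^*\B)_x$ to $\ell^2(s^*\EE)_x$. This follows by comparing $\langle W_x(\xi\otimes b_x), W_x(\eta\otimes c_x)\rangle_{B_x}$ with $\langle b_x, \pi^{\B}_x(\langle \xi,\eta\rangle_B) c_x\rangle_{B_x}$ term-by-term, using the identity $\langle L_g^{-1}(\eta_1), L_g^{-1}(\eta_2)\rangle_{B_{s(g)}} = \beta_g^{-1}(\langle \eta_1,\eta_2\rangle_{B_{r(g)}})$ from the remark following the definition of equivariant correspondence, together with the explicit formula $(\pi^{\B}_x(f) c_x)(g) = \beta_g^{-1}(f(r(g))) c_x(g)$ for $f\in\contz(\B)$ obtained by specialising the induced-regular-representation formula to the unit bisection $X$.

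Next I would assemble the family $\{W_x\}_{x\in X}$ into a $\contz(X)$-linear map of Hilbert modules via the continuity criterion for bundles. It suffices to verify this on elementary tensors $\xi\otimes b$ with $\xi\in\contc(\EE)$ and $b\in\contz(\ell^2(s^*\B))$ of the form $b(x) = \bigoplus_{g\in\G_x} b_0(g)$ for $b_0\in\contc(s^*\B)$; for such tensors $W(\xi\otimes b)(x)(g) = L_g^{-1}(\xi(r(g))) b_0(g)$, and continuity of $g\mapsto L_g^{-1}(\xi(r(g)))$ follows from rewriting $L_g^{-1} = u_{\alpha}(g^{-1},g)^* L_{g^{-1}}(\cdot) u_{\beta}(g^{-1},g)$ and invoking continuity of the inverse bundle map together with strict continuity of the twists. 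Surjectivity follows by density: any $\zeta\in\contc(s^*\EE)$ supported in an open bisection $U$ can be written as $\zeta(g) = L_g^{-1}(\xi(r(g))) b_0(g)$ with $\xi\in\contc(\EE|_{r(U)})$ defined by $\xi(r(g)) := L_g(\zeta(g))$ and $b_0\in\contc(\B|_U)$ produced via a fibrewise approximate-unit argument, then combined by partition of unity over a locally finite cover by bisections.

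The main verification is the intertwining identity on generators $a\delta_U$ with $a\in A_{r(U)}$ and $U\in\Bis(\G)$. Fix $x\in X$, an elementary tensor $\xi\otimes b_x$, and $g\in\G_x$; both sides vanish unless $r(g)\in r(U)$, so assume $r(g)\in r(U)$, let $k\in U$ be the unique element with $r(k)=r(g)$, and set $h:=k^{-1}g$ so that $g=kh$. The formula for $\pi^{\EE}_x$ gives
\[
[\pi^{\EE}_x(a\delta_U) W_x(\xi\otimes b_x)](g) = \alpha_g^{-1}(a(r(g))\, u_{\alpha}(k,h))\, L_h^{-1}(\xi(r(h)))\, b_x(h),
\]
while a strict-limit calculation yields $(\pi^{\B}_x(\delta_U) b_x)(g) = \beta_g^{-1}(u_{\beta}(k,h)) b_x(h)$; combined with $L\dashind(\pi^{\B})(a\delta_U)(\xi\otimes b_x) = aL_U(\xi)\otimes \pi^{\B}_x(\delta_U) b_x$ from the induced-representation proposition, this produces
\[
W_x(aL_U(\xi)\otimes \pi^{\B}_x(\delta_U) b_x)(g) = \alpha_g^{-1}(a(r(g)))\, L_g^{-1}(L_k(\xi(r(h))))\, \beta_g^{-1}(u_{\beta}(k,h))\, b_x(h).
\]
The $2$-cocycle identity $L_k L_h = u_{\alpha}(k,h) L_g u_{\beta}(k,h)^*$ on $E_{r(h)}$ together with $L_g^{-1}(u\cdot\eta\cdot v) = \alpha_g^{-1}(u) L_g^{-1}(\eta) \beta_g^{-1}(v)$ yields $L_g^{-1}(L_k(\eta)) = \alpha_g^{-1}(u_{\alpha}(k,h)) L_h^{-1}(\eta) \beta_g^{-1}(u_{\beta}(k,h)^*)$; after substitution the $u_{\beta}$ factors cancel and the expressions coincide. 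The main obstacle is the bookkeeping of the cocycle factors $u_{\alpha}(k,h)$ and $u_{\beta}(k,h)$: equivariance of $L$ is precisely the tool that converts the bare $u_{\alpha}$-factor appearing in $\pi^{\EE}$ into a product of $\alpha$- and $\beta$-cocycles, whose $u_{\beta}$-part then cancels against the one coming from $\pi^{\B}_x(\delta_U)$.
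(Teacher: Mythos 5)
Your proposal is correct and follows essentially the same route as the paper's proof: fibrewise isometry of $W_x$ via the identity $\langle L_g^{-1}(\xi)b,L_g^{-1}(\zeta)a\rangle_{B_x}=b^*\beta_g^{-1}(\langle\xi,\zeta\rangle_{B_{r(g)}})a$, continuity of the assembled $W$ checked on elementary tensors with $b(x)=\oplus_{g\in\G_x}b_0(g)$, and the intertwining identity verified on generators $a\delta_U$ by reducing to a single summand $g=kh$ and cancelling the $u_\beta$-cocycle factors. The only cosmetic difference is that you compute both sides independently and match them via $L_kL_h=u_\alpha(k,h)L_g(\cdot)u_\beta(k,h)^*$, whereas the paper massages $\pi^{\EE}(a)W(\xi\otimes b)$ directly into the other side using $L_{h^{-1}}^{-1}$; the underlying cocycle bookkeeping is identical.
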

\begin{proof}
Let $x \in X$.
For any $\xi, \ \zeta \in E_{r(g)}$, $g\in \G_x$, and $a, b \in B_{x}$, using Remark~\ref{rem:relations_equivariant_reps}, we have 
\[
\begin{split}
    \langle L_{g}^{-1}(\xi) b ,L_{g}^{-1}(\zeta)a\rangle_{B_{x}} &= \beta_{g}^{-1}(\langle \xi \beta_g(b), \zeta \beta_g(a)\rangle_{B_{r(g)}}) = b^* \beta_{g}^{-1}( \langle \xi , \zeta \rangle_{B_{r(g)}} ) a .
\end{split}
\]
Using this, for any $\xi,\zeta \in \contz(\EE)$ and $a$, $ b\in \ell^2(s^*\B)_x$ we get
\[
\begin{split}
    \left\langle W_x \left(\xi\otimes b\right) , W_x \left(\zeta\otimes  a\right) \right \rangle_{B_x} &= \sum_{g\in \G} b(g)^* \beta_{g}^{-1}( \langle \xi(r(g))  , \zeta(r(g))\rangle_{B_{r(g)}} ) a(g) 
    =\left\langle \xi\otimes b , \zeta\otimes  a\right\rangle_{B_x}.
\end{split}
\]
This readily implies that $W_x$ is a well-defined isometry. 
It clearly has dense range, hence in fact $W_x$ is an adjointable unitary.
Now it suffices to see that $W$ is well defined, i.e.\ that for $\xi \in \contc(\EE)$, $b\in \contz(\ell^2(s^*\B))$ the formula $W(\xi\otimes b )(x)= W_x(\xi \otimes b(x) )$ defines an element of $\contz(\ell^2(s^*\EE))$.
We may assume here that $b$ is of the form $x\mapsto \oplus_{g\in \G_x} b_0(g)$ for some $b_0\in \contz(s^*\B)$, as such elements form a dense subset of $\contz(\ell^2(s^*\B))$. 
The section $\G\ni g\mapsto L_{g^{-1}}\left(\xi(r(g))\right)$ is in $\contb(s^*\EE)$ by continuity and boundedness of $\xi$ and $L$, see Lemma~\ref{lem:L_is_continuous}. 
Therefore the section $\G\ni g\mapsto L_{g^{-1}}\left(\xi(r(g)\right))u_{\beta}(g^{-1},g)b_0(g)$ is in $\contc(s^*\EE)$. 
Since $W(\xi\otimes b )(x) = \oplus_{g\in \G_x} L_{g^{-1}} \left( \xi(r(g)) \right) u_{\beta}(g^{-1},g)b_0(g)$ we conclude that $W(\xi\otimes b )\in \contz(\ell^2(s^*\EE))$.

To prove that $W$ intertwines $L\dashind(\pi^\B)$ and $\pi^{\EE}$ we need to show that for every  $U\in \Bis(\G)$, $a\in \contc(\A|_{r(U)})$, $\xi\in \contz(\EE),\ b\in \ell^2(s^*\B)$, we have
\begin{equation}\label{eq:Fell_absorption_conjugates}
    W^*\pi^{\EE}(a)W (\xi \otimes b)= a L_U(\xi) \otimes \pi^{\B}(\delta_U)(b), 
\end{equation}
where $L_U(\xi)(r(g)):=L_{g}(\xi(s(g))$ for $g\in U$ and 
\[
    \pi^{\B}(\delta_U)(b)(x) := \oplus_{g\in \G_x} \sum_{h\in \G^{r(g)}\cap U} \overline{\alpha}_{g}^{-1}\big(u(h,h^{-1}g)\big)b(h^{-1}g), \qquad x\in X.
\]
Moreover, by linearity and continuity, it suffices to consider $b$ such that for every $x\in X$,  $b(x)\in \ell^2(s^*\B)_x$, is supported on single element of $\G_x$. 
Let us then fix $x\in X$ and assume that $b_x := b(x)\in \ell^2(s^*\B)_x$, is supported on $t\in \G_x$. 
There is at most one element in $U\cap \G_{r(t)}$, that we denote by $h$ if it exists. 
We have   
\[
\begin{split}
    \big( \pi^{\EE}_x (a) W_x (\xi\otimes & b_x) \big)(g) = [g=ht] \alpha_{g}^{-1} \left( a(h)u_{\alpha}(h,t) \right) L_{t}^{-1} \left( \xi(r(t)) \right) b_x(t) \\
        &= [g=ht] \alpha_{g}^{-1} \left( a(h) u_{\alpha}(h,t) \right) L_{g}^{-1} L_{h^{-1}}^{-1} \left( u_{\alpha}(h^{-1},g)\xi(r(t)) u_{\beta}(h^{-1},g)^* \right) b_x(t) \\
        &= [g=ht] L_{g}^{-1} \left( a(h)u_{\alpha}(h,t) L_{h^{-1}}^{-1} (u_{\alpha}(h^{-1},g)\xi(r(t)) u_{\beta}(h^{-1},g)^* \right) b_x(t).
\end{split}
\]
Consider $c:=u_{\alpha}(h,t) L_{h^{-1}}^{-1} \left( u_{\alpha}(h^{-1},g)\xi(r(t)) u_{\beta}(h^{-1},g)^* \right)$ where $g=ht$. 
Since $r(t)=s(h)$ and $L_{h^{-1}}^{-1}(\cdot) = u_{\alpha}(h,h^{-1})^* L_{h}(\cdot) u_{\beta}(h,h^{-1})$ we get
\[
    c = u_{\alpha}(h,t) u_{\alpha}(h,h^{-1})^* \alpha_h(u_{\alpha}(h^{-1},g)) L_{h}\left(\xi(s(h)) \right)\beta_{h}(u_{\beta}(h^{-1},g))^* u_{\beta}(h,h^{-1}).
\]
By the cocycle identity we have $\alpha_h(u_{\alpha}(h^{-1},g)) = u_{\alpha}(h,h^{-1})u_{\alpha}(h,t)^*$  (and similarly for $\beta$). 
Thus $c = L_h(\xi(s(h))u_{\beta}(h,t)$. 
Therefore continuing the above calculations we get
\[
\begin{split}
    \big( \pi^{\EE}_x(a) W_x (\xi\otimes  b_x) \big)(g) &= [g=ht] L_{g}^{-1} \left( a(h) L_h ( \xi(s(h) ) u_{\beta}(h,t) \right) b_x(t) \\
        &= W_x \left( a L_U(\xi) \otimes \pi^{\B}(\delta_U)(b) \right)(g).
\end{split}
\]
This implies the equality \eqref{eq:Fell_absorption_conjugates} and finishes the proof.
\end{proof}

The above theorem states that for any equivariant correspondence $(\EE, L)$ the following diagram commutes
\[
\xymatrixcolsep{2.6pc} \xymatrixrowsep{2.6pc}
\xymatrix{ 
        \Cst( \A^{(\alpha,u_{\alpha})} )	 \ar@{->}[d]_{\text{\normalsize $L\dashind(\pi^\B)$}} \ar@{->}[rrd]^{\pi^\EE} \ar@{->}[rr]^{\Lambda\otimes \mathrm{id}_{\contz(\EE)\qquad\,\,} } 
        &  & \LL\Big(\contz(\ell^2(r^*\A)) \otimes_{\contz(\A)} \contz(\EE) \Big)  \ar@{<->}[d]^{\cong}
            \\
        \LL\Big( \contz(\EE)\otimes_{\pi^{\B}} \contz(\ell^2(s^*\B))\Big) \ar@{<->}[rr]_{\qquad \cong}
        & & \LL\Big( \contz(\ell^2(s^*\EE))\Big)
}
\]
where $\cong$ denotes an isomorphism implemented by a unitary between Hilbert modules. 
Thus regular representations absorb the inducing functor in the sense that composing $L\dashind$ with the regular representation one gets a  regular representation. 
For groups the representation $L\dashind(\pi^{\B})$ is nothing but the tensor product of a representation $L$ and the regular representation $\pi^{\B}$ and the above theorem reduces to the classical Fell's absorption.


Next we derive the second version of Fell's absorption principle that will be needed later, and which in particular generalises \cite[Theorem 4.11]{BedosContiregular}. Recall that regular representations induced by $\cst$-correspondence bundles were introduced in Definition~\ref{def:regular induced rep}, regularisations of equivariant representations in Lemma~\ref{lem:induced_equivariant_representation} and Definition~\ref{def:regular_reps}, and representations induced by equivariant actions in Proposition~\ref{prop:representation_induced_by_equivariant_action}.

\begin{thm}[Fell's absorption II]\label{thm:Fell_absorption II}
Let $\A$, $\B$ be $\cst$-bundles over $X$, carrying twisted actions $(\alpha,u_\alpha)$ and $(\beta,u_\beta)$ of $\G$. 
Let $L$ be an $(\alpha,u_{\alpha})$-$(\beta,u_{\beta})$ equivariant representation of $\G$ on a $\cst$-correspondence bundle $\EE$ from $\A$ to $\B$, and let $\psi : \cst( \B^{(\beta,u_{\beta})} ) \to \LL(H)$ be a representation of $\cst(\B^{(\beta,u_{\beta})})$ on a Hilbert space $H$.
The representation $L^{\G}\dashind(\psi) : \cst( \A^{(\alpha,u_{\alpha})} ) \to \LL(\contz(\ell^2(s^*\EE))\otimes_{\psi} H)$ induced from $\psi$ by the regularisation of $L$ is unitarily equivalent to the amplification $\pi^{\EE}\otimes 1_{H} : \cst(\A^{(\alpha,u_{\alpha})}) \to \LL(\contz(\ell^2(s^*\EE)) \otimes_{\psi} H)$ of the regular representation $\pi^{\EE}$ induced by $\EE$. 
\end{thm}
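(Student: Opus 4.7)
The plan is to deduce the result from Theorem~\ref{thm:Fell_absorption} (Fell I) combined with the tensor decomposition of the regularised equivariant representation established in Proposition~\ref{pr:RegularActionsTensor}. By Proposition~\ref{pr:RegularActionsTensor}, the $\contz(X)$-unitary $\Gamma^\beta$ yields a unitary equivalence $L^\G \cong L \otimes \beta^\G$ of $(\alpha,u_\alpha)$-$(\beta,u_\beta)$ equivariant correspondences. Functoriality of the induction construction of Proposition~\ref{prop:representation_induced_by_equivariant_action} under composition of equivariant correspondences---which follows formally from the associativity of internal tensor products of $\cst$-correspondences---then produces canonical unitary equivalences of representations of $\cst(\A^{(\alpha,u_\alpha)})$:
\[
L^\G\dashind(\psi) \;\cong\; (L\otimes\beta^\G)\dashind(\psi) \;\cong\; L\dashind(\beta^\G\dashind(\psi)).
\]
Thus it suffices to identify $\beta^\G\dashind(\psi)$ with $\pi^\B\otimes 1_H$ and feed the result into Theorem~\ref{thm:Fell_absorption}.

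The heart of the proof is therefore to establish the special case of the theorem when $L$ is the trivial equivariant representation: $\beta^\G\dashind(\psi) \cong \pi^\B\otimes 1_H$ as representations of $\cst(\B^{(\beta,u_\beta)})$ on $\contz(\ell^2(s^*\B))\otimes_\psi H$. After disintegrating $\psi$ into a covariant pair $(\pi, \{v_U\}_{U\in\Bis(\G)})$ as in Section~\ref{sec:representations_of_crossed_products}, we construct an explicit unitary
\[
\Phi : \contz(\ell^2(s^*\B))\otimes_\psi H \longrightarrow \contz(\ell^2(s^*\B))\otimes_\psi H
\]
implementing this equivalence. The formula for $\Phi$ on elementary tensors combines the bisection-level ``inversion'' underlying the equivalence $(\ell^2(s^*\B), \beta^\G) \cong (\ell^2(r^*\B), {}^\G\beta)$ of Example~\ref{ex:regularized_equivariant_correspondences_revisited} with the partial isometries $v_U$ acting on $H$. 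In the untwisted discrete group case, $\Phi$ specialises to the classical Fell absorption unitary $V(\delta_g \otimes \zeta) = \delta_{g^{-1}} \otimes \psi(\delta_g)\zeta$, which one checks directly intertwines $\pi^\B\otimes 1_H$ with $\beta^\G\dashind(\psi)$.

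Combining the previous steps with Theorem~\ref{thm:Fell_absorption} yields
\[
L^\G\dashind(\psi) \;\cong\; L\dashind(\beta^\G\dashind(\psi)) \;\cong\; L\dashind(\pi^\B\otimes 1_H) \;\cong\; L\dashind(\pi^\B)\otimes 1_H \;\cong\; \pi^\EE\otimes 1_H,
\]
where the third isomorphism arises from associativity of the internal tensor product of $\cst$-correspondences, and the final one is the amplification $W \otimes 1_H$ of the unitary $W$ from Theorem~\ref{thm:Fell_absorption}.

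The main technical obstacle is the explicit construction and verification of $\Phi$ in the twisted setting. The challenge is to correctly handle the $2$-cocycle $u_\beta$ when translating the ``shifted'' multiplication by $\beta^\G_U$ appearing in $\beta^\G\dashind(\psi)$ into the convolution structure of $\cst(\B^{(\beta,u_\beta)})$ acting via $\pi^\B$; this relies on the cocycle identity for $u_\beta$, the twisted composition rule $v_U v_V = \pi(\omega_\beta(U,V))v_{UV}$, and the covariance relations for $(\pi,v)$. In the untwisted case $\Phi$ reduces to the groupoid Fell absorption principle, closely related to the results in \cite{Kranz} and \cite{BussMartinez}.
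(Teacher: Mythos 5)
Your reduction strategy is sound and genuinely different from the paper's proof: the paper constructs, in one step, an explicit unitary $W$ on $\contz(\ell^2(s^*\EE))\otimes_{\psi}H$ determined by $W(\delta_U\xi\otimes\eta)=\delta_{U^*}L_U(\xi)\otimes v_U\eta$ for $U\in\Bis(\G)$, and verifies directly (via a long computation with approximate units and the cocycle identities) that it is well defined, unitary, and intertwines $L^{\G}\dashind(\psi)$ with $\pi^{\EE}\otimes 1_H$. Your chain $L^\G\dashind(\psi)\cong L\dashind(\beta^\G\dashind(\psi))\cong L\dashind(\pi^\B\otimes 1_H)\cong L\dashind(\pi^\B)\otimes 1_H\cong\pi^\EE\otimes 1_H$ is a legitimate alternative: the first link follows from Proposition~\ref{pr:RegularActionsTensor} together with the (easily checked) fact that unitarily equivalent equivariant correspondences induce unitarily equivalent representations, the functoriality $(L\otimes K)\dashind(\psi)\cong L\dashind(K\dashind(\psi))$ does follow from associativity of balanced tensor products once one observes that $K\dashind(\psi)$ restricted to $\contz(\B)$ is pointwise multiplication, and the last two links are Theorem~\ref{thm:Fell_absorption} amplified by $1_H$. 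What this buys is a clean separation of the ``coefficient'' part (handled by Fell~I) from the ``absorption of $\psi$'' part.

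The genuine gap is that the one step carrying all of the content --- the base case $\beta^\G\dashind(\psi)\cong\pi^{\B}\otimes 1_H$ --- is described but not proved. This base case is exactly the statement of the theorem for the trivial equivariant correspondence $(\EE,L)=(\B,\beta)$, so your argument reduces the general case to a special case rather than to something already established; and that special case contains essentially the entire twisted-cocycle difficulty of the paper's proof. Concretely, you must still: (i) show that the prescription $\Phi(\delta_U b\otimes\eta)=\delta_{U^*}\beta_U(b)\otimes v_U\eta$ is well defined on overlapping bisections, which requires the inner-product compatibility computation $\langle\Phi(\delta_U b\otimes\eta),\Phi(\delta_{U'}b'\otimes\eta')\rangle=\langle\delta_U b\otimes\eta,\delta_{U'}b'\otimes\eta'\rangle$ carried out with approximate units of $\contz(\B|_{r(U\cap U')})$ and the relations $\psi(e_i^{r(U_0)})v_U=\psi(e_i^{r(U_0)})v_{U_0}$; (ii) exhibit $\Phi^*$ and prove $\Phi\Phi^*=\Phi^*\Phi=1$ (this uses $\omega_{\beta}(U,U^*)^*\beta_U(\cdot)\,\omega_{\beta}(U,U^*)$-type identities and the covariance relations); and (iii) verify the intertwining relation, which is where the cocycle identity $\alpha_g(u(h,h^{-1}g))u(g,\cdot)=\dots$ bookkeeping actually occurs. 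None of this is shorter in the case $\EE=\B$ than in the paper's general computation, so until (i)--(iii) are written out the proof is incomplete. If you do carry them out, the remaining formal steps of your reduction should be recorded as a lemma (functoriality of $\dashind$ under tensor products and under unitary equivalence), since the paper nowhere states it.
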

\begin{proof} 
Disintegrate $\psi$ into a covariant representation $(\pi,v)$ as in Subsection~\ref{sec:representations_of_crossed_products}. 
So $\pi = \psi|_{\contz(X)}$ and for any $U\in\Bis(\G)$ the partial isometry $v_{U}$ is the strong limit of $\psi(e^{r(U)}_i\delta_{U})$, where for an open set $V\subseteq X$, we choose an approximate unit $\{e^V_i\}_i$ in $ B_{V}=\contz(\B|_{V})$.
We will describe the action of the desired unitary on the dense subspace $\contc(s^*\EE)\otimes_{\pi} H$ of $\contz(\ell^2(s^*\EE))\otimes_{\pi}H$.
For $U\in\Bis(\G)$ and $\xi \in \contc(\EE|_{s(U)})$, define  $\delta_U \xi \in \contc(s^*\EE|_U)$ by 
\[
    (\delta_U \xi)(g) := \xi(s(g)) , \qquad g\in U .
\]
For any $U\in \Bis(\G)$ and any $\eta\in H$ put
\begin{equation}\label{eq:unitary_for_Fell_absorption_II}
    W (\delta_U \xi \otimes \eta):=\delta_{U^*}L_{U}(\xi) \otimes v_U\eta,
\end{equation}
where $v_U$ is the partial isometry from the disintegration of $\psi$, see Subsection~\ref{sec:representations_of_crossed_products}.
We first show that for any $\xi'\in \contc(\EE|_{s(U')})$, $U'\in\Bis(\G)$, $\eta'\in H$ we have
\begin{equation}\label{eq:inner_product_preserve_Fell}
    \langle W (\xi \delta_U\otimes \eta),  W (\xi' \delta_{U'}\otimes \eta')\rangle = \langle  \xi \delta_U\otimes \eta,  \xi' \delta_{U'}\otimes \eta'\rangle. 
\end{equation}
Let $U_0=U\cap U'$. 
The section $\langle \delta_{U^*} L_{U}(\xi) , \delta_{U'^*}L_{U'}(\xi') \rangle_{\contz(\B)}$ vanishes outside $r(U_0)$ and for $g\in U_0$ we have $\langle  \delta_{U^*}L_{U}(\xi), \delta_{U'^*}L_{U'}(\xi') \rangle_{\contz(\B)}(r(g))=\langle L_g(\xi(s(g))), L_g(\xi'(s(g)))\rangle_{B_{r(g)}}$. 
Thus 
\[
\begin{split}
    \langle \delta_{U^*}L_{U}(\xi) , \delta_{U'^*}L_{U'}(\xi') \rangle_{\contz(\B)} &= \lim_{i,j} e_{i}^{r(U_0)}\cdot\langle \delta_{U^*}L_{U}(\xi) ,  \delta_{U'^*}L_{U'}(\xi')\rangle_{\contz(\B)}\cdot e_{j}^{r(U_0)} \\
        &= \lim_{i,j} \langle \delta_{U^*}L_{U}(\xi) e_{i}^{r(U_0)}, \delta_{U'^*}L_{U'}(\xi')  e_{j}^{r(U_0)}\rangle_{\contz(\B)} \\
        &= \lim_{i,j} \langle \delta_{U^*}L_{U}(\xi e_{i}^{s(U_0)}) , \delta_{U'^*}L_{U'}(\xi' e_{i}^{s(U_0)}) \rangle_{\contz(\B)} \\
        &= \lim_{i,j} \langle \delta_{U_0^*} L_{U_0}(\xi e_{i}^{s(U_0)}) ,  \delta_{U_0^*}L_{U_0}(\xi' e_{i}^{s(U_0)})\rangle_{\contz(\B)}.
\end{split}
\]
Similarly, $\langle \delta_{U}\xi , \delta_{U'}\xi' \rangle_{\contz(\B)} = \lim_{i,j} \langle \delta_{U_0^*}\xi e_{i}^{s(U_0)}, \delta_{U_0^*}\xi' e_{j}^{s(U_0)} \rangle_{\contz(\B)}$. 
We also have $\psi(e_i^{r(U_0)})v_{U} = \psi(e_i^{r(U_0)})v_{U_0}$ and $\psi(e_i^{r(U_0)})v_{U'}=\psi(e_i^{r(U_0)})v_{U_0}$.
Using this and the covariance conditions in Subsection~\ref{sec:representations_of_crossed_products}, we get 
\[
\begin{split}
    \langle W (\delta_U\xi \otimes \eta),  W (\delta_{U'}\xi' \otimes & \eta')\rangle = \langle  v_U\eta,\pi(\delta_{U^*}\langle L_{U}(\xi) ,\delta_{U'^*} L_{U'}(\xi') \rangle) v_{U'}\eta' \rangle \\
        &= \lim_{i,j} \left\langle \eta, v_{U_0}^*\pi\left(\langle \delta_{U_0^*} L_{U_0}(\xi e_{i}^{s(U_0)}) ,  \delta_{U_0^*} L_{U_0}(\xi' e_{j}^{s(U_0)}) \rangle_{\contz(\B)} \right)v_{U_0} \eta' \right\rangle \\
        &= \lim_{i,j} \left\langle \eta , \pi \left( \beta_{U_0}^{-1} \Big( \langle\delta_{U_0^*} L_{U_0}(\xi e_{i}^{s(U_0)}) , \delta_{U_0^*} L_{U_0}(\xi' e_{j}^{s(U_0)}) \rangle_{\contz(\B)}\Big)\right)\eta' \right\rangle \\
        &= \lim_{i,j} \left\langle \eta, \pi\left(\langle \delta_{U_0} \xi e_{i}^{s(U_0)}, \delta_{U_0} \xi' e_{j}^{s(U_0)} \rangle_{\contz(\B)}\right)\eta'\right\rangle \\
        &= \left\langle \eta, \pi\left(\langle \xi \delta_{U}, \xi' \delta_{U'} \rangle_{\contz(\B)}\right)\eta'\right\rangle = \langle \xi \delta_U\otimes \eta,  \xi' \delta_{U'}\otimes \eta'\rangle.
\end{split}
\]
Now by linearity, continuity and \eqref{eq:inner_product_preserve_Fell}, we see that \eqref{eq:unitary_for_Fell_absorption_II} determines an isometry $W$ on $\contz(\ell^2(s^*\EE))\otimes_{\pi} H$. 
Using that $\omega_{\alpha}(U,U^*)a=\omega_{\alpha}(U_0,U_0^*)a$ for any $U_0\subseteq U\in \Bis(\G)$ and $a\in A_{U^0}=\contz(\A|_{U_0})$, one can adopt the above arguments to see that the formula
\[
    W^* (\delta_U\xi \otimes \eta) := \delta_{U^*}\omega_{\alpha}(U,U^*)^*L_{U}(\xi) \otimes v_{U}\eta,
\]
determines an isometry on $\contz(\ell^2(s^*\EE))\otimes_{\pi} H$. 
Since $\omega_{\alpha}(U,U^*)^*L_{U}(\cdot)\omega_{\beta}(U,U^*) = L_{U^*}^{-1}$, $\overline{\pi}(\omega_{\beta}(U,U^*)^*)v_{U}=v_{U^*}^*$ and the tensor product is balanced over $\overline{\pi}(\M(A))$, we also have
\[
    W^* (\delta_U \xi \otimes \eta) := \delta_{U^*}L_{U^*}^{-1}(\xi) \otimes v_{U^*}^*\eta.
\]
This readily implies that $WW^* = W^*W = 1$, as for instance we have 
\[
\begin{split}
    WW^* (\delta_U \xi \otimes \eta) &= \delta_{U}\xi\otimes v_{U^*}v_{U^*}^*\eta = \text{s-}\lim_{i}\delta_{U}\xi\otimes \pi(e_i^{s(U)})\eta \\
        &= \text{s-}\lim_{i}\delta_{U}\xi \otimes \pi( e_i^{s(U)}) \eta = \text{s-}\lim_{i}\delta_{U}\xi e_i^{s(U)}\otimes \eta = \delta_U \xi \otimes \eta.
\end{split}
\]
Hence $W$ is a unitary and $W^*$ is its adjoint. 

For $V\in\Bis(G)$ and $a\in A_{r(V)}$ we define $a\delta_V\in \contc(\A^{(\alpha, u_{\alpha})}|_{V})$ by $a\delta_V(g)=a(r(g))$ for $g\in V$.
One computes that $\pi^\EE(a\delta_V)\delta_{U^*}L_{U^*}^{-1}(\xi) = \delta_{VU^*}\zeta$ where $\zeta\in \contc(\EE|_{s(VU^*)})$ for $gh\in VU^*$ where $g\in V$, $h\in U^*$ is given by 
\[
\begin{split}
    \zeta(s(g)) &= \alpha_{gh}^{-1}\Big(a(r(g))u_{\alpha}(g,h)\Big) L_{h}^{-1}\big(\xi(s(g))\big) \\
        &= \alpha_{h}^{-1} \Big( \alpha_g^{-1}\big(u_{\alpha}(g,h) a(r(g))\big) \Big) L_{h}^{-1}(\xi(s(g))) \\
        &= L_{h}^{-1} \Big( \alpha_g^{-1}\big(u_{\alpha}(g,h) a(r(g))\big)\xi(s(g) \Big).
\end{split}
\]
Thus on one hand we have 
\[
    (\pi^{\EE}\otimes 1_{H}) (a\delta_V)  W^*(\delta_U\xi \otimes \eta) = \delta_{VU^*}\zeta \otimes v_{U^*}^*\eta.
\]
On the other hand, a direct computation shows that $aL_{V}^{\G}(\delta_U\xi) = \delta_{UV^*}\zeta_0$ where $\zeta_0\in \contc(\EE|_{s(UV^*)}) = \contc(\EE|_{r(VU^*)})$ is given by $\zeta_0(r(g)) := L_g\Big(\alpha_g^{-1}(a(r(g))\xi (s(g)))\Big)$ for $g\in V$. 
Moreover, for $gh\in VU^*$ where $g\in V$, $h\in U^*$, we have
\[
\begin{split}
    L_{VU^*}^{-1}(\zeta_0)(s(g)) &= L_{gh}^{-1} \Big( L_g\Big(\alpha_g^{-1}(a(r(g)))\xi (s(g)))\Big) \Big) \\
        &= L_{h}^{-1}L_g^{-1} L_g \Big( \alpha_g^{-1} (u_{\alpha}(g,h)a(r(g)))\xi (s(g))\beta_g^{-1}(u_{\beta}(g,h)^*) \Big) \\
        &= L_{h}^{-1} \Big( \alpha_g^{-1}(u_{\alpha}(g,h) a (r(g)))\xi (s(g))) \Big) \beta_{h}^{-1}( \beta_g^{-1}(u_{\beta}(g,h)^*) \\
        &= \zeta(s(gh)) \beta_{gh}^{-1}(u_{\beta}(g,h)^*).
\end{split}
\]
That is $L_{VU^*}^{-1}(\zeta_0) = \zeta \beta_{UV^*}^{-1}(\omega_{\beta}(V,U^*)^*)$. Using this we get 
\[
\begin{split}
    W^* L^{\G}\dashind(\psi) (a\delta_V) (\delta_U\xi \otimes \eta) &= W^* \big( aL_{V}^{\G}(\delta_U\xi) \otimes v_{V}\eta \big) = W^* \big( \delta_{UV^*} \zeta_0 \otimes v_{V}\eta \big) \\
        &= \delta_{VU^*}\Big(\zeta \beta_{UV^*}^{-1}(\omega_{\beta}(V,U^*)^*) \Big)\otimes v_{VU^*}^* v_{V}\eta \\
        &= \delta_{VU^*}\zeta \otimes \pi(\beta_{UV^*}^{-1}(\omega_{\beta}(V,U^*)^*))v_{VU^*}^*v_V\eta.
\end{split}
\]
Then using the covariance relations and the fact that $v_{V}^*v_V$ is a strong limit of $\{\pi(e_i^{s(V)})\}$ we obtain that
\[
\begin{split}
    \pi( \beta_{UV^*}^{-1}(\omega_{\beta}(V,U^*)^*) )v_{VU^*}^*v_V\eta &= \lim_{i}\pi(e_i^{s(VU^*)}) v_{U^*}^*v_{V}^*v_V\eta = \lim_{i,j}v_{U^*}^*\pi(\beta_{U^*}^{-1}(e_i^{s(VU^*)})) \pi(e_j^{s(V)})\eta \\
        &= \lim_{i}v_{U^*}^* \pi( \beta_{U^*}^{-1}(e_i^{s(VU^*)}))\eta = \lim_{i}\pi(e_i^{s(VU^*)})v_{U^*}^*\eta .
\end{split}
\]
Further exploiting the fact that the tensor product is balanced we get
\[
    W^* L^{\G}\dashind(\psi) (a\delta_V)  (\delta_U\xi \otimes \eta) = \delta_{VU^*}\zeta  \otimes v_{U^*}^*\eta = (\pi^{\EE}\otimes 1_{H}) (a\delta_V)  W^*(\delta_U\xi \otimes \eta).
\]
This finishes the proof.
\end{proof}

The above theorem states that for any equivariant correspondence $(\EE, L)$ and for any representation $\psi$ of $\Cst( \B^{(\alpha,u_{\alpha})} )$ on a Hilbert space  $H$ the following diagram commutes
\[
\xymatrixcolsep{2.6pc} \xymatrixrowsep{2.6pc}
\xymatrix{ 
    \Cst( \A^{(\alpha,u_{\alpha})} )	 \ar@{->}[d]_{\text{\normalsize $L^{\G}\dashind(\psi)$}} \ar@{->}[rrd]^{\pi^\EE\otimes\mathrm{id}_{H}} 
    \ar@{->}[rr]^{\Lambda\otimes \mathrm{id}_{\contz(\EE)}\otimes\mathrm{id}_{H} \qquad\qquad \quad} 
    &  & \LL\Big(\contz(\ell^2(r^*\A)) \otimes_{\contz(\A)} \contz(\EE) \otimes_{\psi} H\Big)  \ar@{<->}[d]^{\cong}
    \\
    \LL\Big( \contz(\ell^2(s^*\EE))\otimes_{\psi} H\Big) \ar@{<->}[rr]_{ \cong}
    & & \LL\Big( \contz(\ell^2(s^*\EE))\otimes_{\psi} H\Big)
    }
\]
where the isomorphisms $\cong$ are implemented by unitaries between Hilbert modules. 
Thus the regular equivariant representation $L^{\G}$ absorbs all representations in the sense that the induced representation is an amplification of a regular representation, and hence descends to the reduced crossed product. 
For groups the representation $L^{\G}\dashind(\psi)$ is nothing but the tensor product of a regular representation $L^{\G}$ and the representation $\psi$, and the above theorem recovers the classical Fell absorption principle.

\subsection{Absorption principles for groupoids} 

Let us specialise the above results to the case of trivial one-dimensional \cstar{}bundle $\A=\B=\C\times X$ over $X$. 
In this case, we are simply considering a  twisted groupoid $(\G,u)$, and equivariant representations correspond to continuous Hilbert $\G$-bundles $(\H,L)$ as in Example~\ref{ex:continuous_Hilbert_G-bundles}. 
Recall the identifications and notation introduced in Proposition \ref{prop:induced_regular_representations_for_actions}. 
The regular induced representation $\pi^\H\colon C^*(\G,u)\to \LL(\contz(\ell^2(s^*\H)))$  acts on $\contz(\ell^2(s^*\H))$, which is the Hilbert module over $\contz(X)$ obtained as the completion of $\contc(s^*\H)$ with respect to the obvious right action and inner product
\[
    \braket{\xi}{\eta}(x):=\sum_{g\in \G_x}\braket{\xi(g)}{\eta(g)},\qquad x\in X,\, \xi,\, \eta \in \contc(s^*\H).
\]
The representation $\pi^\H\colon\Cst(\G,u)\to \LL(\contz(\ell^2(s^*\H)))$ is given by the formula
\[
    \pi^\H(f)\xi(g)=\sum_{h\in \G^{r(g)}}f(h)\xi(h^{-1}g)u(h,h^{-1}g),
\]
where $ f \in \contc(\G), \xi \in \contc(s^*\H)$ and $g \in \G$.

The left regular representation $\lambda^u$ of $C^*(\G,u)$ is given by a similar formula, as it can be seen as the regular representation induced from the trivial Hilbert bundle $\C\times X$, which acts on the $\contz(X)$-Hilbert module which we will simply denote $\ell^2(\G)$: the completion of $\contc(\G)$ in the $\contz(X)$-valued inner product $\braket{\xi}{\eta}(x):=\sum_{g\in \G_x} \xi(g)\overline{\eta(g)}$, $x\in X,\, \xi,\, \eta \in \contc(\G)$. 
The induced representation $L\dashind(\lambda^u)$ acts thus on $\contz(\H)\otimes_{\contz(X)}\ell^2(\G)$ by the formula
\[
    L\dashind(\lambda^u)(f\delta_U)(\xi\otimes\eta)= f\cdot L_U(\xi)\otimes \lambda^u(\delta_U)\eta
\]
for an open bisection $U\subseteq \G$, $f\in \contc(r(U))\subseteq \contc(X)$, $\xi\in \contz(\H|_{r(U)})\subseteq \contz(\H)$, $\eta \in \contc(\G)\subseteq \ell^2(\G)$,
where $f\delta_U(g)=[g\in U]f(r(g))$, $(f\cdot L_U)(\xi)(r(g))=f(r(g))L_g(\xi(s(g)))$, and $(\lambda^u(\delta_U)\eta)(g)=[g \in \G^{r(h)}, h \in U]\eta(h^{-1}g)$ for $g \in \G$.
With the above notation, Theorem~\ref{thm:Fell_absorption} gives the following result. 

\begin{cor}
The unitary $W\colon \contz(\H)\otimes_{\contz(X)} \ell^2(\G)
\congto \contz(\ell^2(s^*\H))$ given by $W(\xi\otimes\eta)(g):=L_g^{-1}(\xi(r(g)))\eta(g)$, for $\xi \in \contz(\H), \eta \in 
\ell^2(\G)$, $g \in \G$, yields a unitary equivalence between the $L$-induced representation $L\dashind(\lambda^u)\colon C^*(\G,u)\to \LL\big(\contz(\H)\otimes_{\contz(X)} \ell^2(\G)\big)$
and the regular induced representation $\pi^\H\colon C^*(\G,u)\to \LL(\contz(\ell^2(s^*\H)))$.
\end{cor}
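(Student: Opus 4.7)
The plan is to derive this corollary as a direct specialization of Theorem~\ref{thm:Fell_absorption} to the case of trivial coefficient bundles $\A=\B=\C\times X$, so the work is almost entirely one of unpacking notation.

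First, I would check that the set-up reduces correctly. When both bundles are trivial and one-dimensional, any twisted action $(\alpha,u_\alpha)$ on $\A$ is forced to have $\alpha_g=\mathrm{id}_\C$ for every $g\in\G$, and $u_\alpha$ must be a continuous normalized $\T$-valued $2$-cocycle on $\G$, as explained in Example~\ref{ex:continuous-cocycles}; similarly for $(\beta,u_\beta)$. Taking both twists equal to $u$, we obtain $\cst(\A^{(\alpha,u_\alpha)})=\cst(\B^{(\beta,u_\beta)})=\cst(\G,u)$. A $\cst$-correspondence bundle $\EE$ from $\A$ to $\B$ is then simply a continuous Hilbert $\G$-bundle $\H=\{H_x\}_{x\in X}$, and an $(\alpha,u_\alpha)$-$(\beta,u_\beta)$-equivariant representation $L$ on $\EE$ reduces to a continuous family of unitaries $L_g:H_{s(g)}\to H_{r(g)}$ satisfying $L_gL_h=u(g,h)L_{gh}u(g,h)^*$, i.e.\ the twisted projective cocycle condition appropriate to $(\H,L)$.

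Next, I would match up the Hilbert modules and representations. Because $B_x=\C$, the fibre $\ell^2(s^*\B)_x=\bigoplus_{g\in\G_x}\C$ is canonically $\ell^2(\G_x)$, and consequently $\contz(\ell^2(s^*\B))$ coincides with the $\contz(X)$-Hilbert module $\ell^2(\G)$ described in the preamble to the corollary. Under this identification, the representation $\pi^\B$ from Proposition~\ref{prop:induced_regular_representations_for_actions} collapses to the left regular representation $\lambda^u$ of Example~\ref{ex:continuous-cocycles}, and $\pi^\EE$ collapses to $\pi^\H$ as described in Example~\ref{ex:groupoid regular}. The $L$-induced representation $L\dashind(\pi^\B)$ of Proposition~\ref{prop:representation_induced_by_equivariant_action} thus becomes the representation $L\dashind(\lambda^u)$ acting on $\contz(\H)\otimes_{\contz(X)}\ell^2(\G)$ by the formula displayed before the corollary.

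Finally, I would observe that the formula for the fibrewise unitary in Theorem~\ref{thm:Fell_absorption},
\[
W_x(\xi\otimes b_x)(g)=L_g^{-1}\bigl(\xi(r(g))\bigr)\,b_x(g),
\]
specializes, when $b_x\in\ell^2(\G_x)$ is scalar-valued, to exactly $W(\xi\otimes\eta)(g)=L_g^{-1}(\xi(r(g)))\eta(g)$. The unitarity of $W$ and the intertwining identity $L\dashind(\lambda^u)=W^*\pi^\H(\cdot)W$ therefore follow immediately from Theorem~\ref{thm:Fell_absorption}. There is no real obstacle here beyond the bookkeeping of tracing what each object in the general theorem becomes in the trivial-coefficient case; the corollary is essentially a restatement of its parent.
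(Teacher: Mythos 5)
Your proposal is correct and follows exactly the route the paper intends: the corollary is stated in the paper as an immediate specialisation of Theorem~\ref{thm:Fell_absorption} to the trivial one-dimensional bundle $\A=\B=\C\times X$, with the identifications $\contz(\ell^2(s^*\B))\cong\ell^2(\G)$, $\pi^\B\cong\lambda^u$, $\pi^\EE\cong\pi^\H$ carried out just as you describe. The only cosmetic remark is that since the twist $u$ is scalar-valued, the cocycle condition on $L$ simplifies to $L_gL_h=L_{gh}$ (cf.\ Remark~\ref{rem:do-not-depend-on-twist}), but this does not affect the argument.
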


\begin{rem}
By Proposition~\ref{prop:induced_regular_representations_for_actions}, $\pi^{\H}$ is naturally unitarily equivalent to the representation $\Lambda^{\H} := \lambda^u\otimes \mathrm{id}_{\contz(\H)}$ of $\cst(\G,u)$ on  $\contz(\ell^2(\G)) \otimes_{\contz(X)} \contz(\H)$. Hence the above corollary amounts to saying that $\lambda^u\otimes \mathrm{id}_{\contz(\H)}\cong L\dashind(\lambda^u)$, and so the regular representation induced by $\H$ from $\lambda^u$ is unitarily equivalent to the representation induced by $L$ from $\lambda^u$. 
Pictorially,  we can tensor $\H$ by $\ell^2(\G)$ on both sides and equivariance allows us to move regular representations from  one side to another. 
\end{rem}

We recall, see Example~\ref{ex:regularized_equivariant_correspondences_revisited}, that the regularisation $(\ell^2(s^*\H), L^{\G})$ of the Hilbert $\G$-bundle $(\H,L)$ is unitarily isomorphic to the continuous Hilbert $\G$-bundle $(\H\otimes \{\ell^2(\G_x)\}_{x\in X}, L\otimes\rho)$, where  $(\{\ell^2(\G_x)\}_{x\in X},\rho)$ is the right regular continuous Hilbert $\G$-bundle, cf.\  Examples~\ref{ex:continuous-cocycles},~\ref{ex:continuous_Hilbert_G-bundles}, \ref{ex:groupoid regular}.  
Thus for any representation $\psi\colon C^*(\G,u) \to B(H)$ on a Hilbert space $H$, the induced representation $L^\G\dashind(\psi)$ acts  on $\contz(\H)\otimes_{\contz(X)}\ell^2(\G)\otimes_{\psi} H$ by the formula
\[
    L^\G\dashind(\psi)(f\delta_U)(\xi\otimes\eta\otimes\zeta)=\lim_i f\cdot L_U(\xi)\otimes \rho_U(\eta)\otimes \psi(\delta_U)\zeta
\]
for an open bisection $U\subseteq \G$, $f\in \contc(r(U))\subseteq \contc(X)$, $\xi\in \contz(\H|_{r(U)})\subseteq \contz(\H)$, $\eta \in \contc(\G)\subseteq \ell^2(\G)$,
where  $\rho_U(\eta)(g)=[g\in \G_{r(h)}, h\in U] \eta(gh)$ and $\psi(\delta_U)\zeta$ is the strong limit of $\psi(e_i\delta_U)\zeta$ where $\{e_i\}_{i\in I}$ is an approximate unit of the ideal $\contz(r(U))\subseteq \contz(X)$.
Theorem~\ref{thm:Fell_absorption II} specialises in this context to the following corollary.

\begin{cor}
Consider a continuous Hilbert $\G$-bundle $(\H, L)$ over a twisted groupoid $(\G,u)$ and a representation $\psi\colon C^*(\G,u) \to B(H)$ of $C^*(\G,u)$ on a Hilbert space $H$. 
Then the representation $L^\G\dashind(\psi): C^*(\G,u) \to \LL(\contz(\ell^2(s^*\H))\otimes_{\psi} H)$ induced from $\psi$ by the regularisation of $L$ is unitarily equivalent to the amplification $\Lambda^{\H}\otimes 1_{H} : C^*(\G,u) \to \LL(\contz(\ell^2(s^*\H)) \otimes_{\psi} H)$ of the regular representation $\Lambda^{\H}$ induced by $\H$. 
\end{cor}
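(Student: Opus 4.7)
The plan is to obtain this statement as a direct specialisation of Theorem~\ref{thm:Fell_absorption II} to the case $\A=\B=\{\C\}_{x\in X}$, paired with the identification of the two avatars of the regular representation induced by $\H$ that is recorded in Proposition~\ref{prop:induced_regular_representations_for_actions}.

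First I would set the dictionary. When $\A=\B$ is the trivial one-dimensional $\cst$\nobreakdash-bundle, a twisted action of $\G$ reduces to a continuous normalised $2$\nobreakdash-cocycle $u$ with $\alpha$ trivial (Example~\ref{ex:continuous-cocycles}), so $\cst(\A^{(\alpha,u_\alpha)})=\cst(\B^{(\beta,u_\beta)})=\cst(\G,u)$; and an $(\alpha,u_\alpha)$-$(\beta,u_\beta)$ equivariant correspondence becomes a continuous Hilbert $\G$\nobreakdash-bundle $(\H,L)$ (Example~\ref{ex:continuous_Hilbert_G-bundles}, noting Remark~\ref{rem:do-not-depend-on-twist}). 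Under this dictionary, the regularised equivariant representation $L^\G$ acts on $\contz(\ell^2(s^*\H))$ exactly as described in the discussion preceding the corollary, and the representation induced from $\psi$ by $L^\G$ via Proposition~\ref{prop:representation_induced_by_equivariant_action} is precisely the operator $L^\G\dashind(\psi)$ appearing in the statement.

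Next I would invoke Theorem~\ref{thm:Fell_absorption II} verbatim: it provides a unitary equivalence
\[
L^\G\dashind(\psi)\;\cong\;\pi^{\EE}\otimes 1_H
\]
between representations on $\contz(\ell^2(s^*\EE))\otimes_\psi H$, where $\EE=\H$ here. The final move is to identify the right-hand side with $\Lambda^\H\otimes 1_H$. This is the content of Proposition~\ref{prop:induced_regular_representations_for_actions} applied in the trivial-bundle case: the unitary $\ell^2(\A\otimes_{A_x}E_x)\cong\bigoplus_{g\in\G_x}E_x$ collapses to the identity (since $A_x=\C$ and $\alpha$ is trivial), and the resulting bundle unitary implements $\pi^\H\cong\Lambda^\H$. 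Tensoring this unitary with $1_H$ on both sides, and composing with the unitary furnished by Theorem~\ref{thm:Fell_absorption II}, yields the desired equivalence $L^\G\dashind(\psi)\cong\Lambda^\H\otimes 1_H$.

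Since the statement is essentially a packaging of previously established results, I do not anticipate any genuine obstacle; the only thing that requires mild care is to check that the formulas displayed just before the corollary for $L^\G\dashind(\psi)$ (involving the right regular unitaries $\rho$ and the strict limits $\psi(\delta_U)\zeta$) match the formula used in Proposition~\ref{prop:representation_induced_by_equivariant_action}. This follows from Example~\ref{ex:regularized_equivariant_correspondences_revisited}, which provides the unitary equivalence $(\ell^2(s^*\H),L^\G)\cong(\H\otimes\{\ell^2(\G_x)\}_{x\in X},L\otimes\rho)$, together with the standing convention that $\psi(\delta_U)\zeta=\mathrm{s\text{-}lim}_i\psi(e_i\delta_U)\zeta$ for any approximate unit $\{e_i\}$ of $\contz(r(U))$ in $\M(\cst(\G,u))$.
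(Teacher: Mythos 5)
Your proposal is correct and matches the paper's treatment: the corollary is stated there precisely as the specialisation of Theorem~\ref{thm:Fell_absorption II} to the trivial one-dimensional bundle, with the identification $\pi^{\H}\cong\Lambda^{\H}$ supplied by Proposition~\ref{prop:induced_regular_representations_for_actions}, exactly as you describe. No gaps.
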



%
%
\section{Fourier--Stieltjes multipliers for twisted groupoid actions}
\label{Sec:FSmultipliers}

This section is central for the work, as we are ready to exploit the results established earlier in the paper to construct the Banach category of Fourier--Stieltjes multipliers for twisted groupoid crossed products. 
Recall that $\G$ is a fixed locally compact Hausdorff \'etale groupoid with unit space $X=\G^{(0)}$. 
As in the last section we begin the discussion in the general context of Fell bundles.

\begin{defn}\label{de:MultiplierFellBundle}
Let $\A = \{ A_g \}_{g \in \G}$ and $\B = \{ B_g \}_{g \in \G}$ be Fell bundles over $\G$. 
A \emph{multiplier} from $\A$ to $\B$ is a continuous bundle of fibrewise bounded operators $T=\{T_g\}_{g\in \G}$, i.e.\ a family of maps $T_{g}\in \BB(A_g, B_g)$ such that for each $a\in \contc(\A)$ the map
\begin{equation}\label{eq:multiplier_map_defn}
    \G\ni  g \longmapsto (\m_Ta)(g) := T_{g}(a(g))\in B_{g},
\end{equation}
is in $\contc(\B)$. 
If $A$ and $B$ are completions of $\contc(\A)$ and $\contc(\B)$ respectively, we  say that the multiplier $T$ is  $A$-$B$ \emph{bounded} (resp.\ \emph{completely bounded} or \emph{completely positive}) if the map $\m_{T}:\contc(\A)\to \contc(\B)$ extends to a bounded (resp.\ completely bounded, completely positive) map $\m_{T}:A\to B$ (note that we still denote it in the same way).
Usually, we will be interested in the case when $A$ and $B$ are reduced or full section $\cst$-algebras, in which case we will call $T$ a \emph{reduced} or \emph{full multiplier}, respectively. 
\end{defn}

\begin{rem}\label{rmk:confusion}
There is a somewhat unfortunate long tradition of calling both the family $T=\{T_g\}_{g\in \G}$ and the resulting map $m_T$ a multiplier. 
As it is difficult to fight the tradition, we will rather follow it and hope that it does not cause confusion to the readers.
\end{rem}

\begin{ex}[Herz--Schur multipliers] 
If $\A = \B$, then any bounded strictly continuous section $\varphi\in \contb(r^*\M(\A|_{X}))$ defines a multiplier $T^{\varphi}$ from $\A$ to $\A$ where $T^\varphi_g a := \varphi(g) a$ for all $a\in A_{g}$, $g\in \G$. 
One may then call $T^{\varphi}$ a \emph{Herz--Schur multiplier} for $\A$ and write $\m_{\varphi} := \m_{T^{\varphi}}$, so
\begin{equation}\label{eq:Herz-Schur_multiplier_map_defn}
	(\m_{\varphi}a)(g) := \varphi(g)a(g)\in A_{g}, \qquad a\in \contc(\A),\ g\in \G.
\end{equation}
Specialising to the case of the Fell bundle $\A^{(\alpha,u)}$ of a twisted action $(\alpha,u)$ of $\G$, we have $\varphi\in \contb(r^*\M(\A))$ and the corresponding multipliers were  studied in \cite{BartoszKangAdam} (in the context of actions of twisted groupoids). 
Note that in the case of trivial bundles, every multiplier is a Herz--Schur multiplier (as fibres are one-dimensional).
The Herz--Schur multipliers studied in \cite{mtt}, \cite{mstt} for group actions, are what we would call completely bounded reduced multipliers. 
\end{ex}

\begin{ex}[Exel multipliers]\label{ex:Alcides_approximation_property_general}
Let $\A$ be a Fell bundle over $\G$ and let $\A^0:=\A|_{X}$ be the restriction of $\A$ to $X$. 
Consider two sections $\xi,\zeta\in \contc(r^*\A^0)$
and define
\[
    T_g(a) = \sum_{h\in \G^{r(g)}}\xi(h)^*\cdot a\cdot\zeta(g^{-1}h),\quad a\in A_g ,\ g \in \G.
\]
Here we use the natural $A_{r(g)}$-$A_{s(g)}$-bimodule structure of $A_g$. 
The sum above is finite because $\xi$ has compact support and $\G^{r(g)}$ is discrete. 
It is clear that $T_g$ is a bounded linear map $A_g\to A_g$.
Moreover, if $a\in \contc(\A)$, then $g\mapsto T_g(a(g))$ is again a continuous section of $\A$ because $\contc(\A)$ is a right module over $\contc(s^*(\A^0))$ in the canonical way.

For groups, multipliers of the above form first appeared in Exel's work \cite{Exel:amenability} and are used to define a notion of \emph{approximation property} for Fell bundles over groups. 
More recently, the last notion has been generalised to \'{e}tale groupoids or inverse semigroups in \cite{Kranz,BussMartinez}. 
\end{ex}


As mentioned in Remark~\ref{rmk:confusion}, in the sequel we will often identify a multiplier $T=\{T_g\}_{g\in \G}$ with the associated linear map $\m_T:\contc(\A)\to \contc(\B)$ given by \eqref{eq:multiplier_map_defn}. 
\emph{The set of all multipliers} $\m(\A, \B)$ from $\A$ to $\B$, when viewed as maps on continuous sections, is naturally an involutive $\contb(X)$-bimodule. 
Namely, for $T=\{T_g\}_{g\in \G}, S=\{S_g\}_{g\in \G}\in \m(\A, \B)$ and $f \in \contb(X)$ these structures are given by $(\lambda T)_g := \lambda T_g$, $(T+S)_g := T_g +S_g$, $ (f \cdot T)_g := f(r (g)) T_g$,  $(T \cdot f)_g := f(s(g))T_g $, and
\begin{equation}\label{eq:def-involuition-multipliers}
    T^{\dagger}_g(a) := (T_{g^{-1}} (a^{\star}))^{\star}, \qquad g \in \G,\ a \in A_g.
\end{equation}
The involution $\dagger$ is conjugate-linear and for $f,T$ as above we have $(f \cdot T)^{\dagger}=T^{\dagger}\cdot \overline{f}$, $(T\cdot f)^{\dagger}=\overline{f} \cdot T^{\dagger}$. 
In terms of the map $\m_T:\contc(\A)\to \contc(\B)$ it is given by $\m_{T^\dagger}(a):= \m_T(a^*)^*$, $a\in \contc(\A)$. 
We can also compose multipliers if the domain and codomain agree: 
for $T=\{T_g\}_{g\in \G}\in \m(\A, \B)$, $S = \{S_g\}_{g\in \G}\in \m(\B, \CC)$ we put
\[
    S\circ T := \{ S_g \circ T_g \}_{g\in \G}.
\]
In particular, all Fell bundles over $\G$ with multipliers as morphisms form a category. 
Also $\m(\A):=\m(\A, \A)$ is an algebra and an involutive $\contb(X)$-bimodule.
The involution~\eqref{eq:def-involuition-multipliers} is multiplicative on the algebra $\m(\A)$, so it is not an `involution' in the ordinary sense unless $\m(\A)$ is commutative. 

When restricting to Fell bundles coming from twisted actions of $\G$ we have a distinguished subcategory with morphisms defined as follows. 
Let $(\alpha,u_{\alpha})$ and $(\beta,u_{\beta})$ be twisted actions of $\G$ on $\cst$-bundles $\A$ and $\B$ over $X$, respectively, and consider the associated Fell bundles $\A^{(\alpha,u_{\alpha})}=r^*\A$ and $\B^{(\beta,u_{\beta})}=r^*\B$ over $\G$.
For any  $(\alpha,u_{\alpha})$-$(\beta,u_{\beta})$-equivariant representation $L$ of $\G$ on a $\cst$-$\A$-$\B$-correspondence bundle $\EE$ and any bounded strictly continuous sections $\xi,\zeta\in \contb(\M(\EE))$ the formula  
\[
    T_{g}(a) := \langle \xi(r(g)), a \overline{L}_{g} \zeta(s(g)) \rangle_{\M(B_{r(g)})} \in B_{r(g)}, \qquad a\in A_{r(g)},\ g\in \G,
\]
defines a multiplier $T_{\EE,L,\xi,\zeta}:=\{T_g\}_{g\in \G}$ from $\A^{(\alpha,u_{\alpha})}$ to $\B^{(\beta,u_{\beta})}$, cf.\ Proposition~\ref{prop:strict_extension_of_groupoid_action}\ref{enu:equivariant_representation3'}. 
We can alternatively express $T_{g}$ without using the extended action $\overline{L}$, as the following formula holds
\[
    T_{g}(a)=\lim_{i} \left\langle e_i \xi(r(g)),  L_{g}\Big(\alpha_{g^{-1}}(a)\zeta(s(g))\Big)\right\rangle_{B_{r(g)}} , 
\]
where $\{e_i\}_{i}$ is an approximate unit in $A_{r(g)}$, $a\in A_{r(g)}$, $g\in \G$.

\begin{defn} 
Let $\A$, $\B$  be $\cst$-bundles over $X$, carrying twisted actions $(\alpha,u_\alpha)$ and $(\beta,u_\beta)$ of $\G$.
We call $T_{\EE,L,\xi,\zeta}$ defined above the \emph{Fourier--Stieltjes multiplier} associated to the $(\alpha,u_{\alpha})$-$(\beta,u_{\beta})$-equivariant representation $L$ of $\G$ and sections $\xi,\zeta\in \contb(\M(\EE))$. 
We denote by $FS[(\alpha,u_{\alpha}),(\beta,u_{\beta})]$ the set of all Fourier--Stieltjes multipliers from $(\alpha,u_{\alpha})$ to $(\beta,u_{\beta})$. 
We also write $FS(\alpha,u_{\alpha}):=FS[(\alpha,u_{\alpha}),(\alpha,u_{\alpha})]$.
\end{defn}


\begin{ex} 
If $\G = G$ is a discrete group then $r^*\A \equiv  G\times A$ and $A_{r(g)}=A$ for every $g\in G$. 
Thus if $(\alpha,u_{\alpha})$ and $(\beta, u_{\beta})$ are twisted group actions of $\G$ respectively on $\cst$-algebras $A$ and $B$, we may identify the Fourier--Stieltjes multipliers in $FS[(\alpha,u_{\alpha}),  (\beta, u_{\beta})]$ with maps $T_{E,L,\xi,\zeta} : G\times A \to B$ given by $T_{E,L,\xi,\zeta}(g,a)=\langle \xi, a \overline{L}_{g}\zeta\rangle_{\M(B)}$, for $a\in A$, $\xi,\zeta \in \M(E)$, where $L$ is an equivariant representation of $\G$ on an $A$-$B$-$\cst$-correspondence $E$. 
When $A=B$ is unital, these maps were called  Fourier--Stieltjes coefficients in \cite{BedosConti}. 
In contrast to \cite{BedosConti} we do not require the unitality of the $\cst$-algebras in question and also allow different `source' and `target' algebras.   
\end{ex}

\begin{ex} \label{ex:FS groupoid}
If $\A$ is the trivial one-dimensional bundle over $X$, so that $A=\contz(X)$ and $r^*\A$ is the trivial one-dimensional $\G$-bundle, we may identify the Fourier--Stieltjes multipliers with Fourier--Stieltjes coefficients $T_{\H, \ell,\xi,\zeta}:\G \to \C$ given by $T_{\H,\ell,\xi,\zeta}(g)=\langle \xi(r(g)), \ell_g\zeta(s(g))\rangle$, where $(\H, \ell)$ is a $\G$-Hilbert bundle and $\xi,\zeta \in \contb(\H)$, see for instance \cite{Paterson} (in the untwisted case) and compare to Examples~\ref{ex:continuous-cocycles} and~\ref{ex:continuous_Hilbert_G-bundles}. 
\end{ex}

\begin{ex}\label{ex:identity_multiplier} 
If $\A$ is a $\cst$-bundle over $X$, carrying a twisted action $(\alpha,u_\alpha)$ of $\G$, then the identity map on $\contc(\A^{(\alpha,u_{\alpha})})$ arises as
a Fourier--Stieltjes multiplier. 
Namely, the unit section $1$ belongs to $\contb(\M(\A))$, for the multiplier $\cst$-correspondence bundle $\M(\A)=\{\M(A_x)\}_{x\in X}$, and $T_{\A,\alpha,1,1}\in FS(\alpha,u_{\alpha})$ is a  multiplier consisting of identities on each fibre. 
\end{ex}

\begin{lem}\label{lem:FS_multiplier_bounded} 
Let $\A$, $\B$ be $\cst$-bundles over $X$, carrying twisted actions $(\alpha,u_\alpha)$ and $(\beta,u_\beta)$ of $\G$.
Every Fourier--Stieltjes multiplier $T:=T_{\EE,L,\xi,\zeta} \in FS[(\alpha,u_{\alpha}),(\beta,u_{\beta})]$ is  
$\contz$\nb-completely bounded, meaning that the operator $\m_{T} : \contz(\A^{(\alpha,u_{\alpha})}) \to \contz(\B^{(\beta,u_{\beta})})$ is completely bounded, where the spaces are equipped with the supremum norm. 
In fact, $\| \m_{T} \|_{cb} \leq \|\xi\| \cdot \|\zeta\|$. 
The same statement holds if we view $\m_{T}$ as an operator mapping $\contb(\A^{(\alpha,u_{\alpha})})$ to $\contb(\B^{(\beta,u_{\beta})})$.
\end{lem}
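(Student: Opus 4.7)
The plan is to reduce everything to a simple pointwise Cauchy--Schwarz-type estimate in the multiplier $\cst$-correspondence bundle, and then bootstrap to complete boundedness via the matrix amplification machinery of Examples~\ref{ex:matrixbundles}, \ref{ex:matrixcorr}, \ref{ex:matrixtwistedactions} and~\ref{ex:matrixreps}.

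First, I would establish a fibrewise norm bound. For every $g\in \G$ and $a\in A_{r(g)}$, using the Cauchy--Schwarz inequality for inner products in the $\M(A_{r(g)})$-$\M(B_{r(g)})$-$\cst$-correspondence $\M(E_{r(g)})$ together with the fact (from Proposition~\ref{prop:strict_extension_of_groupoid_action}) that $\overline{L}_g : \M(E_{s(g)}) \to \M(E_{r(g)})$ is an isometry, we get
\[
    \|T_g(a)\| = \|\langle \xi(r(g)), a\overline{L}_g\zeta(s(g))\rangle_{\M(B_{r(g)})}\| \leq \|\xi(r(g))\|\cdot\|a\|\cdot\|\zeta(s(g))\| \leq \|\xi\|\|\zeta\|\|a\|.
\]
Taking the pointwise supremum over $g\in \G$ immediately yields $\|\m_T(a)\|_\infty \leq \|\xi\|\|\zeta\|\|a\|_\infty$ for $a\in \contc(\A^{(\alpha,u_\alpha)})$.

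Next, for the completely bounded bound, fix $n\in \N$ and consider the matrix amplification $\m_T^{(n)}$. Via the natural identifications $\Mat_n(\contc(\A^{(\alpha,u_\alpha)})) \cong \contc(\Mat_n(\A)^{(\alpha^{(n)},u_\alpha^{(n)})})$ and its analogue for $\B$ (see Examples~\ref{ex:matrixbundles} and~\ref{ex:matrixtwistedactions}), I would verify that $\m_T^{(n)}$ coincides with the Fourier--Stieltjes multiplier $T_{\Mat_n(\EE), L^{(n)}, \xi \otimes I_n, \zeta \otimes I_n}$, where $L^{(n)}$ is the amplified equivariant representation from Example~\ref{ex:matrixreps} and $\xi\otimes I_n, \zeta\otimes I_n \in \contb(\M(\Mat_n(\EE))) \cong \Mat_n(\contb(\M(\EE)))$ are the diagonal amplifications. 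This is a direct computation: for $A=[a_{ij}]$ the fibre map sends $A(g)$ to the matrix $[\langle \xi(r(g)), a_{ij}(g)\overline{L}_g\zeta(s(g))\rangle_{\M(B_{r(g)})}]$, which is precisely the inner product $\langle \xi(r(g))\otimes I_n, A(g)\overline{L_g^{(n)}}(\zeta(s(g))\otimes I_n)\rangle$ in $\Mat_n(\M(B_{r(g)})) = \M(\Mat_n(B_{r(g)}))$. Since $\|\xi\otimes I_n\| = \|\xi\|$ and $\|\zeta\otimes I_n\| = \|\zeta\|$, applying the pointwise bound from the previous paragraph to the amplified multiplier gives $\|\m_T^{(n)}(A)\|_\infty \leq \|\xi\|\|\zeta\|\|A\|_\infty$, uniformly in $n$, which is exactly $\|\m_T\|_{cb} \leq \|\xi\|\|\zeta\|$.

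For the final assertion about $\contb$, given $a\in \contb(\A^{(\alpha,u_\alpha)})$, the section $g\mapsto T_g(a(g))$ is bounded by the pointwise estimate, and continuity is obtained locally: for any compact $K\subseteq \G$, choose $\chi\in \contc(\G)$ with $\chi\equiv 1$ on $K$; then $\chi\cdot a \in \contc(\A^{(\alpha,u_\alpha)})$, and by linearity of each $T_g$ we have $\m_T(\chi a)=\chi\cdot\m_T(a)$, which agrees with $\m_T(a)$ on $K$ and is continuous by Definition~\ref{de:MultiplierFellBundle}. Hence $\m_T(a)\in \contb(\B^{(\beta,u_\beta)})$, and the complete boundedness estimate transfers to the $\contb$-setting verbatim. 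I expect no real obstacle here; the only delicate point is the canonical identification of $\m_T^{(n)}$ with an amplified Fourier--Stieltjes multiplier, but this is routine once the compatibilities among the matrix constructions in Examples~\ref{ex:matrixcorr}--\ref{ex:matrixreps} are in place.
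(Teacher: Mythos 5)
Your proposal is correct and follows essentially the same route as the paper's proof: a fibrewise Cauchy--Schwarz estimate (using that $\overline{L}_g$ is isometric) gives the bound $\|T_g(a)\|\leq\|\xi\|\|\zeta\|\|a\|$, and the complete bound is obtained by identifying $\m_T^{(n)}$ fibrewise with the inner product against the diagonal amplifications $\xi\otimes I_n$, $\zeta\otimes I_n$, whose norms equal $\|\xi\|$, $\|\zeta\|$ by the matrix Hilbert-module norm formula. Your extra remarks on the $\contb$-case are consistent with what the paper leaves implicit.
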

\begin{proof} 
Fix $T:=T_{\EE,L,\xi,\zeta} \in FS[(\alpha,u_{\alpha}),(\beta,u_{\beta})]$.
The Schwarz inequality in the $\contz(\B)$-Hilbert module $\contz(\EE)$ implies that $\|\m_T(a)\|\leq \|a\| \|\xi\| \cdot \|\zeta\|$ for $a\in \contc(\A^{(\alpha,u_{\alpha})})$. 
Hence we have a bounded operator $\m_{T}: \contz(\A^{(\alpha,u_{\alpha})})\to \contz(\B^{(\beta,u_{\beta})})$ with $\|\m_{T}\|\leq \|\xi\| \cdot \|\zeta\|$. 
This generalises to amplifications.
Indeed, for any $n \in \N$, we have a natural identification of matrix amplifications $\Mat_n(\contz(r^*\A))\cong \contz(r^*(\Mat_n(\A))$, where $\Mat_n(\A)$ is the Fell bundle over $X$ with fibres given by $n$ by $n$ matrices over fibres of $\A$ (and the twisted action of $\G$ extended to $\Mat_n(\A)$ in the obvious way), see Example~\ref{ex:matrixtwistedactions}.
Thus to estimate the norm of $m_T^{(n)}: \Mat_n(\contz(\A^{(\alpha,u_{\alpha})})) \to \Mat_n(\contz(\B^{(\beta,u_{\beta})}))$ it suffices to fix $g \in \G$ and estimate the norm of $T_g^{(n)}: \Mat_n(A_{r(g)}) \to \Mat_n (B_{r(g)})$. 
A straightforward computation shows that given a matrix $a=[a_{ij}]_{i,j=1}^n \in \Mat_n (A_{r(g)})$ we have 
\[ 
    T_g^{(n)}(a) = \langle \tilde{\xi} , a \cdot \tilde{\zeta} \rangle,
\]
where $\tilde{\xi}= [\xi(r(g)) \delta_{ij}]_{i,j=1}^n \in \Mat_n (\EE_{r(g)})$, $\tilde{\zeta}= [\overline{L}_g \zeta(s(g)) \delta_{ij}]_{i,j=1}^n \in \Mat_n (\EE_{r(g)})$.
As before, the Schwarz inequality shows that 
\[
    \|T_g^{(n)}(a)\| \leq \|a\| \|\tilde{\xi}\| \|\tilde{\zeta}\|,
\]
and it remains to note that by \cite[Section 3, Formula (1')]{Ble} we have 
\begin{align*}
    \|\tilde{\xi}\| &= \Big\| \Big[\sum_{k=1}^n \langle \tilde{\xi}_{ki}, \tilde{\xi}_{kj}\rangle\Big]_{i,j=1}^n \Big\|^{\frac{1}{2}} = \Big\| \big[ \langle \xi(r(g)), \xi(r(g)) \rangle \delta_{i,j} \big]_{i,j=1}^n \Big\|^{\frac{1}{2}} \\
		&= \|\langle \xi(r(g)), \xi(r(g)) \rangle \|^{\frac{1}{2}} 
		= \|\xi(r(g))\|\leq \|\xi\|
\end{align*}
and similarly $\|\tilde{\zeta}\| =  \|\zeta(s(g))\|\leq \|\zeta\|$. 
Hence $\| \m_{T}^{(n)} \|=\sup_{g\in \G} \|T_g^{(n)}\|\leq \|\xi\| \cdot \|\zeta\|$.
\end{proof}


For fixed $\G$, the class of twisted actions with (positive-definite) Fourier--Stieltjes multipliers as morphisms forms a Banach $\contb(X)$-bimodule category. 

\begin{prop}\label{prop:BanachNormFS} 
Let $\A$, $\B$ be $\cst$-bundles over $X$, carrying twisted actions $(\alpha,u_\alpha)$ and $(\beta,u_\beta)$ of $\G$.
The Fourier--Stieltjes multipliers $FS[(\alpha,u_{\alpha}),(\beta,u_{\beta})]$ form an involutive Banach $\contb(X)$-bimodule with the algebraic structure inherited from $\m(\A, \B)$ and the norm 
\begin{equation}\label{eq:norm_Fourier-Stieltjes}
    \|T\|_{FS} := \inf\{\|\xi \|\|\zeta \|: T=T_{\EE,L,\xi,\zeta} \text{ for some  }(\EE, L)\text{ and }\xi,\zeta\in \contb(\M(\EE)) \} .
\end{equation}
Moreover, the composition of Fourier--Stieltjes multipliers is a Fourier--Stieltjes multiplier, the norm $\|\cdot\|_{FS}$ is submultiplicative and the identities are Fourier--Stieltjes multipliers.
In particular, $FS(\alpha,u_{\alpha})$ is a unital Banach algebra and involutive Banach $\contb(X)$-bimodule with a multiplicative, conjugate-linear involution. 
\end{prop}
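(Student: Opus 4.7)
The proof amounts to systematically checking that each algebraic operation on $\m(\A,\B)$ preserves the class of Fourier--Stieltjes multipliers, and that the quantity $\|\cdot\|_{FS}$ in \eqref{eq:norm_Fourier-Stieltjes} satisfies the axioms of a complete submultiplicative norm that is compatible with composition, module action and involution.

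Each algebraic operation admits a natural realisation at the level of equivariant correspondences. For the sum of $T_i = T_{\EE_i,L_i,\xi_i,\zeta_i}$, $i=1,2$, the plan is to use the direct sum equivariant correspondence $(\EE_1\oplus \EE_2, L_1 \oplus L_2)$ together with the sections $\xi_1\oplus \xi_2$ and $\zeta_1\oplus \zeta_2$. Scalar multiplication is absorbed into either section, and the $\contb(X)$-module action $f\cdot T_{\EE,L,\xi,\zeta}$ is realised as $T_{\EE,L,\overline{f}\xi,\zeta}$, where $f$ acts on $\contb(\M(\EE))$ through the canonical central embedding $\contb(X)\hookrightarrow Z(\M(\contz(\A)))$; the right action is analogous. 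For the composition $T_2\circ T_1$ with $T_i = T_{\EE_i,L_i,\xi_i,\zeta_i}$, I would use the tensor product $(\EE_1\otimes_{\B}\EE_2, L_1\otimes L_2)$ of Example~\ref{ex:tensor_product_groupoid_actions} together with the elementary sections $\xi_1\otimes\xi_2$ and $\zeta_1\otimes\zeta_2$; a short unfolding of the inner product on the balanced tensor product yields the identity $T_2\circ T_1 = T_{\EE_1\otimes_{\B}\EE_2,\,L_1\otimes L_2,\,\xi_1\otimes\xi_2,\,\zeta_1\otimes\zeta_2}$. The unit is supplied by the Fourier--Stieltjes multiplier $T_{\A,\alpha,1,1}$ from Example~\ref{ex:identity_multiplier}.

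The involution is the most delicate component: the plan is to establish the identity $T^\dagger_{\EE,L,\xi,\zeta}=T_{\EE,L,\zeta,\xi}$. The verification requires unwinding the Fell bundle involutions on $\A^{(\alpha,u_\alpha)}$ and $\B^{(\beta,u_\beta)}$ given by Corollary~\ref{cor:Fell_bundle_from_twisted_action}, applying $\overline{\beta}_g$ to the resulting inner product via the equivariance of $\overline{L}$ and of the $\M(\B)$-valued inner product from Proposition~\ref{prop:strict_extension_of_groupoid_action}, and then invoking the cocycle identities $\overline{\alpha}_g(u_\alpha(g^{-1},g))=u_\alpha(g,g^{-1})$ and $\overline{L}_g\overline{L}_{g^{-1}}(\cdot)=u_\alpha(g,g^{-1})(\cdot)u_\beta(g,g^{-1})^*$ to collapse all twisting factors. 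Once this identity is in hand, the remaining properties of $\dagger$ (conjugate linearity, preservation of composition order, and isometry in $\|\cdot\|_{FS}$) are immediate.

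For the norm, positive definiteness follows from Lemma~\ref{lem:FS_multiplier_bounded}, which gives $\|\m_T\|\leq \|T\|_{FS}$, so that $\|T\|_{FS}=0$ forces $T=0$. Submultiplicativity under composition reduces to the standard estimate $\|\xi_1\otimes\xi_2\|^2=\|\langle \xi_2,\langle \xi_1,\xi_1\rangle\xi_2\rangle\|\leq \|\xi_1\|^2\|\xi_2\|^2$ applied in $\contb(\M(\EE_1\otimes\EE_2))$, and the $\contb(X)$-module bounds are analogous. The triangle inequality uses the standard rescaling trick: given $\varepsilon>0$, I choose representatives $T_i=T_{\EE_i,L_i,\xi_i,\zeta_i}$ with $\|\xi_i\|\|\zeta_i\|<\|T_i\|_{FS}+\varepsilon$, then replace $(\xi_i,\zeta_i)$ by $(t_i\xi_i,t_i^{-1}\zeta_i)$ with $t_i:=(\|\zeta_i\|/\|\xi_i\|)^{1/2}$ so that $\|\xi_i\|=\|\zeta_i\|=(\|\xi_i\|\|\zeta_i\|)^{1/2}$; using the fibrewise formula $\|\eta_1(x)\oplus\eta_2(x)\|^2=\|\eta_1(x)\|^2+\|\eta_2(x)\|^2$ valid for direct sums of Hilbert modules, one obtains $\|T_1+T_2\|_{FS}\leq \|T_1\|_{FS}+\|T_2\|_{FS}+2\varepsilon$. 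Completeness follows by the usual absolute convergence criterion: given $\sum_n\|T_n\|_{FS}<\infty$, rescale so that $\|\xi_n\|=\|\zeta_n\|$, take the countable direct sum equivariant correspondence $(\bigoplus_n\EE_n,\bigoplus_n L_n)$ together with the sections $\bigoplus_n\xi_n$ and $\bigoplus_n\zeta_n$ (which lie in $\contb(\M(\bigoplus_n\EE_n))$ because $\sum_n\|\xi_n\|^2,\sum_n\|\zeta_n\|^2<\infty$), and identify the resulting Fourier--Stieltjes multiplier with $\sum_n T_n$. The principal technical obstacle is the computation for the involution, where the two Fell bundle $\star$-operations, the cocycles $u_\alpha,u_\beta$, and the extended action $\overline{L}$ on the multiplier bundle must be tracked simultaneously; the remaining verifications are routine once the appropriate direct sum and tensor product constructions are checked to live in $\contb(\M(\EE))$.
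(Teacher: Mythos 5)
Your proposal is correct and follows essentially the same route as the paper's proof: direct sums for addition and completeness, the tensor product of equivariant correspondences for composition and submultiplicativity, the identity $(T_{\EE,L,\xi,\zeta})^\dagger = T_{\EE,L,\zeta,\xi}$ verified via the cocycle identities for the involution, and Lemma~\ref{lem:FS_multiplier_bounded} to see that $\|\cdot\|_{FS}$ is genuinely a norm. The only cosmetic slip is writing $\|\eta_1\oplus\eta_2\|^2=\|\eta_1\|^2+\|\eta_2\|^2$ where in general only $\leq$ holds for Hilbert modules, but that is the inequality actually needed.
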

\begin{proof} 
The fact that the collection of Fourier--Stieltjes multipliers $FS[(\alpha,u_{\alpha}),(\beta,u_{\beta})]$ is closed under linear combinations, and in fact forms a $\contb(\G^{(0)})$-bimodule follows from the following straightforward relations
\[
    f \cdot T_{\EE,L,\xi,\zeta}= T_{\EE,L,\overline{f}\xi,\zeta}, \quad 
    T_{\EE,L,\xi,\zeta}\cdot f= T_{\EE,L,\xi,f\zeta} , \quad 
    T_{\EE,L,\xi_1,\zeta_1}+T_{\FF,K,\xi_2,\zeta_2} = T_{\EE\oplus \FF,L\oplus K,\xi_1\oplus \xi_2,\zeta_1\oplus \zeta_2}.
\]
These also imply that $\|\cdot\|_{FS}$ is a seminorm. 
Since $\|\cdot\|_{FS}$ majorises the operator norm in $\BB(\contz(r^*\A),\contz(r^*\B))$, see Lemma~\ref{lem:FS_multiplier_bounded}, and clearly $m_T=0$ if and only if $T=0$, the expression $\|\cdot\|_{FS}$ is in fact a norm.  
That $FS[(\alpha,u_{\alpha}),(\beta,u_{\beta})]$ is closed
under the involution, and that the involution is isometric, follows from the relation
\[ 
    (T_{\EE,L,\xi,\zeta})^\dagger = T_{\EE,L,\zeta,\xi}.
\]
Indeed, set $ T_{\EE,L,\xi,\zeta}=\{T_{g}\}_{g\in \G}$ and let  $g \in \G$, $a\in A_{r(g)}$. 
Using relations in Remarks~\ref{rem:consequnces_twists_in_groupoid_actions}, \ref{rem:relations_equivariant_reps} and the fact that $\alpha_g(u_\alpha(g^{-1},g)^*) u_\alpha(g,g^{-1}) =1_{\M(A_{r(g)})}$ we get: 
\[
\begin{split}
    T_{g^{-1}}(a^\star)^\star &= \left(\langle \xi(r(g^{-1})), u_\alpha(g^{-1},g)^* \alpha_{g^{-1}}(a^*) \overline{L}_{g^{-1}}\zeta(s(g^{-1}))\rangle \right)^{\star} \\
        &= u_\beta(g,g^{-1})^* \beta_g \left( \langle u_\alpha(g^{-1},g)^* \alpha_{g^{-1}}(a^*)  \overline{L}_{g^{-1}}\zeta(r(g)),   \xi(s(g)) \rangle \right) \\ 
        &= u_\beta(g,g^{-1})^* \langle  L_g u_\alpha(g^{-1},g)^* L_{g^{-1}}a^*\zeta(r(g)), L_g  \xi(s(g))\rangle \\
        &= u_\beta(g,g^{-1})^* \langle \alpha_g(u_\alpha(g^{-1},g)^*) L_g L_{g^{-1}}a^*\zeta(r(g)), L_g  \xi(s(g))\rangle \\
        &= u_\beta(g,g^{-1})^* \langle \alpha_g(u_\alpha(g^{-1},g)^*) u_\alpha(g,g^{-1}) a^*\zeta(r(g)) u_\beta(g,g^{-1})^*, L_g  \xi(s(g))\rangle \\
        &= \langle a^*\zeta(r(g)), L_g  \xi(s(g))\rangle  =  \langle \zeta(r(g)), aL_g  \xi(s(g))\rangle = T_{\EE,L,\zeta,\xi}(a)(g).
\end{split}
\]
That Fourier--Stieltjes multipliers are closed under composition follows from the relation
\begin{equation}\label{eq:compostion_of_Fourier_Stieltjes}
    T_{\FF,K,\xi_2,\zeta_2}\circ  T_{\EE,L,\xi_1,\zeta_1}= T_{\EE\otimes \FF,L\otimes  K,\xi_1\otimes  \xi_2,\zeta_1\otimes  \zeta_2} , 
\end{equation}
where we use the notation from Examples~\ref{ex:tensor_bundles} and~\ref{ex:tensor_product_groupoid_actions}. 
The displayed relation holds because 
\[
\begin{split}
    T_{\FF,K,\xi_2,\zeta_2}  (T_{\EE,L,\xi_1,\zeta_1}(a))(g) &= \langle \xi_2(r(g)), T_{\EE,L,\xi_1,\zeta_1}(a) \overline{K}_{g}\zeta_2(s(g))\rangle 
        \\
    &= \langle \xi_2(r(g)), \langle \xi_1(r(g)), a(g) \overline{L}_{g} \zeta_1(s(g)) \rangle_{\M(A_{r(g)})} \overline{K}_{g}\zeta_2(s(g))\rangle 
        \\
    &=\langle (\xi_1\otimes \xi_2)(r(g)), a(g) \overline{L}_{g}\zeta_1(s(g)) \otimes \overline{K}_{g}\zeta_2(s(g)) \rangle
        \\
    &= T_{\EE\otimes \FF,L\otimes  K,\xi_1\otimes  \xi_2,\zeta_1\otimes  \zeta_2}(a)(g).
\end{split}
\]
Using this relation one sees that the norm $\|\cdot\|_{FS}$ is submultiplicative.  
To prove that $FS[(\alpha,u_{\alpha}),(\beta,u_{\beta})]$ is complete in this norm we take $\{T_i\}_{i=1}^\infty\subseteq FS[(\alpha,u_{\alpha}),(\beta,u_{\beta})]$ such that $\sum_{i=1}^\infty \|T_i\|_{FS}<\infty$ and we need to show that $\sum_{i=1}^\infty T_i$ is convergent in $\|\cdot\|_{FS}$. 
We may pick $(\alpha,u_{\alpha})$-$(\beta,u_{\beta})$ equivariant representations of $\G$ on $\cst$-correspondences $\{(\EE_i, L_i)\}_{i=1}^\infty$  and non-zero sections $\{\xi_i,\zeta_i\}_{i=1}^\infty\in \contb(\M(\EE_i))$ such that $T_i=T_{\EE_i,L_i,\xi_i,\zeta_i}$ and $\|\xi_i \|\|\zeta_i\|\leq \|T_i\|+1/2^i$ for $i\in \N$.
In addition (by passing to $\sqrt{\frac{|\zeta_i\|}{\|\xi_i\|}}\xi_i$ and $\sqrt{\frac{|\xi_i\|}{\|\zeta_i\|}}\zeta_i$) we may assume that $\| \xi_i \| = \| \zeta_i \|$ for each $i \in \N$. 
Then it follows that $\oplus_{i=1}^\infty \xi_i$ and $\oplus_{i=1}^\infty \zeta_i$ belong to $\contb(\M(\oplus_{i=1}^\infty \EE_i))$ because $\sum_{i=1}^\infty \|\xi_i\|^2 = \sum_{i=1}^\infty \|\zeta_i\|^2 < \sum_{i=1}^\infty \|T_i\|_{FS} +1$. 
Let $T := T_{\oplus_{i=1}^\infty \EE_i,\oplus_{i=1}^\infty L_i,\oplus_{i=1}^\infty \xi_i,\oplus_{i=1}^\infty \zeta_i}$. 
Then 
\[
\begin{split}
    \| T-\sum_{i=1}^n T_i\|_{FS} 
    &\leq \|\oplus_{i=n+1}^\infty \xi_i\| \cdot \|\oplus_{i=n+1}^\infty \zeta_i\| \leq \sum_{i=n+1}^\infty \|\xi_i\|^2 \sum_{i=n+1} \|\zeta_i\|^2 \\
        &< \left( \sum_{i=n+1}^\infty \|T_i\|_{FS} +1/2^i\right)^2 \stackrel{n\to \infty}{\longrightarrow} 0. 
\end{split}
\]
By Example~\ref{ex:identity_multiplier}, $T_{\A,\alpha,1,1}$ is the unit in $FS(\alpha,u_{\alpha})$ and clearly its FS-norm equals $1$. 
\end{proof}


\begin{ex}\label{ex:Fourier-Stietjes-algebra-groupoids}
Applying the above construction to actions on a trivial line bundle $\{\C\}_{x\in X}$, cf.\ Example~\ref{ex:continuous_Hilbert_G-bundles}, we get the \emph{Fourier--Stieltjes algebra $FS(\G)$ of the groupoid $\G$}, similar to what was considered in \cite{RamsayWalter} or \cite{Oty}. 
In our setting, $FS(\G)$ is the space of all (continuous, bounded) functions $\varphi\colon \G\to \C$ of the form $\varphi(g) = \langle \xi(r(g)) , \ell_g(\zeta(s(g))) \rangle$, for $g\in \G$ and  
sections $\xi ,\zeta \in \contb (\H)$ of a (continuous) $\G$-Hilbert bundle $(\H,\ell)$. 
By the above proposition, $FS(\G)$ is an involutive Banach algebra with respect to the pointwise product of functions $\G\to \C$, the conjugation as involution, and the norm given as in~\eqref{eq:norm_Fourier-Stieltjes}.
By Remark~\ref{rem:do-not-depend-on-twist}, we get the same algebra $FS(\G,u)=FS(\G)$ for any (continuous) $2$-cocycle $u$ on $\G$.
\end{ex}

\begin{prop}\label{prop:InclusionGroupoidFSAlg}
For a twisted action $(\alpha, u)$ of $\G$ on $\A$, we have an injective contractive involutive algebra homomorphism $FS(\G)\ni \varphi \mapsto T^\varphi\in FS(\alpha,u)$ where  $T^\varphi_g (a) := \varphi(g) a$, for $a \in A_{r(g)}$, $g\in \G$. 
\end{prop}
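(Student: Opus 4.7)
The plan is to explicitly realise $T^\varphi$ as a Fourier--Stieltjes multiplier by dilating any representation of $\varphi$ in $FS(\G)$ into an equivariant correspondence for $(\alpha,u)$. Given $\varphi = \langle \xi(r(\cdot)), \ell_{\cdot} \zeta(s(\cdot))\rangle$ with $(\H,\ell)$ a continuous Hilbert $\G$-bundle and $\xi,\zeta\in \contb(\H)$, I will form the $(\alpha,u)$-$(\alpha,u)$ equivariant correspondence $(\EE,L) := (\A,\alpha)\otimes(\H,\ell)$ as in Example~\ref{ex:TensorActionHilbertBundle}, so that fibrewise $E_x = A_x\otimes H_x$ is the trivial $A_x$-$A_x$-correspondence on the left tensored with the Hilbert space $H_x$ on the right, and $L_g = \alpha_g\otimes \ell_g$.

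Next I will check that the sections $\widetilde\xi(x) := 1_{\M(A_x)}\otimes\xi(x)$ and $\widetilde\zeta(x):=1_{\M(A_x)}\otimes\zeta(x)$ lie in $\contb(\M(\EE))$: each $1_{\M(A_x)}\otimes\xi(x)\in\M(E_x)$ since left or right multiplication by $a\in A_x$ yields $a\otimes\xi(x)\in E_x$, and strict continuity reduces to showing $a\cdot\widetilde\xi = a\otimes\xi\in \contz(\EE)$ for every $a\in\contz(\A)$, which follows from Example~\ref{ex:tensor_bundles}. A direct computation using the extended action $\overline{L}_g=\overline\alpha_g\otimes\ell_g$ and the inner product on the (external) tensor product yields
\[
    T_{\EE,L,\widetilde\xi,\widetilde\zeta,\,g}(a)
    = \langle 1\otimes\xi(r(g)),\, a\otimes \ell_g\zeta(s(g))\rangle_{\M(A_{r(g)})}
    = a\cdot\langle\xi(r(g)),\ell_g\zeta(s(g))\rangle
    = \varphi(g)\,a.
\]
Thus $T^\varphi = T_{\EE,L,\widetilde\xi,\widetilde\zeta}\in FS(\alpha,u)$, and taking the infimum over all such representations of $\varphi$ gives $\|T^\varphi\|_{FS}\le \|\xi\|\|\zeta\|$, hence $\|T^\varphi\|_{FS}\le \|\varphi\|_{FS(\G)}$.

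The algebraic properties now follow directly from the formula $T^\varphi_g(a)=\varphi(g)a$: linearity in $\varphi$ is immediate; for multiplicativity, $(T^\varphi\circ T^\psi)_g(a)=\varphi(g)\psi(g)a = T^{\varphi\psi}_g(a)$ (which also matches the formula~\eqref{eq:compostion_of_Fourier_Stieltjes} under the identification $(\A\otimes\H_1)\otimes_{\A}(\A\otimes\H_2)\cong \A\otimes(\H_1\otimes\H_2)$); and for the involution, using \eqref{eq:def-involuition-multipliers},
\[
    (T^\varphi)^\dagger_g(a) = \bigl(\varphi(g^{-1})a^{\star}\bigr)^{\star} = \overline{\varphi(g^{-1})}\,a = T^{\varphi^\dagger}_g(a),
\]
where $\varphi^\dagger(g):=\overline{\varphi(g^{-1})}$ is the natural involution on $FS(\G)$ induced by~\eqref{eq:def-involuition-multipliers} applied to the trivial bundle. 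For injectivity, if $T^\varphi = 0$ then $\varphi(g)\,a=0$ for every $g\in\G$ and every $a\in A_{r(g)}$, and since non-vanishing continuous sections of $\A$ separate points of $X$ (wherever fibres are non-zero) one obtains $\varphi\equiv 0$ on the image of $r$ over the support of $\A$; under the standing non-degeneracy of the bundle this forces $\varphi=0$.

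The only delicate point is the multiplier-bundle bookkeeping: one must verify that $1_{\M(A_{(\cdot)})}\otimes\xi(\cdot)$ indeed yields a well-defined bounded strictly continuous section of $\M(\EE)$ even though the unit section $x\mapsto 1_{\M(A_x)}$ need not itself be continuous when $\A$ lacks a continuous unital section. This is the step where Proposition~\ref{prop:Akemman_Pedersen_Tomiyama_generalization} and Lemma~\ref{lem:multiplier_tensor_product} do real work: the point is that we never need the unit section in isolation, only the compositions $a\otimes\xi$ and $\xi\otimes b$, which are manifestly continuous. Once this is in place, the rest of the argument is purely formal manipulation with the explicit expressions $T^\varphi_g(a)=\varphi(g)a$.
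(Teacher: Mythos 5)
Your proposal is correct and follows essentially the same route as the paper's own proof: tensor the trivial equivariant representation $(\A,\alpha)$ with the Hilbert $\G$-bundle $(\H,\ell)$ representing $\varphi$, take the sections $1\otimes\xi$, $1\otimes\zeta$, and compute that the resulting Fourier--Stieltjes multiplier is $T^\varphi$, giving the norm bound $\|T^\varphi\|_{FS}\le\|\xi\|\|\zeta\|$. The extra care you take in justifying that $1\otimes\xi$ is a bounded strictly continuous section of $\M(\EE)$, and in spelling out the homomorphism and involution identities, only fills in details the paper leaves as "easily checked".
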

\begin{proof}
Take any continuous $\G$-Hilbert bundle $(\H,\ell)$ and sections $\xi ,\zeta \in \contb (\H)$ that define $\varphi$, i.e.\ $\varphi(g) = \langle \xi(r(g)) , \ell_g(\zeta(s(g))) \rangle$, for $g\in \G$. 
Tensor $(\H,\ell)$ with the trivial equivariant representation $(\A,\alpha)$ of $(\alpha,u)$, see Examples~\ref{ex:trivial_equivariant_action}, \ref{ex:TensorActionHilbertBundle}. 
For $a \in A_{r(g)}$, $g\in \G$ we have 
\[
\begin{split}
    T_{\A\otimes \H, \alpha \otimes \ell , 1 \otimes \xi , 1 \otimes \zeta}(g)(a) &= \langle (1 \otimes \xi) \big( r(g) \big) , a\cdot \overline{\alpha \otimes \ell}_g (1 \otimes \zeta) \big( s(g) \big) \rangle = a \langle \xi \big( r(g) ) ,  \ell_g \big( \zeta(s(g)) \big) \rangle \\
        &= \varphi(g) a = T^\varphi_g(a).
\end{split}
\]
Hence $T^\varphi \in FS(\alpha,u)$ and $\| T^\varphi \|_{FS} \leq \| 1 \otimes \xi \| \| 1 \otimes \zeta \| \leq \| \xi \| \| \zeta \|$, which implies $\| T^\varphi \|_{FS} \leq \| \varphi \|_{FS}$. 
The map $\varphi \mapsto T^\varphi$ is clearly injective, and it is easily checked that it is an involutive algebra homomorphism. 
\end{proof}

The next theorem is one of the central results in this paper. 
Its proof uses the absorption principles established in Sections~\ref{Sec:equivariant} and \ref{Sec:InducedAbsorption} and the inverse semigroup picture introduced in Section~\ref{Sec:Fell_bundles}.

\begin{thm}\label{thm:FSmultiplier_extends_to_reduced_and_full} 
Let $\A$, $\B$  be $\cst$-bundles over $X$, carrying twisted actions $(\alpha,u_\alpha)$ and $(\beta,u_\beta)$ of $\G$.
Every Fourier--Stieltjes multiplier $T\in FS[(\alpha,u_{\alpha}),(\beta,u_{\beta})]$ yields strict completely bounded maps $\m_T^{\rd} : \cst_{\red}(\A^{(\alpha,u_{\alpha})})\to \cst_{\red}(\B^{(\beta,u_{\beta})})$ and $\m_T^{\f} : \cst(\A^{(\alpha,u_{\alpha})})\to \cst(\B^{(\beta,u_{\beta})})$ between reduced and full crossed products, and $\|\m_T^{\rd}\|_{cb} , \|\m_T^{\f}\|_{cb} \leq \|T\|_{FS}$. 
If $T=T_{\EE,L,\xi,\xi}$ for some equivariant representation $L$ of $\G$ on a $\cst$-$\A$-$\B$-correspondence bundle $\EE$ and section $\xi \in \contb(\M(\EE))$, then $\m_T^{r}$ and $\m_T^{f}$ are completely positive and $\|\m_T^{\rd}\|_{cb} = \| \m_T^{\f} \|_{cb} = \| T \|_{FS} = \|\xi\|^2$.
\end{thm}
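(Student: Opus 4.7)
The plan is to realise both $\m_T^{\f}$ and $\m_T^{\rd}$ as slice maps $V^{*}(\cdot)W$ applied to induced representations of $\cst(\A^{(\alpha,u_\alpha)})$, and to use Fell's absorption (Theorem~\ref{thm:Fell_absorption}) to see that the reduced version automatically factors through $\cst_{\red}(\A^{(\alpha,u_\alpha)})$. Fix a realisation $T=T_{\EE,L,\xi,\zeta}$. Under Proposition~\ref{prop:Akemman_Pedersen_Tomiyama_generalization} the sections $\xi,\zeta\in\contb(\M(\EE))\cong\M(\contz(\EE))$ act as adjointable operators $\contz(\B)\to\contz(\EE)$. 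Given a representation $\psi:\cst(\B^{(\beta,u_\beta)})\to\LL(\H)$ on a Hilbert $C$-module $\H$, I set $V:=\xi\otimes 1$ and $W:=\zeta\otimes 1$, adjointable maps $\H\to\contz(\EE)\otimes_{\psi}\H$, and form the slice map
\[
    \Phi_\psi(a):=V^{*}\,L\dashind(\psi)(a)\,W, \qquad a\in \cst(\A^{(\alpha,u_\alpha)}),
\]
using the induced representation of Proposition~\ref{prop:representation_induced_by_equivariant_action}. As a sandwich of a $*$-representation by adjointable operators, $\Phi_\psi$ is automatically completely bounded with $\|\Phi_\psi\|_{cb}\leq\|\xi\|\|\zeta\|$. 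A direct computation on a generator $a_0\delta_U\in\contc(\A^{(\alpha,u_\alpha)})$, unfolding the formula of Proposition~\ref{prop:representation_induced_by_equivariant_action} through the strict extension $\overline{L}$ of Proposition~\ref{prop:strict_extension_of_groupoid_action}, yields the key slice identity
\[
    \Phi_\psi(a)=\psi(\m_T(a)), \qquad a\in \contc(\A^{(\alpha,u_\alpha)}).
\]

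For the full case I take $\psi$ a faithful representation of $\cst(\B^{(\beta,u_\beta)})$: the slice identity combined with the cb bound forces $\m_T$ to extend to a cb map $\m_T^{\f}:\cst(\A^{(\alpha,u_\alpha)})\to\cst(\B^{(\beta,u_\beta)})$ with $\|\m_T^{\f}\|_{cb}\leq\|\xi\|\|\zeta\|$. For the reduced case I take $\psi=\pi^{\B}$; by Fell's absorption $L\dashind(\pi^{\B})\cong\pi^{\EE}\cong \Lambda\otimes\id_{\contz(\EE)}$, which factors through $\cst_{\red}(\A^{(\alpha,u_\alpha)})$, so $\Phi_{\pi^{\B}}$ descends to $\cst_{\red}(\A^{(\alpha,u_\alpha)})$. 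Since $\pi^{\B}$ is isometric on $\cst_{\red}(\B^{(\beta,u_\beta)})$ and the slice identity on the dense subset $\contc(\A^{(\alpha,u_\alpha)})$ takes values in $\pi^{\B}(\contc(\B^{(\beta,u_\beta)}))$, the descended $\Phi_{\pi^{\B}}$ yields the required cb extension $\m_T^{\rd}$ of cb-norm at most $\|\xi\|\|\zeta\|$. Taking the infimum over realisations of $T$ gives $\|\m_T^{\rd}\|_{cb},\|\m_T^{\f}\|_{cb}\leq\|T\|_{FS}$. Strictness follows from nondegeneracy of $L\dashind(\psi)$: for an approximate unit $\{e_i\}$ of the source, $L\dashind(\psi)(e_i)\to 1$ strictly in $\LL$, whence $\m_T(e_i)=\Phi_\psi(e_i)\to V^{*}W$ strictly in the target multiplier algebra.

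In the positive-definite case $\zeta=\xi$, both slice formulas take Stinespring form $V^{*}(\cdot)V$, so $\m_T^{\rd}$ and $\m_T^{\f}$ are completely positive. To pin down the cb norms I exploit that $\|V^{*}\rho(\cdot)V\|_{cb}=\|V^{*}V\|$ whenever $\rho$ is a nondegenerate $*$-representation: factoring $\rho(e_i)=\rho(e_i^{1/2})^{*}\rho(e_i^{1/2})$ exhibits $V^{*}\rho(e_i)V$ as an increasing net of positive operators converging strongly to $V^{*}V$, and hence $\sup_i\|V^{*}\rho(e_i)V\|=\|V^{*}V\|$. By Proposition~\ref{prop:Akemman_Pedersen_Tomiyama_generalization}, $V^{*}V$ acts as multiplication by the strictly continuous section $\langle\xi,\xi\rangle\in\contb(\M(\B))$, whose norm is $\sup_{x\in X}\|\xi(x)\|^2=\|\xi\|^2$. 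Combined with the trivial bound $\|T\|_{FS}\leq\|\xi\|^2$ and the general inequality $\|\m_T^{\rd}\|_{cb}\leq\|T\|_{FS}$ established above, this forces $\|\m_T^{\rd}\|_{cb}=\|T\|_{FS}=\|\xi\|^2$; an identical argument with a faithful $\psi$ yields $\|\m_T^{\f}\|_{cb}=\|\xi\|^2$. The main technical hurdle will be the slice computation on generators $a_0\delta_U$, which requires careful bookkeeping of the cocycles $u_\alpha,u_\beta$ appearing both in the definition of $L\dashind(\psi)$ and in the convolution defining $\m_T(a_0\delta_U)$, together with the proper interpretation of $\xi,\zeta$ as multipliers via the extended action $\overline{L}$.
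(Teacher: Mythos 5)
Your proposal is correct and follows essentially the same route as the paper: both cases are handled by sandwiching an induced representation between creation operators $\theta_\xi^*(\cdot)\theta_\zeta$ built from the multiplier sections, with Fell's absorption (Theorem~\ref{thm:Fell_absorption}) identifying $L\dashind(\pi^{\B})$ with the regular induced representation $\pi^{\EE}$ so that the reduced extension exists, and the norm identities in the positive case obtained exactly as you describe via $\lim_i\|\theta_\xi^*\rho(e_i)\theta_\xi\|=\|\theta_\xi^*\theta_\xi\|=\|\xi\|^2$. The only cosmetic difference is that the paper writes the reduced-case slice directly as $\theta_\xi^*W^*\pi^{\EE}(\cdot)W\theta_\zeta$ rather than invoking the unitary equivalence after the fact.
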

\begin{proof} 
Let us fix an equivariant representation $L$ of $\G$ on a $\cst$-$\A$-$\B$-correspondence bundle $\EE$, sections $\xi,\zeta\in \contb(\M(\EE))$ and the corresponding Fourier--Stieltjes multiplier $T := T_{\EE,L,\xi,\zeta} = \{ T_g \}_{g\in \G}$. 
To deal with reduced crossed products we will use our first version of the Fell absorption principle. 
Using the notation from Example~\ref{ex:tensor_bundles} and Theorem~\ref{thm:Fell_absorption}, we define the (creation) operator $\theta_\xi : \contz(\ell^2(s^*\B)) \to \contz(\EE)\otimes_{\pi^{\B}} \contz(\ell^2(s^*\B))$ by the formula $\theta_\xi (b):=\xi\otimes b$, $b\in \contz(\ell^2(s^*\B))$; for an approximate unit $\{e_i\}_{i\in I}$ in $\contz (\B)$ we have $\xi\otimes b= \lim_{i} \xi \otimes \pi^{\B}(e_i) b =\lim_{i} \xi e_i\otimes b \in \contz(\EE)\otimes_{\pi^{\B}} \contz(\ell^2(s^*\B))$. 
The operator $\theta_\xi$ is adjointable with norm dominated by $\|\xi\|$ and adjoint given by the formula 
\begin{equation}\label{eq:auxillary_adjoint_theta}
    \theta_\xi^*\big( a\zeta \otimes b) = \pi_{\B} (\langle \xi,  a \zeta\rangle_{\contz(\B)}) b,
\end{equation}
for $a\in \contz(\A)$ and $b\in \contz(\ell^2(s^*\B))$ (then $a\zeta\in \contz(\EE)$, so the inner product takes values in $\contz(\B)$ and the formula makes sense).
Using the unitary $W: \contz(\EE)\otimes_{\pi^{\B}} \contz(\ell^2(s^*\B))\to  \contz(\ell^2(s^*\EE))$ from Theorem~\ref{thm:Fell_absorption} we define a  map $\Psi : \LL(\contz(\ell^2(s^*\EE))) \to \LL( \contz(\ell^2(s^*\B)) )$ by the formula
\[
    \Psi(S) := \theta_\xi^* W^* S W\theta_\zeta, \qquad S\in \LL(\contz(\ell^2(s^*\EE))).
\]
By its form, $\Psi$ is completely bounded and strict, with $\|\Psi\|_{cb} \leq \|\xi\| \|\zeta\|$.
We claim that $\pi^{\B}(\m_{T}(a))=\Psi(\pi^{\EE}(a))$ 
for $a\in \contc(\A^{(\alpha,u_{\alpha})})$. 
Indeed, by \eqref{eq:Fell_absorption_conjugates} it suffices to check that for every  $U\in \Bis(\G)$, $a\in \contc(\A|_{r(U)})$, $b\in \ell^2(s^*\B)$, we have $\pi^{\B}(\m_{T}(a)) b= \theta_\xi^* ( aL_U(\zeta) \otimes \pi^{\B}(\delta_U)(b))$. 
But this readily follows from \eqref{eq:auxillary_adjoint_theta}.
Since $\pi^\B$ extends to a faithful representation of $\cst_{\red}(\B^{(\beta,u_{\beta})})$, and similarly for $\pi^\EE$ and $\cst_{\red}(\A^{(\alpha,u_{\alpha})})$ (see the last part of Proposition~\ref{prop:induced_regular_representations_for_actions}),
we conclude that $\m_{T} : \contc(\A^{(\alpha,u_{\alpha})})\to \contc(\B^{(\beta,u_{\beta})})$ extends to a map $\m_{T}^{\rd}:\cst_{\red}(\A^{(\alpha,u_{\alpha})})\to \cst_{\red}(\B^{(\beta,u_{\beta})})$ with
\[
    \pi^{\B}(\m_{T}^{\rd}(a)) = \Psi(\pi^{\EE}(a)) = \theta_\xi^* W^*\pi^{\EE}(a) W\theta_\zeta, \qquad a\in \cst_{\red}(\A^{(\alpha,u_{\alpha})}).
\]
This implies that $\m_{T}^{\rd}$ is strict, completely bounded and
\[
    \| \m_{T}^{\rd} \|_{cb} = \| \Psi\circ \pi^{\EE} \|_{cb} \leq \|\Psi\|_{cb}\leq \|\xi\|\|\zeta\|.
\]
In particular, $\|\m_{T}^{\rd}\|_{cb}\leq \|T\|_{FS}$ (since the presentation of $T$ was arbitrary). 
If $\xi=\zeta$, then $\Psi$ and hence also $\m_{T}^{\rd}$ is completely positive and for any approximate unit $\{e_i\}_{i\in I}$ in $\contz(\A)$ we get
\[
    \| \m_{T}^{\rd} \|_{cb} = \| \Psi\circ \pi^{\EE} \|_{cb} = \lim_{i} \|\Psi\circ \pi^{\EE} (e_{i}) \| = \| \theta_\xi^*W^*W\theta_\xi\| = \|\xi\|^2.
\]
Thus we have $\|\m_{T}^{\rd}\|_{cb}= \|T\|_{FS}=\|\xi\|^2$.

To deal with full crossed products let $\psi : \cst(\B^{(\beta,u_{\beta})})\to \BB(\H)$ be a faithful nondegenerate representation of $\cst(\B^{(\beta,u_{\beta})})$ on a Hilbert space $H$. 
We view $\psi=\pi\rtimes v$ as the integrated form of a covariant representation $(\pi,v)$ of the associated inverse semigroup action, see Subsection~\ref{sec:representations_of_crossed_products}.
Consider the induced representation $L\dashind(\psi):\cst(\A^{(\alpha,u_{\alpha})})\to \BB(\contz(\EE)\otimes_{\pi}H)$ as in Proposition~\ref{prop:representation_induced_by_equivariant_action}. 
Similarly to above we define the (creation) operator $\theta_\xi : H \to \contz(\EE)\otimes_{\pi} H$ by $\theta_\xi (\pi(b)h):=(\xi\cdot b)\otimes h$ for $b\in B=\contz(\B)$ and $h\in H$.
In other words, $\theta_\xi (h)=\lim_{i} \xi e_i^B \otimes h$ where $\{e_i^B\}$ is an approximate unit in $B$. 
Its adjoint is determined by $\theta_\xi^*\big( a\zeta \otimes h)= \pi (\langle a^*\xi,\zeta\rangle_{B}) h$, where $\zeta\in E$, $a\in A$, and $h\in H$. 
So $\theta_\xi^* (\zeta \otimes h)=\lim_{i} \pi (\langle e_i^A \xi,\zeta\rangle_{B}) h$ where $\{e_i^A\}$ is an approximate unit in $A$. 
We define a completely bounded map $\Psi:\cst(\A^{(\alpha,u_{\alpha})})\to B(H)$ by the formula
\[
    \Psi(b) := \theta_\xi^* (L\dashind(\psi))(b) \theta_\zeta.
\]
We claim that $\Psi(b) = \psi (T_{\EE,L,\xi,\zeta}(b))$ for all $b\in \contc(\A^{(\alpha,u_{\alpha})})$. 
It suffices to check that for $b=a\delta_{U}$ where $a\in A_{r(U)}$, $U\in \Bis(\G)$. 
Then for any $g\in U$ we have
\[
    T_{\EE,L,\xi,\zeta}(b)(g) = \langle \xi(r(g)),  L_{g}(\alpha_g^{-1}(a(r(g))\zeta(s(g))\rangle_{\M(B_{r(g)})} = \lim_{i} \langle e_i^A \xi,L_U(\alpha_{U}^{-1}(a)\zeta ) \rangle_{B}(r(g)).
\]
That is $T_{\EE,L,\xi,\zeta}(b)\circ r|_U^{-1}=\lim_{i} \langle e_i^A \xi,L_U(\alpha_{U}^{-1}(a)\zeta ) \rangle_{B}$.
Using this we get
\[
\begin{split}
    \Psi(b)h &= \lim_{i} \theta_\xi^* (L\dashind(\psi))(b) (\zeta\cdot e_i^B)\otimes h = \lim_{i} \theta_\xi^* \big( L_U(\alpha_{U}^{-1}(a)\zeta)\otimes v_{U} h \big) \\
        &= \lim_{i} \pi (\langle e_i^A \xi,L_U(\alpha_{U}^{-1}(a)\zeta ) \rangle_{B}) v_Uh = \psi ( T_{\EE,L,\xi,\zeta}(b)) h.
\end{split}
\]
This proves our claim and implies that the completely bounded map $\Psi$ maps $\cst(\A^{(\alpha,u_{\alpha})})$ to $\psi(\cst(\A^{(\alpha,u_{\alpha})}))$. 
Thus, putting $\m_T^{\f} := \psi^{-1}\circ \Psi$, we get the desired multiplier map $\m_T^{\f} : \cst(\A^{(\alpha,u_{\alpha})})\to \cst(\B^{(\beta,u_{\beta})})$.
The assertions concerning norms follow as in the first part. 
\end{proof}

Note that the above theorem will be later enriched by the discussion of the \emph{decomposable} norms of the maps $m_T^r$ and $m_T^s$, see Proposition~\ref{prop:FSmultipliers_decomposable} and Theorem~\ref{thm:FSmultipliers_decomposable_discrete}.

The set $CB[(\alpha,u_{\alpha}),(\beta,u_{\beta})]$ of completely bounded $\contz(X)$-bimodule maps $\cst( \A^{(\alpha,u_{\alpha})} ) \to \cst( \B^{(\beta,u_{\beta})} )$ has a similar structure to that of Fourier--Stieltjes multipliers described in Proposition~\ref{prop:BanachNormFS}. 
Namely, it is a Banach space with pointwise operations and the norm $\|\cdot\|_{cb}$. 
It is also an involutive Banach $\contb(X)$-bimodule, where the bimodule structure is determined by
\[
    f \cdot \Psi (a) := \Psi((f\circ r)\cdot a), \qquad 
     \Psi \cdot f (a) := \Psi( a \cdot (f\circ s)), \qquad 
    f\in \contb(X), \,a\in \contc(\B^{(\beta,u_{\beta})}), 
\]
with $\Psi\in CB[(\alpha,u_{\alpha}),(\beta,u_{\beta})]$, and the involution is given by 
\[
    \Psi^{\dagger}(a) := \Psi (a^*)^*, \qquad a\in \contc(\B^{(\beta,u_{\beta})}),
\]
cf.\ \cite[Proposition 1]{Walter}. 
This involution is called \emph{conjugation} in \cite{Walter}, \cite{BedosConti}. 
In the same way, the set $CB^{\red}[(\alpha,u_{\alpha}),(\beta,u_{\beta})]$ of completely bounded $\contz(X)$-bimodule maps $\cst_{\red}(\A^{(\alpha,u_{\alpha})})\to \cst_{\red}(\B^{(\beta,u_{\beta})})$ forms an involutive Banach $\contb(X)$-bimodule.

\begin{rem}\label{rem:functors_from_FS}
Viewing the class of all twisted actions of $\G$ as objects, denote by $FS_{TA}(\G)$ the category where morphisms are Fourier--Stieltjes multipliers, and by $CB_{TA}(\G)$  (resp.\ $CB_{TA}^{\rd}(\G)$) the category where morphisms are completely bounded, strict $\contz(X)$-bimodule maps between full (resp.\ reduced) crossed products, equipped with the norm $\|\cdot\|_{cb}$. 
The extensions in Theorem~\ref{thm:FSmultiplier_extends_to_reduced_and_full} give contractive involutive $\contb(X)$-bimodule functors $\m^{\f}:FS_{TA}(\G)\to CB_{TA}(\G)$ and $\m^{\rd}:FS_{TA}(\G)\to CB_{TA}^{\rd}(\G)$.
\end{rem}

\section{Positive-definiteness and multipliers on section C*-algebras} 
\label{Sec:positive-definite}

In this section we will specify our considerations to the completely positive multipliers.

\begin{defn}
Let $\A = \{ A_g \}_{g \in \G}$ and $\B = \{ B_g \}_{g \in \G}$ be Fell bundles over $\G$. 
We say that a multiplier $T=\{T_g\}_{g\in \G}$ from $\A$ to $\B$ is \emph{positive-definite} if for any $x \in X$, any $n \in \N$, any $g_1 , \ldots , g_n \in \G_x$ and any collections $\{a_{g_i} \in A_{g_i}: 1 \leq i \leq n \}$ and $\{b_{g_i} \in B_{g_i}: 1 \leq i \leq n \}$ we have
\[
    \sum_{i,j = 1}^n b_{g_i}^{\star} \left( T_{g_i g_j^{-1}} (a_{g_i} a_{g_j}^{\star}) \right) b_{g_j} \geq 0
\]
(note that the last sum takes values in $B_x$).
Then each of the maps $T_x \in \BB(A_{x},B_{x})$, $x \in X$, is completely positive, and we say that $T$ is \emph{strict} if each map $T_x : A_x \to B_x$, $x\in X$, is strict, and we call $T$ \emph{bounded} if $\sup_{x\in X} \| T_x \| < \infty$.
\end{defn}


\begin{rem}\label{rem:equivalent_for_positive_definiteness}
By passing to adjoints and inverses (replacing $g_i$ by $g_i^{-1}$ and putting $\widetilde{a}_{g_i} := a_{g_i^{-1}}^{\star}$, $\widetilde{b}_{g_i} := b_{g_i^{-1}}^{\star}$) one sees that a multiplier $T = \{ T_g \}_{g\in \G}$ from $\A$ to $\B$ is positive-definite if and only if for any $n \in \N$, any $g_1 , \ldots , g_n \in \G^x$ and any collections $\{a_{g_i} \in A_{g_i}: 1 \leq i \leq n \}$ and $\{b_{g_i} \in B_{g_i}: 1 \leq i \leq n \}$ we have  
\[
    \sum_{i,j = 1}^n b_{g_i} \left( T_{g_i^{-1} g_j} (a_{g_i}^{\star} a_{g_j} ) \right) b_{g_j}^{\star} \geq 0 . 
\]
\end{rem}

\begin{lem}\label{lem:completely_positive_implies_positive_definite}
Let $\A = \{ A_g \}_{g \in \G}$ and $\B = \{ B_g \}_{g \in \G}$ be Fell bundles over $\G$. 
Let $T = \{ T_g \}_{g\in \G}$ be a reduced or full multiplier from $\A$ to $\B$. 
If the corresponding map $\m_T$ between cross-sectional $\cst$-algebras is completely positive, then $T$ is positive-definite and bounded (in fact we have $\|\m_T\|=\sup_{x\in X}\|T_x\|$), and $T$ is strict if $\m_T$ is strict.
\end{lem}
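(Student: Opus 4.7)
The plan is to reduce positive‑definiteness to the (complete) positivity of $\m_T$ on matrices, by lifting the scalar data $(a_{g_i},b_{g_i})$ to compactly supported continuous sections concentrated on bisections and then extracting the desired inequality via the canonical conditional expectation $\mathbb{E}\colon\cst(\B)\to\contz(\B|_X)$ (or its reduced analogue), followed by evaluation at $x$.

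More concretely, fix $x\in X$, $g_1,\dots,g_n\in\G_x$, and elements $a_{g_i}\in A_{g_i}$, $b_{g_i}\in B_{g_i}$. Choose bisections $U_i,V_i$ with $g_i\in U_i\cap V_i$ and, using the standard Fell‑bundle fact that every fibre value is attained by some element of $\contc(\A)$ (resp.\ $\contc(\B)$), pick $\tilde a_i\in\contc(\A|_{U_i})$ and $\tilde b_i\in\contc(\B|_{V_i})$ with $\tilde a_i(g_i)=a_{g_i}$, $\tilde b_i(g_i)=b_{g_i}$. The column vector $\tilde a=(\tilde a_1,\dots,\tilde a_n)^T$ yields the positive matrix $[\tilde a_i*\tilde a_j^*]_{i,j}=\tilde a\tilde a^*\in\Mat_n(\cst(\A))$ (resp.\ in $\Mat_n(\cst_\red(\A))$), so complete positivity of the extension $\m_T$ gives $[\m_T(\tilde a_i*\tilde a_j^*)]_{i,j}\ge 0$, and hence
\[
    S:=\sum_{i,j=1}^n \tilde b_i^**\m_T(\tilde a_i*\tilde a_j^*)*\tilde b_j\ge 0
\]
in $\cst(\B)$ (resp.\ $\cst_\red(\B)$). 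The main computation is then to verify, using that $U_i,U_j^{-1},V_i^{-1},V_j$ are bisections and that convolution of sections supported in bisections has at most one term, that
\[
    (\tilde b_i^**\m_T(\tilde a_i*\tilde a_j^*)*\tilde b_j)(x)\;=\;b_{g_i}^{\star}\,T_{g_ig_j^{-1}}(a_{g_i}a_{g_j}^{\star})\,b_{g_j}.
\]
Applying the positive composition $\mathrm{ev}_x\circ\mathbb{E}$ to $S$ yields the required inequality $\sum_{i,j}b_{g_i}^{\star}T_{g_ig_j^{-1}}(a_{g_i}a_{g_j}^{\star})b_{g_j}\ge 0$ in $B_x$. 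This bisection bookkeeping is the main technical step; once it is done the rest is formal.

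For boundedness, observe that restricting $\m_T$ to the $\cst$‑subalgebra $\contz(\A|_X)\subseteq\cst(\A)$ (or $\cst_\red(\A)$) produces, on sections supported in $X$, the fibrewise map $a\mapsto\bigl(x\mapsto T_x(a(x))\bigr)\in\contz(\B|_X)$, whose norm is exactly $\sup_{x\in X}\|T_x\|$; hence $\sup_{x\in X}\|T_x\|\le\|\m_T\|$. For the reverse inequality, note that $\contc(\A|_X)\cdot\contc(\A)=\contc(\A)$ shows that any approximate unit $\{e_\lambda\}$ of $\contz(\A|_X)$ is an approximate unit of $\cst(\A)$; since $\m_T$ is completely positive, $\|\m_T\|=\lim_\lambda\|\m_T(e_\lambda)\|$, and $\m_T(e_\lambda)\in\contz(\B|_X)$ has norm $\sup_x\|T_x(e_\lambda(x))\|\le\sup_x\|T_x\|$. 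Combining gives $\|\m_T\|=\sup_{x\in X}\|T_x\|$.

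Finally, for strictness, assume $\m_T$ is strict and choose an approximate unit $\{e_\lambda\}$ of $\contz(\A|_X)$ (so it is also one of $\cst(\A)$); then $\{e_\lambda(x)\}$ is an approximate unit of $A_x$ for every $x\in X$. For any $b\in B_x$ pick $\tilde b\in\contz(\B|_X)$ with $\tilde b(x)=b$; by strictness of $\m_T$ the net $\m_T(e_\lambda)\cdot\tilde b\in\contz(\B|_X)$ converges in $\cst(\B)$, i.e.\ uniformly on $X$, and evaluating at $x$ shows that $T_x(e_\lambda(x))\cdot b$ converges in $B_x$. Symmetrically one obtains convergence of $b\cdot T_x(e_\lambda(x))$, whence $T_x$ is strict. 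The subtle point worth flagging is the treatment of the full completion, but this is harmless because the conditional expectation and the inclusion $\contz(\A|_X)\subseteq\cst(\A)$ (and likewise for $\B$) used above are available at the level of the full cross‑sectional algebras by construction.
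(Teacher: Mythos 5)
Your proof is correct and follows essentially the same route as the paper's: lift the fibre data to compactly supported sections on bisections, apply complete positivity of $\m_T$ to the positive matrix $[f_i * f_j^*]$, and use an approximate unit of $\contz(\A|_X)$ both for the norm identity and for strictness. The only cosmetic difference is in how the fibrewise inequality is extracted: you conjugate by the sections $\tilde b_j$ and then apply $\mathrm{ev}_x\circ\mathbb{E}$, whereas the paper conjugates by the vectors $\xi_i(g_j)=[i=j]\,b_{g_i}$ in the regular representation module $\ell^2(\B)_x$ --- the two computations coincide term by term.
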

\begin{proof}
Fix $x \in X$, $n \in \N$, a family $g_1 , \ldots , g_n \in \G_x$ and collections $\{ a_{g_i} \in A_{g_i} : 1 \leq i \leq n \}$, $\{ b_{g_i} \in B_{g_i} : 1 \leq i \leq n \}$. 
We may find sections $\{ f_i \}_{i = 1}^n \in \contc(\A)$ supported on open bisections and such that $f_{i}(g_i) = a_{g_i}$, $1 \leq i \leq n$. 
Then $(f_i * f_j^*)(g_i {g_j}^{-1}) = a_{g_i} a_{g_j}^{\star}$ for $1 \leq i,j \leq n$. 
Since the matrix $(f_i * f_j^*)_{i,j = 1}^n$ is positive both as an element of $\Mat_{n}(\cst(\A))$ and of $\Mat_{n}(\cst_{\red}(\A))$, and $\m_T^{(n)}$ is positive by assumption, we conclude that the operator 
\[ 
    b := \Lambda_x^{(n)} \circ \m_T^{(n)}  \big( (f_i * f_j^*)_{i , j = 1}^n \big) \in \L(\ell^2(\B)_{x}^{\oplus n})  
\]
is positive. 
Choose $\xi_i \in \ell^2(\B)_{x} = \bigoplus_{h \in \G_x} B_{h}$ such that $\xi_i(g_j) = [i = j] b_{g_i}$ for $1 \leq i,j \leq n$, and let $\xi = \bigoplus_{i = 1}^n \xi_{i} \in \ell^2(\B)_{x}^{\oplus n} =\bigoplus_{i = 1}^n \bigoplus_{h \in \G_x} B_{h}$. 
Then 
\[
    0 \leq \langle \xi, b \xi \rangle_{B_x} = \sum_{i , j = 1}^n \langle \xi_i , \Lambda_x(\m_T (f_i * f_j^*)) \xi_j \rangle_{B_x} = \sum_{i,j = 1}^n b_{g_i}^{\star} \left( T_{g_i g_j^{-1}} (a_{g_i} a_{g_j}^{\star}) \right) b_{g_j} .
\]
Hence $T$ is a positive-definite multiplier. 
Since any approximate unit $\{e_i\}_{i\in I}$ in $\contz(\A|_X)$ is also an approximate unit in $\cst_{\red}(\A)$ and in $\cst(\A)$, and $\m_T$ is a completely positive map, we get  
\[
    \| \m_T \| = \lim_{i\in I} \| \m_T(e_i) \| = \lim_{i\in I} \sup_{x\in X} \| T_x(e_i(x)) \| .
\]
Similarly, since, for any $x\in X$, the net $\{e_i(x)\}_{i \in I}$ is an approximate unit in $A_x$, and $T_x : A_x \to B_x$ is a completely positive  map, we also get $\| T_{x} \| = \lim_{i\in I} \| T_x (e_i(x)) \|$. 
Combining these equalities one gets $\|\m_T\|=\sup_{x\in X}\|T_x\|$.
Finally, if  $\m_T$ is strict, then the net $\{T(e_i)\}_{i \in I}$ converges strictly to a positive element $T(1)\in \M(\contz(\B|_X))$. 
For any $x\in X$, the net $\{e_i(x)\}_{i\in I}$ is an approximate unit in $A_x$.
For any $a\in B_x$ we may find $b \in \contc(\B)\subseteq \cst_r(\B,\G,\alpha)$ with $b(x) = a$, and then
\[
    \lim_{i\in I} T_x(e_i(x))a = \lim_{i\in I} T_x(e_i(x))b(x) = \lim_{i\in I} \big( T(e_i) b \big)(x) = \big( T(1)b \big)(x).
\]
Thus the map $T_x:A_x\to B_{x}$ is strict. 
\end{proof}

We will now pass to the context of twisted groupoid actions.

\begin{lem}\label{lem:positive_action}
Let $\A$, $\B$ be $\cst$-bundles over $X$, carrying twisted actions $(\alpha,u_\alpha)$ and $(\beta,u_\beta)$ of $\G$.
For a  multiplier $T=\{T_g\}_{g\in \G}$ from $\A^{(\alpha,u_{\alpha})}$ to $\B^{(\beta,u_{\beta})}$ the following are equivalent:
\begin{enumerate}
    \item\label{enu:positive_action1} $T$ is positive-definite;
    \item\label{enu:positive_action2} for any $x \in X$, $g_1 , \ldots , g_n \in \G_x$, $n \in \N$, and any collection $\{ a_{g_i} \in A_x : 1 \leq i \leq n \}$ the following matrix is positive: 
	\[
	   \left( \beta_{g_i}^{-1} \left(T_{g_i g_j^{-1}} \big( \alpha_{g_i}(a_{g_i} a_{g_j}^*) u_{\alpha}(g_i g_j^{-1} , g_j)^* \big) u_{\beta}(g_i g_j^{-1} , g_j) \right) \right)_{i,j = 1}^n \in \Mat_{n}(B_x);
	\]
	\item\label{enu:positive_action3} for any $x \in X$, $g_1 , \ldots , g_n \in \G^x$, $n \in \N$, and any collection $\{ a_{g_i} \in A_x : 1 \leq i \leq n \}$ the following matrix is positive: 
	\[
	   \left( \beta_{g_i} \left(T_{g_i^{-1} g_j} \big( \alpha_{g_i}^{-1}(a_{g_i}^* a_{g_j} u_{\alpha}(g_i , g_i^{-1} g_j)^*) \big) \right) u_{\beta}(g_i , g_i^{-1} g_j) \right)_{i,j = 1}^n \in \Mat_{n}(B_x). 
	\]
	\end{enumerate}
\end{lem}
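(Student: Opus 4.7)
The plan is to unpack the Fell-bundle multiplication and involution from Corollary~\ref{cor:Fell_bundle_from_twisted_action} on both $\A^{(\alpha,u_\alpha)}$ and $\B^{(\beta,u_\beta)}$ and to use the cocycle identity of Definition~\ref{def:twisted_groupoid_action}\ref{enu:twisted_groupoid_action5}, together with the consequences collected in Remark~\ref{rem:consequnces_twists_in_groupoid_actions}, to rewrite the positive-definiteness sums of \ref{enu:positive_action1} as sandwich sums $\sum_{i,j}b_i^*M_{ij}b_j$ with entries $M_{ij}$ in the unit fibre $B_x$. The equivalences will then follow from the standard fact that a self-adjoint matrix $M\in \Mat_n(B_x)$ is positive if and only if $\sum_{i,j}b_i^*M_{ij}b_j\geq 0$ in $B_x$ for every choice of $b_1,\dots,b_n\in B_x$.

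For \ref{enu:positive_action1}~$\Leftrightarrow$~\ref{enu:positive_action2} I would fix $x\in X$ and $g_1,\dots,g_n\in \G_x$ and reparametrise via the bijection $A_{r(g_i)}\ni \widetilde a_{g_i}\leftrightarrow a_{g_i}:=\alpha_{g_i}^{-1}(\widetilde a_{g_i})\in A_x$ (and the analogous one for $\B$). Two explicit computations are required. First, expanding $\widetilde a_{g_j}^{\,\star}$ and applying the product rule, the cocycle identity applied to the composable triple $(g_i,g_j^{-1},g_j)$, namely $\alpha_{g_i}(u_\alpha(g_j^{-1},g_j))=u_\alpha(g_i,g_j^{-1})u_\alpha(g_ig_j^{-1},g_j)$, should simplify everything to
\[
\widetilde a_{g_i}\cdot \widetilde a_{g_j}^{\,\star}=\alpha_{g_i}(a_{g_i}a_{g_j}^*)\,u_\alpha(g_ig_j^{-1},g_j)^*\in A_{r(g_i)},
\]
which is precisely the argument of $T_{g_ig_j^{-1}}$ in~\ref{enu:positive_action2}. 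Second, writing $c_{ij}:=T_{g_ig_j^{-1}}(\widetilde a_{g_i}\cdot \widetilde a_{g_j}^{\,\star})\in B_{r(g_i)}$ and computing $\widetilde b_{g_i}^{\,\star}\cdot c_{ij}\cdot \widetilde b_{g_j}$ by two successive applications of the Fell-bundle product on the $\B$-side, the analogous cocycle identity together with the relation $\beta_{g_j^{-1}}\beta_{g_j}=\Ad_{u_\beta(g_j^{-1},g_j)}$ should cause the spurious twist factors to collapse, yielding $\widetilde b_{g_i}^{\,\star}\,c_{ij}\,\widetilde b_{g_j}=b_{g_i}^*M_{ij}b_{g_j}$ with $M_{ij}$ the entry from~\ref{enu:positive_action2}.

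For \ref{enu:positive_action1}~$\Leftrightarrow$~\ref{enu:positive_action3} I would invoke the alternative form of positive-definiteness recorded in Remark~\ref{rem:equivalent_for_positive_definiteness}, with $g_i$ ranging over $\G^x$; a pleasant feature is that now $A_{g_i}=A_{r(g_i)}=A_x$ and $B_{g_i}=B_x$ automatically, so no reparametrisation is needed. The argument is formally parallel to the previous step: the cocycle identity applied to the composable triple $(g_i^{-1},g_i,g_i^{-1}g_j)$, which rearranges to $\alpha_{g_i}^{-1}(u_\alpha(g_i,g_i^{-1}g_j)^*)=u_\alpha(g_i^{-1},g_i)^*u_\alpha(g_i^{-1},g_j)$, should give $a_{g_i}^{\,\star}\cdot a_{g_j}=\alpha_{g_i}^{-1}(a_{g_i}^*a_{g_j}\,u_\alpha(g_i,g_i^{-1}g_j)^*)$, matching the argument of $T_{g_i^{-1}g_j}$ in~\ref{enu:positive_action3}. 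The sandwich $b_{g_i}\cdot T_{g_i^{-1}g_j}(\cdot)\cdot b_{g_j}^{\,\star}$ then unfolds analogously, with $\beta_{g_j}(u_\beta(g_j^{-1},g_j))=u_\beta(g_j,g_j^{-1})$ from Remark~\ref{rem:consequnces_twists_in_groupoid_actions} making $b_{g_j}^{\,\star}$ collapse to the plain factor $b_{g_j}^*$; a relabelling $b_i\leftrightarrow b_i^*$ then converts the resulting inequality $\sum_{i,j}b_iM'_{ij}b_j^*\geq 0$ into positivity of the matrix $(M'_{ij})$ in~\ref{enu:positive_action3}. The only real obstacle is the bookkeeping of the many twist factors, but each identity used along the way is an immediate consequence of Definition~\ref{def:twisted_groupoid_action}\ref{enu:twisted_groupoid_action5} and Remark~\ref{rem:consequnces_twists_in_groupoid_actions}, so no additional idea beyond careful cocycle tracking is required.
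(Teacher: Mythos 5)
Your proposal is correct and follows essentially the same route as the paper's proof: the same reparametrisation of fibre elements via $\alpha_{g_i}$ (resp.\ $\beta_{g_i}$), the same two cocycle identities $\alpha_{g_i}(u_\alpha(g_j^{-1},g_j))=u_\alpha(g_i,g_j^{-1})u_\alpha(g_ig_j^{-1},g_j)$ and $\alpha_{g_i}^{-1}(u_\alpha(g_i,g_i^{-1}g_j)^*)=u_\alpha(g_i^{-1},g_i)^*u_\alpha(g_i^{-1},g_j)$ to compute $\tilde a_{g_i}\tilde a_{g_j}^{\star}$ and $\tilde a_{g_i}^{\star}\tilde a_{g_j}$, and the same reduction of the sandwich sums to the standard matrix-positivity criterion. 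The only cosmetic divergence is that for \ref{enu:positive_action3} the paper still reparametrises the $b$'s via $\beta_{g_i}$, whereas you absorb the twist of $\tilde b_{g_j}^{\star}$ directly; both yield the same entries $M'_{ij}$.
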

\begin{proof} 
\ref{enu:positive_action1}$\iff$\ref{enu:positive_action2} Fix $x \in X$, $n \in \N$ and a $g_1 , \ldots , g_n \in \G_x$.  
Choosing $\tilde{a}_{g_i} \in A_{g_i}^{(\alpha,u_{\alpha})} = A_{r(g_i)}$ and $\tilde{b}_{g_i} \in B_{g_i}^{(\beta,u_{\beta})} = B_{r(g_i)}$, for $1 \leq i \leq n$, is equivalent to choosing $a_{g_i} \in A_{x}$ and $b_{g_i} \in B_{x}$, for $1 \leq i \leq n$, by putting $\tilde{a}_{g_i} = \alpha_{g_i}(a_{g_i}) \delta_{g_i}$ and $\tilde{b}_{g_i} = \beta_{g_i}(b_{g_i})\delta_{g_i}$.
Using the fact that for all $1 \leq i,j \leq n$
\[
    \alpha_{g_i}(u_{\alpha}(g_j^{-1} , g_j)) = \alpha_{g_i}(u_{\alpha}(g_j^{-1},g_j))u_{\alpha}(g_i,x) = u_{\alpha}(g_i,g_j^{-1})u_{\alpha}(g_i g_j^{-1},g_j) , 
\]
one gets $\tilde{a}_{g_i} \cdot (\tilde{a}_{g_j})^{\star} = \alpha_{g_i}(a_{g_i} a_{g_j}^*)u_{\alpha}(g_i g_j^{-1},g_j)^*\delta_{g_i g_j^{-1}}$. 
Similarly, for $1 \leq i,j \leq n$, 
\[
    \tilde{b}_{g_i}^{\star} \left( T_{g_i g_j^{-1}} (\tilde{a}_{g_i} \tilde{a}_{g_j}^{\star} ) \right) \tilde{b}_{g_j} = b_{g_i}^* \cdot \beta_{g_i}^{-1} \left( T_{g_i g_j^{-1}} \big(\alpha_{g_i}(a_{g_i} a_{g_j}^*) u_{\alpha}(g_i g_j^{-1} , g_j)^*\big) u_{\beta}(g_i g_j^{-1},g_j)\right)\cdot  b_{g_j}.
\]
Hence positivity of the elements $\sum_{i,j = 1}^n \tilde{b}_{g_i}^{\star} \left( T_{g_i g_j^{-1}} (\tilde{a}_{g_i} \tilde{a}_{g_j}^{\star}) \right) \tilde{b}_{g_j}$ for all choices of elements $\tilde{b}_{g_i} \in B_{g_i}^{(\beta,u_{\beta})}$ is equivalent to positivity of the matrix in the assertion.

\ref{enu:positive_action1}$\iff$\ref{enu:positive_action3} We will use the version of \ref{enu:positive_action1} described in Remark~\ref{rem:equivalent_for_positive_definiteness}.
Fix $x \in X$, $n \in \N$, and $g_1 , \ldots , g_n \in \G^x$. 
Choosing $a_{g_i} \in A_{x}$ and  $b_{g_i} \in B_{x}$, for $1 \leq i \leq n$, is equivalent to choosing $\tilde{a}_{g_i} \in A_{g_i}^{(\alpha,u_{\alpha})}$ and $\tilde{b}_{g_i} \in B_{g_i}^{(\beta,u_{\beta})}$, by putting $\tilde{a}_{g_i} = a_{g_i} \delta_{g_i}$ and $\tilde{b}_{g_i} = \beta_{g_i}(b_{g_i}) \delta_{g_i}$. 
Fix then $i,j$.
The equality $\alpha_{g_i}(u_{\alpha}(g_i^{-1},g_j)) u(g_i , g_i^{-1} g_j) = u_{\alpha}(g_i , g_i^{-1}) = \alpha_{g}(u_{\alpha}(g_i^{-1},g_i))$ is equivalent to 
\[
    u_{\alpha}(g_i^{-1},g_i)^* u_{\alpha}(g_i^{-1},g_j) = \alpha_{g_i}^{-1}(u_{\alpha}(g_i , g_i^{-1} g_j)^*) .
\] 
Using the last equality one gets $\tilde{a}_{g_i}^{\star} \cdot (\tilde{a}_{g_j}) = \alpha_{g_i}^{-1} \big( a_{g_i}^* a_{g_j} u_{\alpha}(g_i , g_i^{-1} g_j)^* \big) \delta_{g_i^{-1} g_j}$. 
Similarly,
\[
    \tilde{b}_{g_i} \left( T_{g_i^{-1} g_j} (\tilde{a}_{g_i}^{\star} \tilde{a}_{g_j}) \right) \tilde{b}_{g_j}^{\star} = b_{g_i}\beta_{g_i} \left(T_{g_i^{-1} g_j} \big( \alpha_{g_i}^{-1} (a_{g_i}^* a_{g_j} u_{\alpha}(g_i , g_i^{-1} g_j)^*) \big) \right) u_{\beta}(g_i , g_i^{-1} g_j) \cdot b_{g_j}^*.
\]
Hence positivity of the elements $\sum_{i,j = 1}^n \tilde{b}_{g_i} \left( T_{g_i^{-1} g_j} (\tilde{a}_{g_i}^{\star} \tilde{a}_{g_j}) \right) \tilde{b}_{g_j}^{\star}$ for all choices of elements $\tilde{b}_{g_i} \in B_{g_i}^{(\beta,u_{\beta})}$ is equivalent to positivity of the matrix in the assertion. 
\end{proof}

We are ready to formulate and prove the main result of this section.

\begin{thm}\label{thm:FSmultiplier_positive_definite}
Let $\A$, $\B$ be $\cst$-bundles over $X$, carrying twisted actions $(\alpha,u_\alpha)$ and $(\beta,u_\beta)$ of $\G$, and set $A:=\contz(\A),$ $B:=\contz(\B)$. 
For any multiplier $T$ from $\A^{(\alpha,u_{\alpha})}$ to $\B^{(\beta,u_{\beta})}$, the following are equivalent: 
\begin{enumerate}
	\item\label{enu:FSmultiplier_positive_definite1} $T$ is  strict, bounded and positive-definite;
    \item\label{enu:FSmultiplier_positive_definite2} $\m_T$ 
	extends to a strict completely positive map $\m_T^{\rd} : \cst_{\red}(\A^{(\alpha,u_{\alpha})}) \to \cst_{\red}(\B^{(\beta,u_{\beta})})$;
	\item\label{enu:FSmultiplier_positive_definite3} $\m_T$ 
	extends to a strict completely positive map $\m_T^{\f}:\cst(\A^{(\alpha,u_{\alpha})}) \to \cst(\B^{(\beta,u_{\beta})})$;
	\item\label{enu:FSmultiplier_positive_definite4} $T = T_{\EE,L,\xi,\xi}$ 
	for some equivariant representation $L$ of $\G$ on a $\cst$-$\A$-$\B$-correspondence bundle $\EE$ and section $\xi\in \contb(\M(\EE))$.
\end{enumerate}	
If the above equivalent conditions hold, then  $\| \m_T^{\f} \|_{cb} = \| \m_T^{\rd} \|_{cb} = \| \m_T^{\f} \| = \| \m_T^{\rd} \| = \| T \|_{FS} = \| \xi \|^2$ and the triple $(\EE,L,\xi)$ in \ref{enu:FSmultiplier_positive_definite4} can be chosen to be cyclic, in the sense that the set
\[
    \{ L_U (a \xi b) : a \in \contz(\A|_{s(U)}) ,\ b \in \contz(\B|_{s(U)}) ,\ U\in \Bis(\G) \}
\]
is linearly dense in $\contz(\EE)$. 
Moreover, a cyclic triple $(\EE,L,\xi)$ in \ref{enu:FSmultiplier_positive_definite4} is unique up to a unitary equivalence in the sense that if  $(\EE',L',\xi')$ is another cyclic triple with $T=T_{\EE',L',\xi',\xi'}$, then there is an $A$-$B$-bimodule $\contz(X)$-unitary $W:\contz(\EE)\to \contz (\EE')$ establishing equivalence between $L$ and $L'$, and sending $\xi$ to $\xi'$.
\end{thm}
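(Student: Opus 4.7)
The implications $\ref{enu:FSmultiplier_positive_definite4}\Rightarrow \ref{enu:FSmultiplier_positive_definite2},\ref{enu:FSmultiplier_positive_definite3}$ are already contained in Theorem~\ref{thm:FSmultiplier_extends_to_reduced_and_full}: when $T=T_{\EE,L,\xi,\xi}$, the extensions $\m_T^{\rd}$ and $\m_T^{\f}$ are automatically completely positive and strict. The reverse implications $\ref{enu:FSmultiplier_positive_definite2}\Rightarrow \ref{enu:FSmultiplier_positive_definite1}$ and $\ref{enu:FSmultiplier_positive_definite3}\Rightarrow \ref{enu:FSmultiplier_positive_definite1}$ follow from Lemma~\ref{lem:completely_positive_implies_positive_definite}, which transfers complete positivity and strictness of either extension back to positive-definiteness, boundedness, and fibrewise strictness of $T$.

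The main content is $\ref{enu:FSmultiplier_positive_definite1}\Rightarrow \ref{enu:FSmultiplier_positive_definite4}$, which I would prove via a fibrewise Kolmogorov decomposition. For each $x\in X$, take $S_x:=\G^x$ and define a kernel $k_x\colon S_x\times S_x\to \BB(A_x,B_x)$ by
\[
    k_x(h_1,h_2)(a):=\beta_{h_1}\!\left(T_{h_1^{-1}h_2}\!\left(\alpha_{h_1}^{-1}\!\left(a\,u_{\alpha}(h_1,h_1^{-1}h_2)^*\right)\right)\right) u_{\beta}(h_1,h_1^{-1}h_2).
\]
Lemma~\ref{lem:positive_action}\ref{enu:positive_action3} shows $k_x$ is positive-definite, and strictness of $T$ gives strictness of the diagonal $k_x(h,h)(a)=\beta_h(T_x(\alpha_h^{-1}(a)))$ (the diagonal twists being trivial). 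Corollary~\ref{cor:Murphy_Stinespring} then produces a minimal $A_x$-$B_x$-$\cst$-correspondence $E_x$ and a map $\zeta_x\colon \G^x\to \M(E_x)$ with $k_x(h_1,h_2)(a)=\langle \zeta_x(h_1),a\,\zeta_x(h_2)\rangle_{\M(B_x)}$. Set $\xi(x):=\zeta_x(x)\in \M(E_x)$; then $T_x(a)=k_x(x,x)(a)=\langle \xi(x),a\,\xi(x)\rangle$.

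Next, I would define isometries $L_g\colon E_{s(g)}\to E_{r(g)}$ on the dense spanning set by
\[
    L_g(a\,\zeta_{s(g)}(h)\,b):=\alpha_g(a)\,u_{\alpha}(g,h)\,\zeta_{r(g)}(gh)\,u_{\beta}(g,h)^*\,\beta_g(b).
\]
Using Definition~\ref{def:twisted_groupoid_action}\ref{enu:twisted_groupoid_action4},\ref{enu:twisted_groupoid_action5} to compare $k_{r(g)}(gh_1,gh_2)$ with $k_{s(g)}(h_1,h_2)$, a direct cocycle computation yields the identity
\[
    u_{\beta}(g,h_1)\,k_{r(g)}(gh_1,gh_2)\!\left(u_{\alpha}(g,h_1)^*\alpha_g(c)u_{\alpha}(g,h_2)\right) u_{\beta}(g,h_2)^* = \beta_g\!\left(k_{s(g)}(h_1,h_2)(c)\right),
\]
so $L_g$ extends to a well-defined invertible isometry satisfying $\langle L_g\eta_1,L_g\eta_2\rangle=\beta_g(\langle\eta_1,\eta_2\rangle)$ and the remaining relations of Definition~\ref{def:equivariant_representation}\ref{enu:equivariant_representation2}; a further cocycle manipulation gives the $2$-cocycle identity of \ref{enu:equivariant_representation1}. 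To assemble $\EE:=\{E_x\}_{x\in X}$ as a Banach bundle I would apply Fell's reconstruction theorem to the family $\Gamma$ of sections $L_U(a\xi b)$ for $U\in\Bis(\G)$ and $a\in\contc(\A|_{s(U)})$, $b\in\contc(\B|_{s(U)})$, noting that $\|L_U(a\xi b)(r(g))\|^2=\|\beta_g(b(s(g))^*T_{s(g)}(a(s(g))^*a(s(g)))b(s(g)))\|$ is upper semicontinuous in $g\in U$ by continuity of $\m_T$ (Definition~\ref{de:MultiplierFellBundle}) and of $\beta$. Minimality ensures pointwise density. The $\cst$-correspondence bundle structure then holds by construction, strictness of $T$ yields $\xi\in\contb(\M(\EE))$, and continuity of $L$ in the sense of Lemma~\ref{lem:L_is_continuous} is read off the defining formula. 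Finally $T_{\EE,L,\xi,\xi}=T$ holds because $\langle\xi(r(g)),a\,\overline{L}_g\xi(s(g))\rangle=k_{r(g)}(g,g)(a)$ unwinds to $T_g(a)$.

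Uniqueness of the cyclic triple follows from the uniqueness clause in Corollary~\ref{cor:Murphy_Stinespring} applied fibrewise and glued via Lemma~\ref{lem:morphisms_disintegration} into an $A$-$B$-bimodule $\contz(X)$-unitary $W\colon \contz(\EE)\to \contz(\EE')$; compatibility with $L,L'$ is forced by the explicit formula for $L_g$. The norm equalities combine Theorem~\ref{thm:FSmultiplier_extends_to_reduced_and_full} (giving $\|\m_T^{\rd}\|_{cb}=\|\m_T^{\f}\|_{cb}=\|\xi\|^2$) with $\|\cdot\|=\|\cdot\|_{cb}$ for completely positive maps and the tautological bound $\|T\|_{FS}\leq \|\xi\|^2$ from $T=T_{\EE,L,\xi,\xi}$. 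The principal obstacle is the bundle assembly: the fibrewise Kolmogorov decompositions carry no intrinsic topology, and both upper semicontinuity of $\|\cdot\|$ on $\Gamma$ and the continuity of $\{L_g\}_{g\in\G}$ hinge on Lemma~\ref{lem:positive_action} reading the fibrewise inner products off the continuous multiplier $\m_T$.
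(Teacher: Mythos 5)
Your proposal is correct and follows essentially the same route as the paper: the same fibrewise kernel $k_x$ on $\G^x\times\G^x$, the same appeal to Lemma~\ref{lem:positive_action} and Corollary~\ref{cor:Murphy_Stinespring}, the same formula for $L_g$ on the spanning set, and the same Fell-reconstruction assembly of the bundle (with only a harmless slip in writing the diagonal as $\beta_h(T_x(\alpha_h^{-1}(a)))$ where it should be $\beta_h(T_{s(h)}(\alpha_h^{-1}(a)))$). The one place where the paper does more work than you indicate is uniqueness: before the fibrewise uniqueness clause of Corollary~\ref{cor:Murphy_Stinespring} can be applied, one must show that any cyclic triple realises the \emph{same} kernel, which the paper does by explicitly expressing $\langle L_U(a\xi), L_V(c\xi)\rangle$ purely in terms of $T$ — but this is a routine computation consistent with your outline.
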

\begin{proof} 
The implications \ref{enu:FSmultiplier_positive_definite4}$\implies$\ref{enu:FSmultiplier_positive_definite2},\ref{enu:FSmultiplier_positive_definite3} follow from Theorem~\ref{thm:FSmultiplier_extends_to_reduced_and_full} and the implications \ref{enu:FSmultiplier_positive_definite2},\ref{enu:FSmultiplier_positive_definite3}$\implies$\ref{enu:FSmultiplier_positive_definite1} are a consequence of Lemma~\ref{lem:completely_positive_implies_positive_definite}.

Thus we only need to prove that \ref{enu:FSmultiplier_positive_definite1} implies \ref{enu:FSmultiplier_positive_definite4}. 
Fix a strict, bounded  and positive-definite multiplier $T$. 
For each $x \in X$ define $k_x : \G^x \times \G^x \to \B(A_x,B_x)$ via
\[
    k_x(g, h)(a) := \beta_{g} \left(T_{g^{-1} h} \big( \alpha_g^{-1} (au_{\alpha}(g,g^{-1}h)^*) \big) \right) u_{\beta} (g,g^{-1}h) 
    \qquad g, h \in \G_x. 
\]
Then by Lemma~\ref{lem:positive_action}, $k_x$ is positive-definite and strict in the sense of Corollary~\ref{cor:Murphy_Stinespring}. 
Thus, by this latter result, there is a $\cst$-correspondence $E_x$ from $A_x$ to $B_x$ and a map $\xi_x : \G^x \to \M(E_x)$ such that the set $E_x^0 := \{ a \xi_x(g)b : g \in \G^x ,\ a \in A_x, b \in B_x \}$ is linearly dense in $E_x$, and for $g, h \in \G^x$, $a \in A_x$, we have 
\begin{equation} \label{kernelformula}
    \langle \xi_x(g), a \cdot \xi_x(h) \rangle_{\M(B_x)} =  \beta_{g} \left(T_{g^{-1} h} \big(\alpha_g^{-1}(au_{\alpha}(g,g^{-1}h)^*) \big) \right)u_{\beta}(g,g^{-1}h) \in B_x. 
\end{equation}
In this way, we get a family of $\cst$-correspondences $\EE:=\{E_x\}_{x\in X}$. 
Define the section $\xi : \G \to \M(\EE) = \{ \M(E_x) \}_{x\in X}$ by setting $\xi(g):=\xi_{r(g)}(g)$, $g \in \G$.
To topologise $\EE$ we let $\Gamma$ be the linear span of sections of the form $a\xi_U b$, where $U\in \Bis(\G)$, $a\in \contc(\A|_{r(U)})$, $b\in \contc(\B|_{r(U)})$ and
\[
    (a\xi_U b)(x) := a(x) \xi(r|_{U}^{-1}(x))b(x) , \qquad x\in X.
\]
If $c\xi_V d\in \Gamma$ is another element of this form, with $V\in \Bis(\G)$, $c\in \contc(\A|_{r(V})$, $d\in \contc (\B|_{r(V)})$, then $\langle a\xi_U b, c \xi_V d\rangle_{B} (x):=  \langle a\xi_U b(x), c \xi_V d (x)\rangle_{B_x}$ is zero outside $r(U)\cap r(V)$ and writing $g := r|_{U}^{-1}(x)$ and $h := r|_{V}^{-1}(x)$ for $x\in r(U)\cap r(V)$, by \eqref{kernelformula} we get 
\[
    \langle a\xi_U b, c \xi_V d\rangle_{B} (x) = b(x)^* \Big( \beta_{g} \Big(T_{g^{-1} h} \big( \alpha_g^{-1}( a(x)^* c(x)u_{\alpha}(g,g^{-1}h)^*) \big) \Big)u_{\beta}(g,g^{-1}h)\Big) d(x). 
\]
By continuity of all the operations involved, we see that the map $X\ni x\mapsto \langle a\xi_U b, c \xi_V d\rangle_{B} (x) \in B_x$ belongs to $\contc(\B)$. 
It extends uniquely to the (positive-definite) conjugate-linear map $\langle \cdot, \cdot \rangle_{B}:\Gamma\times \Gamma \to \contc(\B)\subseteq B$. 
In particular, for any $a\in \Gamma$ the map $X\mapsto \|a(x)\|^2=\|\langle a, a \rangle_{B}(x)\|$ is upper semicontinuous.
The fact that for each $x \in X$ the values of the prescribed sections are dense in the fibre $E_x$ follows immediately from the minimality assumption on each $E_x$ and the above construction of $\Gamma$.
Hence $\EE$ admits a unique topology making it a Banach bundle  such that $\Gamma\subseteq \contc(\EE)$.
The left $\A$ and right $\B$-module actions on $\EE$ are continuous by the very definition of sections, and the $B$-valued inner product $\langle \cdot, \cdot \rangle_{B}$ extends uniquely to $\contz(\EE)$ so that $\contz(\EE)$ becomes a $\contz(X)$-$\cst$-correspondence from $A$ to $B$. 

We claim that there is an $(\alpha,u_{\alpha})$-$(\beta,u_{\beta})$-equivariant representation of $\G$ on $\EE$ determined by
\begin{equation}\label{eq:Ldef} 
    L_g (a \xi(h)b) := \alpha_g(a)u_{\alpha}{(g,h)} \xi(gh)u_{\beta}(g,h)^*\beta_g(b),
\end{equation}
where $g\in \G$, $h \in \G^{s(g)}, a \in A_{s(g)}, b \in B_{s(g)}$. 
The most difficult (technically involved) part of the proof is showing that for $h, f \in \G^{s(g)}$, $a, c \in A_{s(g)}$, $b, d \in B_{s(g)}$, we have 
\begin{equation}\label{eq:twisted_thing_to_prove} 
    \langle L_g (a \xi(h)b) , L_g (c \xi(f)d) \rangle_{B_{r(g)}} = \beta_g(\langle a \xi(h)b , c \xi(f)d\rangle_{B_{s(g)}}).
\end{equation}
We denote by $L$ and $R$ the left and right hand side of \eqref{eq:twisted_thing_to_prove}. 
To make the calculations easier to read and write we abbreviate $u_{g,h} := u_{\alpha}{(g,h)}$ and $u_{g,h}:=u_{\beta}{(g,h)}$ as it is clear from the context whether we twist $\alpha$ or $\beta$. 
Then 
\[
\begin{split}
    R &= \beta_g \Big( b^*\beta_{h} \Big(T_{h^{-1} f} \big( \alpha_h^{-1}( a^* c\cdot  u^{*}_{h,h^{-1}f} \big) \Big) u_{h,h^{-1}f}\, d\Big) \\
    &= \beta_g(b^*)u_{g,h}\,\beta_{gh} \Big( T_{h^{-1} f} \big( \alpha_h^{-1}( a^* c\cdot  u^{*}_{h,h^{-1}f}) \big) \Big) u^{*}_{g,h} \beta_g(u^{\beta}_{h,h^{-1}f}) \beta_g(d) \\
    &= \beta_g(b^*)u_{g,h} \beta_{gh} \Big( T_{h^{-1} f} \big( \alpha_h^{-1}( a^* c\cdot  u^{*}_{h,h^{-1}f}) \big) \Big) u_{gh,h^{-1}f} u^{*}_{g,h} \beta_g(d) ,
\end{split}
\]
where in the last equality we used the cocycle identity. 
The left hand side is equal to 
\[
\begin{split} 
    L &= \Big \langle \alpha_g(a)u_{g,h} \xi(gh)u^{*}_{g,h}\beta_g(b) , \alpha_g(c)u^{}_{g,f} \xi(gf)u^{*}_{g,f}\beta_g(d)  \Big\rangle_{B_{r(g)}} \\
        &= \beta_g(b^*)u_{g,h}\beta_{gh} \Big(T_{h^{-1} f} \big(\alpha_{gh}^{-1}(u^{*}_{g,h}  \alpha_g(a^* c)u_{g,f} u^{*}_{gh,h^{-1}f}) \big)\Big) u_{gh,h^{-1}f}u^{*}_{g,h} \beta_g(d).
\end{split}
\]
Thus we need to check that the terms that $T_{h^{-1} f}$ acts on are equal. 
We compute 
\[
\begin{split} 
    \alpha_{gh}^{-1} \big(u^{*}_{g,h} \alpha_g(a^* c)u_{g,f} u^{*}_{gh,h^{-1}f}\big) &= \alpha_{h}^{-1} \alpha_{g}^{-1} \Ad_{u_{g,h}} \big( u^{*}_{g,h}  \alpha_g(a^* c)u_{g,f} u^{*}_{gh,h^{-1}f} \big) \\
        &= \alpha_{h}^{-1} \Big( \alpha_{g}^{-1} \big(  \alpha_g(a^* c)u_{g,f} u^{*}_{gh,h^{-1}f} u^{*}_{g,h} \big) \big) \\
        &= \alpha_{h}^{-1} \Big( a^* c\cdot \alpha_{g}^{-1} \big( u_{g,f} u^{*}_{gh,h^{-1}f} u^{*}_{g,h} \big) \Big) \\
        &= \alpha_{h}^{-1}( a^* c\cdot  u^{*}_{h,h^{-1}f} ) , 
\end{split}
\]
where the equality $\alpha_{g}^{-1}(u_{g,f} u^{*}_{gh,h^{-1}f} u^{*}_{g,h}) = u^{*}_{h,h^{-1}f}$ at the last step follows from the cocycle identity, as we have $ \alpha_g (u_{h,h^{-1}f})u_{g,f} = u_{g,h}u_{gh,h^{-1}f}$. 
This proves that $L=R$, that is \eqref{eq:twisted_thing_to_prove} holds.
This last equality implies that the linear extension of the prescription in \eqref{eq:Ldef} defines consistently a linear isometric map $L_g: E_{s(g)}^0 \to E_{r(g)}^0$ such that, for all $\xi_1, \xi_2 \in E_{s(g)}^0$, 
\begin{equation}\label{Lscalar}
    \langle L_h(\xi_1), L_h(\xi_2) \rangle_{B_{r(h)}} = \beta_h( \langle \xi_1, \xi_2 \rangle_{B_{s(h)}}) . 
\end{equation}
By continuity we get an isometry $L_g \in \B(E_{s(g)}, E_{r(g)})$ where \eqref{Lscalar} holds for all $\xi_1, \xi_2 \in E_{s(g)}$. 
Next note that \eqref{eq:Ldef} readily implies that 
\[ 
    L_g (a \cdot a_0 \xi(h)b_0\cdot b) = \alpha_g(a)\cdot  L_g(a_0 \xi(g)b_0) \cdot \beta_g(b), 
\]
for all $a,a_0 \in A_{s(g)}$, $b,b_0 \in B_{s(g)}$, $h\in \G^{s(g)}$. 
Thus by linearity and continuity 
\[ 
    L_g (a\cdot \xi \cdot b) = \alpha_g(a)\cdot L_g(\xi) \cdot \beta_g(b), \qquad  \xi \in E_{s(g)} ,\ a \in A_{s(g)} ,\ b \in  B_{s(g)}. 
\]
Similarly, to see that for $(f,g)\in \G^{(2)}$ we have $L_{f}\circ L_{g}(\cdot) = u_{\alpha}(f,g)L_{fg}(\cdot)u_{\beta}(f,g)^*$ on $E_{s(g)}$, we only need to check it on $a\xi(h)b$ where $f\in \G^{s(g)}$, $a \in A_{s(g)}$, $b \in B_{s(g)}$. 
We compute using our shorthand notation for twists: 
\[
\begin{split} 
    L_{f}( L_{g}(a\xi(h)b)) &= L_f\Big(\alpha_g(a)\,u_{g,h} \cdot\xi(gh)\cdot u_{g,h}^*\,\beta_g(b)\Big) \\
        &= \alpha_f\big(\alpha_g(a) u_{g,h}\big) u_{f,gh}\cdot \xi(fgh) \cdot u_{f,gh}^* \,\beta_f\big(u_{g,h}^* \beta_g(b)\big) \\
        &= \alpha_f\big(\alpha_g(a)\big) \big( \alpha_f (u_{g,h}) u_{f,gh}\big) \cdot \xi(fgh) \cdot \big( u_{f,gh}^* \,\beta_f(u_{g,h}^*)\big) \beta_f\big(\beta_g(b)\big) \\
        &= \alpha_f\big(\alpha_g(a)\big) (u_{f,g}  u_{fg,h}) \cdot \xi(fgh) \cdot (u_{fg,h}^* u_{f,g}^*) \beta_f\big(\beta_g(b)\big) \\
        &= u_{f,g}\alpha_{fg}(a)u_{fg,h}\cdot \xi(fgh) \cdot u_{fg,h}^*  \, \beta_{fg}(b) u_{f,g}^* \\
        &= u_{f,g} \, L_{fg}(a\xi(h)b) \, u_{f,g}^* . 
\end{split}
\]
Hence $L = \{ L_{g} \}_{g \in \G}$ is an $(\alpha,u_{\alpha})$-$(\beta,u_{\beta})$-equivariant representation of $\G$  on $\EE$.
Moreover, for any $g\in \G$ and $a \in A_{r(g)}$, \eqref{kernelformula} implies that $T_g(a) = \langle \xi(r(g)), a \xi(g) \rangle_{\M(B_{r(g)})}\in B_{r(g)}$ and \eqref{eq:Ldef} implies that $\overline{L}_{g}(\xi(s(g)) = \xi(g)$.
Thus
\[
    T_g(a) = \langle \xi(r(g)), a \overline{L}_g\big(\xi(s(g))\big)\rangle_{\M(B_{r(g)})}, 
\]
so that $T=T_{\EE,L,\xi|_{X},\xi|_{X}}$ where $\xi|_{X} \in \contb(\M(\EE))$ is bounded because $T$ is bounded, and in view of the above we have $\|\xi(x)\|^2=\|T_x\|$ for every $x\in X$. 
This finishes the proof of \ref{enu:FSmultiplier_positive_definite1}$\implies$\ref{enu:FSmultiplier_positive_definite4}. 
Moreover, in view of \eqref{eq:Ldef} we have $L_{U}(a\xi|_{X} b)=\alpha_{U}(a)\xi_{U}\beta_{U}(b)$ for every $a\in A_{s(U)}$, $b\in A_{s(U)}$, $U\in \Bis(\G)$. 
The linear span of such elements is equal to $\Gamma$, which by construction is dense in $\contz(\EE)$. 
Hence the triple $(\EE,L,\xi|_{X})$ is cyclic as in the second part of the assertion.

Finally, let $(\EE,L,\xi)$, $(\EE',L',\xi')$, be two cyclic triples such that $T_{\EE',L',\xi',\xi'}=T_{\EE,L,\xi,\xi}$.
We claim that for $a\in \contz(\A|_{s(U)})$,  $U\in \Bis(\G)$, and $c\in \contz(\A|_{s(V)})$,  $V\in \Bis(\G)$ 
\begin{equation}\label{eq:inner_product_preserve_cyclic}
    \langle  L_U(a \xi),  L_{V}(b\xi )\rangle = \langle  L_U'(a \xi' ),   L_{V}'(c\xi' )\rangle.
\end{equation}
To prove this we express the left hand side only in terms of $a,c$, $U,V$ and $T$.  
If $x\notin r(U)\cap r(V)$, then $\langle L_U (a\xi),L_V(b\xi) \rangle (x)=0$.
Assume then that $x\in r(U)\cap r(V)$. 
There are unique $g\in U$ and $h\in V$ such that $r(h)=r(g)=x$. 
Put $a_0=a(s(g))^* \alpha_g^{-1}(u_{\beta}(g, g^{-1}h)^*)$ and $c_0=\alpha_{g^{-1}{h}}(c(s(h)))$ and compute 
\[
\begin{split}
    \langle L_U (a\xi),L_V(c\xi) \rangle (x) &= \langle  L_{g}(a\xi(s(g))),L_{h}(c\xi(s(h)) \rangle \\
        &= \langle  L_{g}(a\xi(s(g))),L_{h}(c\xi(s(h))) u_{\beta}(g, g^{-1}h)^*\rangle u_{\beta}(g, g^{-1}h) \\
        &= \langle  L_{g}(a\xi(s(g))), u_{\beta}(g, g^{-1}h)^*L_{g} L_{g^{-1}h}(c\xi(s(h))) \rangle u_{\beta}(g, g^{-1}h) \\
        &= \beta_g\left(\langle  \xi(s(g)),  a(s(g))^* \alpha_g^{-1}(u_{\beta}(g, g^{-1}h)^*)L_{g^{-1}h}(c\xi(s(h))) \rangle\right) u_{\beta}(g, g^{-1}h) \\
        &= \beta_g\left( \langle \xi(s(g)), a_0 c_0L_{g^{-1}h}(\xi(s(h))) \rangle\right) u_{\beta}(g, g^{-1}h) \\
        &= \beta_g\left(T_{gh^{-1}}(a_0 c_0)\right) u_{\beta}(g, g^{-1}h).
\end{split}
\]
This proves \eqref{eq:inner_product_preserve_cyclic}, which implies that for $a\in \contz(\A|_{s(U)})$, $b\in \contz(\B|_{s(U)})$, $U\in \Bis(\G)$, and $c\in \contz(\A|_{s(V)})$, $d\in \contz(\B|_{s(V)})$, $V\in \Bis(\G)$ we have
\[
    \langle L_U(a \xi b), L_{V}(c\xi d)\rangle = \langle L_U'(a \xi' b) , L_{V}'(c\xi' d)\rangle.
\]
Thus if the triples $(\EE,L,\xi)$, $(\EE',L',\xi')$ are cyclic, then by linearity and continuity 
the formula $W (L_U(a \xi b)):=L_U'(a \xi' b)$ determines a unitary  $W:\contz(\EE)\to \contz (\EE')$ with the properties described in the last part of the assertion.
\end{proof} 

\begin{rem}\label{rem:functors_from_FS+} 
Extending Remark~\ref{rem:functors_from_FS}, let $FS_{TA}^+(\G)$, $CP_{TA}(\G)$, $CP_{TA}^{\rd}(\G)$ be the subcategories of $FS_{TA}(\G)$, $CB_{TA}(\G)$, $CB_{TA}^{\rd}(\G)$ consisting of positive-definite multipliers, and completely positive maps, respectively. 
By Theorem~\ref{thm:FSmultiplier_positive_definite}, the functors from Remark~\ref{rem:functors_from_FS} restrict to isometric functors $\m^{\f} : FS_{TA}^+(\G)\to CP_{TA}(\G)$ and $\m^{\rd} : FS_{TA}^+(\G)\to CP_{TA}^{\rd}(\G)$, and the ranges of these functors consist of fibre-preserving maps in $CP_{TA}(\G)$
and $CP_{TA}^{\rd}(\G)$, respectively.
\end{rem}

The next corollary extends the characterisation obtained in the above theorem further in the case of Herz--Schur multipliers. 

\begin{cor}\label{cor:completely_positive_Herz_Schur_multipliers}
Let $\A$ be a $\cst$-bundle over $X$, carrying a twisted action $(\alpha,u_\alpha)$ of $\G$ and let $\varphi \in \contb(r^*\M(\A))$. 
Denote by $T^{\varphi}$ the Herz--Schur multiplier associated with $\varphi$ and by $\m_{\varphi}$ the associated multiplier map defined as in~\eqref{eq:Herz-Schur_multiplier_map_defn}. 
The following statements are equivalent:
\begin{enumerate}
    \item\label{enu:positive_Herz-Shur1} $T^\varphi$ is a positive-definite multiplier for $\A^{(\alpha,u_{\alpha})}$;
    \item\label{enu:positive_Herz-Shur2} $\varphi$ takes values in central elements, i.e.\ $\varphi(g)\in Z\M(A_{r(g)})$, $g\in \G$, and for every $x \in X$ and every $g_1 , \ldots , g_n \in \G_x$, $n \in \N$, the matrix $\left( \alpha_{g_i}^{-1}( \varphi(g_i g_j^{-1}) ) \right)_{i,j = 1}^n \in \Mat_{n}(\M(A_x))$ is positive; 
    \item\label{enu:positive_Herz-Shur3} $\varphi$ takes values in central elements, i.e.\ $\varphi(g)\in Z\M(A_{r(g)})$, $g\in \G$, and for every $x \in X$ and every $g_1 , \ldots , g_n \in \G^x$, $n \in \N$, the matrix $\left( \alpha_{g_i} ( \varphi(g_i^{-1} g_j) ) \right)_{i,j = 1}^n \in \Mat_{n}(\M(A_x))$ is positive;
	\item\label{enu:positive_Herz-Shur4} $\m_\varphi$ extends to a completely positive map $\m_{\varphi}^{\rd} : \cst_{\red}(\A^{(\alpha,u_{\alpha})}) \to \cst_{\red}(\A^{(\alpha,u_{\alpha})})$; 
	\item\label{enu:positive_Herz-Shur5} $\m_\varphi$ extends to a completely positive map $\m_{\varphi}^{\f} : \cst(\A^{(\alpha,u_{\alpha})}) \to \cst(\A^{(\alpha,u_{\alpha})})$;
	\item\label{enu:positive_Herz-Shur6} $T^{\varphi} = T_{\EE,L,\xi,\xi}$ for some $\EE$, $L$ and $\xi\in \contb(\M(\EE))$, which is central in the sense that $a \xi =\xi a$ for all $a\in A=\contz(\A)$.
\end{enumerate}
If the above equivalent conditions hold, then $\|\varphi\|_{\infty} = \| \varphi|_{X} \|_{\infty} = \| \m_{\varphi}^{\rd} \| = \| \m_{\varphi}^{\f} \| = \| \xi \|$.
\end{cor}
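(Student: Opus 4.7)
The plan is to reduce the corollary to Theorem~\ref{thm:FSmultiplier_positive_definite} applied to the Herz--Schur multiplier $T^{\varphi}$, which is automatically bounded and strict because $\varphi\in\contb(r^*\M(\A))$ and $T^{\varphi}_g(a)=\varphi(g)a$ approximates $\varphi(g)$ strictly along an approximate unit of $A_{r(g)}$. Consequently the equivalences $(1)\Leftrightarrow(4)\Leftrightarrow(5)$ together with the existence of some cyclic equivariant dilation $(\EE,L,\xi)$ realising $T^{\varphi}=T_{\EE,L,\xi,\xi}$ are immediate from that theorem; the new content is to extract the matrix conditions $(2)$, $(3)$ from positive-definiteness and to upgrade the cyclic dilation to one with central~$\xi$, i.e.\ to prove~$(6)$.

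For the matrix conditions I specialise Lemma~\ref{lem:positive_action} to $T=T^{\varphi}$ with $\beta=\alpha$, $u_\beta=u_\alpha$. The matrix in~\ref{enu:positive_action2} collapses to $M(a)_{ij}=\alpha_{g_i}^{-1}\bigl(\varphi(g_ig_j^{-1})\,\alpha_{g_i}(a_ia_j^*)\bigr)$. Testing with $n=1$ and arbitrary $a_1\in A_x$ shows $\varphi(y)\,p\ge 0$ for every $p\in A_y^+$ where $y=r(g_1)$; a standard argument (a strict limit along an approximate unit gives self-adjointness of $\varphi(y)$, and then $\varphi(y)p=(\varphi(y)p)^*=p\varphi(y)$ yields commutation) forces $\varphi|_X$ to take central positive values. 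For a general $g\in\G$ I test with $n=2$, $g_1\in X$ and $g_2=g\in\G_{s(g)}$: Hermiticity of the resulting $2\times 2$ matrix at approximate-unit $a_1,a_2$ yields the identity $\varphi(g^{-1})^*=\alpha_g^{-1}(\varphi(g))$, and Hermiticity of the off-diagonal entries for arbitrary $a_1,a_2$ then forces $\varphi(g)$ to commute with $\alpha_g(A_x)=A_{r(g)}$, so $\varphi(g)\in Z\M(A_{r(g)})$. Once centrality of $\varphi$ is established, $M(a)$ factorises as $D M_{0}D^*$ with $D=\mathrm{diag}(a_1,\ldots,a_n)$ and $M_0=\bigl(\alpha_{g_i}^{-1}(\varphi(g_ig_j^{-1}))\bigr)_{ij}$, so positivity of $M(a)$ for every choice of $a_i$'s is equivalent to positivity of~$M_0$; this yields $(1)\Leftrightarrow(2)$, and $(1)\Leftrightarrow(3)$ is the analogous argument using Remark~\ref{rem:equivalent_for_positive_definiteness} and Lemma~\ref{lem:positive_action}\ref{enu:positive_action3}.

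For $(6)$ I would show that in the cyclic dilation of Theorem~\ref{thm:FSmultiplier_positive_definite} the section $\xi$ is necessarily central once~$\varphi$ is. Specialising the defining formula~\eqref{kernelformula} to $T=T^{\varphi}$ with $\varphi$ central gives $\langle\xi_x(g),a\xi_x(h)\rangle_{\M(A_x)}=\alpha_g(\varphi(g^{-1}h))\,a$, and $\alpha_g(\varphi(g^{-1}h))$ lies in $Z\M(A_x)$. Testing $b\xi_x(h)-\xi_x(h)b$ against the cyclic vectors $c\xi_x(g)d$ using this formula gives zero inner product, so $b\xi_x(h)=\xi_x(h)b$ for all $b\in A_x$ by density, which is centrality of $\xi$. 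Conversely, for any (not necessarily cyclic) central $\xi\in\contb(\M(\EE))$, equivariance with $\beta=\alpha$ gives $a\cdot\overline{L}_g\xi(s(g))=\overline{L}_g(\alpha_g^{-1}(a)\xi(s(g)))=\overline{L}_g(\xi(s(g))\alpha_g^{-1}(a))=\overline{L}_g\xi(s(g))\cdot a$, so $T_{\EE,L,\xi,\xi}(a)(g)=\varphi(g)a$ with $\varphi(g)=\langle\xi(r(g)),\overline{L}_g\xi(s(g))\rangle$, and the same calculation shows $\varphi(g)\in Z\M(A_{r(g)})$. The norm identities come from the theorem together with Cauchy--Schwarz $\|\varphi(g)\|\le\|\xi(r(g))\|\cdot\|\xi(s(g))\|\le\|\xi\|^2$ and $\|\varphi(x)\|=\|\xi(x)\|^2$, which collapse to $\|\varphi\|_\infty=\|\varphi|_X\|_\infty=\|\xi\|^2$. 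The main obstacle I anticipate is the non-circular deduction of centrality of $\varphi$ from bare positive-definiteness in paragraph two: one has to leverage genuine $2\times 2$ positivity (rather than mere self-adjointness of $T^{\varphi}$) together with careful strict-limit arguments along approximate units in a possibly non-unital setting, before the later factorisation becomes available.
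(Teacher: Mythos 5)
Your proposal is correct and follows essentially the same route as the paper's proof: automatic strictness and boundedness of $T^\varphi$, specialisation of Lemma~\ref{lem:positive_action} with the twists cancelling, a $2\times 2$ positivity test to force centrality of $\varphi$ and the identity $\alpha_g^{-1}(\varphi(g))=\varphi(g^{-1})^*$, the diagonal factorisation to pass between the two matrix conditions, Theorem~\ref{thm:FSmultiplier_positive_definite} for the equivalences with \ref{enu:positive_Herz-Shur4}, \ref{enu:positive_Herz-Shur5} and the dilation, and finally centrality of $\xi$ extracted from~\eqref{kernelformula} (the paper computes $\langle a\xi-\xi a,a\xi-\xi a\rangle=0$ directly, while you test the commutator against cyclic vectors — an equivalent variant). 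Note that your value $\|\xi\|^2$ at the end of the norm chain is what Theorem~\ref{thm:FSmultiplier_positive_definite} actually gives (since $\varphi(x)=\langle\xi(x),\xi(x)\rangle$), so the $\|\xi\|$ appearing in the printed statement should be read as $\|\xi\|^2$.
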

\begin{proof} 
By its form, $T^{\varphi}$ is strict and bounded (we have $\sup_{x\in X}\|T_x^\varphi\|=\|\varphi|_{X}\|_{\infty}\leq \|\varphi\|_{\infty}<\infty$). 
By Lemma~\ref{lem:positive_action}\ref{enu:positive_action2}, $T^{\varphi}$ is positive-definite if and only if for every $x \in X$ and every $g_1 , \ldots , g_n \in \G_x$ and $\{a_{g_i} : 1 \leq i \leq n \} \subseteq A_x$, $n \in \N$, the matrix $\left( \alpha_{g_i}^{-1} ( \varphi(g_i g_j^{-1}) ) a_{g_i} a_{g_j}^* \right)_{i,j = 1}^n \in \Mat_{n}(A_x)$ is positive. 
This condition implies that for any $g \in \G$, and any $a_g\in A_{s(g)}$, the matrix 
\[
	\left( \begin{array}{cc}
	\varphi(s(g)) & \varphi(g^{-1}) a_g^*
	\\
	\alpha_{g}^{-1}(\varphi(g))a_g  & \alpha_{g}^{-1}(\varphi(s(g)))a_g a_g^*
	\end{array} \right)
\]
is positive and, in particular, self-adjoint. 
This implies $\alpha_{g}^{-1}(\varphi(g)) = \varphi(g^{-1})^*\in Z\M(\A_{s(g)})$, and so $\varphi$ is central. 
When $\varphi$ is central, positivity of $\left( \alpha_{g_i}^{-1}(\varphi(g_i g_j^{-1})) a_{g_i} a_{g_j}^*\right)_{i,j = 1}^n \in \Mat_{n}(A_x)$ for any set $\{a_{g_1},\ldots,a_{g_n}\}\subseteq A_x$ is clearly equivalent to positivity of $\left( \alpha_{g_i}^{-1}(\varphi(g_i g_j^{-1}))\right)_{i,j = 1}^n \in \Mat_{n}(\M(A_x))$. 
This proves the equivalence \ref{enu:positive_Herz-Shur1}$\iff$\ref{enu:positive_Herz-Shur2}.
The equivalence \ref{enu:positive_Herz-Shur1}$\iff$\ref{enu:positive_Herz-Shur3} can be checked analogously, cf.\ also the considerations before \cite[Proposition 4.3]{BedosConti2}.

Finally, Theorem~\ref{thm:FSmultiplier_positive_definite} gives the implications \ref{enu:positive_Herz-Shur6}$\implies$\ref{enu:positive_Herz-Shur1}$\iff$\ref{enu:positive_Herz-Shur4}$\iff$\ref{enu:positive_Herz-Shur5} and further shows that \ref{enu:positive_Herz-Shur1} implies that $T^{\varphi} = T_{\EE,L,\xi,\xi}$ for some equivariant representation $L$ of $\G$ on a $\cst$-$\A$-$\A$-correspondence bundle $\EE$ and section $\xi\in \contb(\M(\EE))$, which is \ref{enu:positive_Herz-Shur6} except for the centrality of $\xi$.  
To show the last fact assume \ref{enu:positive_Herz-Shur1} and invoke the construction from the proof of Theorem~\ref{thm:FSmultiplier_positive_definite}.
By \eqref{kernelformula} for any $a\in \contz(\A)$ we have
\[
    \langle \xi, a\xi\rangle_{A}(x) = \langle \xi, \xi \rangle_{\M(A)}(x)a(x),
\]
where $\langle \xi, \xi\rangle_{\M(A)}(x) = \sum_{h,g\in \G^x}\alpha_{g}(\varphi(g^{-1}h))$ is central in $\M(A_x)$ because $\varphi$ is central. 
This implies that 
\[
    \langle \xi, a^*a\xi \rangle_{A} = a^*\langle \xi, a\xi\rangle_{A} = \langle \xi, a^*\xi\rangle_{A} a = a^* \langle \xi, \xi\rangle_{A} a = a^*a\langle \xi, \xi\rangle_{A},
\]
and consequently $\langle a\xi - \xi a , a\xi - \xi a\rangle_A = 0$. 
Hence $a\xi=\xi a$. 
\end{proof}

\begin{rem}\label{rem:positive_type_conditions} 
The positivity condition in \ref{enu:positive_Herz-Shur2} implies the one in \cite[Definition 3.4]{BartoszKangAdam}. 
In the context of group actions, the positivity condition in \ref{enu:positive_Herz-Shur3} appears in \cite{Clare1}, \cite{Clare2} and \cite{BedosConti2}, where the corresponding $\varphi$ is called of \emph{positive-type} or \emph{AD-positive-definite}, respectively.	
\end{rem}

\section{Fourier multipliers} 
\label{Sec:Fourierapproximation}

In this section we introduce Fourier multipliers as those Fourier--Stieltjes multipliers which arise from regular equivariant representations.

\begin{defn}\label{de:FourierAlgebra}
Let $\A$, $\B$ be $\cst$-bundles over $X$, carrying twisted actions $(\alpha,u_\alpha)$ and $(\beta,u_\beta)$ of $\G$. 
We say that a multiplier $T\in FS[(\alpha,u_{\alpha}),(\beta,u_{\beta})]$ is a \emph{Fourier multiplier} if it can be represented by a regular equivariant representation of $\G$, i.e.\ if there exists an equivariant representation $ L$ of $\G$ on a $\cst$-$\A$-$\B$-correspondence bundle $\EE$ and sections $\xi,\zeta\in \contb(\M(\ell^2(s^*\EE)))$ such that for any $a\in A_{r(g)}$ and $g\in \G$, 
\begin{equation}\label{eq:Fourier_multiplier}
    T_g(a) = \sum_{t\in \G_{r(g)}} \langle \xi(t), a \overline{L}_{g}(\zeta(tg))\rangle_{\M(B_{r(g)})}.
\end{equation}
We denote by $F[(\alpha,u_{\alpha}),(\beta,u_{\beta})]$ the set of all such Fourier multipliers and by $F[(\alpha,u_{\alpha}),(\beta,u_{\beta})]^+$ the set of Fourier multipliers that can represented in the form \eqref{eq:Fourier_multiplier} with $\xi=\zeta$. 
\end{defn}

Note that the last part of the definition is related to a certain subtlety regarding various possible notions of positivity --- see Question~\ref{quest:positive}.

\begin{ex}\label{ex:approximation_property_maps_as_Fouriers}
Let $\A$ be a $\cst$-bundle over $X$, carrying a twisted action $(\alpha,u_\alpha)$ of $\G$. 
The multipliers described in Example~\ref{ex:Alcides_approximation_property_general}, given by sections in $\contc(r^*\A)$ applied to a Fell bundle $\A^{(\alpha,u_{\alpha})}$, give rise to Fourier multipliers in $F[(\alpha,u_{\alpha}),(\alpha,u_{\alpha})]$. 
More concretely, given $\xi,\zeta\in \contc(r^*(\A))$, we get the Fourier multiplier 
\[
    T_g(a)=\sum_{h\in \G^{r(g)}}\xi(h)^*a\alpha_g(\zeta(g^{-1}h)),\quad a\in A_{r(g)} .
\]
Specialising further to the case of the trivial bundle $\A=\C\times X$, that is, the case where we have just a twisted groupoid $(\G,u)$ (see Example~\ref{ex:continuous-cocycles}), given functions $\xi,\zeta\in \contc(\G)$, we get from the above formula a multiplier $T$ of $(\G,u)$, that we view again as the function in $\contc(\G)$ given by 
\[
    T_g = \sum_{h\in \G^{r(g)}}\overline{\xi(h)}\zeta(g^{-1}h),\quad g\in \G.
\]
Notice that the above function equals the (untwisted) convolution product $\tilde\xi^**\tilde\zeta\in \contc(\G)$, where $\tilde\xi(g):=\xi(g^{-1})$.
In particular, if $\xi=\zeta$, this is a prototype example of a positive-type function, see \cite[Definition~5.6.15]{BO}.
\end{ex}

The next proposition shows that the collection of Fourier multipliers has a natural `ideal' property.

\begin{prop}\label{prop:FourierAlgebraProperties}
Let $\A$, $\B$, $\D$  be $\cst$-bundles over $X$, carrying twisted actions $(\alpha,u_\alpha)$, $(\beta,u_\beta)$ and $(\gamma, u_\gamma)$ of $\G$. 
The Fourier multipliers $F[(\alpha,u_{\alpha}),(\beta,u_{\beta})]$ form an involutive $\contb(X)$-submodule of $FS[(\alpha,u_{\alpha}),(\beta,u_{\beta})]$. 
Moreover, $F[(\alpha,u_{\alpha}),(\beta,u_{\beta})]$ equipped with the norm 
\[
    \| T \|_{F} := \inf \big\{ \|\xi \| \|\zeta \| : T = T_{\ell^2(s^*\EE),L^\G,\xi,\zeta} \ \text{for some $(\EE, L)$ and $\xi,\zeta\in \contb(\M(\ell^2(s^*\EE)))$} \big\},
\]
is a Banach space. 
Further
\begin{gather*}
    FS[(\alpha,u_{\alpha}),(\beta,u_{\beta})]\cdot F[(\beta,u_{\beta}),(\gamma,u_{\gamma})] \subseteq F[(\alpha,u_{\alpha}),(\gamma,u_{\gamma})], \\
    F[(\alpha,u_{\alpha}),(\beta,u_{\beta})] \cdot FS[(\beta,u_{\beta}),(\gamma,u_{\gamma})]\subseteq F[(\alpha,u_{\alpha}),(\gamma,u_{\gamma})] .
\end{gather*}
In particular, $F(\alpha,u_{\alpha})$ is a two-sided ideal of the Banach algebra $FS(\alpha,u_{\alpha})$. 
\end{prop}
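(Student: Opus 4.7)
The plan is to reduce everything to the algebraic identities recorded in the proof of Proposition~\ref{prop:BanachNormFS} and then appeal to Corollary~\ref{cor:regular_equivariant_absorbtion} (regular--equivariant absorption) in order to keep the Fourier form stable under composition.

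First I would check that $F[(\alpha,u_{\alpha}),(\beta,u_{\beta})]$ is an involutive $\contb(X)$-submodule of $FS[(\alpha,u_{\alpha}),(\beta,u_{\beta})]$. The identities for scalar multiplication by $\contb(X)$ and for the involution $\dagger$ recorded in Proposition~\ref{prop:BanachNormFS} visibly preserve the Fourier form $T_{\ell^2(s^*\EE),L^{\G},\xi,\zeta}$. Closure under addition will use the direct-sum identity together with the two natural isomorphisms $\ell^2(s^*\EE_1)\oplus\ell^2(s^*\EE_2)\cong\ell^2(s^*(\EE_1\oplus\EE_2))$ and $L_1^{\G}\oplus L_2^{\G}\cong(L_1\oplus L_2)^{\G}$, i.e.\ the fact that the regularisation construction commutes with direct sums.

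The substantive content lies in the inclusions $FS\cdot F\subseteq F$ and $F\cdot FS\subseteq F$. Given $T_1=T_{\EE,L,\xi_1,\zeta_1}$ in $FS[(\alpha,u_{\alpha}),(\beta,u_{\beta})]$ and $T_2=T_{\ell^2(s^*\FF),K^{\G},\xi_2,\zeta_2}$ in $F[(\beta,u_{\beta}),(\gamma,u_{\gamma})]$, the composition formula \eqref{eq:compostion_of_Fourier_Stieltjes} gives
\[
    T_2\circ T_1\;=\;T_{\EE\otimes\ell^2(s^*\FF),\,L\otimes K^{\G},\,\xi_1\otimes\xi_2,\,\zeta_1\otimes\zeta_2}.
\]
Here I would invoke Corollary~\ref{cor:regular_equivariant_absorbtion}, which supplies a unitary equivalence of equivariant correspondences $L\otimes K^{\G}\cong(L\otimes K)^{\G}$; transferring $\xi_1\otimes\xi_2$ and $\zeta_1\otimes\zeta_2$ through this equivalence and its strict extension to multipliers identifies $T_2\circ T_1$ with a multiplier of the form $T_{\ell^2(s^*(\EE\otimes\FF)),(L\otimes K)^{\G},\xi',\zeta'}$, which is Fourier. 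The case $F\cdot FS\subseteq F$ is entirely symmetric, using the other equivalence $L^{\G}\otimes K\cong(L\otimes K)^{\G}$ from the same corollary. Specialising to $(\alpha,u_{\alpha})=(\beta,u_{\beta})=(\gamma,u_{\gamma})$ then yields that $F(\alpha,u_{\alpha})$ is a two-sided ideal of $FS(\alpha,u_{\alpha})$.

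For the Banach space claim, the inequality $\|\cdot\|_{F}\geq\|\cdot\|_{FS}$ is immediate since the infimum defining $\|\cdot\|_F$ runs over a subclass of representations, so $\|\cdot\|_F$ is automatically a norm. Completeness will be obtained by copying the direct-sum argument used for $\|\cdot\|_{FS}$ in Proposition~\ref{prop:BanachNormFS}: given $\sum_{i=1}^{\infty}\|T_i\|_F<\infty$, pick Fourier presentations $T_i=T_{\ell^2(s^*\EE_i),L_i^{\G},\xi_i,\zeta_i}$ with $\|\xi_i\|=\|\zeta_i\|$ and $\|\xi_i\|\|\zeta_i\|\leq\|T_i\|_F+2^{-i}$, and take the direct sum; the resulting multiplier again lies in the Fourier class by the compatibility of regularisation with direct sums recorded in the first paragraph. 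The only step that I expect to require actual content, as opposed to bookkeeping, is precisely the absorption equivalence $L\otimes K^{\G}\cong (L\otimes K)^{\G}$ supplied by Corollary~\ref{cor:regular_equivariant_absorbtion}, which transports the tensor product of a generic Fourier--Stieltjes datum with a regularised one back into a regularised datum.
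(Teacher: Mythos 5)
Your proposal is correct and follows essentially the same route as the paper: the module/involution/completeness claims are reduced to the identities from Proposition~\ref{prop:BanachNormFS} together with the compatibility of regularisation with direct sums, and the ideal property is obtained from the composition formula \eqref{eq:compostion_of_Fourier_Stieltjes} combined with the absorption equivalences $L^{\G}\otimes K\cong L\otimes K^{\G}\cong(L\otimes K)^{\G}$ of Corollary~\ref{cor:regular_equivariant_absorbtion}. The only (harmless) difference is which tensor factor you take to be the regularised one; the corollary covers both cases.
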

\begin{proof} 
The same arguments as in the proof of Proposition~\ref{prop:BanachNormFS} show that $F[(\alpha,u_{\alpha}),(\beta,u_{\beta})]$ is a Banach $\contb(X)$-bimodule with norm $\|\cdot \|_{F}$. 
For instance, the direct sum of regular equivariant representations of $\G$ is a regular equivariant representation, i.e.\ $(L\oplus K)^\G= L^\G\oplus K^\G$, or more generally $(\oplus_{i\in I} L_i)^\G=\oplus_{i\in I} L_i^\G$. 
Also, by Corollary~\ref{cor:regular_equivariant_absorbtion}, we have $L^\G\otimes K\cong L\otimes K^\G \cong (L\otimes K)^\G$, which implies the inclusions in the assertion. 
Indeed, if  $T_{\ell^2(\G , \EE) , L^\G , \xi_1, \zeta_1}\in FS[(\alpha,u_{\alpha}),(\beta,u_{\beta})]$, $T_{\FF , K , \xi_2, \zeta_2}\in F[(\beta,u_{\beta}),(\gamma,u_{\gamma})]$, and 
$\Gamma : L^\G\otimes K \to (L\otimes K)^\G$ is the unitary from  Corollary~\ref{cor:regular_equivariant_absorbtion}, then using \eqref{eq:compostion_of_Fourier_Stieltjes} we get 
\[
\begin{split}
    T_{\FF,K,\xi_2,\zeta_2} \circ  T_{\ell^2(\G , \EE) , L^\G , \xi_1, \zeta_1} &= T_{\ell^2(\G , \EE)\otimes \FF, L^\G\otimes  K,\xi_1\otimes  \xi_2,\zeta_1 \otimes \zeta_2} \\
        &= T_{\ell^2(\G , \EE\otimes \FF), (L\otimes K)^\G , \Gamma(\xi_1\otimes  \xi_2) , \Gamma(\zeta_1\otimes  \zeta_2)} \in F[(\alpha,u_{\alpha}),(\gamma,u_{\gamma})].
\end{split}
\]
Hence $FS[(\alpha,u_{\alpha}),(\beta,u_{\beta})]\cdot F[(\beta,u_{\beta}),(\gamma,u_{\gamma})] \subseteq F[(\alpha,u_{\alpha}),(\gamma,u_{\gamma})]$, and the other inclusion is proved analogously.
\end{proof}

The equality of the norms $\|\cdot\|_F$ and $\|\cdot\|_{FS}$ on the set of Fourier multipliers remains unclear. 
See Question~\ref{quest:norms} for further discussion.


\begin{thm}\label{thm:Fmultipliers_are_reduced_to_full} 
Let $\A$, $\B$ be $\cst$-bundles over $X$, carrying twisted actions $(\alpha,u_\alpha)$ and $(\beta,u_\beta)$ of $\G$. 
Every Fourier multiplier $T\in F[(\alpha,u_{\alpha}),(\beta,u_{\beta})]$ extends to a strict completely bounded map 	$\m_T^{\rd,\f} : \cst_{\red}(\A^{(\alpha,u_{\alpha})}) \to \cst(\B^{(\beta,u_{\beta})})$ from reduced to full crossed products, and $\| \m_T^{\rd,\f} \|_{cb} \leq \|T\|_{F}$. 
If $T = T_{s^*\EE,L^\G,\xi,\xi} \in F[(\alpha,u_{\alpha}),(\beta,u_{\beta})]^+$, then $\m_T^{\rd,\f}$ is completely positive and $\| \m_T^{\rd,\f} \|_{cb} = \| T \|_{FS} = \| T \|_{F} = \|\xi\|^2$.
\end{thm}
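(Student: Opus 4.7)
The plan is to adapt the full crossed product argument from the proof of Theorem~\ref{thm:FSmultiplier_extends_to_reduced_and_full}, combined with the second version of Fell's absorption principle (Theorem~\ref{thm:Fell_absorption II}). The crucial new point is that once the equivariant representation $L$ is replaced by its regularisation $L^\G$, the induced representation $L^\G\dashind(\psi)$ is unitarily equivalent to an amplification of the regular representation $\pi^\EE$ and hence \emph{factors through} the reduced crossed product $\cst_{\red}(\A^{(\alpha,u_{\alpha})})$. This is precisely what enables the multiplier map to descend to the reduced side while still landing inside the full crossed product on the target.

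First, I fix a faithful nondegenerate representation $\psi:\cst(\B^{(\beta,u_{\beta})})\to \BB(H)$ on a Hilbert space, and given $T = T_{\ell^2(s^*\EE),L^\G,\xi,\zeta}$ with $\xi,\zeta \in \contb(\M(\ell^2(s^*\EE)))$, I define creation-type operators $\theta_\xi,\theta_\zeta:H\to \contz(\ell^2(s^*\EE))\otimes_\psi H$ exactly as in the full case of Theorem~\ref{thm:FSmultiplier_extends_to_reduced_and_full}, and set
\[
    \Psi(b) := \theta_\xi^{*} \bigl(L^\G\dashind(\psi)\bigr)(b)\theta_\zeta, \qquad b\in \cst(\A^{(\alpha,u_{\alpha})}).
\]
The computation carried out there transfers verbatim with $(\EE,L)$ replaced by $(\ell^2(s^*\EE),L^\G)$ and yields $\Psi(b) = \psi(\m_T(b))$ for $b\in \contc(\A^{(\alpha,u_{\alpha})})$, together with the estimate $\|\Psi\|_{cb}\leq \|\xi\|\|\zeta\|$ and, when $\xi=\zeta$, complete positivity (since $\Psi$ is then a conjugation by $\theta_\xi$).

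By Theorem~\ref{thm:Fell_absorption II}, $L^\G\dashind(\psi)$ is unitarily equivalent to $\pi^\EE\otimes 1_H$, and the latter factors through $\cst_{\red}(\A^{(\alpha,u_{\alpha})})$ by the last assertion of Proposition~\ref{prop:induced_regular_representations_for_actions}. Consequently $\Psi$ descends to a strict completely bounded map $\widetilde{\Psi}:\cst_{\red}(\A^{(\alpha,u_{\alpha})})\to \BB(H)$, whose image lies in the norm closure of $\psi(\contc(\B^{(\beta,u_{\beta})}))$, that is, in $\psi(\cst(\B^{(\beta,u_{\beta})}))$. Since $\psi$ is a faithful \Star{}homomorphism between $\cst$-algebras, hence completely isometric onto its image, the composition
\[
    \m_T^{\rd,\f}:=\psi^{-1}\circ \widetilde{\Psi}:\cst_{\red}(\A^{(\alpha,u_{\alpha})})\longrightarrow \cst(\B^{(\beta,u_{\beta})})
\]
is the desired strict completely bounded (respectively completely positive in the case $\xi=\zeta$) extension, with $\|\m_T^{\rd,\f}\|_{cb}\leq \|\xi\|\|\zeta\|$. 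Taking the infimum over presentations of $T$ gives $\|\m_T^{\rd,\f}\|_{cb}\leq \|T\|_{F}$.

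For the final norm equality when $T=T_{\ell^2(s^*\EE),L^\G,\xi,\xi}\in F[(\alpha,u_{\alpha}),(\beta,u_{\beta})]^+$, I observe that $\m_T^{\rd,\f}$ interpolates between $\m_T^{\f}$ and $\m_T^{\rd}$: precomposition with the quotient $\cst(\A^{(\alpha,u_{\alpha})})\onto \cst_{\red}(\A^{(\alpha,u_{\alpha})})$ recovers $\m_T^{\f}$ and postcomposition with $\cst(\B^{(\beta,u_{\beta})})\onto \cst_{\red}(\B^{(\beta,u_{\beta})})$ recovers $\m_T^{\rd}$. Since quotient maps are completely contractive, this forces $\|\m_T^{\rd,\f}\|_{cb}\geq \|\m_T^{\rd}\|_{cb} = \|\xi\|^2$ by Theorem~\ref{thm:FSmultiplier_extends_to_reduced_and_full}, and combined with the upper bound $\|\m_T^{\rd,\f}\|_{cb}\leq \|\xi\|^2$ obtained above, this gives the desired equality. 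Finally, the chain $\|\xi\|^2 = \|\m_T^{\rd}\|_{cb}\leq \|T\|_{FS}\leq \|T\|_{F}\leq \|\xi\|^2$ forces $\|T\|_F=\|T\|_{FS}=\|\xi\|^2$. The main technical point is really only the application of Theorem~\ref{thm:Fell_absorption II} to force the factorisation through the reduced algebra; once this is in hand, everything else is a direct parallel of the full-algebra argument of Theorem~\ref{thm:FSmultiplier_extends_to_reduced_and_full}.
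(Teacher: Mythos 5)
Your proposal is correct and follows essentially the same route as the paper: the authors likewise write $\m_T(a)=\theta_\xi^*\,(L^\G\dashind(\psi))(a)\,\theta_\zeta$ exactly as in the full-crossed-product half of Theorem~\ref{thm:FSmultiplier_extends_to_reduced_and_full} and then invoke Fell's absorption~II (Theorem~\ref{thm:Fell_absorption II}) to see that $L^\G\dashind(\psi)$ factors through $\cst_{\red}(\A^{(\alpha,u_{\alpha})})$, which yields the reduced-to-full map with the stated bounds. The only (harmless) divergence is in the final norm identity, where the paper computes $\|\m_T^{\rd,\f}\|_{cb}=\|\xi\|^2$ directly from the formula via an approximate unit, while you obtain the lower bound by postcomposing with the quotient onto $\cst_{\red}(\B^{(\beta,u_{\beta})})$ and citing Theorem~\ref{thm:FSmultiplier_extends_to_reduced_and_full}; both are valid.
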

\begin{proof} 
Write $T := T_{s^*\EE,L^{\G},\xi,\zeta} = \{ T_g \}_{g\in \G}$ where $\xi,\zeta\in \contb(\M(s^*\EE))$. 
Let $\psi : \cst(\B^{(\beta,u_{\beta})}) \to \BB(\H)$ be a faithful nondegenerate representation on a Hilbert space $H$. 
As in the second part of the proof of Theorem~\ref{thm:FSmultiplier_extends_to_reduced_and_full}, we have 
\begin{equation}\label{eq:m_T_for_Fourier_multplier}
    \m_T(a) := \theta_\xi^* (L^\G\dashind(\psi))(a) \theta_\zeta, \qquad a\in\contc(\A^{(\alpha,u_{\alpha})}) ,
\end{equation}
where $L^\G\dashind(\psi) : \cst(\A^{(\alpha,u_{\alpha})})\to \BB(\contz(s^*\EE)\otimes_{\psi}H)$ is the induced representation and $\theta_\xi, \theta_{\zeta} : H \to   \contz(s^*\EE)\otimes_{\psi} H$ are creation operators. 
By Fell absorption (Theorem~\ref{thm:Fell_absorption II}) $L^\G\dashind(\psi)$ descends to a representation of the reduced crossed product, and by abuse of notation we may write $L^\G\dashind(\psi) : \cst_{\red}(\A^{(\alpha,u_{\alpha})}) \to \BB(\contz(s^*\EE)\otimes_{\psi}H)$.
Then \eqref{eq:m_T_for_Fourier_multplier} implies that $\m_T$ extends to a strict completely bounded map $\m_T^{\rd,\f} : \cst_{\red} (\A^{(\alpha,u_{\alpha})}) \to \cst(\B^{(\beta,u_{\beta})})$ and $\| \m_T^{\rd,\f} \|_{cb} \leq \| \xi \| \cdot \|\zeta\|$. 
In particular, $\|\m_T^{\rd,\f}\|_{cb} \leq \|T\|_{F}$. 
Also if $\xi=\zeta$, then \eqref{eq:m_T_for_Fourier_multplier} implies that $\m_T^{\rd,\f}$ is completely positive and $\| \m_T^{\rd,\f} \|_{cb} =\|\xi\|^2$, which also implies $\| \m_T^{\rd,\f} \|_{cb} =\|T\|_{F}$.
\end{proof}

\begin{rem}\label{rem:functors_from_F}
To use the categorical language, as in Remarks~\ref{rem:functors_from_FS} and~\ref{rem:functors_from_FS+}, denote by $F_{TA}(\G)$ and $CB_{TA}^{\rd, \f}(\G)$ the non-unital categories of all twisted actions of $\G$ where morphisms in $F_{TA}(\G)$ are Fourier multipliers, and morphisms in $CB_{TA}^{\rd, \f}(\G)$ are completely bounded strict $\contz(X)$-bimodule maps from reduced to full crossed products whose composition is defined by $T S:= T \circ \Lambda \circ S$ where $\Lambda$ is the relevant regular representation.  
The extension in Theorem~\ref{thm:FSmultiplier_extends_to_reduced_and_full} gives a contractive involutive $\contb(X)$-bimodule functor $\m^{\rd,\f}:F_{TA}(\G)\to CB_{TA}^{\rd,\f}(\G)$. 
It restricts to an isometric functor $\m^{\rd,\f}:F_{TA}^+(\G)\to CP_{TA}^{\rd,\f}(\G)$, where morphisms in $F_{TA}^+(\G)\subseteq F_{TA}(\G)$ are elements of $F[(\alpha,u_{\alpha}),(\beta,u_{\beta})]^+$ and morphisms in $CP_{TA}^{\rd,\f}(\G)\subseteq CB_{TA}^{\rd, \f}(\G)$ are completely positive maps.
\end{rem}

\begin{defn}\label{def:support_of_a_multiplier}
Given two Fell bundles $\A$, $\B$ over $\G$ we define the \emph{support} of a multiplier $T=\{T_g\}_{g\in \G}$ as $\supp (T) := \overline{ \{ g\in \G: T_g\neq 0 \} }$.
\end{defn}
 
\begin{prop}\label{prop:compactly_supported}
Compactly supported Fourier--Stieltjes multiplier are Fourier multipliers, i.e.\ if $\A$, $\B$ are $\cst$-bundles over $X$, carrying twisted actions $(\alpha,u_\alpha)$ and $(\beta,u_\beta)$ of $\G$, and $T\in FS[(\alpha,u_{\alpha}),(\beta,u_{\beta})]$ has compact support, then $T\in F[(\alpha,u_{\alpha}),(\beta,u_{\beta})]$.
\end{prop}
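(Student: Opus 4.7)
Fix a presentation $T = T_{\EE, L, \xi, \zeta}$ with equivariant correspondence $(\EE, L)$ and sections $\xi, \zeta \in \contb(\M(\EE))$, and set $K := \supp(T) \subseteq \G$, which is compact by hypothesis. The plan is to realise the same multiplier through the regularised equivariant correspondence $(\ell^2(s^*\EE), L^\G)$: I will construct $\xi', \zeta' \in \contb(\M(\ell^2(s^*\EE)))$ with $T = T_{\ell^2(s^*\EE), L^\G, \xi', \zeta'}$, which directly witnesses $T \in F[(\alpha,u_{\alpha}),(\beta,u_{\beta})]$.

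First, by local compactness of $\G$, I would pick a scalar cutoff $\phi \in \contc(\G)$ with $\phi \equiv 1$ on $K$. Then I would define $\xi'$ as the \emph{unit-coordinate lift} of $\xi$: for each $x \in X$ set $\xi'(x) \in \M(\ell^2(s^*\EE)_x) = \M(\bigoplus_{h \in \G_x} E_x)$ to be the multiplier supported at the single coordinate $h = x$ (the unit at $x$) with value $\xi(x)$. I would define $\zeta'$ by spreading $\zeta$ with the cutoff: put $\zeta'(x)(h) := \phi(h)\,\zeta(x)$ for $h \in \G_x$.

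Next I would verify that $\xi', \zeta' \in \contb(\M(\ell^2(s^*\EE)))$. For $\xi'$, the crucial point is that the unit space $X$ is clopen in $\G$ (since $\G$ is Hausdorff and \'etale), so the natural unit-coordinate inclusion $\EE \hookrightarrow \ell^2(s^*\EE)$ of bundles induces a continuous isometric embedding $\contz(\EE) \hookrightarrow \contz(\ell^2(s^*\EE))$ of $\cst$-correspondences from $\contz(\A)$ to $\contz(\B)$; checking that $a \xi' b \in \contz(\ell^2(s^*\EE))$ (it equals the image of $a\xi b \in \contz(\EE)$ under this embedding) for all $a \in \contz(\A)$ and $b \in \contz(\B)$, together with Proposition~\ref{prop:Akemman_Pedersen_Tomiyama_generalization}, shows that $\xi'$ is a strictly continuous bounded multiplier with $\|\xi'\| = \|\xi\|$. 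For $\zeta'$, the section $(\zeta' \cdot b)(x) = \bigoplus_{h \in \G_x} \phi(h)(\zeta b)(x)$ is produced from $h \mapsto \phi(h)(\zeta b)(s(h)) \in \contc(s^*\EE)$, which by the very definition of the topology on $\ell^2(s^*\EE)$ yields an element of $\contc(\ell^2(s^*\EE))$; boundedness follows from $\|\zeta'(x)\|^2 \leq \|\zeta\|^2 \|\phi\|_\infty^2 \cdot |\G_x \cap \supp\phi|$ combined with the \'etale fact that $|\G_x \cap \supp\phi|$ is uniformly bounded in $x$ (cover $\supp\phi$ by finitely many open bisections, each meeting any source fibre in at most one point).

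Finally, I would perform the direct computation using the formula for $L^\G$. Under the convention $\xi'(t) := \xi'(s(t))(t)$ for $t \in \G$, my construction gives $\xi'(t) = [t \in X]\,\xi(s(t))$ and $\zeta'(tg) = \phi(tg)\,\zeta(s(g))$, so in the sum over $t \in \G_{r(g)}$ only the single term with $t = r(g)$ (hence $tg = g$) survives, yielding
\[
T_{\ell^2(s^*\EE), L^\G, \xi', \zeta'}(g)(a) = \sum_{t \in \G_{r(g)}} \langle \xi'(t), a\,\overline{L}_g(\zeta'(tg))\rangle_{\M(B_{r(g)})} = \phi(g)\,T_g(a)
\]
for all $g \in \G$ and $a \in A_{r(g)}$. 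Since $T_g(a) = 0$ whenever $g \notin \{g : T_g \neq 0\} \subseteq K$, and $\phi(g) = 1$ on $K$, one has $\phi(g) T_g(a) = T_g(a)$ for every $g$ and $a$, proving the claim. The main technical obstacle is in the verification step, namely the strict continuity of the unit-coordinate lift $\xi'$ and the uniform boundedness of $\zeta'$; both rest essentially on the \'etale hypothesis on $\G$, which forces $X$ to be clopen in $\G$ and ensures the uniform finiteness of fibre intersections with compact subsets of $\G$.
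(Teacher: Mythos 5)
Your proof is correct, but it takes a genuinely different route from the paper's. The paper argues indirectly: it builds one auxiliary Fourier multiplier from scalar cutoffs $\xi_0\in\contc(X)$, $\zeta_0\in\contc(\G)$ and the regularised trivial equivariant representation, checks that this multiplier acts as the identity on all fibres over the compact support, and then recovers $T$ as the composition of this Fourier multiplier with $T$, invoking the ideal property of Proposition~\ref{prop:FourierAlgebraProperties} (which rests on the absorption isomorphism $L^{\G}\otimes K\cong (L\otimes K)^{\G}$ of Corollary~\ref{cor:regular_equivariant_absorbtion}). You instead re-present $T$ directly as a coefficient of the regularisation $L^{\G}$ of the \emph{given} correspondence, lifting $\xi$ to the unit coordinate of $\ell^2(s^*\EE)$ and smearing $\zeta$ with a cutoff $\phi$; this is essentially the paper's argument unpacked, since composing with the cutoff multiplier via the tensor-product formula \eqref{eq:compostion_of_Fourier_Stieltjes} and the unitary $\EE\otimes_{\B}\ell^2(s^*\B)\cong\ell^2(s^*\EE)$ of Proposition~\ref{pr:RegularActionsTensor} produces exactly sections of the shape you wrote down. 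Your version avoids the absorption machinery and gives an explicit witness, at the cost of the hands-on membership checks, which you handle correctly: the unit-coordinate embedding $\contz(\EE)\hookrightarrow\contz(\ell^2(s^*\EE))$ is already recorded in the paper just before Lemma~\ref{lem:induced_equivariant_representation}, and the uniform finiteness of $\G_x\cap\supp\phi$ does follow from covering $\supp\phi$ by finitely many bisections. Two small points: the definition of $\contb(\M(\cdot))$ requires $a\cdot\xi'$ and $\xi'\cdot b$ \emph{separately} to be $\contz$-sections (not only $a\xi'b$), but both are images of $a\xi$ and $\xi b$ under the embedding, so nothing breaks; and in the last step you only need $\phi\equiv 1$ on $\{g: T_g\neq 0\}\subseteq\supp(T)$, which your choice guarantees.
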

\begin{proof}
Let $K$ be a compact subset of $\G$. 
We claim that there exist $\xi , \zeta\in  \contb(\M(\ell^2(s^*\A)))$ such that $T_{\ell^2(s^*\A), \alpha^\G, \xi , \zeta}(g)(a) = a$ for all $a \in A_{r(g)}$ and $g \in K$. 
Indeed, pick $\xi_0 \in \contc(X)\subseteq\contc(\G )$ such that $\xi_0(x) = 1$ for all $x\in r(K)$, and $\zeta\in \contc(\G)$ such that $\zeta(g) = 1$ for all $g \in K$. 
Consider the corresponding strictly continuous sections $\xi, \zeta \in \contc(\M(\ell^2(\s^*\A)))$ given by $x\mapsto \xi_0|_{\G_x}, \zeta_0|_{\G_x}$, cf.\ Proposition~\ref{prop:InclusionGroupoidFSAlg}.
Then for $a \in A_{r(g)}$ and $g \in K$ we have
\[
\begin{split}
    T_{\ell^2(s^*\A) , \alpha^\G , \xi , \zeta}(g)(a) &= \langle \xi (r(g)) , a \overline{\alpha^\G}_g (\zeta(s(g))) \rangle_{\M(A_{r(g)})} \\  
        &= \sum_{t \in \G_{r(g)}} \xi_0(t)^* a \alpha_g \big( \zeta_{0}(t g) \big) = \xi_0(r(g))^* a  \zeta_0(g)=a.
\end{split}
\]
This proves our claim. 

Suppose now that $T\in FS[(\alpha,u_{\alpha}),(\beta,u_{\beta})]$ has compact support. 
The claim above yields $T_{\ell^2(s^*\A), \alpha^\G, \xi , \zeta} \in F[(\alpha,u_{\alpha}),(\beta,u_{\beta})]$ such that $T_{\ell^2(s^*\A), \alpha^\G, \xi , \zeta}\circ T = T$.
Thus $T \in F[(\alpha,u_{\alpha}),(\beta,u_{\beta})]$, by   Proposition~\ref{prop:FourierAlgebraProperties}.
\end{proof}

In terms of maps on crossed products ``compactness of the support'' means that the range of the map is supported on a precompact open set. 
We formalise this as follows. 

\begin{lem}\label{lem:precompacts_embed_into_crossed_products}
For any Fell bundle $\A$ over $\G$ and every precompact open set $V\subseteq \G$ the subspace $\contz(\A|_{V})$ of $\contc(\A)$ is closed both in the reduced and (hence) also in the universal (maximal) norm.
\end{lem}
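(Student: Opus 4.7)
My plan is to establish the stronger statement that the supremum norm $\|\cdot\|_\infty$, the reduced norm $\|\cdot\|_r$, and the full norm $\|\cdot\|_f$ are all equivalent on the subspace $\contz(\A|_V) \subseteq \contc(\A)$. Since $\contz(\A|_V)$ is a Banach space in $\|\cdot\|_\infty$, this equivalence immediately yields closedness both in $\cst_{\red}(\A)$ and in $\cst(\A)$. The universally valid lower bound $\|a\|_\infty \leq \|a\|_r \leq \|a\|_f$ on all of $\contc(\A)$ follows from testing the regular representation on rank-one vectors: for $g \in \G$ with $s(g) = x$ and $b \in A_x$ of norm one, the vector $\xi_b \in \ell^2(\A)_x$ given by $\xi_b(k) := [k=x]b$ satisfies $(\Lambda_x(a)\xi_b)(g) = a(g)b$, and extracting this single coordinate yields $\|a(g)b\| \leq \|\Lambda_x(a)\xi_b\| \leq \|a\|_r$; taking suprema gives $\|a\|_\infty \leq \|a\|_r$.

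The heart of the argument is the bisection case. If $V$ is contained in an open bisection $U \in \Bis(\G)$, then for $a \in \contz(\A|_U)$ the product $a^\star a \in \contc(\A)$ is supported on the unit space $X$, since for $x \in s(U)$ there is a unique $u_x \in U$ with $s(u_x) = x$ and $(a^\star a)(x) = a(u_x)^\star a(u_x) \in A_x$. Thus $a^\star a \in \contz(\A|_X)$ and $\|a^\star a\|_\infty = \|a\|_\infty^2$. Using the isometric inclusions $\contz(\A|_X) \hookrightarrow \cst_{\red}(\A)$ and $\contz(\A|_X) \hookrightarrow \cst(\A)$ recalled earlier in Section~\ref{Sec:Fell_bundles}, the $\cst$-identity gives
\[
    \|a\|_r^2 = \|a^\star a\|_r = \|a^\star a\|_\infty = \|a\|_\infty^2 \qquad\text{and}\qquad \|a\|_f^2 = \|a^\star a\|_f = \|a^\star a\|_\infty = \|a\|_\infty^2,
\]
so all three norms coincide on $\contz(\A|_U)$.

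For a general precompact open $V$, I would cover the compact set $\overline V$ by finitely many open bisections $U_1,\ldots,U_n$ and choose a subordinate partition of unity $\chi_1,\ldots,\chi_n \in \contc(\G)$ with $\chi_i \geq 0$, $\supp \chi_i \subseteq U_i$, and $\sum_i \chi_i \equiv 1$ on $\overline V$. Any $a \in \contz(\A|_V)$ then decomposes as $a = \sum_i \chi_i a$ with $\chi_i a \in \contz(\A|_{U_i})$ and $\|\chi_i a\|_\infty \leq \|a\|_\infty$, so by the bisection case $\|a\|_r \leq \sum_i \|\chi_i a\|_r = \sum_i \|\chi_i a\|_\infty \leq n\|a\|_\infty$, and the analogous bound holds for $\|\cdot\|_f$. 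Combining with the universal lower bound, all three norms are equivalent on $\contz(\A|_V)$, so this subspace is closed in $\cst_{\red}(\A)$ and in $\cst(\A)$. The one technical step requiring care is verifying that $a^\star a$ indeed lies in $\contz(\A|_X)$ in the bisection case, which uses continuity of the Fell bundle multiplication and the bisection property of $U$; once this is in hand, the rest is a finite-dimensional reduction enabled by compactness of $\overline V$.
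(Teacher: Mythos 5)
Your proof is correct, but it takes a genuinely different route from the paper. The paper's argument is very short: it invokes the known fact (cited from an external reference) that the inclusion $\contc(\A)\subseteq\contz(\A)$ extends to a contractive injective linear map $j:\cst_{\red}(\A)\to\contz(\A)$, observes that $j$ restricts to the identity on $\contz(\A|_V)$ and that this subspace is sup-norm closed in $\contz(\A)$, and deduces reduced-norm closedness (the full case being reduced to this). You instead prove the stronger quantitative statement that $\|\cdot\|_\infty\leq\|\cdot\|_r\leq\|\cdot\|_f\leq n\|\cdot\|_\infty$ on $\contz(\A|_V)$, where $n$ is the number of open bisections needed to cover $\overline V$: the lower bound by testing $\Lambda_x$ on rank-one vectors (which is essentially the content of the cited external fact), the upper bound by the standard $\cst$-identity computation $\|a\|^2=\|a^\star * a\|=\|a^\star * a\|_\infty$ on a single bisection (using that $a^\star * a$ is supported on $X$ and that $\contz(\A|_X)\hookrightarrow\cst(\A)$ is isometric) followed by a partition-of-unity decomposition. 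What your approach buys is self-containedness and, notably, a cleaner treatment of the \emph{full}-norm case: since equivalence of $\|\cdot\|_f$ with the complete norm $\|\cdot\|_\infty$ makes $(\contz(\A|_V),\|\cdot\|_f)$ complete, closedness in $\cst(\A)$ is immediate, whereas the paper's reduction from the full to the reduced case implicitly relies on exactly the kind of norm-equivalence estimate you prove. The only cosmetic caveats: in your ``bisection case'' you should either take $U$ precompact or note that in the application the sections $\chi_i a$ are compactly supported in $U_i$ (so that $a^\star * a$ genuinely lies in $\contc(\A|_X)$), and the identity $(a^\star * a)(x)=a(u_x)^\star a(u_x)$ deserves the one-line verification that the bisection property forces the convolution sum to collapse to a single term supported on units.
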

\begin{proof}
Since the reduced norm is dominated by the universal norm it suffices to show that  $\contz(\A|_{V})$ is closed in the reduced norm.
We use the known fact that the inclusion $\contc(\A^{(\alpha,u_{\alpha})}) \subseteq \contz(\A^{(\alpha,u_{\alpha})})$ extends to a contractive linear map $j : \cst_{\red}(\A^{(\alpha,u_{\alpha})}) \to \contz(\A^{(\alpha,u_{\alpha})})$, where the codomain is equipped with the supremum norm, see \cite[Proposition 7.10]{BartoszRalf2}. 
This implies the assertion because $j$ is the identity on $\contz(\A^{(\alpha,u_{\alpha})}|_{V})$ and $\contz(\A^{(\alpha,u_{\alpha})}|_{V})$ is closed in $\contz(\A^{(\alpha,u_{\alpha})})$.
\end{proof} 

\begin{cor}\label{cor:compactly_supported_vs_range}
Let $T = \{ T_g \}_{g\in \G}$ be a reduced (or full) multiplier from a Fell bundle $\A$ to a Fell bundle $\B$ over $\G$. 
Then $T$ has compact support if and only if the range of $\m_T^{\rd}$ (resp.\ $\m_T^{\f}$) is contained in  $\contz(\B|_{V})$ for some precompact open $V\subseteq \G$.
\end{cor}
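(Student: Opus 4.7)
The plan is to derive both implications directly from Lemma~\ref{lem:precompacts_embed_into_crossed_products} combined with two standard density facts: that $\contc(\A)$ is dense in $\cst_{\red}(\A)$ and in $\cst(\A)$, and that for a Fell bundle the set $\{a(g) : a \in \contc(\A)\}$ is a dense linear subspace of the fibre $A_g$ (a standard consequence of Fell's reconstruction theorem for upper semicontinuous Banach bundles). This is a direct corollary, and I do not expect a serious obstacle; the whole point is to translate between the fibrewise condition defining $\supp(T)$ and the global closedness property supplied by the lemma.

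For the forward direction, assume $K := \supp(T)$ is compact. Using local compactness of $\G$, I would choose a precompact open set $V \subseteq \G$ with $K \subseteq V$. For any $a \in \contc(\A)$, the section $\m_T(a)\in \contc(\B)$ vanishes on the open set $\G \setminus K$, hence in particular outside $V$, so $\m_T(a) \in \contz(\B|_V)$. Combining continuity of $\m_T^{\rd}$ (respectively $\m_T^{\f}$) with density of $\contc(\A)$ in $\cst_{\red}(\A)$ (respectively $\cst(\A)$) and the closedness of $\contz(\B|_V)$ inside $\cst_{\red}(\B)$ (respectively $\cst(\B)$) supplied by Lemma~\ref{lem:precompacts_embed_into_crossed_products}, I conclude that the range of $\m_T^{\rd}$ (respectively $\m_T^{\f}$) is contained in $\contz(\B|_V)$.

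For the converse, assume the range of $\m_T^{\rd}$ (or $\m_T^{\f}$) is contained in $\contz(\B|_V)$ for some precompact open $V \subseteq \G$. For every $a \in \contc(\A) \subseteq \cst_{\red}(\A)$ the image $\m_T(a) = \m_T^{\rd}(a)$ lies in $\contz(\B|_V)$, so $T_g(a(g)) = \m_T(a)(g) = 0$ for all $g \notin V$. Fixing such $g$, the values $\{a(g) : a \in \contc(\A)\}$ form a dense linear subspace of $A_g$, and $T_g \colon A_g \to B_g$ is bounded, hence $T_g = 0$. Therefore $\{g \in \G : T_g \neq 0\} \subseteq V$, and so $\supp(T) \subseteq \overline{V}$ is compact, which completes the proof.
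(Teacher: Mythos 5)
Your proof is correct and follows essentially the same route as the paper: the forward direction combines density of $\contc(\A)$ with the closedness of $\contz(\B|_V)$ from Lemma~\ref{lem:precompacts_embed_into_crossed_products}, and the converse is the direct fibrewise argument. You spell out the density-plus-boundedness step for $T_g=0$ (and correctly note $\supp(T)\subseteq\overline{V}$) where the paper simply writes ``clearly''; this is a welcome precision but not a different argument.
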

\begin{proof}
If the range of $\m_T^{\rd}$ (resp.\ $\m_T^{\f}$) is contained in  $\contz(\B|_{V})$, then clearly $\supp (T)\subseteq V$. 
Hence precompactness of $V$ implies compactness of $\supp (T)$. 
Conversely, if $\supp (T)$ is compact then there is a precompact open set $V\subseteq \G$ containing $\supp (T)$. 
Since $\m_t(\contc(\A))\subseteq \contz(\B|_V)$ and $\contz(\B|_V)$ is closed in both $\cst_{\red}(\B)$ and $\cst(\B)$ one concludes that the range of the continuous extension of $\m_T$ is also in $\contz(\B|_V)$.
\end{proof}

\section{Approximation properties for multipliers}
\label{Sec:Applicationapproximation}

In this section we turn to the discussion of approximation properties. 
We begin in the general context of abstract Fell bundle multipliers, proving a lemma that shows one can always approximately `strictify' a given multiplier. 
\begin{lem}\label{lem:multiplier_approx_with strict}
Let $\A$, $\B$  be $\cst$-bundles over $\G$. 
Let $T$ be a multiplier from $\A$ to $\B$ such that $\sup_{g\in \G} \| T_g \|<\infty$. 
Choose an approximate unit $\{e_i\}_{i\in I}$ in $\contz(\A)$.
Putting 
\[
    T_{i,g}(a) := T_g \big( e_i(r(g)) a \alpha_g(e_i(s(g))) \big)
\] 
for all $a\in A_{r(g)}$, $g\in \G$ and $i\in I$,  
we obtain a net $\{T_{i}\}_{i\in I}$ of strict multipliers from $\A$ to $\B$ such that $\m_{T_i}(f)$ converges uniformly on compact sets to $\m_{T}(f)$  for every $f\in\contc(\A)$. 
\end{lem}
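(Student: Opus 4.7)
The plan is to verify in turn the three claimed properties of each $T_i$ --- it is a multiplier, it is strict, and it approximates $T$ --- followed by the convergence statement. Boundedness of each $T_{i,g}$ is immediate: the left and right module maps $a\mapsto e_i(r(g))a$ and $a\mapsto a\alpha_g(e_i(s(g)))$ are contractive on $A_g$, so $\|T_{i,g}\|\leq \|T_g\|$ and $\sup_g\|T_{i,g}\|\leq\sup_g\|T_g\|<\infty$. The first observation to exploit is that, using the Fell-bundle multiplication, the defining formula can be rewritten as
\[
    T_{i,g}(a(g))=T_g\bigl((e_i*a*e_i)(g)\bigr),\qquad a\in\contc(\A),\,g\in\G,
\]
where $*$ denotes the convolution in $\contc(\A)$; this follows because $e_i\in\contz(\A|_X)$ is supported on the unit space, so the convolution collapses to a pointwise operation. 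Since $e_i*a*e_i\in\contc(\A)$ and $T$ is a multiplier, $\m_{T_i}(a)=\m_T(e_i*a*e_i)\in\contc(\B)$, establishing that $T_i$ is a multiplier.

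Strictness of $T_{i,x}\colon A_x\to B_x$ for each $x\in X$ follows from the fact that $e_i(x)$ is a fixed element of the $\cst$-algebra $A_x$: the map $a\mapsto e_i(x)ae_i(x)$ extends uniquely to a strictly continuous bounded map $\M(A_x)\to A_x$, and composing with the bounded $T_x$ produces the desired strict extension. For the convergence statement, fix $f\in\contc(\A)$ with compact support $K\subseteq\G$. Then $\m_{T_i}(f)-\m_T(f)=\m_T(e_i*f*e_i-f)$ is supported in $K$, and
\[
    \sup_{g\in\G}\|\m_{T_i}(f)(g)-\m_T(f)(g)\|\leq \Bigl(\sup_g\|T_g\|\Bigr)\cdot\|e_i*f*e_i-f\|_{\infty}.
\]
So the whole question reduces to proving $\|e_i*f*e_i-f\|_{\infty}\to 0$ in the uniform norm; once this is shown, uniform convergence on $K$ (and hence on any compact subset of $\G$) is immediate.

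The main obstacle, and the technical heart of the proof, is this last uniform convergence. Pointwise convergence $e_i(r(g))f(g)\alpha_g(e_i(s(g)))\to f(g)$ follows from the bimodule approximate-unit property in each $A_g$, but upgrading to uniform convergence on $K$ requires extra work. I plan to address this by one of two routes: either (i) a partition-of-unity reduction to bisections --- on a compact bisection $U$, the continuous section $f|_U$ can be transported to a compactly supported continuous section of $\A|_{r(U)}$ where the left/right multiplications by $e_i$ become standard pointwise approximate-unit actions on a $\cst$-bundle over $X$, for which the uniform convergence is routine; or (ii) invoking the contractive linear map $j\colon\cst_{\red}(\A)\to\contz(\A)$ used in the proof of Lemma~\ref{lem:precompacts_embed_into_crossed_products}, which gives $\|\cdot\|_{\infty}\leq\|\cdot\|_{\red}$ on $\contc(\A)$, combined with the standard fact that $\{e_i\}$ is a two-sided approximate unit in $\cst_{\red}(\A)$ for Fell bundles over étale groupoids. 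Either route closes the loop and yields the claimed uniform convergence on compacta.
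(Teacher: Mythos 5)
Your proof is correct and follows essentially the same route as the paper: the identity $\m_{T_i}(f)=\m_T(e_i*f*e_i)$ together with the sup-norm estimate $\|\m_{T_i}(f)-\m_T(f)\|_\infty\leq\sup_g\|T_g\|\cdot\|e_i*f*e_i-f\|_\infty$, with strictness of each $T_{i,x}$ read off from the construction. The only difference is that the paper treats $\|e_i*f*e_i-f\|_\infty\to 0$ as immediate (it is just nondegeneracy of the left and right $\contz(\A|_X)$-module actions on each $\contz(\A|_U)$, $U$ a bisection), whereas you flag it as the technical heart and sketch two completions — both of which do work, the first being the standard argument.
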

\begin{proof} 
By definition, for $f\in\contc(\A)$ and $i \in I$ we have $\m_{T_i}(f)=\m_T(e_if e_i)$ and therefore
\[
    \| \m_{T_i}(f) - \m_{T}(f) \|_{\infty} = \| \m_{T}(e_ife _i-f) \|_{\infty} \leq \sup_{g\in \G} \| T_g \|\cdot \|e_ife _i-f\|_{\infty}\to 0.
\]
This proves the assertion, as each $T_{i}$ is a strict multiplier by construction. 
\end{proof}

We now specify the context to twisted groupoid actions.

\begin{lem}\label{lem:multiplier_approx_with strict2}
Let $\A$, $\B$  be $\cst$-bundles over $X$, carrying twisted actions $(\alpha,u_\alpha)$ and $(\beta,u_\beta)$ of $\G$. 
Let $T$ be a multiplier from $\A^{(\alpha,u_{\alpha})}$ to $\B^{(\beta,u_{\beta})}$ such that $\sup_{g\in \G} \| T_g \|<\infty$. 
Choose an approximate unit $\{e_i\}_{i\in I}$ in $\contz(\A)$. As before put
\[
    T_{i,g}(a) := T_g \big( e_i(r(g)) a \alpha_g(e_i(s(g))) \big)
\] 
for all $a\in A_{r(g)}$, $g\in \G$ and $i\in I$.  
Then
\begin{enumerate}
    \item\label{ite:multiplier_approx_with strict1} if $T\in FS[(\alpha,u_{\alpha}), (\beta,u_{\beta})]$, then for every $i \in I$ we have $T_i \in FS[(\alpha,u_{\alpha}), (\beta,u_{\beta})]$, $\|T_i\|_{FS}\leq \|T\|_{FS}$ and $\|T_i\|_{FS}\to \|T\|_{FS}$ (similar statement holds with $FS$ replaced by $F$);
    \item\label{ite:multiplier_approx_with strict2} if $T$ is positive-definite, then each $T_{i}$ is a  positive-definite Fourier-Stieltjes multiplier. Moreover we have the estimate $\sup_{i\in I} \| T_i \|_{FS} \leq \sup_{x\in X} \|T_{x}\|$.
\end{enumerate}
\end{lem}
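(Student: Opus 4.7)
The plan is to identify each $T_i$ concretely as a Fourier--Stieltjes (or Fourier) multiplier obtained from a representation of $T$ by left-multiplying the implementing sections by $e_i$, and then to deduce all claims from this identification.

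First I perform the key computation. Given $T = T_{\EE,L,\xi,\zeta}$, the equivariance identity $\overline{L}_g(b\eta) = \overline{\alpha}_g(b)\overline{L}_g(\eta)$ from Proposition~\ref{prop:strict_extension_of_groupoid_action}, applied with $b=e_i(s(g))$, rewrites the defining formula as
\[
    T_{i,g}(a) = \langle e_i(r(g))\xi(r(g)),\, a\,\overline{L}_g(e_i(s(g))\zeta(s(g)))\rangle_{\M(B_{r(g)})}.
\]
Setting $(e_i\xi)(x):=e_i(x)\xi(x)$ and $(e_i\zeta)(x):=e_i(x)\zeta(x)$, which lie in $\contz(\EE)\subseteq\contb(\M(\EE))$ because $e_i\in\contz(\A)$ absorbs the strict multiplier, this identifies $T_i = T_{\EE,L,e_i\xi,e_i\zeta}$. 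Hence $T_i\in FS[(\alpha,u_{\alpha}),(\beta,u_{\beta})]$ with $\|T_i\|_{FS} \leq \|e_i\xi\|\cdot\|e_i\zeta\| \leq \|\xi\|\cdot\|\zeta\|$, and taking the infimum over representations of $T$ gives $\|T_i\|_{FS}\leq\|T\|_{FS}$. The parallel argument applied componentwise to a Fourier representation $T=T_{\ell^2(s^*\EE),L^{\G},\xi,\zeta}$ identifies $T_i = T_{\ell^2(s^*\EE),L^{\G},e_i\xi,e_i\zeta}$ and yields $\|T_i\|_F\leq\|T\|_F$.

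For (2), I start from a cyclic positive-definite representation $T=T_{\EE,L,\xi,\xi}$ provided by Theorem~\ref{thm:FSmultiplier_positive_definite}, so that the identification above specialises to $T_i = T_{\EE,L,e_i\xi,e_i\xi}$, manifestly positive-definite. Since $e_i\xi\in\contz(\EE)$, each fibre $T_{i,x}(a)=\langle e_i(x)\xi(x),\,a\,e_i(x)\xi(x)\rangle$ is strict, bounded and completely positive; Theorem~\ref{thm:FSmultiplier_positive_definite} together with Lemma~\ref{lem:completely_positive_implies_positive_definite} applied to $T_i$ then delivers
\[
    \|T_i\|_{FS} = \|\m_{T_i}^{\f}\|_{cb} = \|\m_{T_i}^{\f}\| = \sup_{x\in X}\|T_{i,x}\|.
\]
For positive $a\in A_x$ with $\|a\|\leq 1$, the element $e_i(x)ae_i(x)$ is positive of norm $\leq 1$, so $\|T_{i,x}(a)\|=\|T_x(e_i(x)ae_i(x))\|\leq\|T_x\|$; taking the supremum in $x$ yields $\sup_{i\in I}\|T_i\|_{FS}\leq\sup_{x\in X}\|T_x\|$.

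Finally, for the convergence $\|T_i\|_{FS}\to\|T\|_{FS}$, the identity $\m_{T_i}^{\f}(f)=\m_T^{\f}(e_ife_i)$, combined with $\{e_i\}\subseteq\contz(\A)$ being an approximate unit for $\cst(\A^{(\alpha,u_{\alpha})})$ (so that $e_ife_i\to f$ in the crossed-product norm), gives $\m_{T_i}^{\f}\to\m_T^{\f}$ in the point-norm topology. Lower semicontinuity of the cb-norm in this topology together with Theorem~\ref{thm:FSmultiplier_extends_to_reduced_and_full} yields
\[
    \|\m_T^{\f}\|_{cb} \leq \liminf_i\|\m_{T_i}^{\f}\|_{cb} \leq \liminf_i\|T_i\|_{FS} \leq \|T\|_{FS},
\]
and in the positive-definite case Theorem~\ref{thm:FSmultiplier_positive_definite} identifies $\|T\|_{FS}$ with $\|\m_T^{\f}\|_{cb}$, collapsing the chain to an equality. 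The main obstacle is the general $FS$-case of (1): since $\|T\|_{FS}$ may strictly exceed $\|\m_T^{\f}\|_{cb}$, point-norm lsc of the cb-norm alone does not suffice. The cleanest fix I envision is a Paulsen-type $2\times 2$ dilation embedding $T$ as an off-diagonal corner of a positive-definite multiplier on the matrix-lifted bundle (Example~\ref{ex:matrixtwistedactions}) whose $FS$-norm equals $\|T\|_{FS}$; the positive-case result applies to the dilation, and the convergence is then transported back via the bimodule structure of Proposition~\ref{prop:BanachNormFS}.
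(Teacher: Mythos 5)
Your identification $T_i=T_{\EE,L,e_i\xi,e_i\zeta}$ (using self-adjointness of $e_i$ and the module property of $\overline{L}_g$) is exactly the paper's key step, and the bound $\|T_i\|_{FS}\leq\|T\|_{FS}$ follows as you say. However, your proof of part (2) has a genuine gap: you begin by invoking Theorem~\ref{thm:FSmultiplier_positive_definite} to write $T=T_{\EE,L,\xi,\xi}$, and that implication requires $T$ to be \emph{strict}. Strictness of $T$ is not assumed in part (2) --- indeed the whole point of this lemma (see its use in Proposition~\ref{prop:characterisation_positive_FS_approx} and Corollary~\ref{cor:positive_definite_is_full_and_reduced}) is to produce strict approximants of a possibly non-strict positive-definite multiplier, so assuming such a representation of $T$ is circular here. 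The paper instead verifies positive-definiteness of $T_i$ \emph{directly} from the definition (replacing each $a_{g_k}$ by $e_i(r(g_k))a_{g_k}$ in the defining positivity condition, which requires no representation of $T$), observes that $T_i$ is strict and bounded by construction since $T_{i,x}(a)=T_x(e_i(x)ae_i(x))$, and only then applies Theorem~\ref{thm:FSmultiplier_positive_definite} --- to $T_i$, not to $T$ --- to conclude $T_i\in FS[(\alpha,u_\alpha),(\beta,u_\beta)]$ with $\|T_i\|_{FS}=\sup_{x\in X}\|T_{i,x}\|\leq\sup_{x\in X}\|T_x\|$. Your estimate $\|T_{i,x}\|\leq\|T_x\|$ is fine and can be reused once the argument is reordered this way.

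On the convergence $\|T_i\|_{FS}\to\|T\|_{FS}$ in part (1): you are right to flag that lower semicontinuity of the cb-norm only closes the positive-definite case, but the $2\times 2$ dilation you propose does not obviously repair this, since the $FS$-norm of the dilated positive-definite multiplier is governed by its diagonal entries and there is no evident way to transport a \emph{lower} bound back to the off-diagonal corner. The paper's own route is different: it deduces the convergence from $\|e_i\xi\|\leq\|\xi\|$ together with $\|e_i\xi\|\to\|\xi\|$ (a consequence of nondegeneracy, $\contz(\EE)=\contz(\A)\contz(\EE)$), and similarly for $\zeta$. If you want to match the paper you should argue along those lines rather than via cb-norm semicontinuity or dilation.
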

\begin{proof}

If $T = T_{\EE,L,\xi,\zeta}$, for an equivariant representation $L$ of $\G$ on a $\cst$-$\A$-$\B$-correspondence bundle $\EE$ and sections $\xi,\zeta\in \contb(\M(\EE))$ then for each $i \in I$ we have $T_i=T_{\EE,L,e_i\xi_i,e_i\zeta}$.
Clearly $\|e_i\xi\|\leq \|\xi\|$, and $\|e_i\xi\|\to \|\xi\|$ because $\contz(\EE)=\contz(\A)\contz(\EE)$, as we assume $\cst$-correspondences are non-degenerate.  
We have similar relations for $\zeta$ and this readily implies the assertion in \ref{ite:multiplier_approx_with strict1}.

Further one can easily check that if $T$ is positive-definite, so is $T_i$ for each $i \in I$.
Thus the assertion in \ref{ite:multiplier_approx_with strict2} follows by Theorem~\ref{thm:FSmultiplier_positive_definite}.
\end{proof}

\begin{lem}\label{lem:Fourier_explicite_approximation}
Let $\A$, $\B$  be $\cst$-bundles over $X$, carrying twisted actions $(\alpha,u_\alpha)$ and $(\beta,u_\beta)$ of $\G$ and let $L$ be an equivariant representation of $\G$ on a $\cst$-$\A$-$\B$-correspondence bundle $\EE$. 
If $T=T_{\ell^2(s^*\EE) , L^\G , \xi , \zeta}$ for sections $\xi,\zeta\in \contb(\M(\ell^2(s^*\EE)))$, then there are nets $\{\xi_i\}_{i\in I}, \{\zeta_i\}_{i \in I} \subseteq  \contc(s^*\EE) \subseteq \contc(\ell^2(s^*\EE))$ such that  $\| \xi_i \| \leq \|\xi\|$, $\|\zeta_i\|\leq \|\zeta\|$, for every $i \in I$, and putting $T_i:=T_{\ell^2(s^*\EE) , L^\G , \xi_i , \zeta_i}$ it follows that $T_i$ is compactly supported and
$\m_{T_i}(a)$ converges uniformly on compact sets to $\m_{T}(a)$ for every $a\in\contc(\A^{(\alpha,u_{\alpha})})$.  
\end{lem}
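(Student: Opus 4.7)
The plan is to approximate the multiplier-valued sections $\xi,\zeta\in\contb(\M(\ell^2(s^*\EE)))$ by elements of $\contc(s^*\EE)$ via a double cutoff procedure (one in the $X$-direction using an approximate unit in $\contz(\B)$, one in the $\G$-direction using compactly supported bump functions on $\G$), and then verify uniform convergence via Cauchy--Schwarz in the Hilbert module together with an equi-integrability argument.

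First I would exhaust $\G$ by precompact open subsets $V_i$, pick $\chi_i\in\contc(\G,[0,1])$ with $\chi_i|_{V_i}=1$, and fix an approximate unit $\{b_i\}\subseteq\contc(\B)$ with $0\leq b_i\leq 1$. Treating $\xi,\zeta$ as bounded strictly continuous sections $t\mapsto\xi(t)\in\M(E_{s(t)})$ over $\G$ (via the identification $\xi(t)=\xi(s(t))(t)$), I would define
\[
\xi_i(t):=\chi_i(t)\,\xi(t)\,b_i(s(t)),\qquad \zeta_i(t):=\chi_i(t)\,\zeta(t)\,b_i(s(t)).
\]
Since $\M(E)\cdot B\subseteq E$, each $\xi_i(t)$ lies in the module $E_{s(t)}$; continuity and compact support in $\G$ then follow from the corresponding properties of $\chi_i$ and $b_i$, so $\xi_i,\zeta_i\in\contc(s^*\EE)$. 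A pointwise calculation in $\ell^2(s^*\EE)_x$ using $\chi_i\leq 1$ and $\|b_i\|\leq 1$ yields $\|\xi_i\|\leq\|\xi\|$ and $\|\zeta_i\|\leq\|\zeta\|$. The compact support of $T_i:=T_{\ell^2(s^*\EE),L^\G,\xi_i,\zeta_i}$ follows because $T_{i,g}\neq 0$ forces $t,tg\in\supp\chi_i$ for some $t\in\G_{r(g)}$, and hence $g\in(\supp\chi_i)^{-1}(\supp\chi_i)$, which is compact.

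For the convergence, fix $a\in\contc(\A^{(\alpha,u_\alpha)})$ with compact support $K\subseteq\G$; both $m_T(a)$ and $m_{T_i}(a)$ vanish outside $K$, so it suffices to show uniform convergence on $K$. For $g\in K$ I would write
\[
m_T(a)(g)-m_{T_i}(a)(g)=\langle(\xi-\xi_i)(r(g)),\eta_g\rangle+\langle\xi_i(r(g)),\eta_g-\tilde\eta_{i,g}\rangle,
\]
where $\eta_g:=a(g)\overline{L^\G}_g\zeta(s(g))\in\ell^2(s^*\EE)_{r(g)}$ and $\tilde\eta_{i,g}:=a(g)\overline{L^\G}_g\zeta_i(s(g))$. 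A direct calculation shows the first term splits into a tail piece $\sum_{t\in\G_{r(g)}}(1-\chi_i(t))\langle\xi(t),\eta_g(t)\rangle$ and an approximate-unit piece $(1-b_i(r(g))^*)\sum_t\chi_i(t)\langle\xi(t),\eta_g(t)\rangle$. Cauchy--Schwarz in the Hilbert module bounds the tail piece by $\|\xi\|_\infty\bigl(\sum_{t\notin V_i}\|\eta_g(t)\|^2\bigr)^{1/2}$, while the approximate-unit piece is controlled by the asymptotic identity behaviour of $b_i$ on the compact set $r(K)\subseteq X$. A parallel analysis handles the second term.

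The main obstacle will be the uniform tail-vanishing argument, namely showing $\sup_{g\in K}\sum_{t\in\G_{r(g)}\setminus V_i}\|\eta_g(t)\|^2\to 0$ as $V_i\uparrow\G$. This follows from continuity of the section $g\mapsto\eta_g$ and compactness of $K$, which together give a compact image in the pullback bundle $r^*\ell^2(s^*\EE)$, where $\ell^2$-tails vanish uniformly (the classical fact that compact subsets of $\ell^2$ have uniformly small tails, adapted to the bundle setting).
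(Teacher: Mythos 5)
There is a genuine gap in your treatment of the ``approximate-unit pieces''. You write that the term $(1-b_i(r(g))^*)\sum_t\chi_i(t)\langle \xi(t),\eta_g(t)\rangle$ is ``controlled by the asymptotic identity behaviour of $b_i$ on the compact set $r(K)$'', but an approximate unit of $\contz(\B)$ satisfies $(1-b_i)c\to 0$ only for $c$ ranging in a fixed (or totally bounded) subset of $\B$; there is no norm convergence $\|1-b_i(x)\|\to 0$, not even pointwise, when the fibres are non-unital. Your elements $c_{i,g}=\sum_t\chi_i(t)\langle\xi(t),\eta_g(t)\rangle$ depend on both $i$ and $g$, and you never show that this family is totally bounded, so the estimate does not follow as stated. (It can be repaired: up to an error of $\|\xi\|\,\|(1-M_{\chi_i})\eta_g\|$, which your tail argument controls, $c_{i,g}$ equals $\m_T(a)(g)$, and $\{\m_T(a)(g):g\in K\}$ is the image of a compact set under the continuous compactly supported section $\m_T(a)\in\contc(r^*\B)$, hence covered by finitely many $\varepsilon$-tubes around sections on which $(1-b_i)$ acts uniformly; a similar but messier argument, now with the twisted units $\overline{\beta}_g(b_i(s(g)))$ acting on the right of elements of $B_{r(g)}$, is needed for your second term. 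None of this is in your proof.) The uniform tail-vanishing claim is correct but also only gestured at; it requires the finite-tube covering argument for compact subsets of the Banach bundle $r^*\ell^2(s^*\EE)$, which deserves to be spelled out since it is the engine of the whole estimate.

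The paper avoids all of these uniformity issues by cutting with an approximate unit $\{e_i\}$ of $\contz(\A)$ on the \emph{left} rather than with $\{b_i\}\subseteq\contz(\B)$ on the right: one has $e_i\xi,e_i\zeta\in\contz(\ell^2(s^*\EE))$ by strict continuity, and the corresponding multipliers satisfy the algebraic identity $\m_{T_i}(f)=\m_T(e_i*f*e_i)$, so uniform convergence is immediate from $\sup_g\|T_g\|<\infty$ and $\|e_i*f*e_i-f\|_\infty\to 0$ (this is exactly Lemmas~\ref{lem:multiplier_approx_with strict} and~\ref{lem:multiplier_approx_with strict2}). Having reduced to $\xi,\zeta\in\contz(\ell^2(s^*\EE))$, one then simply uses norm-density of $\contc(s^*\EE)$ in $\contz(\ell^2(s^*\EE))$ and a single Cauchy--Schwarz estimate, which gives convergence of $\m_{T_i}(a)$ uniformly on all of $\G$; compact support of the approximants is read off, as you do, from the fact that two sections supported on compact subsets (indeed, finite unions of precompact bisections) of $\G$ produce a multiplier supported on a precompact set. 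I would recommend replacing your $\G$-direction cutoff $\chi_i$ and right-hand cutoff $b_i$ by this two-stage reduction.
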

\begin{proof}
Using Lemmas~\ref{lem:multiplier_approx_with strict} and \ref{lem:multiplier_approx_with strict2}, and by passing to $\xi_i:= e_i \xi \in \contz(\ell^2(s^*\EE))$ and $\zeta_i := e_i \zeta\in \contz(\ell^2(s^*\EE))$ for an approximate unit $\{e_i\}_{i\in I}$ in $\contz(\A)$, we may reduce the proof to the case when $\xi , \zeta \in \contz(\ell^2(s^*\EE))$. 
Since $\contc(s^*\EE)$ is dense in $\contz(\ell^2(s^*\EE))$ we may find nets $\{\xi_i\}_{i \in I}, \{\zeta_i\}_{i \in I}\subseteq \contc(s^*\EE)\subseteq \contc(\ell^2(s^*\EE))$ with $\| \xi_i \| \leq \| \xi \|$, $\|\zeta_i\|\leq \|\zeta\|$ for each $i \in I$ and $\|\xi-\xi_i\|\to 0$, $\|\zeta-\zeta_i\|\to 0$. 
Then putting $T_i:=T_{\ell^2(s^*\EE) , L^\G , \xi_i , \zeta_i}$, we have 
\[
    \| \m_{T_i}(a) - \m_{T}(a) \|_{\infty} \leq \| \xi - \xi_i \| \cdot \| a \|_{\infty} \cdot \| \zeta \| + \|\zeta-\zeta_i\| \cdot \| a \|_{\infty} \cdot \| \zeta_i \| \to 0 
\]
for every $a\in\contc(\A^{(\alpha,u_{\alpha})})$. 

Finally, if $\xi$ and $\zeta$ are supported on precompact bisections $U$ and $V\in\Bis(\G)$, respectively, then \eqref{eq:Fourier_multiplier} implies that $T$ is supported on a precompact bisection $U^*V$. 
Since elements in $\contc(s^*\EE)$ are finite sums of elements with supports in precompact bisections, this proves the compact support assertion, as whenever $\tilde\xi,\tilde\zeta \in  \contc(s^*\EE)$, the multiplier $T=T_{\ell^2(s^*\EE) , L^\G , \tilde\xi , \tilde\zeta}$ is compactly supported.
\end{proof}

The last lemma can be used to approximate Fourier multipliers by compactly supported ones, but a priori only in the sense of uniform convergence on compact sets.

\begin{prop}\label{pr:CompactSupportFourier}
Let $\A$, $\B$  be $\cst$-bundles over $X$, carrying twisted actions $(\alpha,u_\alpha)$ and $(\beta,u_\beta)$ of $\G$. 
If $T\in F[(\alpha,u_{\alpha}),(\beta,u_{\beta})]$, then  there is a net $\{T_{i}\}_{i\in I}$ of compactly supported multipliers in $ F[(\alpha,u_{\alpha}),(\beta,u_{\beta})]$ such that  $\| T_{i} \|_{F} \leq \| T \|_{F}$ for every $i \in I$ and $\m_{T_i}(a)$ converges uniformly on compact sets to $\m_{T}(a)$ for every $a\in\contc(\A^{(\alpha,u_{\alpha})})$.
\end{prop}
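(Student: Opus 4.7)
The plan is to combine the approximation from Lemma~\ref{lem:Fourier_explicite_approximation} with a diagonal indexing over representations of $T$ that asymptotically attain the Fourier-norm infimum, followed by a small rescaling to enforce the sharp norm bound. If $T=0$ the constant zero net works, so assume $\|T\|_F > 0$.

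First, by the definition of $\|\cdot\|_F$ as an infimum, for every $n\in\N$ I would choose a Fourier presentation $T=T_{\ell^2(s^*\EE_n),L_n^{\G},\xi^{(n)},\zeta^{(n)}}$ with $\|\xi^{(n)}\|\|\zeta^{(n)}\|\leq \|T\|_F + 1/n$. Applying Lemma~\ref{lem:Fourier_explicite_approximation} to each such presentation produces a net $\{S_{n,j}\}_{j\in J_n}$ of compactly supported Fourier multipliers in $F[(\alpha,u_\alpha),(\beta,u_\beta)]$ satisfying $\|S_{n,j}\|_F\leq \|T\|_F + 1/n$ and $\m_{S_{n,j}}(a)\to \m_{T}(a)$ uniformly on compact subsets of $\G$ for every $a\in\contc(\A^{(\alpha,u_\alpha)})$. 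I then assemble the doubly-indexed family into one net by letting $I$ be the directed set of tuples $i=(n,\mathcal{F},K,\varepsilon)$ with $n\in\N$, $\mathcal{F}\subseteq\contc(\A^{(\alpha,u_\alpha)})$ finite, $K\subseteq\G$ compact and $\varepsilon>0$, ordered coordinate-wise; for each such $i$ the convergence above allows me to pick $j(i)\in J_n$ with $\sup_{g\in K}\|\m_{S_{n,j(i)}}(a)(g)-\m_{T}(a)(g)\|<\varepsilon$ for every $a\in\mathcal{F}$.

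Next, to tighten the norm bound, I would rescale: set $\lambda_n := \|T\|_F/(\|T\|_F+1/n)$ and $T_i := \lambda_n S_{n,j(i)}$. Each $T_i$ is compactly supported and lies in $F[(\alpha,u_\alpha),(\beta,u_\beta)]$, and since $\|\cdot\|_F$ is positively homogeneous,
\[
    \|T_i\|_F \,=\, \lambda_n\|S_{n,j(i)}\|_F \,\leq\, \lambda_n(\|T\|_F + 1/n) \,=\, \|T\|_F.
\]
For the approximation, the triangle inequality gives
\[
    \|\m_{T_i}(a)-\m_{T}(a)\|_{\infty,K} \,\leq\, \lambda_n\|\m_{S_{n,j(i)}}(a)-\m_{T}(a)\|_{\infty,K} + (1-\lambda_n)\|\m_{T}(a)\|_{\infty,K},
\]
where $\|\cdot\|_{\infty,K}$ denotes the supremum over $K$. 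The first term is dominated by $\varepsilon$ as soon as $a\in\mathcal{F}$ and $K$ sits inside the compact set appearing in $i$, while the second is $O(1/n)$; hence both tend to zero as $i$ increases in $I$, yielding the required uniform convergence on compact subsets of $\G$ for every $a\in\contc(\A^{(\alpha,u_\alpha)})$.

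The main obstacle is purely one of bookkeeping: $\|T\|_F$ is defined as an infimum which need not be attained by any single Fourier presentation, so Lemma~\ref{lem:Fourier_explicite_approximation} alone only delivers $\|S_{n,j}\|_F\leq\|T\|_F+1/n$. The rescaling factor $\lambda_n\to 1$ absorbs this $1/n$ slack and gives the sharp pointwise inequality $\|T_i\|_F\leq\|T\|_F$ at the cost of perturbing $T$ by $(1-\lambda_n)T = O(1/n)$, a perturbation that is harmless in the compact-open topology and disappears in the limit along the net.
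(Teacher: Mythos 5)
Your proposal is correct and follows essentially the same route as the paper: choose near-optimal Fourier presentations with $\|\xi^{(n)}\|\|\zeta^{(n)}\|\leq\|T\|_F+1/n$, rescale by $\lambda_n\to 1$ to enforce $\|T_i\|_F\leq\|T\|_F$, and invoke Lemma~\ref{lem:Fourier_explicite_approximation} for the compactly supported approximants. The only (immaterial) differences are that the paper rescales $T$ before applying the lemma whereas you apply it first and rescale afterwards, and that you spell out the diagonal net construction that the paper leaves implicit.
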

\begin{proof}
For each $k \in \N$ we can find an equivariant representation $L_k$ of $\G$ on a $\cst$-$\A$-$\B$-correspondence bundle $\EE_k$ and non-zero sections $\xi_k , \zeta_k \in \contb( \M(\ell^2(s^*\EE_k)) )$ such that $T=T_{\ell^2(s^*\EE_k) , L_k^\G , \xi_k , \zeta_k}$ and $\|\xi_k\|\cdot \|\zeta_k\|\leq \|T\|_F +{1/k}$, so that $\lambda_k := \frac{\|T\|_{F}}{\|\xi_k\|\cdot \|\zeta_k\|}\leq 1$ converges to $1$ as $k$ tends to infinity. 
Thus setting $T_k := \lambda_k T=T_{\ell^2(s^*\EE_k) , L_k^\G , \lambda_k\xi_k , \zeta_k} \in F[(\alpha,u_{\alpha}),(\beta,u_{\beta})]$, for each $k \in \N$ we get $\|T_k\|_F\leq  \|\lambda_k\xi_k\|\cdot\|\zeta_k\|=\|T\|_F$ and $m_{T_k}\to m_{T}$.
Applying Lemma~\ref{lem:Fourier_explicite_approximation} to each $T_k$ one gets the desired net.
\end{proof}

We are ready to define an analogue of the property above for arbitrary multipliers. 

\begin{defn}\label{def:Fourier_approximation_property}
Let $\A$, $\B$ be $\cst$-bundles over $X$, carrying twisted actions $(\alpha,u_\alpha)$ and $(\beta,u_\beta)$ of $\G$. 
A multiplier $T$ from $\A^{(\alpha,u_{\alpha})}$ to $\B^{(\beta,u_{\beta})}$ has the \emph{(positive) Fourier approximation property} if there is a $\|\cdot\|_{F}$-bounded net $\{T_{i}\}_{i\in I}$ in $F[(\alpha,u_{\alpha}),(\beta,u_{\beta})]$ (resp.\ in $ F[(\alpha,u_{\alpha})],(\beta,u_{\beta})]^+$) such that $\m_{T_i}(a)$ converges uniformly on compact sets to $\m_{T}(a)$ for all $a\in\contc(\A^{(\alpha,u_{\alpha})})$.
\end{defn}

Later we will often use the acronym \emph{AP} for the approximation property. 
The next proposition gathers several equivalent conditions for a given multiplier to have the Fourier AP. 

\begin{prop}\label{prop:characterisation_Fourier_approx}
Let $\A$, $\B$ be $\cst$-bundles over $X$, carrying twisted actions $(\alpha,u_\alpha)$ and $(\beta,u_\beta)$ of $\G$. 
For a multiplier $T$ from $\A^{(\alpha,u_{\alpha})}$ to $\B^{(\beta,u_{\beta})}$ the following are equivalent:
\begin{enumerate}
    \item\label{enu:characterisation_Fourier_approx1} $T$ has the (positive) Fourier approximation property;  
    \item\label{enu:characterisation_Fourier_approx2} there is a  $\|\cdot\|_{F}$-bounded net $\{T_{i}\}_{i\in I}$ in $ F[(\alpha,u_{\alpha}),(\beta,u_{\beta})]$ (resp.\ in $ F[(\alpha,u_{\alpha})],(\beta,u_{\beta})]^+$) of compactly supported  multipliers such that $\m_{T_i}(a)$ converges  uniformly on compact sets to $\m_{T}(a)$ for every $a\in\contc(\A^{(\alpha,u_{\alpha})})$; 
    \item\label{enu:characterisation_Fourier_approx3} there is a net $\{L_i\}_{i \in I}$ of equivariant representations of $\G$ acting respectively on $\cst$-$\A$-$\B$-correspondence bundles $\EE_i$ and sections $\xi_i$, $\zeta_i\in \contc(s^*\EE_i)$, $i\in I$, (resp. $\xi_i=\zeta_i$) such that 
    \begin{enumerate}
        \item $\sup_{i\in I}\sup_{x\in X}\|\sum_{t\in \G_{x}} \langle \xi_i(t), \xi_i(t)\rangle_{B_x}\| \cdot \sup_{x\in X}\|\sum_{t\in \G_{x}} \langle \zeta_i(t), \zeta_i(t)\rangle_{B_x}\|<\infty$,
        \item for every $a\in\contc(\A^{(\alpha,u_{\alpha})})$ the functions 
        \[
            \G\ni g\longmapsto \sum_{t\in \G_{r(g)}}\langle \xi_i(t), a(g)L_{g} \zeta_i(tg)\rangle_{B_{r(g)}}
        \]
        converge uniformly on compact sets to the function $\G\ni g\longmapsto T_{g}(a(g))\in B_{r(g)}$.
    \end{enumerate}
\end{enumerate}
\end{prop}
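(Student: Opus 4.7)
The plan is to establish the circular implications $(3)\Rightarrow (2)\Rightarrow (1)\Rightarrow (3)$, handling the general and the positive variants in parallel; as will be clear, every construction used preserves the symmetry $\xi=\zeta$ needed for the positive case.

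For $(3)\Rightarrow (2)$, given data $(\EE_i, L_i, \xi_i, \zeta_i)$ as in (3), I set $T_i := T_{\ell^2(s^*\EE_i), L_i^\G, \widetilde{\xi}_i, \widetilde{\zeta}_i}$, where $\widetilde{\xi}_i(x):=\bigoplus_{t\in \G_x}\xi_i(t)$ (a finite sum because $\xi_i$ has compact support) and analogously for $\widetilde{\zeta}_i$. Then $\widetilde{\xi}_i\in \contb(\ell^2(s^*\EE_i))\subseteq \contb(\M(\ell^2(s^*\EE_i)))$ with
\[
    \|\widetilde{\xi}_i\|^2 = \sup_{x\in X}\Big\|\sum_{t\in \G_x}\langle \xi_i(t),\xi_i(t)\rangle_{B_x}\Big\|,
\]
and likewise for $\widetilde{\zeta}_i$, so (3)(a) gives $\sup_i \|T_i\|_F \leq \sup_i \|\widetilde{\xi}_i\|\|\widetilde{\zeta}_i\|<\infty$. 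Since $\xi_i, \zeta_i$ are finite sums of sections supported on precompact bisections, the bisection arithmetic from the last paragraph of the proof of Lemma~\ref{lem:Fourier_explicite_approximation} ensures that each $T_i$ has compact support. Finally, the function displayed in~(3)(b) equals $g\mapsto \m_{T_i}(a)(g)$ by~\eqref{eq:Fourier_multiplier}, so (3)(b) is exactly the convergence required in (2). The implication $(2)\Rightarrow (1)$ is immediate, since every compactly supported Fourier multiplier lies in $F[(\alpha,u_\alpha),(\beta,u_\beta)]$.

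For $(1)\Rightarrow (3)$, I start from a $\|\cdot\|_F$-bounded net $\{T_i\}_{i\in I}$ realising (1), and fix for each $i$ a presentation $T_i=T_{\ell^2(s^*\EE_i), L_i^\G, \widetilde{\xi}_i, \widetilde{\zeta}_i}$ with $\|\widetilde{\xi}_i\|\|\widetilde{\zeta}_i\|$ arbitrarily close to $\|T_i\|_F$ (and with $\widetilde{\xi}_i=\widetilde{\zeta}_i$ in the positive case, after rescaling). Lemma~\ref{lem:Fourier_explicite_approximation} then supplies, for every $i$, nets $\{\xi_{i,k}\}_k, \{\zeta_{i,k}\}_k\subseteq \contc(s^*\EE_i)$ with $\|\xi_{i,k}\|\leq \|\widetilde{\xi}_i\|$, $\|\zeta_{i,k}\|\leq \|\widetilde{\zeta}_i\|$, such that the multipliers $T_{i,k}:=T_{\ell^2(s^*\EE_i), L_i^\G, \xi_{i,k}, \zeta_{i,k}}$ satisfy $\m_{T_{i,k}}(a)\to \m_{T_i}(a)$ uniformly on compact sets, for every $a\in \contc(\A^{(\alpha,u_\alpha)})$; the construction $\xi\mapsto e_k\xi$ used in that lemma manifestly preserves $\xi_{i,k}=\zeta_{i,k}$ in the positive case. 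A standard diagonal extraction along the directed set of quadruples $(i, K, \varepsilon, F)$, where $K\subseteq \G$ is compact, $\varepsilon>0$, and $F\subseteq \contc(\A^{(\alpha,u_\alpha)})$ is finite, selecting indices at which the cumulative approximation error on $K$ is below $\varepsilon$ for all $a\in F$, then produces a single net satisfying both (3)(a) and (3)(b).

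The only point requiring care is the diagonal extraction in $(1)\Rightarrow (3)$, which is standard; I anticipate no deeper obstacle, since all analytic substance is absorbed into Lemma~\ref{lem:Fourier_explicite_approximation}, and the symmetric-section condition needed for the positive variant is preserved at every step.
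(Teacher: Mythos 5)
Your proof is correct and follows essentially the same route as the paper, whose entire proof reads ``Combine Proposition~\ref{pr:CompactSupportFourier} and Lemma~\ref{lem:Fourier_explicite_approximation}'': your $(3)\Rightarrow(2)$ is the norm computation and bisection-support argument already contained in (the last paragraph of) Lemma~\ref{lem:Fourier_explicite_approximation}, and your $(1)\Rightarrow(3)$ is the rescaling-plus-diagonal argument of Proposition~\ref{pr:CompactSupportFourier} combined with that lemma. The only cosmetic point is that in the positive case one does not need to ``rescale'' to get a symmetric presentation of controlled norm: any presentation $T_i=T_{\ell^2(s^*\EE_i),L_i^\G,\xi,\xi}$ automatically satisfies $\|\xi\|^2=\|T_i\|_F$ by Theorem~\ref{thm:Fmultipliers_are_reduced_to_full}, so boundedness is free.
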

\begin{proof}
Combine Proposition~\ref{pr:CompactSupportFourier} and Lemma~\ref{lem:Fourier_explicite_approximation}.
\end{proof}

\begin{defn}\label{def:weak_approximation_property}
Let $\A$, $\B$  be $\cst$-bundles over $X$, carrying twisted actions $(\alpha,u_\alpha)$ and $(\beta,u_\beta)$ of $\G$. 
A multiplier $T$ from $\A^{(\alpha,u_{\alpha})}$ to $\B^{(\beta,u_{\beta})}$ has the \emph{Fourier--Stieltjes approximation property} if there is a $\|\cdot\|_{FS}$-bounded net $\{T_{i}\}_{i\in I}\subseteq FS[(\alpha,u_{\alpha}),(\beta,u_{\beta})]$ of compactly supported multipliers such that $\m_{T_i}(a)$ converges uniformly on compact sets to $\m_{T}(a)$ for every $a\in\contc(\A^{(\alpha,u_{\alpha})})$.  
If one can find $T_i$'s as above which are positive-definite, we speak of the \emph{positive Fourier--Stieltjes approximation property}. 
\end{defn}

\begin{rem}
By Propositions~\ref{prop:compactly_supported} and \ref{prop:characterisation_Fourier_approx}\ref{enu:characterisation_Fourier_approx2}, the Fourier AP and Fourier--Stieltjes AP differ only in the boundedness condition, formulated respectively in terms of the Fourier and the Fourier--Stieltjes norm. 
Since $\|\cdot\|_{FS}\leq \|\cdot\|_{F}$, the $FS$-boundedness is  formally weaker than $F$-boundedness. 
Hence the Fourier AP implies the Fourier--Stieltjes AP. 
It seems reasonable to conjecture that the two notions coincide at least for identity multipliers and positive approximation that we discuss below (see Question \ref{quest:norms}). 
Note also that for the (twisted) actions of discrete groups on unital $\cst$-algebras the Fourier AP was introduced in \cite{BedosContiregular} under the name \emph{weak approximation property}.
\end{rem}

Theorem~\ref{thm:FSmultiplier_positive_definite} implies the following characterisation of the positive Fourier--Stieltjes AP. 

\begin{prop}\label{prop:characterisation_positive_FS_approx}
Let $\A$, $\B$  be $\cst$-bundles over $X$, carrying twisted actions $(\alpha,u_\alpha)$ and $(\beta,u_\beta)$ of $\G$. 
For a multiplier $T$ from $\A^{(\alpha,u_{\alpha})}$ to $\B^{(\beta,u_{\beta})}$ the following are equivalent:
\begin{enumerate}
    \item\label{enu:characterisation_positive_FS_approx1} $T$ has the positive Fourier--Stieltjes approximation property;  
    \item\label{enu:characterisation_positive_FS_approx2} there is a net $\{T_{i}\}_{i\in I}$  of compactly supported  and positive-definite (not necessarily strict) multipliers from $\A^{(\alpha,u_{\alpha})}$ to $\B^{(\beta,u_{\beta})}$ such that $\sup_{i\in I}\sup_{x\in X}\|(T_i)_{x}\|<\infty$ and $\m_{T_i}(a)$ converges uniformly on compact sets to $\m_{T}(a)$ for every $a\in\contc(\A^{(\alpha,u_{\alpha})})$;
    \item\label{enu:characterisation_positive_FS_approx3} there is a net $\{L_i\}_{i \in I}$ of equivariant representations of $\G$, acting respectively on $\cst$-$\A$-$\B$-correspondence bundles $\EE_i$, and sections $\xi_i\in \contc(\EE_i)$, $i\in I$, such that $\sup_{i\in I}\|\xi_i\| <\infty$, and for every $a\in\contc(\A^{(\alpha,u_{\alpha})})$ the functions 
    \[
        \G\ni g\longmapsto \sum_{t\in \G_{r(g)}}\langle \xi_i(t), a(g)L_{i,g}\big(\xi_i(tg)\big)\rangle_{B_{r(g)}}
    \]
    converge uniformly on compact sets to the function $\G\ni g\longmapsto T_{g}(a(g))\in B_{r(g)}$. 
\end{enumerate}
\end{prop}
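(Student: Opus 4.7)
My plan is to prove the equivalence as a cycle $(3)\Rightarrow(1)\Rightarrow(2)\Rightarrow(3)$. The first two implications are essentially structural, while the last contains the technical core.

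For $(3)\Rightarrow(1)$: each $T_i:=T_{\ell^2(s^*\EE_i),L_i^\G,\xi_i,\xi_i}$ is by construction a positive-definite Fourier (hence Fourier--Stieltjes) multiplier, and is compactly supported because its defining sum vanishes unless $t,tg\in\supp(\xi_i)$, so that $\supp T_i\subseteq\supp(\xi_i)^{-1}\cdot\supp(\xi_i)$ is precompact by continuity of the groupoid multiplication. The bound $\|T_i\|_{FS}\leq\|T_i\|_F\leq\|\xi_i\|^2$ transfers the uniform bound on $\|\xi_i\|$, and uniform-on-compacts convergence passes directly.

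For $(1)\Rightarrow(2)$: every positive-definite FS multiplier $S$ is strict by Theorem~\ref{thm:FSmultiplier_positive_definite}, and Lemma~\ref{lem:completely_positive_implies_positive_definite} combined with the norm statement in Theorem~\ref{thm:FSmultiplier_positive_definite} gives $\|S\|_{FS}=\sup_{x\in X}\|S_x\|$. A $\|\cdot\|_{FS}$-bounded family of such multipliers is therefore automatically bounded in the fibrewise norm, so the same net witnessing (1) will witness (2).

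For $(2)\Rightarrow(3)$: I will strictify, then represent via Theorem~\ref{thm:FSmultiplier_positive_definite}, and finally rewrite in the regularised Fourier form. Applying Lemma~\ref{lem:multiplier_approx_with strict2}\ref{ite:multiplier_approx_with strict2} to each $T_i$ will yield a net of strict positive-definite multipliers $T_{i,e}$ (which are FS by Theorem~\ref{thm:FSmultiplier_positive_definite}) with $\|T_{i,e}\|_{FS}\leq\sup_x\|(T_i)_x\|$ and $\m_{T_{i,e}}\to\m_{T_i}$ uniformly on compact sets, all compactly supported since $\m_{T_{i,e}}(f)=\m_{T_i}(efe)$. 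By Theorem~\ref{thm:FSmultiplier_positive_definite}, I write $T_{i,e}=T_{\EE_{i,e},L_{i,e},\xi_{i,e},\xi_{i,e}}$; multiplying $\xi_{i,e}$ by a cutoff in $\contc(X)$ equal to $1$ on $r(\supp T_{i,e})\cup s(\supp T_{i,e})$ leaves $T_{i,e}$ unchanged, so I may assume $\xi_{i,e}\in\contc(\EE_{i,e})$. Finally, I will construct $\eta_{i,e,k}(t):=\psi_k(t)\,\xi_{i,e}(s(t))\in\contc(s^*\EE_{i,e})$ for a suitable net $\{\psi_k\}\subseteq\contc(\G)$, producing positive Fourier multipliers $T_{\ell^2(s^*\EE_{i,e}),L_{i,e}^\G,\eta_{i,e,k},\eta_{i,e,k}}$ whose values equal $\phi_k(g)\cdot T_{i,e}(g)$ with $\phi_k(g):=\sum_{t\in\G_{r(g)}}\overline{\psi_k(t)}\psi_k(tg)$; a diagonal argument in $(i,e,k)$ then yields the desired net.

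The hard part will be the choice of $\psi_k$ in the last step: I need $\phi_k\to 1$ uniformly on the compact support of each $T_{i,e}$ while keeping $\sup_x\phi_k(x)\leq 1$, so that $\|\eta_{i,e,k}\|^2\leq\|\xi_{i,e}\|^2$. This will require exploiting the étale structure of $\G$ via a refinement over finite covers of the compact support by open bisections, with $\psi_k$ built from an $\ell^2$-normalised partition of unity subordinate to these covers, carefully handling overlaps between the bisections and the resulting limit behaviour.
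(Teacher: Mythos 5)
Your implications $(3)\Rightarrow(1)$ and $(1)\Rightarrow(2)$ are sound and agree in substance with the paper, which also runs everything through Theorem~\ref{thm:FSmultiplier_positive_definite} together with the strictification Lemmas~\ref{lem:multiplier_approx_with strict} and~\ref{lem:multiplier_approx_with strict2}. The problem is your $(2)\Rightarrow(3)$. After writing the strictified multipliers as $T_{\EE_{i,e},L_{i,e},\xi_{i,e},\xi_{i,e}}$ with $\xi_{i,e}\in\contc(\EE_{i,e})$, you try to upgrade this Fourier--Stieltjes coefficient to a \emph{regularised} (Fourier) coefficient $T_{\ell^2(s^*\EE_{i,e}),L_{i,e}^\G,\eta,\eta}$ by setting $\eta(t)=\psi_k(t)\,\xi_{i,e}(s(t))$. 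The function $\phi_k(g)=\sum_{t\in\G_{r(g)}}\overline{\psi_k(t)}\psi_k(tg)$ that you need to converge to $1$ uniformly on compact sets while keeping $\phi_k|_X\le 1$ is exactly a net of compactly supported positive-type functions on $\G$ tending to $1$, and the existence of such a net characterises amenability of $\G$ (see Corollary~\ref{cor:amenable-groupoids-vs-FS-AP} and the references cited there). The obstruction is already present on a single compact set: for a nonamenable discrete group and $K$ a finite generating set, having $\phi(g)$ close to $1$ for all $g\in K$ with $\phi(e)\le1$ forces $\|\lambda_g\psi-\psi\|$ to be small for $g\in K$, i.e.\ almost invariant vectors in $\ell^2(\Gamma)$. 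So the ``hard part'' you defer cannot be carried out in general. Structurally, passing from a positive Fourier--Stieltjes approximation to a positive regularised (Fourier) approximation with controlled norms is precisely the passage from the positive Fourier--Stieltjes AP to the positive Fourier AP, which the paper explicitly leaves open (see the remark following Definition~\ref{def:weak_approximation_property} and the diagram in Remark~\ref{rem:diagram}); a proof of $(2)\Rightarrow(3)$ cannot be routed through it.

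The source of the trouble is the displayed formula in item \ref{enu:characterisation_positive_FS_approx3}, which you have (not unreasonably) read as a regularised coefficient with $\xi_i$ a section over $\G$. The paper's own proof makes clear that \ref{enu:characterisation_positive_FS_approx3} is meant as the plain Fourier--Stieltjes coefficient form delivered directly by Theorem~\ref{thm:FSmultiplier_positive_definite}: each strictified $T_i$ equals $T_{\EE_i,L_i,\xi_i,\xi_i}$ for some $\xi_i\in\contb(\M(\EE_i))$ with $\|\xi_i\|^2=\sup_{x\in X}\|(T_i)_x\|$, and one then replaces $\xi_i$ by $e_j\xi_i\in\contc(\EE_i)$ for an approximate unit $\{e_j\}\subseteq\contc(\A)$ of $\contz(\A)$, exactly as in Lemma~\ref{lem:Fourier_explicite_approximation}. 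No regularisation over the groupoid is involved, and with that reading the implication is immediate. If you delete the $\psi_k$ step and keep the rest, your argument collapses to the paper's.
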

\begin{proof}  
Theorem~\ref{thm:FSmultiplier_positive_definite} implies the equivalence \ref{enu:characterisation_positive_FS_approx1}$\iff$
\ref{enu:characterisation_positive_FS_approx2} modulo strictness of the $T_i$s, which can be ignored thanks to Lemma~\ref{lem:multiplier_approx_with strict}. 
The same theorem also shows that both conditions are equivalent to a version of \ref{enu:characterisation_positive_FS_approx3} where sections $\xi_i$ are in $\contb(\M(\EE_i))$ rather than in $\contc(\EE_i)$, but then as in the proof of Lemma~\ref{lem:Fourier_explicite_approximation} we may replace $\xi_i$ by $e_j\xi_i\in \contc(\EE_i)$ for an approximate unit $\{e_j\}_{j \in J}\subseteq \contc(\A)$ in $\contz(\A)$. 
\end{proof}

One of the main consequences of the approximation property for a given multiplier is the following result which shows that the multipliers enjoying this property yield (completely) bounded maps between the appropriate $\cst$-algebras.

\begin{lem}\label{lem:weak_approx_implies_reduced_to_full}
Let $\A$, $\B$ be $\cst$-bundles over $X$, carrying twisted actions $(\alpha,u_\alpha)$ and $(\beta,u_\beta)$ of $\G$. 
Let $T$ and $\{T_{i}\}_{i\in I}$ be multipliers from $\A^{(\alpha,u_{\alpha})}$ to $\B^{(\beta,u_{\beta})}$ such that $\m_{T_i}(a)$ converges uniformly on compact sets to $\m_{T}(a)$  for every $a\in\contc(\A^{(\alpha,u_{\alpha})})$. 
\begin{enumerate}
    \item\label{ite:weak_approx_implies_reduced_to_full1} If $\{ T_{i} \}_{i\in I} \subseteq F[(\alpha,u_{\alpha}), (\beta,u_{\beta})]$ and $M := \sup_{i\in I} \|T_i\|_{F} < \infty$, then $\m_{T}$ extends to a completely bounded map $\m_T^{\rd,\f} : \cst_{\red}(\A^{(\alpha,u_{\alpha})})\to \cst(\B^{(\beta,u_{\beta})})$ such that $\| \m_T^{\rd,\f} \|_{cb} \leq M$ and $\m_{T_i}^{\rd,\f} \to \m_T^{\rd,\f}$ pointwise. 
    
    \item\label{ite:weak_approx_implies_reduced_to_full2} If $\{ T_{i} \}_{i\in I} \subseteq FS[(\alpha,u_{\alpha}), (\beta,u_{\beta})]$ and $M := \sup_{i\in I} \|T_i\|_{FS}< \infty$, then $\m_{T}$ extends to completely bounded maps $\m_T^{\rd} : \cst_{\red}(\A^{(\alpha,u_{\alpha})}) \to \cst_{\red}(\B^{(\beta,u_{\beta})})$ and $\m_T^{f} : \cst(\A^{(\alpha,u_{\alpha})}) \to \cst(\B^{(\beta,u_{\beta})})$, where $\| \m_T^{\rd} \|_{cb}$, $\| \m_T^{\f} \|_{cb} \leq M$ and $\m_{T_i}^{\rd}\to \m_T^{\rd}$, $\m_{T_i}^{\f}\to \m_T^{\f}$ pointwise.
\end{enumerate}
\end{lem}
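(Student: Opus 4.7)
The plan is to reduce both parts to an equicontinuity extension argument. First, by Theorem~\ref{thm:Fmultipliers_are_reduced_to_full} each $T_i\in F[(\alpha,u_{\alpha}),(\beta,u_{\beta})]$ in \ref{ite:weak_approx_implies_reduced_to_full1} extends to $\m_{T_i}^{\rd,\f}\colon\cst_{\red}(\A^{(\alpha,u_{\alpha})})\to\cst(\B^{(\beta,u_{\beta})})$ with $\|\m_{T_i}^{\rd,\f}\|_{cb}\leq\|T_i\|_F\leq M$, and by Theorem~\ref{thm:FSmultiplier_extends_to_reduced_and_full} the Fourier--Stieltjes multipliers in \ref{ite:weak_approx_implies_reduced_to_full2} give analogously bounded extensions $\m_{T_i}^{\rd}$ and $\m_{T_i}^{\f}$ at the reduced and full levels respectively. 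In each case we therefore have a uniformly cb-bounded net of morphisms on the relevant crossed product of $\A^{(\alpha,u_{\alpha})}$.

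Next I would translate the hypothesis of uniform convergence on compact subsets of $\G$ into convergence in the appropriate $C^*$-norm of the target. Fix $a\in\contc(\A^{(\alpha,u_{\alpha})})$ supported in a compact $K\subseteq\G$; since multipliers act fibrewise, every $\m_{T_i}(a)$ is also supported in $K$, so for any precompact open $V\supseteq K$ the sections $\m_{T_i}(a)$ and $\m_T(a)$ all lie in $\contz(\B^{(\beta,u_{\beta})}|_V)$, and uniform convergence on compact subsets of $\G$ reduces to supremum-norm convergence on this subspace. By Lemma~\ref{lem:precompacts_embed_into_crossed_products}, $\contz(\B^{(\beta,u_{\beta})}|_V)$ is closed, and thus a Banach space, in both the reduced and the full $C^*$-norm; moreover the contractive map $\cst_{\red}\to\contz$ of \cite[Proposition 7.10]{BartoszRalf2} used in the proof of that lemma shows that the supremum norm is dominated by the reduced (and \emph{a fortiori} by the full) $C^*$-norm. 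Since all three norms are complete, the open mapping theorem forces their pairwise equivalence on $\contz(\B^{(\beta,u_{\beta})}|_V)$. Consequently the hypothesis yields $\m_{T_i}^{\rd,\f}(a)\to\m_T(a)$ in the full $C^*$-norm in case \ref{ite:weak_approx_implies_reduced_to_full1}, and $\m_{T_i}^{\rd}(a)\to\m_T(a)$, $\m_{T_i}^{\f}(a)\to\m_T(a)$ in the reduced and full norms respectively in case \ref{ite:weak_approx_implies_reduced_to_full2}, for every $a\in\contc(\A^{(\alpha,u_{\alpha})})$.

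Since $\contc(\A^{(\alpha,u_{\alpha})})$ is dense in both $\cst_{\red}(\A^{(\alpha,u_{\alpha})})$ and $\cst(\A^{(\alpha,u_{\alpha})})$, and the nets of extensions are uniformly bounded in cb-norm by $M$, the standard equicontinuity argument (control $\|\m_{T_i}(a)-\m_{T_j}(a)\|$ by $2M\|a-a_0\|+\|\m_{T_i}(a_0)-\m_{T_j}(a_0)\|$ for $a_0$ in the dense subalgebra) produces unique completely bounded extensions $\m_T^{\rd,\f}$, respectively $\m_T^{\rd}$ and $\m_T^{\f}$, to which the nets converge pointwise on the whole crossed product. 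The cb-norm bound $\leq M$ follows by running the very same argument at every matrix amplification level $n\in\mathbb{N}$, using that amplifications of the pointwise limit coincide with pointwise limits of the amplifications, together with the uniform bound $\|(\m_{T_i}^{\rd,\f})^{(n)}\|\leq M$ (and analogously in case \ref{ite:weak_approx_implies_reduced_to_full2}).

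The main (and essentially only) technical step is the equivalence of the supremum norm with the $C^*$-norms on $\contz(\B^{(\beta,u_{\beta})}|_V)$ for precompact open $V$: one direction is immediate from the contractive conditional-expectation-type map $\cst_{\red}\to\contz$, while the converse genuinely invokes the open mapping theorem applied to two Banach norms, one of which (the $C^*$-norm) is \emph{a priori} the larger. Once this is at hand, the rest is the textbook extension-by-equicontinuity of a uniformly bounded, pointwise convergent net from a dense subalgebra.
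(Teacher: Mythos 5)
Your proof is correct, and the overall skeleton (first upgrade uniform-on-compacts convergence to convergence in the $C^*$-norm of the target, then extend by equicontinuity using the uniform bound $M$ from Theorems~\ref{thm:Fmultipliers_are_reduced_to_full} and~\ref{thm:FSmultiplier_extends_to_reduced_and_full}) matches the paper's. The one place where you genuinely diverge is the norm-comparison step. The paper writes $a=\sum_{U\in F}a_U$ as a finite sum of sections supported on bisections and uses that on $\contc(\B^{(\beta,u_\beta)}|_U)$, for $U$ a single bisection, the $C^*$-norm \emph{equals} the supremum norm, giving the explicit estimate $\|\m_T(a)-\m_{T_i}(a)\|_{\Cst(\B^{(\beta,u_\beta)})}\leq\sum_{U\in F}\|\m_T(a_U)-\m_{T_i}(a_U)\|_\infty$. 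You instead fix a precompact open $V$ containing $\supp(a)$, observe that all the relevant sections lie in $\contz(\B^{(\beta,u_\beta)}|_V)$, and invoke Lemma~\ref{lem:precompacts_embed_into_crossed_products} together with the contractive embedding into $\contz$ and the open mapping theorem to conclude that the sup norm and the $C^*$-norms are \emph{equivalent} there. Both arguments are valid; yours trades the elementary bisection decomposition for a soft Banach-space argument whose equivalence constant depends (harmlessly, since $V$ is fixed by $a$) on $V$, while the paper's yields an explicit constant (the cardinality of $F$) and avoids the bounded inverse theorem altogether. A small point in your favour: you spell out the $\varepsilon/3$ argument that gives the pointwise convergence $\m_{T_i}^{\rd,\f}\to\m_T^{\rd,\f}$ asserted in the statement, which the paper's proof leaves implicit.
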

\begin{proof} 
Note first that the uniform convergence on compact sets in $\contc(\A^{(\alpha,u_{\alpha})})$ implies convergence in the maximal $\cst$-norm. 
Indeed, for  $a\in\contc(\A^{(\alpha,u_{\alpha})})$ we have $a = \sum_{U\in F} a_{U}$ for some $a_U \in \contc(\A^{(\alpha,u_{\alpha})}|_{U})$, $U\in F$, where $F\subseteq \Bis(\G)$ is finite. 
The norm of $\cst(\B^{(\beta,u_{\beta})})$ restricted to $\contc(\B^{(\beta,u_{\beta})}|_{U})$, $U\in \Bis(\G)$, coincides with the supremum norm. 
Therefore $\| \m_{T}(a) - \m_{T_i}(a) \|_{\cst(\B^{(\beta,u_{\beta})})} \leq \sum_{U\in F} \| \m_{T}(a_U) - \m_{T_i}(a_U) \|_{\infty} \to 0$. 
Assume now that we are given a net $\{T_{i}\}_{i\in I}\subseteq  F[(\alpha,u_{\alpha}), (\beta,u_{\beta})]$ and $M:=\sup_{i}\|T_{i\in I}\|_{F}<\infty$. 
Then $\|\m_{T_i}(a)\|_{\cst(\B^{(\beta,u_{\beta})})} \leq M \cdot \|a\|_{\cst_{\red}(\A^{(\alpha,u_{\alpha})})}$, by Theorem~\ref{thm:Fmultipliers_are_reduced_to_full}, and thus $\| \m_{T}(a) \|_{\cst(\B^{(\beta,u_{\beta})})} \leq M \cdot \|a\|_{\cst_{\red}(\A^{(\alpha,u_{\alpha})})}$. 
Hence $\m_T$ yields a bounded operator $\m_T^{\rd,\f}:\cst_{\red}(\A^{(\alpha,u_{\alpha})})\to \cst(\B^{(\beta,u_{\beta})})$ with norm not greater than $M$. 
Applying this reasoning to matrices, we deduce that  $\m_T^{\rd,\f}$ is in fact completely bounded and $\| \m_T^{\rd,\f} \|_{cb}\leq M$. 
This proves \ref{ite:weak_approx_implies_reduced_to_full1}. 
The proof of \ref{ite:weak_approx_implies_reduced_to_full2} is the same (instead of Theorem~\ref{thm:Fmultipliers_are_reduced_to_full} use Theorem~\ref{thm:FSmultiplier_extends_to_reduced_and_full}).
\end{proof}

The next two corollaries are immediate consequences of the above result and appropriate definitions (in the second we also use Lemmas~\ref{lem:multiplier_approx_with strict} and \ref{lem:multiplier_approx_with strict2}).

\begin{cor}\label{cor:weak_approx_implies_reduced_to_full}
Every multiplier with the Fourier approximation property is reduced to full. 
Every multiplier with the Fourier--Stieltjes approximation property is both reduced and full.
\end{cor}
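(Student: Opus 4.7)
The plan is to derive the corollary as an immediate consequence of Lemma~\ref{lem:weak_approx_implies_reduced_to_full}, with the definitions of the two approximation properties supplying precisely the inputs required by the two parts of that lemma.

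First, suppose that $T$ has the Fourier approximation property. By Definition~\ref{def:Fourier_approximation_property}, one can select a $\|\cdot\|_F$-bounded net $\{T_i\}_{i\in I}\subseteq F[(\alpha,u_\alpha),(\beta,u_\beta)]$ such that $\m_{T_i}(a)\to \m_T(a)$ uniformly on compact subsets of $\G$ for every $a\in \contc(\A^{(\alpha,u_\alpha)})$. This is exactly the hypothesis of part~\ref{ite:weak_approx_implies_reduced_to_full1} of Lemma~\ref{lem:weak_approx_implies_reduced_to_full}, so the conclusion of that lemma furnishes a completely bounded extension $\m_T^{\rd,\f}\colon \cst_{\red}(\A^{(\alpha,u_\alpha)})\to \cst(\B^{(\beta,u_\beta)})$. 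By definition this is precisely the statement that $T$ is reduced to full.

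Next, if $T$ has the Fourier--Stieltjes approximation property, Definition~\ref{def:weak_approximation_property} produces a $\|\cdot\|_{FS}$-bounded net $\{T_i\}_{i\in I}\subseteq FS[(\alpha,u_\alpha),(\beta,u_\beta)]$ (in fact of compactly supported multipliers, though this will not be needed here) for which $\m_{T_i}(a)\to \m_T(a)$ uniformly on compacts for every $a\in \contc(\A^{(\alpha,u_\alpha)})$. Part~\ref{ite:weak_approx_implies_reduced_to_full2} of Lemma~\ref{lem:weak_approx_implies_reduced_to_full} then simultaneously yields the completely bounded extensions $\m_T^{\rd}\colon \cst_{\red}(\A^{(\alpha,u_\alpha)})\to \cst_{\red}(\B^{(\beta,u_\beta)})$ and $\m_T^{\f}\colon \cst(\A^{(\alpha,u_\alpha)})\to \cst(\B^{(\beta,u_\beta)})$, giving the second assertion.

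No real obstacle should arise, since the substantive work (converting an $FS$- or $F$-bounded, pointwise-convergent net of multiplier maps into a bounded limit on the appropriate crossed product) was already carried out in the proof of Lemma~\ref{lem:weak_approx_implies_reduced_to_full} via Theorems~\ref{thm:Fmultipliers_are_reduced_to_full} and~\ref{thm:FSmultiplier_extends_to_reduced_and_full}, together with the observation that uniform convergence on compacts for sections in $\contc$ upgrades to norm convergence in the full crossed product (by decomposing a given $a\in \contc(\A^{(\alpha,u_\alpha)})$ as a finite sum of pieces supported on bisections, on which the crossed product norm coincides with the supremum norm).
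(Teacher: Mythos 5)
Your proof is correct and follows exactly the route the paper takes: the corollary is stated there as an immediate consequence of Lemma~\ref{lem:weak_approx_implies_reduced_to_full}, with Definitions~\ref{def:Fourier_approximation_property} and~\ref{def:weak_approximation_property} supplying precisely the hypotheses of its two parts. Your additional remarks (that compact support of the approximating net is not needed here, and that uniform-on-compacts convergence upgrades to norm convergence via the bisection decomposition) are consistent with what is already established inside the proof of that lemma.
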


\begin{cor}\label{cor:positive_definite_is_full_and_reduced}
Every bounded positive-definite (not necessarily strict) multiplier $T$ from $\A^{(\alpha,u_{\alpha})}$ to $\B^{(\beta,u_{\beta})}$ is both reduced and full (and we have $\|\m_T^{\rd}\|_{cb}=\|\m_T^{\f}\|_{cb}=\sup_{x\in X}\|T_x\|$).
\end{cor}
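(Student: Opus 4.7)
The plan is to approximate $T$ by strict positive-definite Fourier--Stieltjes multipliers and pass to the limit at the level of maps on the crossed products. Concretely, fix an approximate unit $\{e_i\}_{i\in I}$ in $\contz(\A)$ and form the sandwiched multipliers $T_i$ as in Lemma~\ref{lem:multiplier_approx_with strict}. By Lemma~\ref{lem:multiplier_approx_with strict2}(\ref{ite:multiplier_approx_with strict2}), each $T_i$ is strict, bounded and positive-definite --- hence, by Theorem~\ref{thm:FSmultiplier_positive_definite}, a Fourier--Stieltjes multiplier --- with the sharp estimate $\sup_{i\in I}\|T_i\|_{FS}\leq \sup_{x\in X}\|T_x\|$. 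Lemma~\ref{lem:multiplier_approx_with strict} also delivers the uniform-on-compacta convergence $\m_{T_i}(a)\to \m_T(a)$ for every $a\in\contc(\A^{(\alpha,u_{\alpha})})$.

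Next, I would apply Lemma~\ref{lem:weak_approx_implies_reduced_to_full}(\ref{ite:weak_approx_implies_reduced_to_full2}) with $M:=\sup_{x\in X}\|T_x\|$ to obtain completely bounded extensions $\m_T^{\rd}\colon \cst_{\red}(\A^{(\alpha,u_{\alpha})})\to \cst_{\red}(\B^{(\beta,u_{\beta})})$ and $\m_T^{\f}\colon \cst(\A^{(\alpha,u_{\alpha})})\to \cst(\B^{(\beta,u_{\beta})})$, both of cb-norm at most $M$, together with the pointwise convergences $\m_{T_i}^{\rd}\to \m_T^{\rd}$ and $\m_{T_i}^{\f}\to \m_T^{\f}$. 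Complete positivity of the limits is then automatic: Theorem~\ref{thm:FSmultiplier_positive_definite} guarantees that each $\m_{T_i}^{\rd}$ and $\m_{T_i}^{\f}$ is completely positive, and the cone of completely positive maps is closed under pointwise limits.

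The final step is to pin down the norm. Since $\m_T^{\rd}$ is completely positive and its underlying fibre family is just $\{T_x\}_{x\in X}$, Lemma~\ref{lem:completely_positive_implies_positive_definite} gives $\|\m_T^{\rd}\|=\sup_{x\in X}\|T_x\|$; combined with $\|\m_T^{\rd}\|_{cb}\leq M$ and the equality of operator and cb-norms for completely positive maps, this closes the chain of equalities, and the same argument applies to $\m_T^{\f}$. I do not foresee a genuine obstacle: the one tactical point to keep in mind is to invoke the \emph{positive-definite} form of the strictification (Lemma~\ref{lem:multiplier_approx_with strict2}(\ref{ite:multiplier_approx_with strict2})), which is what yields the bound by $\sup_x\|T_x\|$ rather than some \emph{a priori} unavailable Fourier--Stieltjes norm of $T$ itself.
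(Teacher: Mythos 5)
Your proposal is correct and follows essentially the same route as the paper, which derives this corollary as an immediate consequence of Lemma~\ref{lem:weak_approx_implies_reduced_to_full} combined with the strictification Lemmas~\ref{lem:multiplier_approx_with strict} and~\ref{lem:multiplier_approx_with strict2}. Your additional remarks on complete positivity passing to pointwise limits and on pinning down the norm via Lemma~\ref{lem:completely_positive_implies_positive_definite} correctly fill in the details the paper leaves implicit.
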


%
\subsection*{Fourier and Fourier--Stieltjes approximation properties for actions}

We will now study the definitions considered above in the case of the identity multiplier and view them as approximation properties of a given twisted groupoid action.

\begin{defn}\label{def:weak_approximation_property_for_actions}
We say that an action $(\alpha,u_{\alpha})$ of a groupoid $\G$ on a $\cst$-bundle $\A$ has the \emph{(positive) Fourier} or \emph{Fourier--Stieltjes approximation property} if the identity multiplier of $\A^{(\alpha,u_{\alpha})}$ has this property. 
\end{defn}

\begin{cor}\label{cor:positive_FS_AP_for_actions}
A twisted groupoid action $(\alpha,u_{\alpha})$ has the positive Fourier--Stieltjes approximation property if and only if there is a net $\{T_{i}\}_{i\in I}$ of compactly supported    and positive-definite multipliers of $\A^{(\alpha,u_{\alpha})}$ such that $\sup_{i\in I}\sup_{x\in X}\|(T_i)_{x}\|<\infty$ and $\m_{T_i}(a)\to a$  uniformly on compact sets for every $a\in\contc(\A^{(\alpha,u_{\alpha})})$.
\end{cor}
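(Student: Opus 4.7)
The plan is to recognise that the corollary is simply the specialisation of the equivalence \ref{enu:characterisation_positive_FS_approx1}$\iff$\ref{enu:characterisation_positive_FS_approx2} from Proposition~\ref{prop:characterisation_positive_FS_approx} to the identity multiplier of $\A^{(\alpha,u_{\alpha})}$, with source and target actions both equal to $(\alpha,u_{\alpha})$.

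First I would recall from Example~\ref{ex:identity_multiplier} that the identity multiplier of $\A^{(\alpha,u_{\alpha})}$ coincides with the Fourier--Stieltjes multiplier $T_{\A,\alpha,1,1}$ associated to the trivial equivariant correspondence $(\A,\alpha)$ of $(\alpha,u_{\alpha})$ and the unit section $1\in \contb(\M(\A))$. Since in this presentation $\xi=\zeta=1$, Theorem~\ref{thm:FSmultiplier_positive_definite} guarantees that the identity multiplier is strict, bounded and positive-definite. Consequently, by Definition~\ref{def:weak_approximation_property_for_actions}, $(\alpha,u_{\alpha})$ has the positive Fourier--Stieltjes approximation property exactly when the identity multiplier of $\A^{(\alpha,u_{\alpha})}$ satisfies condition~\ref{enu:characterisation_positive_FS_approx1} of Proposition~\ref{prop:characterisation_positive_FS_approx}.

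Next I would apply Proposition~\ref{prop:characterisation_positive_FS_approx} with $(\beta,u_{\beta})=(\alpha,u_{\alpha})$ and $T=\mathrm{id}$. The corresponding map on continuous sections is $\m_T(a)=a$ for every $a\in \contc(\A^{(\alpha,u_{\alpha})})$, so the condition ``$\m_{T_i}(a)$ converges uniformly on compact sets to $\m_{T}(a)$ for every $a\in\contc(\A^{(\alpha,u_{\alpha})})$'' appearing in \ref{enu:characterisation_positive_FS_approx2} reduces precisely to ``$\m_{T_i}(a)\to a$ uniformly on compact sets for every $a\in\contc(\A^{(\alpha,u_{\alpha})})$''. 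All remaining hypotheses — compact support, positive-definiteness (not required to be strict), and the uniform bound $\sup_{i\in I}\sup_{x\in X}\|(T_i)_x\|<\infty$ — are transcribed verbatim from the proposition.

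There is no genuine obstacle here: the argument is a direct instantiation of Proposition~\ref{prop:characterisation_positive_FS_approx}, together with the identification of the identity as a positive-definite Fourier--Stieltjes multiplier. The only point worth noting is that passing from ``strict'' positive-definite multipliers (as in the definition of $FS^+$) to ``not necessarily strict'' ones is already handled inside Proposition~\ref{prop:characterisation_positive_FS_approx} via Lemma~\ref{lem:multiplier_approx_with strict}, so no additional strictification step is required in this corollary.
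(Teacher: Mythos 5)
Your proposal is correct and matches the paper's own argument, which is exactly "Apply Proposition~\ref{prop:characterisation_positive_FS_approx}" to the identity multiplier with $(\beta,u_\beta)=(\alpha,u_\alpha)$. The extra remarks about the identity being $T_{\A,\alpha,1,1}$ and about strictification being absorbed into the proposition are accurate but not needed beyond what the paper already records.
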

\begin{proof} 
Apply Proposition~\ref{prop:characterisation_positive_FS_approx}.
\end{proof}


\begin{rem} \label{rem:diagram}  
We will focus on the Fourier--Stieltjes approximation property as it is (at least formally) weaker than the Fourier approximation property. 
Exel's approximation property \cite[Definition~4.4]{Exel:amenability}, recently generalised to Fell bundles over \'etale groupoids in \cite[Definition~3.1]{Kranz} and \cite[Definition~4.10]{BussMartinez}, when formulated for the bundle $\A^{(\alpha,u_{\alpha})}$ associated with a twisted groupoid action, means that the identity multiplier has the positive Fourier AP with the approximating net given by  coefficients of the regular equivariant representation of $\G$, see Examples~\ref{ex:trivial_equivariant_action2}, \ref{ex:regular_rep_revisited}. 
More precisely, we say that the action $(\alpha,u_{\alpha})$ of a groupoid $\G$ on a $\cst$-bundle $\A$ has \emph{Exel's approximation property} if there is a net $\{\xi_{i}\}_{i\in I}\subseteq \contc(r^*\A)$ such that $\sup_{i\in I} \sup_{x\in X} \| \sum_{g\in \G_x}\xi_i(g)^*\xi_i(g) \| <\infty$ and for every $a\in\contc(\A^{(\alpha,u_{\alpha})})=\contc(r^*\A)$, the net of sections (compare to Example~\ref{ex:approximation_property_maps_as_Fouriers})
\[
    \G\ni g\longmapsto \sum_{t\in \G^{r(g)}} \xi_i(h)^* a(g)\alpha_{g}(\xi_i(g^{-1}h)) \in A_{r(g)} = \A^{(\alpha,u_{\alpha})}_g \subseteq \A^{(\alpha,u_{\alpha})}
\]
converges uniformly on compact sets to $a$. 
Following \cite[Definition 3.5]{BussEchterhoffWillett}, \cite[Definition 2.2]{Takeishi}, \cite[Theorem 5.6.18]{BO}, one could say that the twisted action $(\alpha,u_{\alpha})$ on $\A$ is \emph{strongly amenable} if there is a bounded net $\{\varphi_i\}_{i}\subseteq \contc(r^*Z\M(\A))$ of central sections of positive-type (see Remark~\ref{rem:positive_type_conditions}) such that $\sup_{g\in G} \|\varphi_i(g)a(g) - a(g)\|_{\infty}\longrightarrow 0$ for every $a \in \contc(\A^{(\alpha,u_{\alpha})})$. 
By Corollary~\ref{cor:completely_positive_Herz_Schur_multipliers}, strong amenability implies the  positive Fourier--Stieltjes AP.
The diagram 
\small
\[
    \xymatrix{
   &  & \text{strong amenability} \ar@{=>}[d]  & \\ 
    \text{Exel's AP} \ar@{=>}[r]  & \text{pos.\ Fourier AP} \ar@{=>}[r] \ar@{=>}[d]  & \text{pos.\ Fourier--Stieltjes AP}\ar@{=>}[d] & \\ 
    &  \text{Fourier AP} \ar@{=>}[r]  & \text{Fourier--Stieltjes AP} \ar@{=>}[r] & \text{weak containment} 
}
\] 
\normalsize
summarises the general relationships between the discussed approximation properties. 
The last implication from the Fourier--Stieltjes AP to the weak containment property will be proved below (Theorem~\ref{thm:approximation_prop_implies_weak_containment}). 
For a brief discussion of the converse we refer to Question~\ref{quest:diagram}.
\end{rem}
 
The next result should be compared to \cite[Theorem 5.8]{BedosContiregular}.

\begin{thm}\label{thm:approximation_prop_implies_weak_containment}
Let $\A$ be a $\cst$-bundle over $X$, carrying a twisted action $(\alpha,u_\alpha)$ of $\G$. 
Assume $(\alpha,u_{\alpha})$ has the Fourier--Stieltjes approximation property. 
Then it has the weak containment property, i.e.\ $\cst(\A^{(\alpha,u_{\alpha})}) = \cst_{\red}(\A^{(\alpha,u_{\alpha})})$, and $\cst(\A^{(\alpha,u_{\alpha})})$ is nuclear if and only if $A = \contz(\A)$ is nuclear.
\end{thm}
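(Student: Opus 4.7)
The overall plan is to feed the identity multiplier $T_{\A,\alpha,1,1}\in FS(\alpha,u_\alpha)$ of Example~\ref{ex:identity_multiplier} into the Fourier--Stieltjes approximation property. This supplies a net $\{T_i\}_{i\in I}\subseteq FS(\alpha,u_\alpha)$ of compactly supported multipliers, bounded by some $M<\infty$ in the $\|\cdot\|_{FS}$-norm, with $\m_{T_i}(a)\to a$ uniformly on compact sets for every $a\in\contc(\A^{(\alpha,u_\alpha)})$. Theorem~\ref{thm:FSmultiplier_extends_to_reduced_and_full} then yields completely bounded extensions $\m_{T_i}^{\rd}:\cst_{\red}(\A^{(\alpha,u_\alpha)})\to\cst_{\red}(\A^{(\alpha,u_\alpha)})$ and $\m_{T_i}^{\f}:\cst(\A^{(\alpha,u_\alpha)})\to\cst(\A^{(\alpha,u_\alpha)})$ of $\|\cdot\|_{cb}$-norm at most $M$. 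These $\m_{T_i}^{\rd}$ and $\m_{T_i}^{\f}$ agree on $\contc(\A^{(\alpha,u_\alpha)})$ with the common value $\m_{T_i}$.

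For weak containment I would fix $a\in\contc(\A^{(\alpha,u_\alpha)})$. By Corollary~\ref{cor:compactly_supported_vs_range}, $\m_{T_i}(a)\in\contz(\A^{(\alpha,u_\alpha)}|_{V_i})$ for some precompact open $V_i\subseteq\G$; on this subspace Lemma~\ref{lem:precompacts_embed_into_crossed_products} forces the full and reduced $\cst$-norms to agree (both being the supremum norm), hence $\|\m_{T_i}(a)\|_{\cst}=\|\m_{T_i}(a)\|_{\red}\leq M\|a\|_{\red}$. The argument at the beginning of the proof of Lemma~\ref{lem:weak_approx_implies_reduced_to_full} converts uniform-on-compacta convergence of $\{\m_{T_i}(a)\}$ in $\contc$ into convergence in the maximal $\cst$-norm, so $\|a\|_{\cst}=\lim_i\|\m_{T_i}(a)\|_{\cst}\leq M\|a\|_{\red}$. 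Density of $\contc$ in both crossed products then promotes the identity on $\contc$ to a bounded map $\cst_{\red}(\A^{(\alpha,u_\alpha)})\to\cst(\A^{(\alpha,u_\alpha)})$, which is necessarily the inverse of the canonical surjection, establishing $\cst(\A^{(\alpha,u_\alpha)})=\cst_{\red}(\A^{(\alpha,u_\alpha)})$.

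For the nuclearity equivalence the trivial direction is that if $\cst(\A^{(\alpha,u_\alpha)})=\cst_{\red}(\A^{(\alpha,u_\alpha)})$ is nuclear, then $A=\contz(\A)$ is the image of the faithful conditional expectation $\E:\cst_{\red}\to A$ from Section~\ref{Sec:Fell_bundles}, and images of conditional expectations on nuclear $\cst$-algebras are nuclear. For the converse, assume $A$ nuclear; I would exploit the compactly supported approximations $\m_{T_i}^{\rd}$. Since each $V_i$ is covered by finitely many precompact open bisections $U_{i,1},\dots,U_{i,n_i}$, the range of $\m_{T_i}^{\rd}$ sits inside the finite direct sum $\bigoplus_{j}\contz(\A^{(\alpha,u_\alpha)}|_{U_{i,j}})$ of Hilbert $A$-bimodules, which in turn embeds, via the standard linking-algebra trick, into a matrix amplification $\Mat_{n_i+1}(A)$. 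Nuclearity of $A$ gives nuclearity of this amplification, so composing $\m_{T_i}^{\rd}$ with a CP approximation of the identity on the finite-bisection piece through $\Mat_{n_i+1}(A)$ would yield approximating CP factorizations of the identity on $\cst_{\red}$ through nuclear algebras.

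The main obstacle is precisely this last step: the Fourier--Stieltjes approximation property furnishes only \emph{completely bounded} approximations, while nuclearity requires \emph{completely positive} ones. I expect to circumvent this by invoking Theorem~\ref{thm:FSmultiplier_positive_definite} to replace each $T_i$ with a nearby positive-definite multiplier of comparable $\|\cdot\|_{FS}$-norm (via a Haagerup-type symmetrization or a polarization/averaging argument over the $\C$-algebra structure), which renders every $\m_{T_i}^{\rd}$ completely positive and lets the factorization through $\Mat_{n_i+1}(A)$ proceed through contractive CP maps. An alternative route that I would also try is to bypass positivity entirely by showing directly that $\cst_{\red}(\A^{(\alpha,u_\alpha)})$ has the operator-space approximation property with respect to the nuclear hull $\Mat_\infty(A)$ and then upgrade via weak containment, but I anticipate the positive-definite replacement to be the cleaner path.
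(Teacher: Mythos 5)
There are two genuine gaps in your proposal, one in each half of the theorem.

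For weak containment, your key step asserts that on $\contz(\A^{(\alpha,u_\alpha)}|_{V_i})$, for $V_i$ precompact open, the full and reduced $\cst$-norms agree and both equal the supremum norm. This is false: Lemma~\ref{lem:precompacts_embed_into_crossed_products} only says that this subspace is \emph{closed} in each of the two norms, not that the norms coincide there (the sup-norm statement in the proof of Lemma~\ref{lem:weak_approx_implies_reduced_to_full} is for a single bisection, not a general precompact open set). Already for the full and reduced $\cst$-algebras of a free group, elements supported on a finite set have strictly different full and reduced norms, so your chain $\|\m_{T_i}(a)\|_{\cst}=\|\m_{T_i}(a)\|_{\red}\leq M\|a\|_{\red}$ breaks at the first equality and the inequality $\|a\|_{\cst}\leq M\|a\|_{\red}$ does not follow. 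The fix is to aim lower: you only need injectivity of $\Lambda$, not a norm comparison. Since $\m_{T_i}^{\f}(a)$ and $\m_{T_i}^{\rd}(\Lambda(a))$ are the same continuous section in $\contz(\A^{(\alpha,u_\alpha)}|_{V_i})$, and $\Lambda$ is injective on continuous sections, $\Lambda(a)=0$ forces $\m_{T_i}^{\f}(a)=0$ for all $i$, and then $\m_{T_i}^{\f}(a)\to a$ gives $a=0$. This is the paper's argument.

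For the nuclearity direction assuming $A$ nuclear, you correctly identify the obstacle --- the Fourier--Stieltjes AP only provides completely bounded approximants --- but the proposed remedy of ``replacing each $T_i$ with a nearby positive-definite multiplier of comparable norm'' cannot work in general: if a bounded net of completely bounded finite-rank-type approximants of the identity could always be upgraded to completely positive ones, the completely bounded approximation property would imply nuclearity, which is false (reduced free group $\cst$-algebras). No symmetrization or polarization produces such an upgrade, and Theorem~\ref{thm:FSmultiplier_positive_definite} gives no mechanism for it. The paper avoids the CPAP route entirely: it proves $\cst(\A^{(\alpha,u_\alpha)})\otimes_{\max}D\to\cst(\A^{(\alpha,u_\alpha)})\otimes_{\min}D$ is injective for every $D$ by (i) using that each $\m_{T_i}^{\f}$ is \emph{decomposable} (Proposition~\ref{prop:FSmultipliers_decomposable}), so that $\m_{T_i}^{\f}\otimes_{\max}\mathrm{id}$ is defined, (ii) invoking Takeishi's argument, which uses nuclearity of $A$ to show the quotient map is injective on $\contz(\A^{(\alpha,u_\alpha)}|_{V_i})\otimes_{\max}D$, and (iii) passing to the limit. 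Your finite-bisection/linking-algebra picture is compatible with step (ii), but without the decomposability input and the tensor-product reformulation the argument does not close.
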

\begin{proof}
For the first part we cannot use Corollary~\ref{cor:weak_approx_implies_reduced_to_full}, as the Fourier--Stieltjes AP is (at least formally) weaker than the Fourier AP. 
We adapt the proof of B\'{e}dos--Conti~\cite[Theorem 4.6]{BedosConti2} for crossed products, which was in turn inspired by \cite[Theorem 2.6]{BO} for groups.

Recall from Lemma~\ref{lem:precompacts_embed_into_crossed_products} that if $V\subseteq \G$ is a precompact open set then the subspace $\contz(\A^{(\alpha,u_{\alpha})}|_{V})$ of $\contc(\A^{(\alpha,u_{\alpha})})$ is closed in the reduced and hence also in the universal (maximal) norm. 
Now let $\{T_{i}\}_{i\in I} \subseteq FS[(\alpha,u_{\alpha})]$  be a net witnessing the approximation property. 
For every $i \in I$ the support of each $T_i$ is contained in a precompact open set $V_i$, and so $\m_{T} : \contc(\A^{(\alpha,u_{\alpha})})\to \contz(\A^{(\alpha,u_{\alpha})}|_{V_i})$. 
By the above claim the extended maps $\m_{T_i}^{\rd} : \cst_{\red}(\A^{(\alpha,u_{\alpha})}) \to \contz( \A^{(\alpha,u_{\alpha})}|_{V_i} )$ and $\m_{T_i}^{\f} : \cst(\A^{(\alpha,u_{\alpha})}) \to \contz( \A^{(\alpha,u_{\alpha})}|_{V_i})$ (that exist by Theorem~\ref{thm:FSmultiplier_extends_to_reduced_and_full}) take values in $\contc(\A^{(\alpha,u_{\alpha})})$. 
The regular representation $\Lambda : \cst(\A^{(\alpha,u_{\alpha})}) \to \cst_{\red}(\A^{(\alpha,u_{\alpha})})$ is the identity on $\contc(\A^{(\alpha,u_{\alpha})})$, and therefore for every $a \in \cst(\A^{(\alpha,u_{\alpha})})$ 
\[
    \m_{T_i}^{\rd} (\Lambda(a)) = \Lambda ( \m_{T_i}^{\f}(a)) = \m_{T_i}^{\f}(a) .
\]
Hence if $\Lambda(a) = 0$, then $\m_{T_i}^{\f}(a) = 0$ for all $i\in I$, which implies $a=0$ because $\m_{T_i}^{\f}(a)\to a$, see Lemma~\ref{lem:weak_approx_implies_reduced_to_full}\ref{ite:weak_approx_implies_reduced_to_full2}. 
Thus $\cst(\A^{(\alpha,u_{\alpha})}) = \cst_{\red}(\A^{(\alpha,u_{\alpha})})$.

If $\cst(\A^{(\alpha,u_{\alpha})})$ is nuclear, then so is $A = \contz(\A)$ because we have a conditional expectation $E:\cst(\A^{(\alpha,u_{\alpha})})\to A$. 
To prove the converse implication we adapt Takeishi's proof of \cite[Theorem 4.1]{Takeishi} for amenable groupoids, which in turn was based on \cite[Theorem 5.6.18]{BO}. 
Assume that $A$ is nuclear. 
Let $D$ be any $\cst$-algebra.  
We need to show that the canonical quotient map $Q : \cst(\A^{(\alpha,u_{\alpha})}) \otimes_{\max} D \rightarrow \cst(\A^{(\alpha,u_{\alpha})})\otimes_{\min} D$ is injective. 
The first part of the proof of \cite[Theorem 4.1]{Takeishi} shows (using nuclearity of $A$) that for any precompact open set $V\subseteq \G$, the quotient map $Q$ is injective on   $\contz(\A^{(\alpha,u_{\alpha})}|_{V})\otimes_{\max} D\subseteq \cst(\A^{(\alpha,u_{\alpha})}) \otimes_{\max} D$.
For the approximating net $\{ T_{i} \}_{i\in I} \subseteq  FS[(\alpha,u_{\alpha})]$, as above, the range of each $\m_{T_i}^{\f}$ is contained in the closed subspace $\contz(\A^{(\alpha,u_{\alpha})}|_{V_i})$, and therefore ${\m}_{T_i}^{\f} \otimes_{\max} \mathrm{id} \left( {\cst}(\A^{(\alpha,u_{\alpha})}) \otimes_{\max}D \right)$ is contained in $\contz(\A^{(\alpha,u_{\alpha})}|_{V_i})\otimes_{\max} D$.  
Let $a \in {\cst}(\A^{(\alpha,u_{\alpha})})\otimes_{\max} D$ with $Q(a)=0$. 
Using that the following  diagram commutes: 
\[
\xymatrix{
    {\cst}(\A^{(\alpha,u_{\alpha})}) \otimes_{\max}  D\ar[d]_{Q} \ar[rr]^{{\m}_{T_i}^{\f} \otimes_{\max} \mathrm{id}} & & {\cst}(\A^{(\alpha,u_{\alpha})}) \otimes_{\max} D \ar[d]^{Q} & 
        \\
    {\cst}(\A^{(\alpha,u_{\alpha})}) \otimes_{\min} D \ar[rr]^{{\m}_{T_i}^{\f} \otimes_{\min} \mathrm{id}} & & {\cst}(\A^{(\alpha,u_{\alpha})})\otimes_{\min} D &,}
\]
we get $Q\circ ({\m}_{T_i}^{\f} \otimes_{\max} \mathrm{id})(a) = ({\m}_{T_i}^{\f} \otimes_{\min} \mathrm{id})\circ Q(a) = 0$. 
Since $Q$ is injective on the range of ${\m}_{T_i}^{\f}\otimes_{\max} \mathrm{id}$ this means that ${\m}_{T_i}^{\f} \otimes_{\max} \mathrm{id}(a)=0$ for all $i\in I$. 
Hence $a = \lim_i{\m}_{T_i}^{\f} \otimes_{\max} \mathrm{id}(a)=0$. 
This proves that $Q$ is injective. 
\end{proof}

We will now specify the above results to the context of (twisted) groupoid $\cst$-algebras.


\begin{cor}\label{cor:amenable-groupoids-vs-FS-AP}
Let $u$ be a continuous $2$-cocycle on a locally compact Hausdorff \'etale groupoid $\G$. 
Then the following assertions are equivalent:
\begin{enumerate}
    \item\label{enu:amenable-groupoids-vs-FS-AP1} the trivial twisted action $(\id,u)$ on the trivial one-dimensional \cstar{}bundle $\C\times X$ has the Fourier--Stieltjes approximation property;
    \item\label{enu:amenable-groupoids-vs-FS-AP2} $(\id,u)$ or, equivalently, the trivial (untwisted) action $\id$ on $\C\times X$ has the positive Fourier approximation property.
    \item\label{enu:amenable-groupoids-vs-FS-AP3} $C^*(\G,u)$ or, equivalently, $C^*_r(\G,u)$ is nuclear;
    \item\label{enu:amenable-groupoids-vs-FS-AP4} $C^*(\G)$ or, equivalently, $C^*_r(\G)$ is nuclear;
    \item\label{enu:amenable-groupoids-vs-FS-AP5}  $\G$ is amenable.
\end{enumerate}
\end{cor}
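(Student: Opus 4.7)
The plan is to combine the classical theory of amenability for Hausdorff \'{e}tale groupoids with the multiplier technology developed in this paper. The equivalences $(3)\Leftrightarrow(4)\Leftrightarrow(5)$, including the internal ``full equals reduced'' clauses in (3) and (4), I would simply invoke from the literature: the Anantharaman-Delaroche--Renault theorem gives $(4)\Leftrightarrow(5)$ for Hausdorff \'{e}tale groupoids, and twisting by a scalar continuous $2$-cocycle $u$ does not affect these properties because $\cst(\G,u)$ arises as a spectral subspace of $\cst(\tilde{\G})$ for the $\T$-extension groupoid $\tilde\G := \G\times_u\T$, which is amenable if and only if $\G$ is (since $\T$ is amenable); nuclearity and weak containment transfer in both directions.

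The genuinely new content is the equivalence of these classical conditions with the Fourier-type approximation properties (1) and (2). The plan is to close the loop via the cycle $(5)\Rightarrow(2)\Rightarrow(1)\Rightarrow(3)$. The key step is $(5)\Rightarrow(2)$: starting from a net $(\xi_i)_{i \in I} \subseteq \contc(\G)$ witnessing topological amenability of $\G$, namely satisfying $\sum_{t \in \G_x} |\xi_i(t)|^2 \leq 1$ for every $x \in X$ and
\[
    \sum_{t \in \G_{r(g)}} \overline{\xi_i(t)}\,\xi_i(tg) \longrightarrow 1 \ \text{uniformly on compact subsets of } \G,
\]
I would take the trivial continuous Hilbert $\G$-bundle $\H = \{\C\}_{x\in X}$ with trivial equivariant representation $L_g = 1$, which is a legitimate $(\id,u)$-$(\id,u)$-equivariant representation by Remark~\ref{rem:do-not-depend-on-twist}. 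Each $\xi_i$, viewed as an element of $\contc(s^*\H)$, yields a compactly supported Fourier multiplier $T_i := T_{\ell^2(s^*\H), L^{\G}, \xi_i, \xi_i} \in F[(\id,u),(\id,u)]^+$ with $\|T_i\|_F \leq \|\xi_i\|^2 \leq 1$, and $\m_{T_i}(a)(g) = a(g)\sum_{t \in \G_{r(g)}} \overline{\xi_i(t)}\xi_i(tg) \to a(g)$ uniformly on compact subsets of $\G$ for every $a \in \contc(\G)$. By Proposition~\ref{prop:characterisation_Fourier_approx} this witnesses the positive Fourier approximation property for $(\id,u)$; the untwisted case is identical.

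The remaining implications are routine. For $(2)\Rightarrow(1)$, every element of $F[\cdot,\cdot]^+$ is a compactly supported positive-definite Fourier--Stieltjes multiplier and $\|\cdot\|_{FS}\leq\|\cdot\|_F$, so the approximating net transfers directly. For $(1)\Rightarrow(3)$, I would apply Theorem~\ref{thm:approximation_prop_implies_weak_containment} to the bundle $\A=\{\C\}_{x\in X}$: here $A = \contz(X)$ is commutative and hence nuclear, so the theorem delivers both weak containment $\cst(\G,u) = \cst_{\red}(\G,u)$ and nuclearity of $\cst(\G,u)$.

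The main obstacle will be the classical direction $(3)\Rightarrow(5)$, i.e.\ that nuclearity of the (twisted) groupoid $\cst$-algebra forces amenability of $\G$. This is a genuinely deep result relying on machinery (measurewise amenability, Anantharaman-Delaroche--Renault) well outside the scope of this paper, but I would invoke it as a black-box input. All other steps reduce to reinterpreting standard data through the Fourier multiplier framework built up in Sections~\ref{Sec:FSmultipliers}--\ref{Sec:Applicationapproximation}, with the construction in $(5)\Rightarrow(2)$ being essentially a translation of the amenability net $(\xi_i)$ into the language of positive Fourier multipliers.
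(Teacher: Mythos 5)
Your proposal is correct and follows essentially the same route as the paper: the cycle $(1)\Rightarrow(3)$ via Theorem~\ref{thm:approximation_prop_implies_weak_containment} (with nuclearity of $\contz(X)$ for free), $(3)\Leftrightarrow(4)\Leftrightarrow(5)$ from the literature, and $(5)\Rightarrow(2)\Rightarrow(1)$ by converting the amenability net into coefficients of the regular representation. The only difference is presentational — you write out the multipliers $T_{\ell^2(s^*\H),L^\G,\xi_i,\xi_i}$ explicitly where the paper appeals to $FS(\G,u)=FS(\G)$ and the positive-type-function characterisation of amenability — which if anything makes the $(5)\Rightarrow(2)$ step slightly more transparent.
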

\begin{proof}
If $(\id,u)$ has the Fourier--Stieltjes approximation property, then by the previous theorem $C^*(\G,u)=C^*_r(\G,u)$ is nuclear, and this is known to be equivalent to amenability of $\G$, see \cite{Takeishi}. 
But amenability of $\G$ is, in turn, equivalent to the existence of a net $\{\varphi_i\}_{i\in I}$ of positive-type functions in $\contc(\G)$ that converge pointwise to $1$ uniformly on compacts, see \cite[Proposition~2.2.13]{AD-Renault-amenable} or \cite[Theorem~5.6.18]{BO}. 
This is another way to say that $(\id,u)$ or just $\id$ has the positive Fourier--Stieltjes AP as $FS(\G,u)=FS(\G)$, see Example~\ref{ex:Fourier-Stietjes-algebra-groupoids}.
\end{proof}

\begin{rem}
Most of the statements in the above corollary were already known to be equivalent before, as already indicated in the references throughout the proof, except for the implication from \ref{enu:amenable-groupoids-vs-FS-AP1} to the other statements \ref{enu:amenable-groupoids-vs-FS-AP2}--\ref{enu:amenable-groupoids-vs-FS-AP5}. 
\end{rem}

\section{Some further applications: Haagerup trick and decomposable norms} 
\label{Sec:Applications}

The Haagerup trick is a way of producing multipliers of a dynamical system from maps on the associated reduced algebra, applied first for groups by Uffe Haagerup \cite[Lemma 2.5]{Haagerup}, later used in many different contexts and in particular generalised to group actions in \cite[Proposition 4.12]{BedosConti2} and \cite[Proposition 3.4]{mstt}. 
A version of this trick for twisted groupoid actions and bimodule maps was a main tool in \cite{BartoszKangAdam}. 
In general, for \'etale groupoids and their actions it is not clear what the Haagerup trick should be. 
However when either the groupoid $\G$ is discrete or when we consider only bimodule maps the situation becomes much more transparent. 
We discuss here these two cases and some of the applications, notably to identifying the Fourier--Stieltjes norm of the associated multiplier. 
For the last purpose we will also use the fact that maps coming from a Fourier--Stieltjes multiplier $T = T_{\EE,L,\xi,\zeta}$  are \emph{decomposable}, that is they are finite linear combinations of completely positive maps, which follows from the usual polarisation formula 
\[
    T =\frac{1}{4}\sum_{k=0}^3 i^k T_{\EE,L,\xi+i^k\zeta ,\xi+i^k\zeta}
\]
and the  multipliers of the form $T_{\EE,L,\xi,\xi}$ induce completely positive maps, cf.\ Theorem~\ref{thm:FSmultiplier_positive_definite}. 
Finally we will use the well-known fact that for any  Fell bundle $\A$ over $\G$, 
the inclusion $\contc(\A)\subseteq \contz(\A)$ extends to a contractive injective linear embedding \(\cst_{\red}(\A) \to \contz(\A)\), cf.\ \cite[Proposition 7.10]{BartoszRalf2}.

\subsection{Twisted actions of discrete groupoids} 

In this subsection we will assume that the groupoid $\G$ is discrete. 

\begin{lem}[Haagerup Trick I]\label{lem:Haagerup trick discrete}
Let $\A$ and $\B$ be Fell bundles over a discrete groupoid $\G$. 
For any bounded linear map $\Phi:\cst_{\red}(\A)\to \cst_{\red}(\B)$  
the formula  
\[
    T_{g}^{\Phi}(a)=\Phi(a\delta_g)(g), \qquad a\in  A_{g},\,\, g \in \G,
\]
where $a\delta_g$ is  a section in $\contc(\A|_{\{g\}})$ with $a\delta_g(g)=a$, defines a multiplier $T^{\Phi}=\{T_g^{\Phi}\}_{g\in \G}$ from $\A$ to $\B$ with $\sup_{g\in \G}\|T_{g}^{\Phi} \|\leq \|\Phi\|$. 
If $\Phi$ is completely positive, then $T^{\Phi}=\{T_g^{\Phi}\}_{g\in \G}$ is  positive-definite.  
\end{lem}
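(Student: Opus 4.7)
The plan is to verify three things: (i) the formula $T_g^\Phi(a)=\Phi(a\delta_g)(g)$ makes sense and defines a bounded linear map $A_g\to B_g$, (ii) the family $T^\Phi=\{T_g^\Phi\}_{g\in \G}$ is a multiplier with $\sup_{g}\|T_g^\Phi\|\le\|\Phi\|$, and (iii) when $\Phi$ is completely positive, $T^\Phi$ is positive-definite. Step (ii) is almost automatic because $\G$ is discrete, and most of the work goes into identifying the right positive element of $B_x$ in step~(iii).

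For (i)--(ii), since $\{g\}$ is an open bisection, any $a\in A_g$ yields a section $a\delta_g\in\contc(\A|_{\{g\}})\subseteq \cst_{\red}(\A)$. Using the $\cst$-identity together with the isometric inclusion $\contz(\A|_X)\subseteq \cst_{\red}(\A)$ recalled in Section~\ref{Sec:Fell_bundles}, I would compute
\[
    \|a\delta_g\|_{\cst_{\red}(\A)}^2 = \|(a\delta_g)^*(a\delta_g)\|_{\cst_{\red}(\A)} = \|a^\star a\,\delta_{s(g)}\|_{\contz(\A|_X)} = \|a\|^2 .
\]
Evaluation at $g$ on $\Phi(a\delta_g)$ is legitimate via the contractive embedding $j:\cst_{\red}(\B)\hookrightarrow \contz(\B)$ mentioned just before the lemma, and it yields $\|T_g^\Phi(a)\|\le\|\Phi(a\delta_g)\|\le\|\Phi\|\|a\|$. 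The multiplier axiom is trivial in the discrete setting: continuity of $g\mapsto T_g^\Phi(b(g))$ is vacuous, and since $b\in\contc(\A)$ has finite support, so does $\m_{T^\Phi}(b)$, hence $\m_{T^\Phi}(b)\in\contc(\B)$.

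For (iii), fix $x\in X$, $g_1,\dots,g_n\in \G_x$, and elements $a_{g_i}\in A_{g_i}$, $b_{g_i}\in B_{g_i}$. I would assemble the column $\xi=(a_{g_i}\delta_{g_i})_{i=1}^n\in\Mat_{n,1}(\cst_{\red}(\A))$; then the matrix $\xi\xi^*\in\Mat_n(\cst_{\red}(\A))$ is positive and has $(i,j)$-entry $a_{g_i}a_{g_j}^\star\,\delta_{g_ig_j^{-1}}$. Complete positivity of $\Phi$ forces $\Phi^{(n)}(\xi\xi^*)\ge 0$ in $\Mat_n(\cst_{\red}(\B))$. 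Conjugating by the column $\eta=(b_{g_i}\delta_{g_i})_{i=1}^n\in\Mat_{n,1}(\cst_{\red}(\B))$ gives a positive element
\[
    z:=\sum_{i,j=1}^n b_{g_i}^\star\delta_{g_i^{-1}}\cdot \Phi\bigl(a_{g_i}a_{g_j}^\star\delta_{g_ig_j^{-1}}\bigr)\cdot b_{g_j}\delta_{g_j}\;\in\;\cst_{\red}(\B).
\]
Applying the completely positive composition $\ev_x\circ\E$, where $\E:\cst_{\red}(\B)\to\contz(\B|_X)$ is the faithful conditional expectation and $\ev_x$ is the $*$-homomorphism of evaluation at the unit $x$, produces a positive element of $B_x$; by definition this element is $\sum_{i,j}b_{g_i}^\star T_{g_ig_j^{-1}}^\Phi(a_{g_i}a_{g_j}^\star)b_{g_j}$, yielding positive-definiteness. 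The one bookkeeping obstacle here is the last identification: expanding $\Phi(a_{g_i}a_{g_j}^\star\delta_{g_ig_j^{-1}})=\sum_h c_{ij}(h)\delta_h$ with $c_{ij}(h)\in B_h$, each term $b_{g_i}^\star\delta_{g_i^{-1}}\cdot c_{ij}(h)\delta_h\cdot b_{g_j}\delta_{g_j}$ is supported on the single arrow $g_i^{-1}hg_j$, which meets $x$ precisely when $h=g_ig_j^{-1}$, and the coefficient there is $c_{ij}(g_ig_j^{-1})=T_{g_ig_j^{-1}}^\Phi(a_{g_i}a_{g_j}^\star)$. After this identification, $(\ev_x\circ\E)(z)$ gives exactly the required sum, completing the proof.
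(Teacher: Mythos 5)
Your proposal is correct and follows essentially the same route as the paper: both proofs hinge on applying $\Phi^{(n)}$ to the positive matrix $\bigl(a_{g_i}a_{g_j}^{\star}\delta_{g_ig_j^{-1}}\bigr)_{i,j}$ and then compressing to extract the positive element of $B_x$. The only (cosmetic) difference is the compression device — the paper pairs $\Lambda_x^{(n)}\circ\Phi^{(n)}$ of that matrix against the vector $\bigoplus_i b_{g_i}\delta_{g_i}\in\ell^2(\B)_x^{\oplus n}$, whereas you conjugate by the column $(b_{g_j}\delta_{g_j})_j$ inside $\Mat_n(\cst_{\red}(\B))$ and apply $\ev_x\circ\E$; these yield the same element of $B_x$.
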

\begin{proof}
It is clear that the formula above defines a multiplier. 
Moreover, $\| T_{g}^{\Phi}(a) \| = \| \Phi(a\delta_g)(g) \| \leq \| \Phi(a\delta_g) \| \leq \| \Phi \| \cdot \| a \|$ for any $a \in A_{g}$, $g \in \G$. 
Hence $\sup_{g\in \G} \| T_{g}^{\Phi} \| \leq \| \Phi \|$. 
Assume that $\Phi$ is completely positive. 
Fix $x \in X$, $n \in \N$, elements $g_1 , \ldots , g_n \in \G_x$, and collections $\{ a_{g_i} \in A_{g_i} : 1 \leq i \leq n \}$, $\{b_{g_i} \in B_{g_i}: 1 \leq i \leq n \}$. 
The matrix $( a_{g_i} a_{g_j}^\star \delta_{g_i g_j^{-1}})_{i,j = 1}^n = (a_{g_i} \delta_{g_i} * (a_{g_j} \delta_{g_j})^*)_{i,j = 1}^n$ is positive in $\Mat_n(\cst_{\red}(\A))$ and so $b := \Phi^{(n)} ((a_{g_i} a_{g_j}^\star \delta_{g_i g_j^{-1}})_{i,j = 1}^n)$ is positive in $\L(\ell^2(\B)_{x}^{\oplus n})$. 
Let $\xi = \bigoplus_{i = 1}^n \xi_{g_i} \in \ell^2(\B)_{x}^{\oplus n}$ be given by $\xi_g(h)=[h=g] b_g$, for all $h,g \in \G$. 
Then 
\[ 
    0 \leq \langle \xi, b \xi \rangle_{B_x} = \sum_{i,j = 1}^n \langle \xi_{g_i} , \Phi ( a_{g_i} a_{g_j}^\star \delta_{g_i g_j^{-1}} ) \xi_{g_j} \rangle_{B_x} = \sum_{i,j = 1}^n b_{g_i}^{\star} \left( T_{g_i g_j^{-1}}^{\Phi} (a_{g_i}  a_{g_j}^{\star} ) \right) b_{g_j} . 
\]
Hence $T^{\Phi}$ is positive definite. 
\end{proof}

\begin{prop}\label{prop:positive_FS_AP_for_actions2} 
Let $\A$, $\B$  be $\cst$-bundles over $X$, carrying twisted actions $(\alpha,u_\alpha)$ and $(\beta,u_\beta)$ of a discrete groupoid $\G$.
A multiplier $T$ from $\A^{(\alpha,u_{\alpha})}$ to $\B^{(\beta,u_{\beta})}$ has the positive Fourier--Stieltjes approximation property if and only if there is a bounded net $\{\Phi_{i}\}_{i\in I}$ of completely positive maps $\Phi_i:\cst_{\red}(\A^{(\alpha,u_\alpha)})\to C(\B^{(\beta,u_\beta)}|_{V_i})\subseteq \cst_{\red}(\B^{(\beta,u_\beta)})$ with ranges supported in finite sets $V_i\subseteq \G$, $i\in I$, and such that $\Phi_i(a)\to m_T(a)$ for every $a \in \contc(\A^{(\alpha,u_\alpha)})$. 
\end{prop}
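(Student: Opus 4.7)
The plan is to combine the characterisation of the positive Fourier--Stieltjes AP given by Proposition~\ref{prop:characterisation_positive_FS_approx} with the Haagerup Trick from Lemma~\ref{lem:Haagerup trick discrete}, exploiting that for discrete $\G$ ``compactly supported'' is the same as ``finitely supported''. The two implications will be more or less symmetric: on one side the approximating multipliers are extended to cp maps on the reduced crossed product; on the other side the Haagerup trick pulls the cp maps back to positive-definite multipliers. The role of discreteness is purely bookkeeping — it reconciles the various topologies (uniform on compacts of $\G$, reduced $\cst$-norm, and pointwise at each $g \in \G$) on the subspaces of finitely supported sections.

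For the forward direction, I will start from a net $\{T_i\}_{i \in I}$ of compactly supported positive-definite multipliers from $\A^{(\alpha,u_\alpha)}$ to $\B^{(\beta,u_\beta)}$ obtained from Proposition~\ref{prop:characterisation_positive_FS_approx}, satisfying $M := \sup_{i,x}\|(T_i)_x\|<\infty$ and $m_{T_i}(a)\to m_T(a)$ uniformly on compact subsets of $\G$ for every $a\in\contc(\A^{(\alpha,u_\alpha)})$. By Corollary~\ref{cor:positive_definite_is_full_and_reduced}, each $T_i$ extends to a completely positive map $\Phi_i := m_{T_i}^{\rd} : \cst_{\red}(\A^{(\alpha,u_\alpha)})\to \cst_{\red}(\B^{(\beta,u_\beta)})$ with $\|\Phi_i\|\le M$, and, by Corollary~\ref{cor:compactly_supported_vs_range}, the range of $\Phi_i$ is contained in $\contz(\B^{(\beta,u_\beta)}|_{V_i}) = C(\B^{(\beta,u_\beta)}|_{V_i})$ for the finite set $V_i := \supp(T_i)$. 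The required convergence $\Phi_i(a)\to m_T(a)$ in the reduced norm then follows from Lemma~\ref{lem:precompacts_embed_into_crossed_products}: both $\Phi_i(a) = m_{T_i}(a)$ and $m_T(a)$ are supported inside the finite set $\supp(a)$, and on this fixed finite-support subspace the reduced and supremum norms coincide, so uniform convergence on compacts of $\G$ is exactly convergence in $\cst_{\red}$-norm.

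For the converse, given a net $\{\Phi_i\}_{i\in I}$ as in the statement, I will apply Lemma~\ref{lem:Haagerup trick discrete} to each $\Phi_i$ to produce a positive-definite multiplier $T^{\Phi_i}=\{T^{\Phi_i}_g\}_{g\in\G}$ with $T^{\Phi_i}_g(a)=\Phi_i(a\delta_g)(g)$ and $\sup_g\|T^{\Phi_i}_g\|\le\|\Phi_i\|$. Because the range of $\Phi_i$ is supported in the finite set $V_i$, the multiplier $T^{\Phi_i}$ is itself supported in $V_i$ and in particular compactly supported, while the uniform bound on $\|\Phi_i\|$ yields $\sup_i\sup_{x\in X}\|(T^{\Phi_i})_x\|<\infty$ (since $x\in X$ is itself an element of $\G$). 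For the convergence, the contractive embedding $\cst_{\red}(\B^{(\beta,u_\beta)})\hookrightarrow\contz(\B^{(\beta,u_\beta)})$ of \cite[Proposition 7.10]{BartoszRalf2} turns the hypothesis $\Phi_i(b\delta_g)\to m_T(b\delta_g)$ in $\cst_{\red}$-norm into pointwise convergence at $g$, giving
\[
    T^{\Phi_i}_g(a(g)) = \Phi_i(a(g)\delta_g)(g) \;\longrightarrow\; m_T(a(g)\delta_g)(g) = T_g(a(g))
\]
for every $a\in\contc(\A^{(\alpha,u_\alpha)})$ and $g\in\G$. Since the supports of $m_{T^{\Phi_i}}(a)$ and $m_T(a)$ all lie inside the finite set $\supp(a)$, this pointwise convergence is automatically uniform on compact subsets of $\G$, and Corollary~\ref{cor:positive_FS_AP_for_actions} then concludes that $T$ has the positive Fourier--Stieltjes AP.

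The principal difficulty — such as it is — is not conceptual but organisational: one must carefully keep track of the three topologies in play (uniform on compacts of $\G$, the reduced $\cst$-norm, and pointwise convergence on $\G$) and verify that they collapse to one another on the various finite-support subspaces we encounter. Discreteness of $\G$ is precisely what makes this collapse possible.
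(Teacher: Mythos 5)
Your proposal is correct and follows essentially the same route as the paper: the forward direction extends the approximating positive-definite multipliers to completely positive maps on the reduced crossed product (via Theorems~\ref{thm:FSmultiplier_extends_to_reduced_and_full}/\ref{thm:FSmultiplier_positive_definite} and Corollary~\ref{cor:compactly_supported_vs_range}), and the converse applies the Haagerup Trick of Lemma~\ref{lem:Haagerup trick discrete} together with the contractive embedding of $\cst_{\red}(\B^{(\beta,u_\beta)})$ into $\contz(\B^{(\beta,u_\beta)})$. The only nitpick is that your final citation should be Proposition~\ref{prop:characterisation_positive_FS_approx}\ref{enu:characterisation_positive_FS_approx2} rather than Corollary~\ref{cor:positive_FS_AP_for_actions}, which concerns the identity multiplier only.
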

\begin{proof}  
If $\{ T_{i} \}_{i\in I} \subseteq FS[(\alpha,u_{\alpha})]$ is a net witnessing the positive Fourier--Stieltjes approximation property for $T$ then the associated maps $\{\m_{T_i}^{\rd}\}_{i\in I}$, given by Theorem~\ref{thm:FSmultiplier_extends_to_reduced_and_full}, have the desired properties by Theorem~\ref{thm:FSmultiplier_positive_definite} and Corollary~\ref{cor:compactly_supported_vs_range}. 
Conversely, assume that $\{\Phi_{i}\}_{i\in I}$ is a bounded net of completely positive maps $\Phi_i : \cst_{\red}(\A^{(\alpha,u_\alpha)}) \to C(\B^{(\beta,u_\beta)}|_{V_i})$ where $V_i$ is finite. 
Then the associated net $\{T^{\Phi_{i}}\}_{i\in I}$ given by Lemma~\ref{lem:Haagerup trick discrete} consists of positive-definite multipliers such that $\sup_{i\in I}\sup_{x\in X}\|(T_i)_{x}\| \leq \sup_{i\in I} \|\Phi_i\|<\infty$. 
Moreover, $\Phi_i(a\delta_g)\to m_T(a\delta_g) = T_{g}(a)\delta_g$ for $a\in A_{r(g)}$, $g\in \G$, implies 
\[
    \m_{T^{\Phi_i}} (a\delta_g) = T^{\Phi_i}(a) \delta_g = \Phi_i(a\delta_g)(g)\delta_g \to \m_T(a\delta_g)(g) \delta_g = T_{g}(a) \delta_g = m_T(a\delta_g) . 
\]
This implies that $\m_{T^{\Phi_i}}(a)\to m_T(a)$ for every $a\in \contc(\A^{(\alpha,u_\alpha)})$.
\end{proof}
 
\begin{cor}\label{cor:characterisation_FS_AP_for_discrete}
A twisted groupoid action $(\alpha,u_{\alpha})$ of a discrete groupoid $\G$ on a bundle $\A$ has the positive Fourier--Stieltjes approximation property if and only if there is a bounded net $\{\Phi_{i}\}_{i\in I}$ of completely positive maps on $\cst_{\red}(\A^{(\alpha,u_{\alpha})})$ that converge pointwise to the identity and that have ranges supported on finite subsets of $\G$. 
\end{cor}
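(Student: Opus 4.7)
The plan is to deduce this directly from Proposition~\ref{prop:positive_FS_AP_for_actions2} applied to the identity multiplier on $\A^{(\alpha,u_{\alpha})}$, namely $T = T_{\A,\alpha,1,1}$ from Example~\ref{ex:identity_multiplier}, for which each $T_g$ is the identity on $A_{r(g)}$ and $\m_T$ is the identity on $\contc(\A^{(\alpha,u_{\alpha})})$. The positive Fourier--Stieltjes AP of $(\alpha,u_{\alpha})$ is by Definition~\ref{def:weak_approximation_property_for_actions} precisely the positive Fourier--Stieltjes AP of this identity multiplier.

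For the forward direction, I would apply Proposition~\ref{prop:positive_FS_AP_for_actions2} to obtain a bounded net $\{\Phi_i\}_{i\in I}$ of completely positive maps $\Phi_i:\cst_{\red}(\A^{(\alpha,u_\alpha)})\to C(\A^{(\alpha,u_\alpha)}|_{V_i})\subseteq \cst_{\red}(\A^{(\alpha,u_\alpha)})$ with $V_i\subseteq \G$ finite and $\Phi_i(a)\to a$ for all $a\in \contc(\A^{(\alpha,u_\alpha)})$. These ranges are then automatically supported on the finite sets $V_i$. Pointwise convergence $\Phi_i\to \mathrm{id}$ on the whole reduced algebra follows from the standard $\varepsilon/3$-argument using density of $\contc(\A^{(\alpha,u_\alpha)})\subseteq \cst_{\red}(\A^{(\alpha,u_\alpha)})$ and the uniform bound on $\sup_i\|\Phi_i\|$.

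For the converse, suppose $\{\Phi_i\}_{i\in I}$ is a bounded net of completely positive maps on $\cst_{\red}(\A^{(\alpha,u_{\alpha})})$ converging pointwise to the identity, with the range of each $\Phi_i$ supported on a finite set $V_i\subseteq \G$. Since $\G$ is discrete, every finite subset $V_i$ is a clopen precompact subset, and Lemma~\ref{lem:precompacts_embed_into_crossed_products} shows that $\contz(\A^{(\alpha,u_{\alpha})}|_{V_i})=C(\A^{(\alpha,u_{\alpha})}|_{V_i})$ is closed in $\cst_{\red}(\A^{(\alpha,u_\alpha)})$. Using the contractive embedding $j:\cst_{\red}(\A^{(\alpha,u_{\alpha})})\to \contz(\A^{(\alpha,u_{\alpha})})$ mentioned at the start of the section, ``range supported on $V_i$'' translates to the range of $\Phi_i$ being contained in $C(\A^{(\alpha,u_{\alpha})}|_{V_i})$. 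In particular $\Phi_i(a)\to a$ on $\contc(\A^{(\alpha,u_{\alpha})})$, so Proposition~\ref{prop:positive_FS_AP_for_actions2} applies and yields the positive Fourier--Stieltjes AP for the identity multiplier, i.e.\ for the action.

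No real obstacle is anticipated: the only point requiring mild care is the identification of the two formulations of ``support'' of elements of $\cst_{\red}(\A^{(\alpha,u_{\alpha})})$ on a finite subset of $\G$, which uses discreteness of $\G$ together with Lemma~\ref{lem:precompacts_embed_into_crossed_products}.
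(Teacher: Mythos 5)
Your proposal is correct and is essentially the paper's own proof: the paper likewise obtains the corollary by applying Proposition~\ref{prop:positive_FS_AP_for_actions2} to the identity multiplier. The extra details you supply (the $\varepsilon/3$ density argument for upgrading convergence from $\contc(\A^{(\alpha,u_{\alpha})})$ to the whole reduced algebra, and the identification of the two notions of support via Lemma~\ref{lem:precompacts_embed_into_crossed_products}) are sound and consistent with what the paper leaves implicit.
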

\begin{proof} 
Apply Proposition~\ref{prop:positive_FS_AP_for_actions2} to the identity multiplier.
\end{proof}

The above results make it more evident that the positive Fourier--Stieltjes AP is a `nuclearity condition' for multipliers.
In particular, if in the characterisation in Corollary~\ref{cor:characterisation_FS_AP_for_discrete} one replaces `ranges are supported on finite subsets of $\G$' with `ranges are finite-dimensional' one gets a characterisation of nuclearity of $\cst_{\red}(\A^{(\alpha,u_{\alpha})})$, see \cite[Theorem 3.1]{ChoiEffros}. 
We show that in this context `finite rank' implies `supported on finite sets' using the following lemma, which is probably well known to experts, cf.\ the proof of 
\cite[Theorem 4.3]{mstt}.

\begin{lem}\label{lem:McKee_Skalski_Todorow_Turowska}
Let $A$ and $B$ be $C^*$-algebras. 
Assume $A$ is finite dimensional and let $B_0$ be a dense $*$-subalgebra of $B$. 
For every completely positive map $\Phi : A \to B$ there is a net $\{\Phi_i\}_{i\in I}$ of completely positive maps $\Phi_i : A \to B_0 \subseteq B$ such that $\| \Phi_i \| = \| \Phi \|$ and $\Phi_{i}(a) \to \Phi(a)$ for $a\in A$ (equivalently $\Phi_{i} \to \Phi$ in norm).
\end{lem}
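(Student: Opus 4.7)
The plan is to use the Choi-matrix correspondence between completely positive maps on a finite-dimensional \cstar{}algebra and positive elements in an associated matrix algebra, together with a simple rescaling step to force the sharp norm equality. First I would decompose $A\cong \bigoplus_{k=1}^{K} M_{n_k}(\C)$ and note the standard fact that, since $A$ is unital, $\|\Phi\|=\|\Phi(1_A)\|$ for any completely positive $\Phi:A\to B$; the problem reduces to simultaneously approximating each block, and I may further assume $\Phi\neq 0$ (the case $\Phi=0$ being trivial with $\Phi_i\equiv 0$).

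For a single block $M_n(\C)$ with matrix units $\{e_{ij}\}$, the CP map $\Phi_k$ obtained from $\Phi$ by restriction corresponds to the positive element $C_k:=[\Phi_k(e_{ij})]_{i,j=1}^{n_k}\in M_{n_k}(B)$ (Choi's theorem), and conversely any positive element of $M_{n_k}(B_0)$ determines a CP map into $B_0$ by reading off its entries. Writing $C_k=X_k^*X_k$ with $X_k\in M_{n_k}(B)$, and using that $M_{n_k}(B_0)$ is a dense $*$-subalgebra of $M_{n_k}(B)$, I choose nets $Y_k^{(i)}\in M_{n_k}(B_0)$ with $Y_k^{(i)}\to X_k$, and set $D_k^{(i)}:=(Y_k^{(i)})^*Y_k^{(i)}\in M_{n_k}(B_0)^+$. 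Then $D_k^{(i)}\to C_k$ in norm. Doing this simultaneously for all blocks yields CP maps $\widetilde{\Phi}_i:A\to B_0$ with $\widetilde{\Phi}_i(a)\to \Phi(a)$ for every $a\in A$; because $A$ is finite-dimensional, this pointwise convergence on an equibounded net is automatically norm convergence $\widetilde{\Phi}_i\to \Phi$, and in particular $\|\widetilde{\Phi}_i\|=\|\widetilde{\Phi}_i(1_A)\|\to \|\Phi(1_A)\|=\|\Phi\|$.

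To upgrade the asymptotic equality of norms to the required exact equality, I pass to a cofinal subnet on which $\widetilde{\Phi}_i\neq 0$ and set
\[
    \Phi_i:=\lambda_i\,\widetilde{\Phi}_i,\qquad \lambda_i:=\frac{\|\Phi\|}{\|\widetilde{\Phi}_i\|}.
\]
Each $\Phi_i$ is CP (a nonnegative scalar multiple of a CP map), takes values in $B_0$, and satisfies $\|\Phi_i\|=\|\Phi\|$ by construction; since $\lambda_i\to 1$, one still has $\Phi_i\to \Phi$ in norm, and hence $\Phi_i(a)\to\Phi(a)$ for every $a\in A$.

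There is no serious obstacle here: the Choi correspondence and the norm formula $\|\Phi\|=\|\Phi(1_A)\|$ are standard, the equivalence of pointwise and norm convergence on a finite-dimensional domain is automatic, and the rescaling is elementary. The only mildly delicate point is the density step producing a \emph{positive} approximant $D_k^{(i)}\in M_{n_k}(B_0)$ to the positive element $C_k\in M_{n_k}(B)$, which is handled by approximating a square root of $C_k$ rather than $C_k$ itself.
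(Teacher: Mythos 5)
Your proof is correct and follows essentially the same route as the paper's: identify $\Phi$ with its (positive) Choi matrix $[\Phi(e_{pq})]$, approximate that matrix by positive elements of $\Mat_n(B_0)$, read the approximants back as completely positive maps via Choi's theorem, and rescale to force $\|\Phi_i\|=\|\Phi\|$. The only difference is cosmetic: where the paper simply invokes density of $\Mat_n(B_0)^+$ in $\Mat_n(B)^+$, you justify it by approximating a square root $X_k$ of the Choi matrix and taking $(Y_k^{(i)})^*Y_k^{(i)}$, and you rescale the total map rather than blockwise — both choices are fine and, if anything, make the argument slightly more self-contained.
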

\begin{proof} 
Since $A$ is finite dimensional we have $A = \bigoplus_{k=1}^N \Mat_{n_k}(\C)$. 
Then each map $\Phi_k := \Phi|_{\Mat_{n_k}(\C)}$ (with $k = 1 , \ldots , N$) is contractive and completely positive, and $\Phi(\oplus_{k=1}^N a_k) = \sum_{k=1}^n \Phi_k(a_k)$ for $\oplus_{k=1}^n a_k \in A$. 
Thus it suffices to prove the assertion for each $\Phi_k$. 
Hence we may simply assume that $A=\Mat_{n}(\C)$ for some $n$. 
We may also assume that $\|\Phi\|=1$.

Let $e_{p,q}$, $p , q = 1, \ldots , n$, be the canonical matrix units in $\Mat_{n}(\C)$. 
Since the matrix $(e_{p,q})^n_{p,q=1}\in \Mat_{n}(\Mat_n(\C))$ is positive and $\Phi$ is completely positive, we get that $(\Phi(e_{p,q}))^n_{p,q=1}\in \Mat_{n}(B)^+$. 
As $\Mat_{n}(B_0)^+$ is dense in $\Mat_{n}(B)^+$, given $\varepsilon_i>0$, $i\in I$, with $\varepsilon_i\to 0$, there are matrices $(b^i)_{p,q=1}^n\in \Mat_{n}(B_0)^+$ such that $\| (\Phi(e_{p,q}))^n_{p,q = 1} - (b^i)_{p,q=1}^n \| < \varepsilon_i/n$.
Define $\Phi_i : A \to B$ by 
\[
    \Phi_i \big( (\lambda_{p,q})^n_{p,q=1} \big) = \sum^n_{p,q=1} \lambda_{p,q} b^i_{p,q} .
\] 
By Choi's Theorem \cite[Theorem 3.14]{Paulsen}, $\Phi_i$ is completely positive.  
Since $\| \Phi(e_{p,q}) - \Phi_i(e_{p,q}) \|= \| \Phi(e_{p,q}) - b^i_{p,q}\| < \varepsilon_i/n\stackrel{i}\to 0$ and $\{e_{p,g}\}_{p,q=1}^n$ is a basis in $\Mat_{n}(\C)$, we conclude that $\Psi_i$ converges to $\Psi$ pointwise (and hence also in norm as $\dim(A)<\infty$).
Moreover,
\[
    | \|\Phi_i\|-1| = | \|\Phi_i(1)\| - \|\Phi(1)\| | \leq \| \Phi_i(1) -\Phi(1) \| \leq \sum_{p=1}^n \| \Phi(e_{p,p}) - b^i_{p,p}\|\leq \varepsilon_i/n\stackrel{i}\to 0.
\]
Thus $\|\Phi_i\|\to 1$ and therefore replacing $\Phi_i$ by $\frac{\Phi_i}{\|\Phi_i\|}$ we get the desired net.
\end{proof}




\begin{cor}\label{cor:discrete_Haagerup_approximation} 
Let $\A$, $\B$ be $\cst$-bundles over $X$, carrying twisted actions $(\alpha,u_\alpha)$ and $(\beta,u_\beta)$ of a discrete groupoid $\G$. 
If a map $\Phi : \cst_{\red}(\A^{(\alpha,u_\alpha)}) \to \cst_{\red}(\B^{(\beta,u_\beta)})$ is a pointwise limit of a bounded net $\{\Phi_{i}\}_{i\in I}$ of finite rank completely positive maps $\Phi_i : \cst_{\red}(\A^{(\alpha,u_\alpha)})\to \cst_{\red}(\B^{(\beta,u_\beta)})$, $i\in I$, then the multiplier $T^{\Phi}$ from Lemma~\ref{lem:Haagerup trick discrete} has the positive Fourier--Stieltjes approximation property. 
\end{cor}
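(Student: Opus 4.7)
The plan is to verify condition~(2) of Proposition~\ref{prop:positive_FS_AP_for_actions2} for the multiplier $T^{\Phi}$. Since $\Phi$ is a bounded pointwise limit of completely positive maps it is itself CP and bounded, so by Lemma~\ref{lem:Haagerup trick discrete} the multiplier $T^{\Phi}$ is positive-definite with $\sup_{g\in\G}\|T^{\Phi}_g\|\leq\|\Phi\|<\infty$. The task therefore reduces to exhibiting a bounded net of compactly-supported positive-definite multipliers $\{T_\lambda\}$ such that $\m_{T_\lambda}(a\delta_g)\to \m_{T^{\Phi}}(a\delta_g)=\Phi(a\delta_g)(g)\delta_g$ for every $a\in A_{r(g)}$ and $g\in\G$.

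I would proceed via a two-layer approximation. By the classical factorisation of finite-rank completely positive maps between $\cst$-algebras (obtained, for instance, by compressing a Stinespring dilation to a finite-dimensional invariant subspace), each $\Phi_i$ can be written as $\Phi_i=\beta_i\circ\alpha_i$ with $\alpha_i\colon\cst_{\red}(\A^{(\alpha,u_\alpha)})\to\Mat_{n_i}(\C)$ and $\beta_i\colon\Mat_{n_i}(\C)\to\cst_{\red}(\B^{(\beta,u_\beta)})$ both CP, and with the product of norms $\|\alpha_i\|\cdot\|\beta_i\|$ controlled uniformly in $i$. Since $\G$ is discrete, $\contc(\B^{(\beta,u_\beta)})$ is a dense $*$-subalgebra of $\cst_{\red}(\B^{(\beta,u_\beta)})$ consisting of finitely-supported sections. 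Applying Lemma~\ref{lem:McKee_Skalski_Todorow_Turowska} to each $\beta_i$ with this dense subalgebra yields a net $\{\beta_i^{(j)}\}_{j\in J_i}$ of CP maps $\beta_i^{(j)}\colon\Mat_{n_i}(\C)\to\contc(\B^{(\beta,u_\beta)})$ with $\|\beta_i^{(j)}\|=\|\beta_i\|$ and $\beta_i^{(j)}\to\beta_i$ in norm. The compositions $\Phi_i^{(j)}:=\beta_i^{(j)}\circ\alpha_i$ are then CP maps whose ranges lie in finite-dimensional subspaces of $\contc(\B^{(\beta,u_\beta)})$, hence are contained in $C(\B^{(\beta,u_\beta)}|_{V_{i,j}})$ for some finite $V_{i,j}\subseteq\G$ (namely the union of supports of a basis of the range).

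Setting $T_{i,j}:=T^{\Phi_i^{(j)}}$, Lemma~\ref{lem:Haagerup trick discrete} provides positive-definite multipliers supported in $V_{i,j}$ (because $\Phi_i^{(j)}(a\delta_g)(g)=0$ whenever $g\notin V_{i,j}$) with uniformly bounded fibre norms $\sup_g\|T_{i,j,g}\|\leq\|\Phi_i^{(j)}\|$. Invoking the contractive embedding $\cst_{\red}(\B^{(\beta,u_\beta)})\hookrightarrow\contz(\B^{(\beta,u_\beta)})$ from Lemma~\ref{lem:precompacts_embed_into_crossed_products} shows that evaluation at any $g\in\G$ is continuous, so for $a\in A_{r(g)}$ we obtain the iterated convergence
\[
\m_{T_{i,j}}(a\delta_g) = \Phi_i^{(j)}(a\delta_g)(g)\,\delta_g \xrightarrow{\,j\,} \Phi_i(a\delta_g)(g)\,\delta_g \xrightarrow{\,i\,} \Phi(a\delta_g)(g)\,\delta_g = \m_{T^{\Phi}}(a\delta_g).
\]
A standard diagonal argument on the product directed set $I\times\prod_i J_i$ then consolidates the $T_{i,j}$ into a single bounded net of compactly-supported positive-definite multipliers $\{T_\lambda\}$ with $\m_{T_\lambda}(a)\to\m_{T^{\Phi}}(a)$ pointwise on $\contc(\A^{(\alpha,u_\alpha)})$; since $\G$ is discrete this coincides with uniform convergence on compact subsets. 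Proposition~\ref{prop:positive_FS_AP_for_actions2} then delivers the positive Fourier--Stieltjes approximation property for $T^{\Phi}$.

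The hard part will be establishing (or giving a precise citation for) the factorisation of each finite-rank CP map through $\Mat_{n_i}(\C)$ with appropriately controlled norms; this is classical but deserves an explicit reference. Should the direct factorisation prove delicate, one alternative is to establish an operator-system version of Lemma~\ref{lem:McKee_Skalski_Todorow_Turowska} with source the finite-dimensional operator system $\Phi_i(\cst_{\red}(\A^{(\alpha,u_\alpha)}))\subseteq\cst_{\red}(\B^{(\beta,u_\beta)})$, and apply that to approximate the inclusion of the range of $\Phi_i$ directly.
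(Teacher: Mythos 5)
Your overall scaffolding matches the paper's: the Haagerup trick of Lemma~\ref{lem:Haagerup trick discrete}, the observation that a finite-rank CP map with values in $\contc(\B^{(\beta,u_\beta)})$ has range supported on a finite subset of $\G$, the convergence of $\m_{T_{i,j}}(a\delta_g)$ via norm-continuity of evaluation at $g$, the diagonal/product net, and the final appeal to Proposition~\ref{prop:positive_FS_AP_for_actions2}. The genuine gap is the very first step of your primary route: it is \emph{not} a classical fact that a finite-rank completely positive map $\Phi_i\colon \cst_{\red}(\A^{(\alpha,u_\alpha)})\to \cst_{\red}(\B^{(\beta,u_\beta)})$ factors (exactly or approximately, with controlled norms) as $\beta_i\circ\alpha_i$ through $\Mat_{n_i}(\C)$ with both factors CP. Compressing a Stinespring dilation does not produce this: the minimal Stinespring space of a finite-rank CP map is typically infinite-dimensional (already for $\Phi(a)=\omega(a)b$ with $\omega$ a state whose GNS space is infinite-dimensional), and compressing to a finite-dimensional subspace changes the map. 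More seriously, such a factorization would assert that every finite-rank CP map is nuclear as a map; the passage from ``finite-rank CP approximants'' to ``matrix-factored approximants'' is exactly the deep content of the Choi--Effros--Kirchberg circle of results and is obstructed in general by the existence of finite-dimensional operator systems that are not $1$-exact. So this step cannot be discharged by a citation, and as written the argument does not close.

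The fallback you sketch in your last sentence is, in substance, the paper's actual proof: one applies Lemma~\ref{lem:McKee_Skalski_Todorow_Turowska} with $B_0=\contc(\B^{(\beta,u_\beta)})$ not to a matrix factor but directly to the finite-dimensional range $\Phi_i(\cst_{\red}(\A^{(\alpha,u_\alpha)}))$, obtaining norm-one CP maps $\phi^i_j$ into $\contc(\B^{(\beta,u_\beta)})$ converging pointwise to the identity on that range, and then replaces $\Phi_i$ by $\phi^i_j\circ\Phi_i$; no factorization through matrix algebras is needed. (Even on this route one should say a word about why the lemma, stated for finite-dimensional $\cst$-algebras, applies to the range of $\Phi_i$, which is a priori only a finite-dimensional subspace --- this is precisely the point your proposed ``operator-system version'' would have to address.) Rewriting your proof along the fallback recovers the paper's argument; the primary route should be abandoned or the factorization claim actually proved.
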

\begin{proof}  
Since $\Phi_{i}(\cst_{\red}(\A^{(\alpha,u_\alpha)})$ is finite dimensional, by Lemma~\ref{lem:McKee_Skalski_Todorow_Turowska} there is a net $\{\phi^i_j\}_{j\in J}$ of norm one completely positive maps $\phi_{j}^i:\Phi_{i}(\cst_{\red}(\A^{(\alpha,u_\alpha)}) \to \contc(\B^{(\beta,u_\beta)})\subseteq \cst_{\red}(\B^{(\beta,u_\beta)})$ that converge pointwise to the identity on $\Phi_{i}(\cst_{\red}(\A^{(\alpha,u_\alpha)})$. 
Thus replacing the net $\{\Phi_{i}\}_{i\in I}$ with $\{\phi_{j}^i\circ \Phi_{i}\}_{(i,j)\in I\times J}$ we may assume that each map $\Phi_{i}$ takes values in the dense $*$-subalgebra $\contc(\B^{(\beta,u_\beta)})$ of $\cst_{\red}(\B^{(\beta,u_\beta)})$. 
Then the finite rank for $\Phi_{i}$ implies that the range of $\Phi_{i}$ is supported on a finite subset of $\G$. 
Hence the assertion follows by Proposition~\ref{prop:positive_FS_AP_for_actions2}.
\end{proof}

\begin{rem}\label{rem:about_full_case}
The above results (i.e.\ Lemma~\ref{lem:Haagerup trick discrete}, Proposition~\ref{prop:positive_FS_AP_for_actions2}, and Corollaries~\ref{cor:characterisation_FS_AP_for_discrete} and \ref{cor:discrete_Haagerup_approximation}, remain valid (with essentially the same proofs) when the reduced crossed products are replaced with full crossed products. 
If, in addition, the crossed product for $(\alpha,u_{\alpha})$ is separable, then nets $\{\Phi_{i}\}_{i\in I}$ may be replaced by sequences, and then their boundedness follow from their pointwise convergence.  
\end{rem}

\begin{thm}\label{thm:characterisation of nuclearity}
Let $(\alpha,u_{\alpha})$ be a twisted groupoid action of a discrete groupoid $\G$ on a bundle $\A$.
Then the following conditions are equivalent:
\begin{enumerate}
    \item\label{enu:characterisation of nuclearity1} $(\alpha,u_{\alpha})$ has the positive Fourier--Stieltjes approximation property and $\contz(\A)$ is nuclear;
    \item\label{enu:characterisation of nuclearity1.5} $(\alpha,u_{\alpha})$ has the Fourier--Stieltjes approximation property and $\contz(\A)$ is nuclear;
    \item\label{enu:characterisation of nuclearity2} the reduced crossed product $\cst_{\red}(\A^{(\alpha,u_\alpha)})$ is nuclear;
    \item\label{enu:characterisation of nuclearity3} the full crossed product $\cst(\A^{(\alpha,u_\alpha)})$ is nuclear;
    \item\label{enu:characterisation of nuclearity4} there is a net of finitely supported positive-definite multipliers $\{T_i\}_{i\in I}$ such that for each $i\in I$, $\sup_{x\in X}\|(T_i)_x\|\leq 1$, and for every for every $g \in \G$, $(T_{i})_g$ is of finite rank and $(T_i)_g(a)\to a$ for any $a\in A_{r(g)}$.
\end{enumerate}
If the above equivalent conditions are satisfied, then $\cst(\A^{(\alpha,u_\alpha)}) = \cst_{\red}(\A^{(\alpha,u_\alpha)})$. 
\end{thm}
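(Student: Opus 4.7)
The plan is to establish the cycle (1)$\Rightarrow$(2)$\Rightarrow$(3),(4)$\Rightarrow$(5)$\Rightarrow$(1), together with the assertion $\cst(\A^{(\alpha,u_\alpha)})=\cst_{\red}(\A^{(\alpha,u_\alpha)})$ which will follow once (2) is in force. The implication (1)$\Rightarrow$(2) is trivial from the definitions. For (2)$\Rightarrow$(3),(4) (and the weak containment), I would simply invoke Theorem~\ref{thm:approximation_prop_implies_weak_containment}: the Fourier--Stieltjes AP forces $\cst=\cst_{\red}$, and then nuclearity of the crossed product is equivalent to nuclearity of $\contz(\A)$.

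For (3)$\Rightarrow$(5) (and analogously (4)$\Rightarrow$(5), using the full-algebra version described in Remark~\ref{rem:about_full_case}), I would start from a bounded net $\{\Phi_i\}_{i\in I}$ of finite-rank, contractive, completely positive maps on $\cst_{\red}(\A^{(\alpha,u_\alpha)})$ converging pointwise to the identity, which exists by Choi--Effros. Since each $\Phi_i(\cst_{\red}(\A^{(\alpha,u_\alpha)}))$ is finite-dimensional, I apply Lemma~\ref{lem:McKee_Skalski_Todorow_Turowska} to arrange (by composing with CP maps of norm one into the dense $*$-subalgebra $\contc(\A^{(\alpha,u_\alpha)})$) that every $\Phi_i$ takes values in $\contc(\A^{(\alpha,u_\alpha)})$; its range is then supported on a finite set $V_i\subseteq \G$. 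The Haagerup trick (Lemma~\ref{lem:Haagerup trick discrete}) produces positive-definite multipliers $T^{\Phi_i}$ with $\supp(T^{\Phi_i})\subseteq V_i$ and $\sup_{x\in X}\|(T^{\Phi_i})_x\|\leq \|\Phi_i\|\leq 1$. Since $\Phi_i$ has finite rank, the map $a\mapsto \Phi_i(a\delta_g)(g)$ is of finite rank for every $g\in \G$; moreover $(T^{\Phi_i})_g(a)=\Phi_i(a\delta_g)(g)\to a$ because the evaluation $\cst_{\red}(\A^{(\alpha,u_\alpha)})\to \A^{(\alpha,u_\alpha)}_g$ is bounded (a consequence of Lemma~\ref{lem:precompacts_embed_into_crossed_products} applied to the singleton $\{g\}$, which is open as $\G$ is discrete). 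This establishes (5).

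For (5)$\Rightarrow$(1) I would argue in two steps. First, the given net witnesses the positive Fourier--Stieltjes AP by Corollary~\ref{cor:positive_FS_AP_for_actions2}. Second, to obtain nuclearity of $\contz(\A)$, I show directly that (5) implies (3): by Corollary~\ref{cor:positive_definite_is_full_and_reduced} each $T_i$ yields a strict completely positive map $m_{T_i}^{\rd}:\cst_{\red}(\A^{(\alpha,u_\alpha)})\to \cst_{\red}(\A^{(\alpha,u_\alpha)})$ of norm $\leq 1$; by Lemma~\ref{lem:precompacts_embed_into_crossed_products} its image is contained in $\contz(\A^{(\alpha,u_\alpha)}|_{V_i})=\bigoplus_{g\in V_i}A_{r(g)}$, and lies in the finite-dimensional subspace $\bigoplus_{g\in V_i}\mathrm{range}((T_i)_g)$; the resulting net converges pointwise to $\id$ by density of $\contc(\A^{(\alpha,u_\alpha)})$ together with the uniform bound on norms. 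Hence $\cst_{\red}(\A^{(\alpha,u_\alpha)})$ has the CPAP and is nuclear. The existence of the faithful conditional expectation $\mathbb{E}:\cst_{\red}(\A^{(\alpha,u_\alpha)})\to \contz(\A)$ then transports nuclearity to $\contz(\A)$, completing (1). The main potential pitfall is verifying the pointwise-to-identity convergence of $m^\rd_{T_i}$ on all of $\cst_{\red}$ rather than only on $\contc$; this is handled by the standard $3\varepsilon$ argument using the uniform bound $\sup_i\|m^\rd_{T_i}\|\leq 1$.
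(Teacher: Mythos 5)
Your proposal is correct and follows essentially the same route as the paper: Choi--Effros plus Lemma~\ref{lem:McKee_Skalski_Todorow_Turowska} and the Haagerup trick of Lemma~\ref{lem:Haagerup trick discrete} to pass from nuclearity of a crossed product to the finite-rank multiplier net, and Theorem~\ref{thm:approximation_prop_implies_weak_containment} for the implications out of the Fourier--Stieltjes approximation property. The only (harmless) deviation is in extracting nuclearity of $\contz(\A)$ from the multiplier-net condition: the paper observes directly that the restrictions $(T_i)_x$ give a finite-rank completely positive approximation of the identity on $\contz(\A)$, whereas you first re-derive nuclearity of $\cst_{\red}(\A^{(\alpha,u_\alpha)})$ and then push it down through the conditional expectation --- both work.
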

\begin{proof} 
Clearly, \ref{enu:characterisation of nuclearity1} implies \ref{enu:characterisation of nuclearity1.5}, and \ref{enu:characterisation of nuclearity1.5} implies \ref{enu:characterisation of nuclearity2} and \ref{enu:characterisation of nuclearity3}, as well as $\cst(\A^{(\alpha,u_\alpha)})=\cst_{\red}(\A^{(\alpha,u_\alpha)})$, by Theorem~\ref{thm:approximation_prop_implies_weak_containment}. 
Suppose now that $\cst_{\red}(\A^{(\alpha,u_\alpha)})$ is nuclear. 
Equivalently, by \cite[Theorem 3.1]{ChoiEffros}, the identity on $\cst_{\red}(\A^{(\alpha,u_\alpha)})$ is the pointwise limit of finite-rank completely positive contractions. 
Modifying this net as in the proof of Corollary~\ref{cor:discrete_Haagerup_approximation} one gets a net as in \ref{enu:characterisation of nuclearity4}. 
The same argument works for the full crossed product.  
Hence either \ref{enu:characterisation of nuclearity2} or \ref{enu:characterisation of nuclearity3} implies \ref{enu:characterisation of nuclearity4}. 
Now if $\{T_i\}_{i\in I}$ is a net as in \ref{enu:characterisation of nuclearity4}, it witnesses the positive Fourier--Stieltjes approximation property of $(\alpha,u_{\alpha})$ and the maps $\{m_{T_i}|_{\contz(\A)}$ witness nuclearity of $\contz(\A)$. 
Hence \ref{enu:characterisation of nuclearity4} implies \ref{enu:characterisation of nuclearity1}. 
\end{proof}

\begin{rem}\label{rem:nuclear_vs_AP's} 
The problem of characterising nuclearity of the crossed product for actions on nuclear $C^*$-algebras in terms of an amenability condition has received a lot of attention, usually using von~Neumann algebra techniques, see \cite[Th\'eor\`eme 4.5]{Clare1}, \cite[Corollary 4.11]{BussEchterhoffWillett}, \cite[Theorem A]{Kranz}. 
In particular, assuming that $\contz(\A)$ is nuclear, the aforementioned results combined with Theorem~\ref{thm:characterisation of nuclearity} give that: 
\begin{enumerate}
    \item If $\G=G$ is a discrete group and the twist is trivial, then the Fourier--Stieltjes AP is equivalent to amenability of the action.
    \item If $\G=G$ is a discrete group, then the Fourier--Stieltjes AP is equivalent to Exel's AP for the twisted action.
    \item If $\G$ is countable, $\contz(\A)$ is separable and the twist is trivial, then the Fourier--Stieltjes AP is equivalent to measurewise amenability of the action.
\end{enumerate}
The characterisation of nuclearity of $\cst_{\red}(\A^{(\alpha,u_\alpha)})$ in terms of condition \ref{enu:characterisation of nuclearity4} in Theorem~\ref{thm:characterisation of nuclearity} is a generalisation of \cite[Theorem 4.3]{mstt}, stated for discrete group actions on unital $C^*$-algebras. 
\end{rem}

\subsection{Bimodule maps, Herz--Schur multipliers and strong amenability}

In this section we consider again a general \'etale locally compact Haudorff groupoid $\G$. 
We will consider however only bimodule maps on a crossed product.

\begin{prop}[Haagerup Trick II]\label{prop:Haagerup_trick}
Let $(\alpha,u)$ be a twisted action of an \'etale groupoid $\G$ on a $\cst$-bundle $\A$. 
To every bounded linear $\contz(\A)$-bimodule map $\Phi : \cst_{\red}(\A^{(\alpha,u)}) \to \cst_{\red}(\A^{(\alpha,u)})$ there is an associated bounded strictly continuous central section $\varphi_{\Phi}\in \contb(r^*Z\M(\A))$ which is uniquely determined by the recipe
\begin{equation}\label{eq:varphi_defn}
    \varphi_{\Phi}(g) a(g) = \Phi(a)(g) \qquad \text{ for any $a\in \contc(\A^{(\alpha,u)})$ supported on a bisection}.
\end{equation}
In particular, $\sup_{g\in \G}\|\varphi_{\Phi}(g) \|\leq \|\Phi\|$. 
If $\Phi$ is completely positive then the Herz--Schur multiplier $\varphi_{\Phi} : \G \to r^*Z\M(\A)$ is of positive type.
\end{prop}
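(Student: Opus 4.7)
The plan is to construct $\varphi_\Phi(g)\in Z\M(A_{r(g)})$ fiberwise for each $g\in\G$, and then verify that these pieces assemble into a strictly continuous bounded section of $r^*Z\M(\A)$. Fix $g\in\G$ and an open bisection $U\in\Bis(\G)$ containing $g$. Identifying the fiber of $\A^{(\alpha,u)}$ at $g$ with $A_{r(g)}$, I would first define an auxiliary map $T_g\colon A_{r(g)}\to A_{r(g)}$ by the recipe $T_g(a(g)):=\Phi(a)(g)$ for $a\in\contz(\A^{(\alpha,u)}|_U)\subseteq\cst_{\red}(\A^{(\alpha,u)})$. The first key step is to show $T_g$ is well defined, i.e., that $\Phi(a)(g)$ depends only on $a(g)$ and not on the section or bisection chosen. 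If $a(g)=0$, then Cohen--Hewitt factorisation yields $a=f\cdot b$ with $f\in\contz(X)$ satisfying $f(r(g))=0$ and $b\in\contz(\A^{(\alpha,u)}|_U)$; since $\Phi$ is, through the central embedding $\contz(X)\hookrightarrow Z\M(\contz(\A))$, automatically a $\contz(X)$-bimodule map, we get $\Phi(a)(g)=f(r(g))\Phi(b)(g)=0$. Independence of the bisection $U$ is handled by noting that for any second bisection $U'\ni g$ the intersection $U\cap U'$ is itself an open bisection through $g$; multiplying lifts by a bump $\chi\in\contz(X)$ with $\chi(r(g))=1$ and support in $r(U\cap U')$ brings both sections into $\contz(\A^{(\alpha,u)}|_{U\cap U'})$ without changing the fiber value at $g$.

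Next I would verify that $T_g$ is an $A_{r(g)}$-bimodule map. For the left action, given $b\in A_{r(g)}$, lift it to some $\tilde b\in\contz(\A)$ with $\tilde b(r(g))=b$; using the Fell bundle multiplication from Corollary~\ref{cor:Fell_bundle_from_twisted_action}, $\tilde b\cdot a$ stays supported on $U$ with $(\tilde b\cdot a)(g)=b\cdot a(g)$, and left $\contz(\A)$-linearity of $\Phi$ yields $T_g(ba)=bT_g(a)$. For the right action, use that $\alpha_g\colon A_{s(g)}\to A_{r(g)}$ is bijective to choose $\tilde b\in\contz(\A)$ with $\tilde b(s(g))=\alpha_g^{-1}(b)$; the Fell bundle formula gives $(a\cdot\tilde b)(g)=a(g)\alpha_g(\tilde b(s(g)))=a(g)b$, and right $\contz(\A)$-linearity of $\Phi$ yields $T_g(ab)=T_g(a)b$. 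Since any bounded $A_{r(g)}$-bimodule endomorphism of the $C^*$-algebra $A_{r(g)}$ is implemented by a unique element of $Z\M(A_{r(g)})$ --- obtained as the strict limit of $T_g$ evaluated on any approximate unit --- this produces the desired $\varphi_\Phi(g)\in Z\M(A_{r(g)})$ with $T_g(a)=\varphi_\Phi(g)a=a\varphi_\Phi(g)$. The norm bound $\|\varphi_\Phi(g)\|\le\|\Phi\|$ follows by combining the contractive embedding $\cst_{\red}(\A^{(\alpha,u)})\hookrightarrow\contz(\A^{(\alpha,u)})$ with the observation that every $a\in A_{r(g)}$ admits a lift $\tilde a\in\contz(\A^{(\alpha,u)}|_U)$ with sup-norm arbitrarily close to $\|a\|$, obtained by multiplying an arbitrary lift by a narrow $\contz(X)$-bump at $r(g)$ and invoking upper semicontinuity of the bundle norm.

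Strict continuity of $\varphi_\Phi$ is then immediate from the defining identity~\eqref{eq:varphi_defn}: for any $a\in\contc(r^*\A)$, written as a finite sum of sections supported on bisections, the section $g\mapsto\varphi_\Phi(g)a(g)$ coincides with $\Phi(a)$ viewed inside $\contz(r^*\A)$ via the canonical embedding, hence is continuous and vanishes at infinity, so $\varphi_\Phi\in\contb(r^*Z\M(\A))$. For the positive-type assertion, if $\Phi$ is completely positive then by linearity and density the Herz--Schur multiplier map $\m_{\varphi_\Phi}$ agrees with $\Phi$ on $\contc(\A^{(\alpha,u)})$ and hence extends to the completely positive map $\Phi$ on $\cst_{\red}(\A^{(\alpha,u)})$; Corollary~\ref{cor:completely_positive_Herz_Schur_multipliers} then translates this into $\varphi_\Phi$ being of positive type. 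The main technical obstacle is the well-definedness of $T_g$ --- specifically reconciling lifts supported on two different bisections through $g$ --- while the remainder is a careful assembly of standard facts about $\contz(X)$-algebras, multiplier bimodules, and Fell bundles over \'etale groupoids.
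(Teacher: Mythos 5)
Your construction of $\varphi_\Phi(g)$ in the first two paragraphs is sound and is essentially the paper's argument in different packaging: the paper obtains $\varphi_\Phi(g)$ as the strict limit of $\Phi(e_i^U\delta_U)(g)$ along an approximate unit of $\contz(\A|_{r(U)})$, which is exactly the ``strict limit of $T_g$ on an approximate unit'' that your abstract bimodule-endomorphism lemma produces; the well-definedness, centrality and norm bound all go through as you describe.

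The last paragraph, however, contains a genuine error. You assert that for $a\in\contc(\A^{(\alpha,u)})$ the section $g\mapsto\varphi_\Phi(g)a(g)$ \emph{coincides} with $\Phi(a)$, i.e.\ that $\m_{\varphi_\Phi}=\Phi$ on $\contc(\A^{(\alpha,u)})$. This is false: the recipe~\eqref{eq:varphi_defn} only identifies $\Phi(a)(g)$ with $\varphi_\Phi(g)a(g)$ at points $g$ lying \emph{inside} the bisection supporting $a$; at points outside it the left-hand side is $0$ while $\Phi(a)(g)$ need not vanish, because a general bounded $\contz(\A)$-bimodule map has no reason to preserve supports in the groupoid direction. (Already for $\G=\Z$ acting trivially on $\A=\C$, the map $\Phi(f)=\hat f(0)\,z^2$ on $C^*_r(\Z)\cong C(\T)$ is a bounded $\C$-bimodule map with $\varphi_\Phi\equiv 0$, yet $\Phi\neq 0$.) This has two consequences. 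First, your strict-continuity argument needs repair, though the conclusion survives: for $a$ supported on a bisection $U$, the section $g\mapsto\varphi_\Phi(g)a(g)$ agrees with the continuous section $\Phi(a)$ on the open set $U$ and with $0$ on the open set $\G\setminus\supp(a)$, and these two sets cover $\G$. Second, and more seriously, your positive-type argument collapses: since $\m_{\varphi_\Phi}\neq\Phi$ in general, complete positivity of $\Phi$ does not transfer to $\m_{\varphi_\Phi}$, so you cannot invoke Corollary~\ref{cor:completely_positive_Herz_Schur_multipliers} this way. One must verify positive-definiteness of $\varphi_\Phi$ directly, as the paper does: choose $f_i\in\contc(\A)$ supported on bisections with $f_i(g_i)=\alpha_{g_i}(a_i)$, observe that each $f_i*f_j^*$ is again supported on a bisection \emph{containing} $g_ig_j^{-1}$ (so the recipe does apply at that point and yields $\Phi(f_i*f_j^*)(g_ig_j^{-1})=\varphi_\Phi(g_ig_j^{-1})\alpha_{g_i}(a_ia_j^*)u(g_ig_j^{-1},g_j)^*$), and then feed the positive matrix $(f_i*f_j^*)_{i,j}$ into $\Phi^{(n)}$ to obtain the positivity condition of Lemma~\ref{lem:positive_action}.
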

\begin{proof}
Fix $g\in \G$ and $U\in \Bis(\G)$ containing $g$. 
Let $a_0 \in A_{r(g)}$ and take any $a \in \contc(\A|_{r(U)})$ and $b\in \contc(\A|_{s(U)})$ such that $a_0 = a(r(g)) = \alpha_g(b(s(g))$. 
Choose an approximate unit $\{e_i^{U}\}_{i \in I} \subseteq \contc(\A|_{r(U)})$ in  $\contz(\A|_{r(U)})$.
We will use the notation from Lemma~\ref{lem:inverse_semigroup_twisted_action}. 
Thus let $a\delta_U\in \contc(\A^{(\alpha,u)})$ be given by $(a\delta_U)(g) =  a(r(g))$, $g\in U$. 
Clearly $a *(e_i^{U}\delta_{U}) \stackrel{i}{\to} a\delta_{U}$ and  $(e_i^{U}\delta_{U})*\alpha_{U}^{-1}(a) \stackrel{i}{\to} a\delta_{U}$ uniformly.   
Using that both $\Phi$ and the embedding $\cst_{\red}(\A^{(\alpha,u)}) \hookrightarrow \contz(\A^{(\alpha,u)})$ (with convolution multiplication) are $\contz(\A^{(\alpha,u)})$-bimodule maps we get
\[
    \lim_{i} a_0\Phi(e_i^{U}\delta_{U})(g) = \lim_{i} \big( a *\Phi(e_i^{U}\delta_{U}) \big)(g) = \lim_{i} \Phi(a *(e_i^{U}\delta_{U})(g) = \Phi(a\delta_{U})(g) , 
\]
and similarly 
\[
    \lim_{i} \Phi(e_i^{U}\delta_{U})(g) a_0 = \lim_{i} \big(\Phi(e_i^{U}\delta_{U})*  \alpha_{U}^{-1}(a) \big) (g) = \lim_{i} \Phi \big( e_i^{U}\delta_{U}*  \alpha_{U}^{-1}(a) \big) (g) = \Phi ( a\delta_{U} ) (g) .
\]
Hence 
the following strict limit in $\M(A_{r(g)})$
\[
    \varphi_{\Phi}(g) := \text{s-}\lim_{i} \Phi( e_i^{U}\delta_{U} ) (g)
\]
exists, is central, and satisfies \eqref{eq:varphi_defn} (which clearly determines $\varphi_{\Phi}(g)$ uniquely). 
We have $\| \varphi_{\Phi}(g) \| = \| \text{s-}\lim_{i} \Phi( e_i^{U}\delta_{U})(g) \| \leq \sup_{i} \| \Phi(e_i^{U}\delta_{U}) \| \leq \|\Phi\|$.

Assume now that $\Phi$ is completely positive. 
Fix $x \in X$, $n \in \N$ and elements $g_i \in \G_x$ and $a_{i} \in A_{x}$ for $1 \leq i \leq n$. 
Find sections $\{ f_i \}_{i = 1}^n \in \contc(\A)$ supported on bisections and such that $f_{i}(g_i) = \alpha_{g_i}(a_{i})$, $1 \leq i \leq n$. 
Then $f_i * f_j^*$ is supported on a bisection and 
\[
    (f_i * f_j^*)(g_i {g_j}^{-1}) = \alpha_{g_i}(a_{i} a_{j}^*)u(g_i g_j^{-1},g_j)^*
\]
for every $1 \leq i,j \leq n$, cf.\ the calculations in the proof of Lemma~\ref{lem:positive_action}. 
Hence by \eqref{eq:varphi_defn} we get
\[
    \Phi( f_{i} * f_j^*) (g_{i}g_{j}^{-1}) u(g_{i}g_{j}^{-1},g_{j}) = \varphi_{\Phi}(g_{i}g_{j}^{-1}) \alpha_{g_{i}}(a_{g_i} a_{g_j}^*) .  
\]
Since the matrix $(f_i * f_j^*)_{i,j = 1}^n\in \Mat_{n}(\cst_{\red}(\A))$ is positive, complete positivity of $\Phi$ implies that 
\[
	\sum_{i,j=1}^n \alpha^{-1}_{g_i} \Big( \Phi( f_{i} * f_j^*)(g_{i}g_{j}^{-1})u(g_{i}g_{j}^{-1},g_{j}) \Big) \geq 0
\]
(to see this replace $m_T$ in the proof of Lemma~\ref{lem:completely_positive_implies_positive_definite} by $\Phi$, see also the proof of Lemma~\ref{lem:positive_action}). 
Combining the last two displayed formulas we obtain
\[
	\sum_{i,j=1}^n \alpha^{-1}_{g_i} \Big( \varphi_{\Phi}(g_{i} g_{j}^{-1} ) \alpha_{g_{i}} ( a_{g_i} a_{g_j}^* ) ) \Big) \geq 0,
\]
which means that the Herz--Schur multiplier $\varphi_{\Phi}$ is of positive type, see Lemma~\ref{lem:positive_action}.
\end{proof}

\begin{rem}
A similar result for actions of twisted groupoids was proved in \cite[Proposition 4.1]{BartoszKangAdam} under the assumption that $\A$ has a continuous unital section. 
Note that the proposition mentioned above should have been stated for bimodule maps (every $*$-preserving, so in particular every completely positive, right $A$-module map is automatically an $A$-bimodule map), and then the proof of Proposition~\ref{prop:Haagerup_trick} can be adapted to get a `non-unital version' of \cite[Proposition 4.1]{BartoszKangAdam}. 
For group actions on unital algebras a version of the above Haagerup trick appears also in the proof of \cite[Theorem 3.6]{dong_ruan}.
\end{rem}

\begin{prop}\label{prop:positive_FS_AP_for_actions3} 
Let $(\alpha,u)$ be  a twisted action of an \'etale groupoid $\G$ on a \cst-bundle $\A$. 
The following conditions are equivalent:
\begin{enumerate}
    \item\label{enu:positive_FS_AP_for_actions21} the action $(\alpha,u)$ is \emph{strongly amenable}, i.e.\ there is a bounded net  of positive-type sections $\{\varphi_i\}_{i\in I}\subseteq \contc(r^*Z\M(\A))$ such that $\sup_{g\in G} \|\varphi_i(g)a(g) - a(g)\|_{\infty}\longrightarrow 0$ for $a \in \contc(\A^{(\alpha,u)})$; 
    \item\label{enu:positive_FS_AP_for_actions22} the identity on $\cst_{\red}(\A^{(\alpha,u)})$ is the pointwise limit of a bounded net $\{\Phi_{i}\}_{i\in I}$ of completely positive $\contz(\A)$-bimodule maps $\Phi_i:\cst_{\red}(\A^{(\alpha,u)})\to C(\A^{(\alpha,u)}|_{V_i})\subseteq \cst_{\red}(\A^{(\alpha,u)})$ with ranges supported on precompact sets $V_i\subseteq \G$, $i\in I$. 
\end{enumerate}
\end{prop}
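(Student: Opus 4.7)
The plan is to derive the equivalence by going in both directions through the correspondence between central positive-type Herz--Schur multipliers and completely positive $\contz(\A)$-bimodule maps on the reduced crossed product, using Corollary~\ref{cor:completely_positive_Herz_Schur_multipliers} and the Haagerup trick from Proposition~\ref{prop:Haagerup_trick} as the two main tools.

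For \ref{enu:positive_FS_AP_for_actions21}$\Rightarrow$\ref{enu:positive_FS_AP_for_actions22}, given a net $\{\varphi_i\}_{i\in I}$ as in \ref{enu:positive_FS_AP_for_actions21}, I form the Herz--Schur multipliers $T^{\varphi_i}$. By Corollary~\ref{cor:completely_positive_Herz_Schur_multipliers} each $T^{\varphi_i}$ extends to a completely positive map $\Phi_i := \m_{\varphi_i}^{\rd}$ on $\cst_{\red}(\A^{(\alpha,u)})$ with $\|\Phi_i\| = \sup_{g\in \G} \|\varphi_i(g)\|$, and it is automatically a $\contz(\A)$-bimodule map. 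Setting $V_i := \supp(\varphi_i)$, which is compact since $\varphi_i\in \contc(r^*Z\M(\A))$, the pointwise formula $\Phi_i(a)(g) = \varphi_i(g) a(g)$ forces $\supp(\Phi_i(a))\subseteq V_i$, and Lemma~\ref{lem:precompacts_embed_into_crossed_products} implies the range of $\Phi_i$ lies in $C(\A^{(\alpha,u)}|_{V_i})$. For pointwise convergence to the identity: for $a\in\contc(\A^{(\alpha,u)})$ with $\supp(a)\subseteq K$ compact, the difference $\Phi_i(a)-a$ is supported in $K$, and the standard partition of unity argument (writing $K$ as a finite union of open bisections) gives an estimate $\|\cdot\|_{\red}\leq C_K\|\cdot\|_\infty$ on sections supported in $K$, so the assumed uniform fibrewise convergence upgrades to $\|\Phi_i(a)-a\|_{\red}\to 0$. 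Boundedness of $\{\Phi_i\}$ then extends this to pointwise convergence on the whole $\cst_{\red}(\A^{(\alpha,u)})$.

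For \ref{enu:positive_FS_AP_for_actions22}$\Rightarrow$\ref{enu:positive_FS_AP_for_actions21}, I apply Proposition~\ref{prop:Haagerup_trick} to each $\Phi_i$ to produce a bounded strictly continuous central positive-type section $\varphi_i := \varphi_{\Phi_i}$ with $\sup_{g\in\G}\|\varphi_i(g)\|\leq \|\Phi_i\|$, so the net $\{\varphi_i\}$ is uniformly bounded. To see that $\varphi_i$ has compact support: if $g\notin V_i$ and $a\in\contc(\A^{(\alpha,u)})$ is supported on a bisection containing $g$ with $a(g)\neq 0$, then $\Phi_i(a)\in C(\A^{(\alpha,u)}|_{V_i})$ vanishes at $g$, so $\varphi_i(g) a(g)=\Phi_i(a)(g)=0$, forcing $\varphi_i(g)=0$; hence $\supp(\varphi_i)\subseteq \overline{V_i}$ is compact and $\varphi_i\in \contc(r^*Z\M(\A))$. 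For the uniform convergence requirement in \ref{enu:positive_FS_AP_for_actions21}, I invoke the contractive injective embedding $\cst_{\red}(\A^{(\alpha,u)})\hookrightarrow \contz(\A^{(\alpha,u)})$ recalled at the start of Section~\ref{Sec:Applications}, which gives $\|\cdot\|_\infty\leq \|\cdot\|_{\red}$; combined with the pointwise $\|\cdot\|_{\red}$-convergence $\Phi_i(a)\to a$ from \ref{enu:positive_FS_AP_for_actions22} and the identity $\Phi_i(a)(g)=\varphi_i(g)a(g)$, this yields $\sup_{g\in\G}\|\varphi_i(g)a(g)-a(g)\|_\infty\to 0$ for every $a\in\contc(\A^{(\alpha,u)})$.

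The main technical obstacle is precisely the mismatch between the two notions of convergence: strong amenability demands uniform fibrewise convergence, whereas the approximation by bimodule maps is phrased in the reduced-norm topology. The resolution in both directions is the interplay between the norm estimate $\|\cdot\|_\infty\leq\|\cdot\|_{\red}$ coming from the canonical embedding $\cst_{\red}(\A)\to \contz(\A)$, and the reverse estimate $\|\cdot\|_{\red}\leq C_K\|\cdot\|_\infty$ available on sections with support in a fixed compact set $K\subseteq \G$. The rest of the argument is essentially bookkeeping around Corollary~\ref{cor:completely_positive_Herz_Schur_multipliers} and Proposition~\ref{prop:Haagerup_trick}.
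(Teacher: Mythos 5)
Your proof is correct and follows essentially the same route as the paper's: the forward direction via Corollary~\ref{cor:completely_positive_Herz_Schur_multipliers} applied to the Herz--Schur multipliers $T^{\varphi_i}$, and the converse via the Haagerup trick of Proposition~\ref{prop:Haagerup_trick}. You supply more detail than the paper does on the two norm comparisons (the partition-of-unity estimate $\|\cdot\|_{\red}\leq C_K\|\cdot\|_\infty$ on compactly supported sections, and the contractive embedding into $\contz(\A^{(\alpha,u)})$ for the reverse), but the underlying argument is the same.
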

\begin{proof} 
If $\{\varphi_i\}_{i\in I}$ are as in \ref{enu:positive_FS_AP_for_actions21} then the maps $\{\m_{\varphi_i}^{\rd}\}_{i\in I}$ given by Corollary~\ref{cor:completely_positive_Herz_Schur_multipliers} satisfy the properties described in \ref{enu:positive_FS_AP_for_actions22}. 
Conversely, having a net $\{\Phi_{i}\}_{i\in I}$ as in \ref{enu:positive_FS_AP_for_actions22}, the corresponding maps given by Proposition~\ref{prop:Haagerup_trick} form a bounded net $\{ \varphi_{\Phi_i} \}_{i\in I} \subseteq \contc(r^*Z\M(\A))$ of Herz--Schur multipliers of positive type. 
Moreover, if $a \in \contc(\A^{(\alpha,u)})$ is supported on a bisection, then $\sup_{g\in G} \| \varphi_{\Phi_i}(g)a(g) - a(g) \|_{\infty} = \sup_{g\in G} \lim_{i} \| \Phi(a)(g)  - a(g) \|_{\infty} = 0$.
Hence this holds for any $a \in \contc(\A^{(\alpha,u)})$. 
\end{proof}

\begin{rem}
For the trivial bundle $\A=\C\times X$, that is, the case of just a twisted groupoid $(\G,u)$ (see Example~\ref{ex:continuous-cocycles}), the above result reduces to amenability of the underlying groupoid $\G$, that is, the existence of a net of positive-type functions $\{\varphi_i\}_{i\in I}\subseteq \contc(\G)$ with $\varphi_i(g)\to 1$ uniformly on compacts. 
This is just a re-statement of item \ref{enu:positive_FS_AP_for_actions21} in the above proposition. 
We know that amenability of $\G$ is equivalent to nuclearity of $C^*(\G,u)=C^*_r(\G,u)$ (see Corollary~\ref{cor:amenable-groupoids-vs-FS-AP}), which means that the identity map $C^*(\G,u)\to C^*(\G,u)$ can be approximated (in the point-norm topology) by a composition of (contractive) completely positive linear maps $C^*(\G,u)\to \Mat_{n_i}(\C)\to C^*(\G,u)$. 
This is, in turn, equivalent the approximation property appearing in item \ref{enu:positive_FS_AP_for_actions22} of Proposition~\ref{prop:positive_FS_AP_for_actions3}. 
We are not sure whether this has already appeared in the literature in the way it is presented here.
\end{rem}

\subsection{Decomposable norm vs Fourier--Stieltjes norm}
\label{Section:matrices}

As we noted in the introduction to this section, every map given by a Fourier--Stieltjes multiplier, whether on the reduced or universal crossed product, can be expressed as a linear combination of completely positive maps. 
More interestingly, we can also relate the Fourier--Stieltjes norm of the multiplier to the \emph{decomposable} norm of the associated map, as introduced in \cite{Haagerupdec}. 

We first recall some basic relevant facts. 
Suppose that $A,B$ are $\cst$-algebras and $\Phi : A \to B$ is a linear map.
Haagerup in \cite{Haagerupdec} proved that following conditions are equivalent:
\begin{enumerate}
    \item\label{enu:Haagerupdec1} $\Phi$ is \emph{decomposable}, i.e.\ it can be expressed as a linear combination of completely positive maps from $A$ to $B$;
    \item\label{enu:Haagerupdec2} $\Phi = \Phi_1 - \Phi_2 + i (\Phi_3 - \Phi_4)$ where $\Phi_k : A \to B$ are completely positive for $k=1,\ldots ,4$;
    \item\label{enu:Haagerupdec3} there exists a completely positive map $\Psi : A \to \Mat_2(B)$ of the form
    \[
        \Psi = \begin{pmatrix}\Psi_1 & \Phi \\ 
                    \Phi ^\dagger & \Psi_2 
                \end{pmatrix} 
    \]
Then the maps $\Psi_1,\Psi_2:A \to B$ are necessarily completely positive.
\end{enumerate}
Moreover, the linear space $DC(A,B)$ of decomposable maps from $A$ to $B$ with the norm
\[
    \| \Phi \|_{\mathrm{dec}} := \sup \left\{ \max\{ \|\Phi_1\|, \|\Phi_2\|\}: \Psi = \begin{pmatrix}\Phi_1 & \Phi \\\Phi ^\dagger & \Phi_2 \end{pmatrix} \text{ is completely positive} \right\}
\]
is a Banach space. 
The norm $\| \cdot \|_{\mathrm{dec}}$ dominates the completely bounded norm, coincides with the latter on completely positive maps and makes the set of decomposable maps a Banach space. 
It is also submultiplicative with respect to composition, see \cite{Haagerupdec} for more details.
Thus $C^*$-algebras with decomposable maps as morphisms, equipped with $\|\cdot \|_{\mathrm{dec}}$-norm,  form a Banach category which contains the category of $C^*$-algebras with morphisms being completely positive maps as a Banach subcategory.

The next result can be viewed as an extension or continuation  of Theorem~\ref{thm:FSmultiplier_extends_to_reduced_and_full}.

\begin{prop}\label{prop:FSmultipliers_decomposable}
Let $\A$, $\B$ be $\cst$-bundles over $X$, carrying twisted actions $(\alpha,u_\alpha)$ and $(\beta,u_\beta)$ of $\G$. 
For every Fourier--Stieltjes multiplier $T\in FS[(\alpha,u_{\alpha}),(\beta,u_{\beta})]$ the corresponding completely bounded maps $\m_T^{\rd} : \cst_{\red}(\A^{(\alpha,u_{\alpha})}) \to \cst_{\red}(\B^{(\beta,u_{\beta})})$ and $\m_T^{\f} : \cst(\A^{(\alpha,u_{\alpha})}) \to \cst(\B^{(\beta,u_{\beta})})$ are decomposable, and we have $\|\m_T\|_{cb} \leq \|\m_T^{\rd}\|_{\mathrm{dec}}  \leq \|T\|_{FS}$, $\|\m_T\|_{cb}\leq \|\m_T^{\f}\|_{\mathrm{dec}} \leq \|T\|_{FS}$. 

If $T$ is positive-definite, then $\|\m_T\|_{cb} = \| \m_T^{\rd} \|_{\mathrm{dec}} = \| \m_T^{\f} \|_{\mathrm{dec}} = \| T \|_{FS}$. 
\end{prop}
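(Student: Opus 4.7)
The plan is to combine Theorem~\ref{thm:FSmultiplier_extends_to_reduced_and_full}, Theorem~\ref{thm:FSmultiplier_positive_definite}, and the polarisation identity recalled in the preamble of this subsection. Decomposability of $\m_{T}^{\rd}$ and $\m_{T}^{\f}$ will then be immediate: polarisation writes $T$ as a linear combination of four multipliers of the form $T_{\EE,L,\eta,\eta}$, each of which is positive-definite and therefore induces a completely positive map on both crossed products by Theorem~\ref{thm:FSmultiplier_positive_definite}; consequently $\m_{T}^{\rd}$ and $\m_{T}^{\f}$ are linear combinations of four completely positive maps.

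For the norm bound I would fix $\varepsilon > 0$ and pick a representation $T = T_{\EE,L,\xi,\zeta}$ with $\|\xi\|\|\zeta\| < \|T\|_{FS}+\varepsilon$; as in the proof of Proposition~\ref{prop:BanachNormFS}, rescaling $\xi,\zeta$ by reciprocal positive scalars does not alter $T_{\EE,L,\xi,\zeta}$ and allows one to assume $\|\xi\|=\|\zeta\|$. Setting $\xi_{1}:=\xi$, $\xi_{2}:=\zeta$, the central object will be the matrix-valued map
\[
    \Psi^{\rd}\colon \cst_{\red}(\A^{(\alpha,u_\alpha)}) \to \Mat_{2}\bigl(\cst_{\red}(\B^{(\beta,u_\beta)})\bigr),\qquad \Psi^{\rd}(a) := \bigl(\m_{T_{\EE,L,\xi_{i},\xi_{j}}}^{\rd}(a)\bigr)_{i,j=1,2}.
\]
Its diagonal entries are the completely positive maps produced by Theorem~\ref{thm:FSmultiplier_positive_definite}, while its off-diagonal entries are $\m_{T}^{\rd}$ and $\m_{T^{\dagger}}^{\rd} = (\m_{T}^{\rd})^{\dagger}$, using $T^{\dagger}=T_{\EE,L,\zeta,\xi}$ from Proposition~\ref{prop:BanachNormFS}. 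The main technical obstacle is to verify that $\Psi^{\rd}$ is completely positive; I plan to do this by reassembling, rather than redoing, the Stinespring-type dilation from the proof of Theorem~\ref{thm:FSmultiplier_extends_to_reduced_and_full}. Keeping $W$, $\pi^{\EE}$, $\pi^{\B}$ and the creation operators $\theta_{\xi}$, $\theta_{\zeta}$ as in that proof, I would form the single adjointable operator
\[
    \Theta\colon \contz(\ell^{2}(s^{*}\B))^{\oplus 2}\to \contz(\EE)\otimes_{\pi^{\B}}\contz(\ell^{2}(s^{*}\B)),\qquad \Theta(b_{1},b_{2}):=\theta_{\xi}(b_{1})+\theta_{\zeta}(b_{2}).
\]
A direct expansion then yields $(\pi^{\B})^{(2)}\bigl(\Psi^{\rd}(a)\bigr) = \Theta^{*}W^{*}\pi^{\EE}(a)W\Theta$, which is of manifest Stinespring form; since $(\pi^{\B})^{(2)}$ is an injective $*$-homomorphism, complete positivity of $\Psi^{\rd}$ follows.

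Given complete positivity of $\Psi^{\rd}$ and the norm equalities $\|\m_{T_{\EE,L,\eta,\eta}}^{\rd}\|_{cb}=\|\eta\|^{2}$ from Theorem~\ref{thm:FSmultiplier_extends_to_reduced_and_full}, Haagerup's definition of the decomposable norm gives
\[
    \|\m_{T}^{\rd}\|_{\mathrm{dec}} \leq \max\bigl(\|\m_{T_{\EE,L,\xi,\xi}}^{\rd}\|,\,\|\m_{T_{\EE,L,\zeta,\zeta}}^{\rd}\|\bigr) = \|\xi\|\|\zeta\| < \|T\|_{FS}+\varepsilon,
\]
and letting $\varepsilon\to 0$ yields $\|\m_{T}^{\rd}\|_{\mathrm{dec}}\leq \|T\|_{FS}$. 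An identical argument based on the identity $\psi(\m_{T}^{\f}(a)) = \theta_{\xi}^{*}(L\dashind(\psi))(a)\theta_{\zeta}$ from the second half of the proof of Theorem~\ref{thm:FSmultiplier_extends_to_reduced_and_full}, applied to a faithful nondegenerate representation $\psi\colon \cst(\B^{(\beta,u_\beta)})\to \BB(H)$ on a Hilbert space, produces $\|\m_{T}^{\f}\|_{\mathrm{dec}}\leq \|T\|_{FS}$; the inequalities $\|\m_{T}\|_{cb}\leq \|\m_{T}^{\rd}\|_{\mathrm{dec}}$ and $\|\m_{T}\|_{cb}\leq \|\m_{T}^{\f}\|_{\mathrm{dec}}$ are standard consequences of Haagerup's theory. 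Finally, if $T$ is positive-definite then Theorem~\ref{thm:FSmultiplier_positive_definite} supplies a presentation $T=T_{\EE,L,\xi,\xi}$ with $\|\xi\|^{2}=\|T\|_{FS}=\|\m_{T}^{\rd}\|_{cb}=\|\m_{T}^{\f}\|_{cb}$, and since the decomposable and completely bounded norms agree on completely positive maps, all four norms in the statement coincide.
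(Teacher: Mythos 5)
Your proof is correct, and its overall skeleton agrees with the paper's: after rescaling so that $\|\xi\|=\|\zeta\|$, both arguments exhibit $\m_T^{\rd}$ as the off-diagonal corner of a completely positive $2\times 2$ matrix map whose diagonal entries are $\m_{T_{\EE,L,\xi,\xi}}^{\rd}$ and $\m_{T_{\EE,L,\zeta,\zeta}}^{\rd}$, and then invoke Haagerup's characterisation together with Theorem~\ref{thm:FSmultiplier_positive_definite}. Where you genuinely diverge is in how complete positivity of that matrix map is established. The paper passes to the amplified data $\bigl(\Mat_2(\A),\alpha^{(2)},u_\alpha^{(2)}\bigr)$, $\Mat_2(\EE)$, $L^{(2)}$ and the diagonal section $\eta=\xi\oplus\zeta$ (Examples~\ref{ex:matrixbundles}, \ref{ex:matrixtwistedactions}, \ref{ex:matrixreps}), applies Theorem~\ref{thm:FSmultiplier_positive_definite} to the positive-definite multiplier $T_{\Mat_2(\EE),L^{(2)},\eta,\eta}$, and precomposes with the completely positive map $x\mapsto\left(\begin{smallmatrix}x&x\\x&x\end{smallmatrix}\right)$; this reuses Theorem~\ref{thm:FSmultiplier_positive_definite} as a black box at the cost of carrying the matrix-lifted action through the argument. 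You instead reopen the proof of Theorem~\ref{thm:FSmultiplier_extends_to_reduced_and_full} and assemble the row operator $\Theta=[\theta_\xi,\theta_\zeta]$, so that $\Theta^*W^*\pi^{\EE}(a)W\Theta$ is the matrix $\bigl(\theta_{\xi_i}^*W^*\pi^{\EE}(a)W\theta_{\xi_j}\bigr)_{i,j}=\bigl(\pi^{\B}(\m^{\rd}_{T_{\EE,L,\xi_i,\xi_j}}(a))\bigr)_{i,j}$, which is of manifest Stinespring form; injectivity of $(\pi^{\B})^{(2)}$ then gives complete positivity directly, with no amplification of the dynamics. Your route is slightly more self-contained and makes the dilation explicit, while the paper's is shorter on the page because the matrix machinery was already set up earlier; the full-crossed-product case and the positive-definite endgame are handled identically in both. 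One cosmetic point: in the final display you should bound $\|\m_T^{\rd}\|_{\mathrm{dec}}$ by the maximum of the \emph{norms} of the diagonal entries (which equal $\|\xi\|^2=\|\zeta\|^2=\|\xi\|\|\zeta\|$ after rescaling), not by their $cb$-norms as written mid-sentence — though for completely positive maps these coincide, so nothing is lost.
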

\begin{proof}
Let $T := T_{\EE,L,\xi,\zeta} = \{ T_g \}_{g\in \G}$ be a Fourier--Stieltjes multiplier. 
By rescaling the respective sections (passing to $\sqrt{\frac{|\zeta\|}{\|\xi\|}}\xi$ and $\sqrt{\frac{|\xi\|}{\|\zeta\|}}\zeta$) we can assume that $\|\xi\|=\|\zeta\|$. 
Using Examples~\ref{ex:matrixbundles}, \ref{ex:matrixtwistedactions} and \ref{ex:matrixreps} consider the $\cst$-correspondence $\Mat_2(\EE)$ from $\A^{(\alpha^{(2)},u_{\alpha}^{(2)})}$ to $\B^{(\beta^{(2)},u_{\beta}^{(2)})}$. 
Define the section $\eta:=\xi \oplus \zeta \in \Mat_2(\EE)$ in the obvious `diagonal' way and consider the associated Fourier--Stieltjes multiplier $\tilde{T} := T_{\Mat_2(\EE), L^{(2)},\eta, \eta}$. 
The operator $m_{\tilde{T}}^{\rd} : \cst_{\red}(\A^{(\alpha^{(2)} , u_{\alpha}^{(2)})}) \to \cst_{\red}(\B^{(\beta^{(2)},u_{\beta}^{(2)})}) $ is completely positive by Theorem~\ref{thm:FSmultiplier_positive_definite}. 
Recall that we have a natural isomorphism
\[ 
    \cst_{\red}( \A^{(\alpha^{(2)},u_{\alpha}^{(2)})} ) \cong \Mat_2( \cst_{\red}(\A^{(\alpha,u_{\alpha})}) )
\]
(and its analogue for $\B^{(\beta,u_{\beta})}$). 
Define the map $\Psi: \cst_{\red}(\A^{(\alpha,u_{\alpha})}) \to \Mat_2(\cst_{\red}(\B^{(\beta,u_{\beta})}))$ via the formula
\[
    \Psi(x) = m_{\tilde{T}}^{\rd} \left( \begin{bmatrix}x & x \\x & x \end{bmatrix} \right), \qquad x \in \cst_{\red}(\A^{(\alpha,u_{\alpha})}). 
\]
As the map $x \mapsto \begin{bmatrix}x & x \\x & x \end{bmatrix} $ is completely positive so is $\Psi$. 
It is now elementary to check that in fact 
\[
    \Psi = \begin{bmatrix} 
                m_{T_{\EE, L, \xi, \xi}}^{\rd} & m_T^{\rd} \\ 
                ({m_T^{\rd}})^\dagger  & m_{T_{\EE, L, \zeta, \zeta}}^{\rd}
            \end{bmatrix} . 
\]
Thus, since  $m_{T_{\EE, L, \zeta, \zeta}}^{\rd}$ and $m_{T_{\EE, L, \xi, \xi}}^{\rd}$  are completely positive, $m_T^{\rd}$ is decomposable.
Moreover, as we rescaled to $\| \xi \| = \| \zeta \|$, by Theorem~\ref{thm:FSmultiplier_positive_definite} we have $\| m_{T_{\EE, L, \zeta, \zeta}}^{\rd} \| = \| m_{T_{\EE, L, \xi, \xi}}^{\rd} \| = \| \xi \| \|\zeta\|$. 
Thus $\| m_T^{\rd} \|_{\mathrm{dec}} \leq \| \xi \| \cdot \|\zeta\|$, which shows $\|\m_T^{\rd}\|_{\mathrm{dec}} \leq \|T\|_{FS}$. 
The same argument gives $\|\m_T^{\f}\|_{\mathrm{dec}} \leq \|T\|_{FS}$. 
The other statements follow easily from Theorem~\ref{thm:FSmultiplier_extends_to_reduced_and_full} and the discussion before the proposition.
\end{proof}

\begin{rem}
One could similarly show that Fourier multipliers yield decomposable `reduced-to-full' maps, as studied in Theorem~\ref{thm:Fmultipliers_are_reduced_to_full}.
\end{rem}

The above result naturally raises the question whether we can identify the decomposable norm of $\m_T$ (say on the reduced crossed product) with the norm $\|T\|_{FS}$. 
In the context of the usual Fourier--Stieltjes algebras of discrete groups such an identification was noted only very recently in \cite{ArhancetKriegler}. 
Below we will show that it remains true in the context of twisted crossed products of \emph{discrete} groupoids. 
Before we do that we need a simple (and likely well-known) lemma.

\begin{lem}\label{lem:matrixcorrespondences}
Let $A$, $B$ be $\cst$-algebras, $n \in \N$. 
If $F$ is a $\cst$-correspondence from $A$ to $B$, then the direct sum $E := \bigoplus_{i=1}^n F$ is a $\cst$-correspondence from $A$ to $\Mat_n(B)$, when equipped with the diagonal left action of $A$ and the scalar product and the action of $\Mat_n(B)$ given by 
\[
\begin{gathered}
    \big\langle (\xi_i)_{i=1}^n , (\zeta_i)_{i=1}^n \big\rangle_{\Mat_n(B)} := \left( \langle \xi_i, \zeta_j\rangle \right)_{i,j=1}^n , \\
    (\xi_i)_{i=1}^n \cdot \left( b_{ij}\right)_{i,j=1}^n := \left( \sum_{j=1}^n \xi_j b_{ji} \right)_{i=1}^n, 
\end{gathered}
\]
where $(\xi_i)_{i=1}^n, (\zeta_i)_{i=1}^n \in \bigoplus_{i=1}^n F$, $\left( b_{ij}\right)_{i,j=1}^n \in \Mat_n(B)$.
	
Conversely, every $\cst$-correspondence from $A$ to $\Mat_n(B)$ (up to unitary equivalence) arises in the way described above from a  $\cst$-correspondence from $A$ to $B$.  
\end{lem}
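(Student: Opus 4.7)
The forward direction is a routine verification. I would first check that the proposed formula for $\langle\cdot,\cdot\rangle_{\Mat_n(B)}$ is sesquilinear, that the resulting matrix is positive in $\Mat_n(B)$ (it factors as $\Xi^*\Xi$ for $\Xi \in \Mat_n(F)$ with first row equal to $(\xi_1,\ldots,\xi_n)$ and other rows zero, up to the standard positivity trick for Hilbert modules), and that it is compatible with the prescribed right action of $\Mat_n(B)$. Completeness of $E$ follows from completeness of $F$ together with equivalence of norms for direct sums. Finally, the diagonal left action of $A$ is by adjointable nondegenerate operators because the original left action on $F$ was, and the adjoint is obtained by taking the diagonal action of the adjoint.

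For the converse, let $E$ be an $A$-$\Mat_n(B)$-correspondence. I would use the canonical identification $\M(\Mat_n(B))=\Mat_n(\M(B))$ (Example~\ref{ex:matrixcorr}), which provides the system of matrix units $\{e_{ij}\}_{i,j=1}^n$ acting strictly on $E$. Set $F:=E\cdot e_{11}$, i.e.\ the (closed) range of the projection $\xi\mapsto \xi\cdot e_{11}$. Then $F$ is equipped with the right $B$-action coming from the $\cst$-isomorphism $B\cong e_{11}\Mat_n(B) e_{11}$, $b\leftrightarrow be_{11}$, and with the $B$-valued inner product $\langle\xi,\zeta\rangle_B$ defined by the unique $b\in B$ with $\langle\xi,\zeta\rangle_{\Mat_n(B)}=be_{11}$ (which lies in $e_{11}\Mat_n(B)e_{11}$ since $\xi=\xi e_{11}$ and $\zeta=\zeta e_{11}$). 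The left $A$-action on $E$ restricts to $F$ because adjointability makes the $A$-action a right $\Mat_n(B)$-module map, and nondegeneracy of $F$ as an $A$-$B$-correspondence follows from nondegeneracy of $E$ together with the strict approximation $b_\alpha e_{11}\to e_{11}$ in $\M(\Mat_n(B))$ for any approximate unit $(b_\alpha)$ of $B$.

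I would then define $\Phi\colon\bigoplus_{i=1}^n F\to E$ by $(\xi_i)_{i=1}^n\mapsto \sum_{i=1}^n \xi_i\cdot e_{1i}$ (note $\xi_i\cdot e_{1i}\in E\cdot e_{ii}$ because $\xi_i=\xi_i e_{11}$ and $e_{11}e_{1i}=e_{1i}$). Surjectivity follows from $\sum_i e_{ii}=1_{\M(\Mat_n(B))}$: every $\eta\in E$ decomposes as $\eta=\sum_i \eta e_{ii}=\sum_i(\eta e_{i1})\cdot e_{1i}=\Phi((\eta e_{i1})_i)$ with $\eta e_{i1}\in F$. Preservation of inner products reduces to $e_{i1}\cdot(be_{11})\cdot e_{1j}=be_{ij}$ for $b\in B$, which is immediate from matrix unit multiplication, so that $\langle\Phi(\xi_i),\Phi(\zeta_j)\rangle_{\Mat_n(B)}=(\langle\xi_i,\zeta_j\rangle_B)_{i,j}$. $A$-linearity is clear.

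The main technical point will be verifying that $\Phi$ intertwines the right $\Mat_n(B)$-actions, i.e.\ that $\Phi((\xi_i)\cdot(b_{jk}))=\Phi((\xi_i))\cdot(b_{jk})$. This comes down to the matrix-unit identity $e_{1j}\cdot(b_{kl})=\sum_n b_{jn}\,e_{1n}$, which after summing reproduces the formula $((\xi_i)\cdot(b_{jk}))_i=\sum_j\xi_j b_{ji}$ from the statement. This is bookkeeping rather than a conceptual obstruction; the only place a bit of care is needed is to keep straight the three ways in which $B$ is identified with a subspace of $\Mat_n(\M(B))$ (diagonally for the right $B$-action on $F$ via $e_{11}\Mat_n(B)e_{11}$, and via $b\mapsto be_{1i}$ or $b\mapsto be_{ij}$ when rewriting inner products).
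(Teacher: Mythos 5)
Your proposal is correct and follows essentially the same route as the paper's proof: both cut $E$ down by the corner projection $e_{11}$ to obtain $F=Ee_{11}$ with the $B$-structure transported through $B\cong e_{11}\Mat_n(B)e_{11}$, and both use the matrix units $e_{1i}$, $e_{i1}$ to build the unitary between $E$ and $\bigoplus_{i=1}^n F$ (yours goes from the direct sum into $E$, the paper's goes the other way, which is immaterial). The only substantive difference is the non-unital case: the paper first proves the unital statement and then passes to $\M(E)$ as a correspondence from $\M(A)$ to $\Mat_n(\M(B))$ before restricting back to $E$, whereas you work directly with the matrix units in $\M(\Mat_n(B))=\Mat_n(\M(B))$ acting strictly on $E$ via the canonical extension of the right Hilbert-module action -- a mild streamlining that avoids checking that the multiplier-level isomorphism restricts.
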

\begin{proof}
Assume first that $A$ and $B$ are unital.	
The proof amounts to a careful check of algebraic formulas, so we will only indicate the main steps needed to establish the second statement. 
	
Suppose that $E$ is a $\cst$-correspondence from $A$ to $\Mat_n(B)$. 
Denote by $e_{ij}$ ($i,j=1, \ldots, n$) matrix units in $\Mat_n(B)$. 
Define $F = Ee_{11}$, and let $\phi:B \to e_{11}\Mat_n(B) e_{11}$ be the obvious isomorphism. 
It is easy to check that $F$ becomes a $\cst$-correspondence from $A$ to $B$, when equipped with the scalar product 
\[
    \langle f_1, f_2 \rangle_B = \phi^{-1} (\langle f_1, f_2\rangle) ,
\]
and the action
\[
    f_1 \cdot b:= f_1 \phi(b) , 
\] 
where $f_1,f_2 \in F, b \in B$; note that the left action of $A$ is unchanged. 
When convenient, we will write the corresponding identification map from $Ee_{11}$ to $F$ as $\psi_1$.
Now given $j\in \{2, \ldots,n\}$ we define a map $\psi_j:Ee_{jj} \to F$ simply by putting
\[ 
    \psi_j (e) = e e_{j1}, \qquad e \in Ee_{jj} .
\]
Finally we define the map $\Psi : E \to \bigoplus_{i=1}^n F$, with the right hand side understood now as a $\cst$-correspondence from $A$ to $\Mat_n(B)$ defined in accordance with the first statement in the lemma, setting
\[ 
    \Psi(e) = \big( \psi_i (ee_{ii}) \big)_{i=1}^n .
\]
We leave it to the diligent reader to verify that $\Psi$ is indeed a unitary equivalence.	

Consider now the case where $A$ and $B$ need not be unital. 
Let $E$ be again a $\cst$-correspondence from $A$ to $\Mat_n(B)$. 
Then $\M(E)$ is a $\cst$-correspondence from $\M(A)$ to $\M(\Mat_n(B)) \cong \Mat_n(\M(B))$. 
By the unital case (and its proof) we have $\M(E) \cong \bigoplus_{i=1}^n \tilde{F}$, where $\tilde{F}$ is a $\cst$-correspondence from $\M(A)$ to $\M(B)$. 
Set $F = \tilde{F}B \subset \tilde{F}$. 
It is then a matter of carefully checking that the explicit isomorphism from $\M(E)$ to $\bigoplus_{i=1}^n \tilde{F}$ restricts to an isomorphism from $E$ to $\bigoplus_{i=1}^n F$. 
\end{proof}

We will now formulate a version of the above result for $\cst$-correspondence bundles.

\begin{cor}\label{cor:matrixcorrespondencebundles}
Let $\A$, $\B$ be $\cst$-bundles over $X$, carrying twisted actions $(\alpha,u_\alpha)$ and $(\beta,u_\beta)$ of $\G$, let $\EE$ be a $\cst$-$\A$-$\Mat_n(\B)$-correspondence bundle and let $n \in \N$. 
Then $\EE$ is unitarily equivalent to the direct sum $\bigoplus_{i=1}^n \FF$, where $\FF$ is a $\cst$-$\A$-$\B$-correspondence bundle, with pointwise operations described in Lemma~\ref{lem:matrixcorrespondences}. 
If moreover $\tilde{L}$ is an equivariant representation of $\G$ on $\EE$, then we may assume that we also have an equivariant representation $L$ of $\G$ on $\FF$ and in the equivalence above $\tilde{L}_g$ corresponds to a direct sum of $L_g$. 
\end{cor}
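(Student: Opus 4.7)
The plan is to realise $\FF$ as a sub-bundle of $\EE$ selected by the $\G$-invariant projection given by the matrix unit $e_{11}$, and to produce $L$ simply by restriction of $\tilde{L}$. First I would view $e_{11}$, regarded as a constant scalar matrix, as a bounded strictly continuous section in $\contb(\M(\Mat_n(\B))) \cong \contb(\Mat_n(\M(\B)))$; it is a projection that is central with respect to the $\contz(X)$-action and commutes with the twist $u_\beta^{(n)} = u_\beta \otimes I_n$. Fibrewise, the construction from the proof of Lemma~\ref{lem:matrixcorrespondences} identifies $F_x := E_x \cdot e_{11}$ as an $A_x$-$B_x$-$\cst$-correspondence (using the isomorphism $B_x \cong e_{11}\Mat_n(B_x)e_{11}$), together with a unitary equivalence $E_x \cong \bigoplus_{i=1}^n F_x$ implemented by $\xi \mapsto (\xi e_{i1})_{i=1}^n$ with inverse $(\eta_i)_{i=1}^n \mapsto \sum_{i=1}^n \eta_i e_{1i}$.

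To topologise $\FF := \{F_x\}_{x\in X}$, rather than chasing axioms of Definition~\ref{def:Correspondence_bundle} directly, the cleanest route is through the equivalence of Proposition~\ref{prop:correspondence_bundles_vs_C(X)_correspondence}: the $\contz(\A)$-$\Mat_n(\contz(\B))$-$\cst$-correspondence $E := \contz(\EE)$, cut on the right by the central multiplier $e_{11}$, gives a $\contz(X)$-$\cst$-correspondence $F := E\cdot e_{11}$ from $\contz(\A)$ to $\contz(\B)$, which automatically disintegrates into a $\cst$-correspondence bundle $\FF$ whose fibres agree with those defined above. The fibrewise unitaries $\xi \mapsto (\xi e_{i1})_{i=1}^n$ assemble into a global $\contz(X)$-bimodule unitary $\contz(\EE) \cong \bigoplus_{i=1}^n \contz(\FF)$, since each $e_{i1}$ is a (constant) continuous section of $\M(\Mat_n(\B))$; applying Proposition~\ref{prop:correspondence_bundles_vs_C(X)_correspondence} again yields the desired unitary equivalence $\EE \cong \bigoplus_{i=1}^n \FF$ of $\cst$-correspondence bundles.

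For the equivariant part, the critical observation is that the diagonal twisted action fixes every scalar matrix unit, i.e.\ $\beta^{(n)}_g(e_{ij}) = e_{ij}$ and $u_\beta^{(n)}(g,h)$ commutes with each $e_{ij}$. Consequently, for $\xi \in F_{s(g)}$, the right-module relation from Definition~\ref{def:equivariant_representation}\ref{enu:equivariant_representation2} gives
\[
    \tilde{L}_g(\xi) = \tilde{L}_g(\xi \cdot e_{11}) = \tilde{L}_g(\xi) \cdot \beta^{(n)}_g(e_{11}) = \tilde{L}_g(\xi) \cdot e_{11} \in F_{r(g)},
\]
so $L_g := \tilde{L}_g|_{F_{s(g)}}$ is a well-defined map $F_{s(g)} \to F_{r(g)}$. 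Invertibility, the module identities, the inner-product covariance of Definition~\ref{def:equivariant_representation}\ref{enu:equivariant_representation1},\ref{enu:equivariant_representation2}, and the continuity condition \ref{enu:equivariant_representation3} all descend from the corresponding properties of $\tilde{L}$ by restriction, using centrality of $e_{11}$ with respect to $u_\beta^{(n)}$ and the identification $B_x \cong e_{11}\Mat_n(B_x)e_{11}$ fibrewise. The direct sum decomposition $\tilde{L}_g \cong \bigoplus_{i=1}^n L_g$ under the unitary $\xi \mapsto (\xi e_{i1})_{i=1}^n$ is then immediate from $\tilde{L}_g(\xi) \cdot e_{i1} = \tilde{L}_g(\xi \cdot e_{i1}) = L_g(\xi \cdot e_{i1})$. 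The only mildly delicate point is the compatibility between the fibrewise constructions of Lemma~\ref{lem:matrixcorrespondences} and the global $\contz(X)$-module/topological structure, but this is precisely what Proposition~\ref{prop:correspondence_bundles_vs_C(X)_correspondence} resolves transparently, so no further topological bookkeeping is required.
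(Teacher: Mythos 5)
Your proposal is correct and follows essentially the same route as the paper's own (very terse) proof: the paper likewise reduces the first part to the continuity of the fibrewise isomorphisms of Lemma~\ref{lem:matrixcorrespondences}, and for the equivariant part rests on exactly your key observation that, by Definition~\ref{def:equivariant_representation}\ref{enu:equivariant_representation2} and the form of the lifted action $(\beta^{(n)},u_\beta^{(n)})$, the operator $\tilde{L}_g$ maps $E_{s(g)}e_{11}$ into $E_{r(g)}e_{11}$. Routing the topological bookkeeping through Proposition~\ref{prop:correspondence_bundles_vs_C(X)_correspondence} is a clean way to make precise what the paper leaves as ``clearly continuous,'' but it is not a different argument.
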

\begin{proof}
For the first part it suffices to note that the isomorphisms introduced in Lemma~\ref{lem:matrixcorrespondences} are clearly continuous with respect to the relevant `bundle' operations.

For the second we can argue as in the proof of Lemma~\ref{lem:matrixcorrespondences}, using the second condition in Definition~\ref{def:equivariant_representation} and the form of matrix liftings described in Example~\ref{ex:matrixtwistedactions} (it essentially suffices to observe that in the unital case for every $g \in \G$ the operator $\tilde{L}_g$ maps $E_{s(g)} e_{11}$ into $E_{r(g)} e_{11}$). 
Once again we leave the check to the reader. 
\end{proof}

We are now ready to formulate the main decomposability result.

\begin{thm}\label{thm:FSmultipliers_decomposable_discrete}
Let $(\alpha,u_\alpha)$ and $(\beta, u_\beta)$ be twisted actions of a discrete groupoid $\G$ respectively on $\A$ and $\B$. 
Then for every Fourier--Stieltjes multiplier $T\in FS[(\alpha,u_{\alpha}),(\beta,u_{\beta})]$ we have 
\[
    \| \m_T^{\rd} \|_{\mathrm{dec}} = \| \m_T^{\f} \|_{\mathrm{dec}} = \|T\|_{FS} .
\] 
\end{thm}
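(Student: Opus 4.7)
The inequalities $\|m_T^{\rd}\|_{\mathrm{dec}} \leq \|T\|_{FS}$ and $\|m_T^{\f}\|_{\mathrm{dec}} \leq \|T\|_{FS}$ are already established in Proposition~\ref{prop:FSmultipliers_decomposable}. Our plan is to prove the reverse inequalities by combining Haagerup's characterisation of the decomposable norm (condition~\ref{enu:Haagerupdec3} above) with the Haagerup trick from Lemma~\ref{lem:Haagerup trick discrete} (which requires $\G$ to be discrete), the dilation statement of Theorem~\ref{thm:FSmultiplier_positive_definite}, and the matrix decomposition of Corollary~\ref{cor:matrixcorrespondencebundles}. We write the argument for $m_T^{\rd}$; the full case is identical, using that Lemma~\ref{lem:Haagerup trick discrete} and its consequences remain valid for the full cross-sectional algebra (cf.\ Remark~\ref{rem:about_full_case}).

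Fix $\varepsilon > 0$. By Haagerup's characterisation, we can find a completely positive map
\[
    \Psi = \begin{pmatrix} \Psi_1 & m_T^{\rd} \\ (m_T^{\rd})^\dagger & \Psi_2 \end{pmatrix} : \cst_{\red}(\A^{(\alpha,u_\alpha)}) \longrightarrow \Mat_2\big(\cst_{\red}(\B^{(\beta,u_\beta)})\big)
\]
with $\max(\|\Psi_1\|, \|\Psi_2\|) \leq \|m_T^{\rd}\|_{\mathrm{dec}} + \varepsilon$. Identifying the codomain with $\cst_{\red}\big((\Mat_2\B)^{(\beta^{(2)}, u_\beta^{(2)})}\big)$ (Examples~\ref{ex:matrixbundles}, \ref{ex:matrixtwistedactions}), we apply Lemma~\ref{lem:Haagerup trick discrete} to produce a positive-definite multiplier $T^\Psi = \{T^\Psi_g\}_{g\in\G}$ from $\A^{(\alpha,u_\alpha)}$ to $(\Mat_2\B)^{(\beta^{(2)}, u_\beta^{(2)})}$, which is bounded (by Lemma~\ref{lem:completely_positive_implies_positive_definite}) and strict (as each $T^\Psi_x$ is a bounded CP map, so monotone convergence of $T^\Psi_x(e_i)$ on an approximate unit forces strict convergence in $\M(\Mat_2(B_x))$). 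Reading off each fibre, $T^\Psi_g$ is the block matrix
\[
    T^\Psi_g = \begin{pmatrix} T^{\Psi_1}_g & T_g \\ (T^\dagger)_g & T^{\Psi_2}_g \end{pmatrix},
\]
since $m_T^{\rd}(a\delta_g)(g) = T_g(a)$ for $a\in A_{r(g)}$ and $g\in \G$, and analogously for the remaining entries.

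Now invoke Theorem~\ref{thm:FSmultiplier_positive_definite} to write $T^\Psi = T_{\EE,\tilde{L},\eta,\eta}$ for some $(\alpha,u_\alpha)$-$(\beta^{(2)},u_\beta^{(2)})$-equivariant representation $\tilde{L}$ on a $\cst$-correspondence bundle $\EE$ from $\A$ to $\Mat_2\B$ and section $\eta \in \contb(\M(\EE))$. By Corollary~\ref{cor:matrixcorrespondencebundles}, we may assume $\EE \cong \FF\oplus \FF$ as $\cst$-correspondence bundles, with $\FF$ an $\A$-$\B$-bundle and $\tilde{L} = L \oplus L$ where $L$ is an $(\alpha,u_\alpha)$-$(\beta,u_\beta)$-equivariant representation on $\FF$. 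The corresponding identification of multiplier bundles (an extension to bundles of the matrix-correspondence decomposition of Lemma~\ref{lem:matrixcorrespondences}, cf.\ Example~\ref{ex:matrixcorr}) lets us write $\eta = (\xi,\zeta)$ with $\xi,\zeta \in \contb(\M(\FF))$, and the scalar product becomes
\[
    \big\langle (\xi_1,\zeta_1), (\xi_2,\zeta_2)\big\rangle_{\M(\Mat_2(B))} = \begin{pmatrix} \langle \xi_1,\xi_2\rangle & \langle \xi_1,\zeta_2\rangle \\ \langle \zeta_1,\xi_2\rangle & \langle \zeta_1,\zeta_2\rangle \end{pmatrix}.
\]
Consequently the $(1,2)$-entry of $T^\Psi_g(a) = \langle \eta(r(g)), a \overline{\tilde L}_g \eta(s(g))\rangle$ is exactly $T_{\FF,L,\xi,\zeta}(g)(a)$, and comparison with the block form above yields $T = T_{\FF,L,\xi,\zeta}$; in particular $\|T\|_{FS} \leq \|\xi\|\cdot \|\zeta\|$.

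It remains to estimate the two factors. For each $x\in X$ we have $\tilde{L}_x = \mathrm{id}$, so
\[
    T^\Psi_x(1_{\M(A_x)}) = \langle \eta(x),\eta(x)\rangle_{\M(\Mat_2(B_x))} = \begin{pmatrix} \langle \xi(x),\xi(x)\rangle & \langle \xi(x),\zeta(x)\rangle \\ \langle \zeta(x),\xi(x)\rangle & \langle \zeta(x),\zeta(x)\rangle \end{pmatrix},
\]
so its $(1,1)$-entry equals $\|\xi(x)\|^2$ and also equals $T^{\Psi_1}_x(1_{\M(A_x)})$. Since $T^{\Psi_1}_x$ is CP and strict, $\|T^{\Psi_1}_x\| = \|T^{\Psi_1}_x(1_{\M(A_x)})\|$, and then Lemma~\ref{lem:completely_positive_implies_positive_definite} gives $\|\xi\|^2 = \sup_x \|T^{\Psi_1}_x\| = \|\Psi_1\|$. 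Analogously, $\|\zeta\|^2 = \|\Psi_2\|$. Combining the estimates,
\[
    \|T\|_{FS} \leq \|\xi\|\cdot \|\zeta\| = \sqrt{\|\Psi_1\|\,\|\Psi_2\|} \leq \max(\|\Psi_1\|,\|\Psi_2\|) \leq \|m_T^{\rd}\|_{\mathrm{dec}} + \varepsilon.
\]
Letting $\varepsilon \to 0$ gives $\|T\|_{FS} \leq \|m_T^{\rd}\|_{\mathrm{dec}}$, and with Proposition~\ref{prop:FSmultipliers_decomposable} we obtain the required equality. The same argument with $\cst_{\red}$ replaced by $\cst$ throughout proves $\|m_T^{\f}\|_{\mathrm{dec}} = \|T\|_{FS}$. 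The main technical point, which we have taken slight care to justify, is the compatibility of the matrix-correspondence decomposition (Corollary~\ref{cor:matrixcorrespondencebundles}) with the multiplier construction for bundles; everything else is a direct application of machinery built earlier in the paper.
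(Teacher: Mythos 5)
Your overall strategy coincides with the paper's own proof: both arguments take a completely positive $2\times 2$ dilation $\Psi$ of $\m_T^{\rd}$ witnessing the decomposable norm, push it through the Haagerup trick of Lemma~\ref{lem:Haagerup trick discrete} into a positive-definite multiplier with values in $\Mat_2(\B)$, dilate via Theorem~\ref{thm:FSmultiplier_positive_definite}, split the resulting correspondence bundle as $\FF\oplus\FF$ using Corollary~\ref{cor:matrixcorrespondencebundles}, and read off $T=T_{\FF,L,\xi,\zeta}$ with $\|\xi\|^2,\|\zeta\|^2$ controlled by the diagonal entries of $\Psi$.

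There is, however, one genuine gap: the parenthetical claim that $T^{\Psi}$ is automatically strict because each $T^{\Psi}_x$ is a bounded completely positive map. This is false. A bounded increasing net of positive elements of a $\cst$-algebra $B$ need not converge strictly in $\M(B)$; for instance the inclusion $\contz((0,1])\hookrightarrow C([0,1])$ is a completely positive (even $*$-homomorphic) map which is not strict, since for unital $B$ strict convergence is norm convergence and no approximate unit of $\contz((0,1])$ converges uniformly on $[0,1]$. Lemma~\ref{lem:completely_positive_implies_positive_definite} only transfers strictness of the global map to the fibres, and the map $\Psi$ supplied by Haagerup's characterisation of $\|\cdot\|_{\mathrm{dec}}$ comes with no strictness whatsoever. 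Strictness is load-bearing in your argument in two places: it is a hypothesis of the implication \ref{enu:FSmultiplier_positive_definite1}$\Rightarrow$\ref{enu:FSmultiplier_positive_definite4} of Theorem~\ref{thm:FSmultiplier_positive_definite}, and it is used again when you evaluate $T^{\Psi_1}_x$ at $1_{\M(A_x)}$, which only makes sense for the strict extension. The paper closes exactly this hole by strictifying: one replaces the Haagerup-trick output by the maps $a\mapsto\tilde\Psi(e_i\,a\,e_i)$ for an approximate unit $\{e_i\}$ of $\contz(\A)$ (Lemmas~\ref{lem:multiplier_approx_with strict} and~\ref{lem:multiplier_approx_with strict2}), obtaining a net of \emph{strict} positive-definite multipliers whose off-diagonal entries $T_i$ satisfy $\|T_i\|_{FS}\to\|T\|_{FS}$, runs your argument on each of these, and passes to the limit. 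With that modification your proof is correct. A further, harmless, inaccuracy: the asserted equality $\sup_x\|T^{\Psi_1}_x\|=\|\Psi_1\|$ should only be the inequality $\le$ (Lemma~\ref{lem:Haagerup trick discrete} gives $\sup_x\|T^{\Psi_1}_x\|\le\|\Psi_1\|$, and the trick need not reproduce $\Psi_1$, which is not assumed to be a multiplier map); this is the direction you actually use, so the final estimate $\|\xi\|\,\|\zeta\|\le\max(\|\Psi_1\|,\|\Psi_2\|)$ survives.
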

\begin{proof}
In view of Proposition~\ref{prop:FSmultipliers_decomposable} it suffices to prove the inequalities $\|\m_T^{\rd}\|_{\mathrm{dec}} , \|\m_T^{\f}\|_{\mathrm{dec}} \geq \|T\|_{FS}$, $\|\m_T^{\rd}\|_{\mathrm{dec}} \geq \|T\|_{FS}$. 
We will prove that $\|\m_T^{\rd}\|_{\mathrm{dec}} \geq \|T\|_{FS}$, as the second inequality follows in the same way.
Write then simply $m_T$ for $\m_T^{\rd}$ and assume that 
\[
    \| \m_T \|_{\mathrm{dec}} \leq 1.
\] 
We will show that $\|T\|_{FS} \leq 1$.

By the definition of the decomposable norm the displayed inequality means that there exists a completely positive map $\Psi : \cst_{\red}(\A^{(\alpha,u_{\alpha})}) \to \Mat_2 \left(\cst_{\red}(\B^{(\beta,u_{\beta})})\right)$ such that 
\[ 
    \Psi = \begin{pmatrix} 
                \Phi_1 & \m_T \\ 
                \m_T^\dagger & \Phi_2\ 
            \end{pmatrix} ,
\]
with $\Phi_1, \Phi_2 : \cst_{\red}(\A^{(\alpha,u_{\alpha})}) \to \cst_{\red}(\B^{(\beta,u_{\beta})})$ completely positive and such that $\| \Phi_1 \| \leq 1$, $\|\Phi_2 \|\leq 1$.
Let us now apply Lemma~\ref{lem:Haagerup trick discrete} to the map $\Psi$, viewed now as a map from $\cst_{\red}(\A^{(\alpha,u_{\alpha})})$ to $\cst_{\red}(\Mat_2(\B)^{(\beta^{(2)},u_{\beta}^{(2)})})$. 
This produces a new completely positive map $\tilde{\Psi}: \cst_{\red}(\A^{(\alpha,u_{\alpha})})\to \cst_{\red}( \Mat_2(\B)^{(\beta^{(2)} , u_{\beta}^{(2)})} )$, which comes from a multiplier. 
In fact, as the Haagerup trick acts on matrices `entrywise', and does not change maps which are already multipliers, we have
\[ 
    \tilde{\Psi} = \begin{pmatrix} 
                        \tilde{\Phi}_1 & \m_T \\ 
                        \m_T^\dagger & \tilde{\Phi}_2
                    \end{pmatrix} ,
\]
with $\tilde{\Phi}_1 , \tilde{\Phi}_2 : \cst_{\red}(\A^{(\alpha,u_{\alpha})}) \to \cst_{\red}(\B^{(\beta,u_{\beta})})$ contractive completely positive maps. 
Applying Lemma~\ref{lem:multiplier_approx_with strict} we may approximate  $\tilde{\Psi}$ by strict multipliers. 
Namely, for an approximate unit $\{ e_i \}_{i\in I}$ in $\contz(\A)$, Lemmas~\ref{lem:multiplier_approx_with strict} and ~\ref{lem:multiplier_approx_with strict2} give  a net $\{\tilde{T}_i\}_{i\in I}\in FS[(\alpha,u_{\alpha}),(\beta^{(2)},u_{\beta}^{(2)})]$ such that for every $i \in I$ we have $\tilde{\Psi}(e_ia  e_i) = \m_{\tilde{T}_i}^{\rd}(a)$, $a\in \cst_{\red}(\A^{(\alpha,u_{\alpha})})$. 
Then
\[ 
    \m_{\tilde{T}_i}^{\rd} =    \begin{pmatrix} 
                                    \tilde{\Phi}_{1,i} & \m_{T_i} \\ 
                                    \m_{T_i}^\dagger & \tilde{\Phi}_{2,i}
                                \end{pmatrix},
\]
where $\tilde{\Phi}_{1,i}, \tilde{\Phi}_{2,i} : \cst_{\red}(\A^{(\alpha,u_{\alpha})}) \to \cst_{\red}(\B^{(\beta,u_{\beta})})$ are contractive completely positive maps (coming from Fourier--Stieltjes multipliers) and $\{T_i\}_{i\in I}\in FS[(\alpha,u_{\alpha}),(\beta,u_{\beta})]$ are such that $\| T_i \|_{FS} \to \|T\|_{FS}$ (again by Lemma~\ref{lem:multiplier_approx_with strict2}). 

Fix for the moment $i \in I$. 
By Theorem~\ref{thm:FSmultiplier_positive_definite} the map $\tilde{\Phi}_i$ is given by a Fourier--Stieltjes multiplier $\tilde{T}_i\in FS[(\alpha,u_{\alpha}),(\beta^{(2)},u_{\beta}^{(2)})]$, $\tilde{T}=T_{\EE,L,\xi,\xi}$ for some $\cst$-$\A$-$\Mat_2(\B)$-correspondence $\EE$ equipped with an equivariant representation $\tilde{L}$ of $\G$ and section $\xi\in \contb(\M(\Mat_2(\EE)))$. 
Applying Theorem~\ref{lem:matrixcorrespondences} to $\EE$, $\tilde{L}$ and $\xi$ we may identify $\EE$ with $\FF \oplus \FF$, $\tilde{L}$ with a direct sum of two copies of an equivariant representation $L$ of $\G$ on $\FF$, and $\xi: = (\zeta, \eta)$ for two sections $\zeta, \eta \in \contb(\M(\FF)$. 
Using this identification we obtain
\[ 
    T_i= T_{L, F, \zeta, \eta}, \;\; \tilde{\Phi}_{1,i} = m_{T_{L,F, \zeta, \zeta}}, \qquad \tilde{\Phi}_{2,i} = m_{T_{L,F, \eta, \eta}} .
\]
Thus finally applying the last part of Theorem~\ref{thm:FSmultiplier_positive_definite} respectively to $\Phi_{1,i}$ and $\Phi_{2,i}$ we deduce that $\|\zeta\|, \|\eta\|\leq 1$. 
But this means that $\|T_i\|_{FS} \leq 1$, which ends the proof, as $\|T_i\|_{FS}\to \|T\|_{FS}$.
\end{proof}

\begin{rem}\label{rem:decbim}
In the context of the last theorem we could define a version of Haagerup's decomposable norm for $\contz(\A)$-bimodule maps, say $\|\cdot\|_{\mathrm{dec},\mathrm{bim}}$, requesting that all the entries of the requested 2 by 2 matrix decomposition are bimodule maps. 
Then the arguments of the above theorem together with the Haagerup trick from Proposition~\ref{prop:Haagerup_trick} would show that even for not necessarily discrete $\G$ and arbitrary Herz--Schur type multiplier $T$ we have $\|T\|_{FS} = \| m_T^{\rd} \|_{\mathrm{dec}, \mathrm{bim}} = \| m_T^{\f} \|_{\mathrm{dec}, \mathrm{bim}}$. 
\end{rem}

\begin{rem}\label{rem:category_of_decomposables}
In contrast to Remark~\ref{rem:functors_from_FS}, where we only had contractive functors, assuming that $\G$ is discrete we get isometric involutive $\contb(X)$-bimodule functors $\m^{\f} : FS_{TA}(\G)\to DC_{TA}(\G)$ and $\m^{\rd} : FS_{TA}(\G)\to DC_{TA}^{\rd}(\G)$ by letting  $DC_{TA}(\G)$  (resp.\ $DC_{TA}^{\rd}(\G)$) to be the category whose objects are  twisted actions of $\G$ on \cst-bundles and morphisms are decomposable maps between full (resp.\ reduced) crossed products that preserve fibers and are equipped with the norm $\|\cdot\|_{\mathrm{dec}}$. 
\end{rem}

As an application (though formally we will use only that the norms $\| \cdot \|_{\mathrm{dec}}$ and $\|\cdot\|_{FS}$ are `equivalent') we characterise the Fourier--Stieltjes AP for discrete groupoid actions in a similar fashion as we did for the positive Fourier--Stieltjes AP in Proposition~\ref{prop:positive_FS_AP_for_actions2} 
and Corollary~\ref{cor:characterisation_FS_AP_for_discrete}.

\begin{prop}\label{prop:FS_AP_for_actions2} 
Let $\A$, $\B$ be $\cst$-bundles over $X$, carrying twisted actions $(\alpha,u_\alpha)$ and $(\beta,u_\beta)$ of a discrete groupoid $\G$. 
A multiplier $T$ from $\A^{(\alpha,u_{\alpha})}$ to $\B^{(\beta,u_{\beta})}$ has the Fourier--Stieltjes approximation property if and only if there is a $\| \cdot \|_{\mathrm{dec}}$-bounded net $\{\Phi_{i}\}_{i\in I}$ of completely bounded maps $\Phi_i : \cst_{\red}(\A^{(\alpha,u_\alpha)}) \to C(\B^{(\beta,u_\beta)} |_{V_i}) \subseteq \cst_{\red}(\B^{(\beta,u_\beta)})$ with ranges supported in finite sets $V_i\subseteq \G$, $i\in I$, and such that $\Phi_i(a)\to m_T(a)$ for every $a\in \contc(\A^{(\alpha,u_\alpha)})$.
\end{prop}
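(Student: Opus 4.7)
The argument is a decomposable-norm analogue of the proof of Proposition~\ref{prop:positive_FS_AP_for_actions2}, and proceeds in two steps following the two implications.

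For the forward implication, start with a $\|\cdot\|_{FS}$-bounded net $\{T_i\}_{i\in I}$ of compactly supported Fourier--Stieltjes multipliers witnessing the Fourier--Stieltjes approximation property of $T$. Since $\G$ is discrete, compact support means finite support, so setting $V_i:=\supp(T_i)$ one obtains a finite subset of $\G$. Put $\Phi_i:=\m_{T_i}^{\rd}$, which is defined and completely bounded thanks to Theorem~\ref{thm:FSmultiplier_extends_to_reduced_and_full} and has range inside $C(\B^{(\beta,u_\beta)}|_{V_i})$ by Corollary~\ref{cor:compactly_supported_vs_range}. Theorem~\ref{thm:FSmultipliers_decomposable_discrete} guarantees $\|\Phi_i\|_{\mathrm{dec}}=\|T_i\|_{FS}$, so the net $\{\Phi_i\}_{i\in I}$ is $\|\cdot\|_{\mathrm{dec}}$-bounded. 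Pointwise convergence $\Phi_i(a)\to \m_T(a)$ on $\contc(\A^{(\alpha,u_\alpha)})$ is immediate from the defining property of the Fourier--Stieltjes approximation.

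For the reverse implication, apply the Haagerup trick (Lemma~\ref{lem:Haagerup trick discrete}) to each $\Phi_i$, producing a multiplier $T^{\Phi_i}$ from $\A^{(\alpha,u_\alpha)}$ to $\B^{(\beta,u_\beta)}$. The key point to address is that each $T^{\Phi_i}$ belongs to $FS[(\alpha,u_\alpha),(\beta,u_\beta)]$, with $\|T^{\Phi_i}\|_{FS}\le \|\Phi_i\|_{\mathrm{dec}}$. This is where the decomposability of $\Phi_i$ enters: writing the $2\times 2$ completely positive dilation
\[
\Psi_i=\begin{pmatrix}\Psi_{i,1} & \Phi_i \\ \Phi_i^{\dagger} & \Psi_{i,2}\end{pmatrix}\colon \cst_{\red}(\A^{(\alpha,u_\alpha)})\to \Mat_2(\cst_{\red}(\B^{(\beta,u_\beta)}))\cong \cst_{\red}(\Mat_2(\B)^{(\beta^{(2)},u_\beta^{(2)})})
\]
with $\max\{\|\Psi_{i,1}\|,\|\Psi_{i,2}\|\}\le \|\Phi_i\|_{\mathrm{dec}}$, one applies the Haagerup trick entrywise to obtain a positive-definite multiplier $T^{\Psi_i}$ with FS-norm bounded by $\|\Phi_i\|_{\mathrm{dec}}$ (using Theorem~\ref{thm:FSmultiplier_positive_definite}). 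Decomposing the underlying $\cst$-correspondence via Corollary~\ref{cor:matrixcorrespondencebundles} precisely as in the proof of Theorem~\ref{thm:FSmultipliers_decomposable_discrete} one extracts a presentation $T^{\Phi_i}=T_{\EE_i,L_i,\xi_i,\eta_i}$ with $\|\xi_i\|\cdot\|\eta_i\|\le \|\Phi_i\|_{\mathrm{dec}}$; this is the main, and essentially only nontrivial, step of the proof. Finiteness of the support of $T^{\Phi_i}$ is automatic since $T^{\Phi_i}_g=\Phi_i(a\delta_g)(g)$ vanishes for $g\notin V_i$.

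It remains to verify $\m_{T^{\Phi_i}}(a)\to \m_T(a)$ uniformly on compacts for every $a\in\contc(\A^{(\alpha,u_\alpha)})$; since $a$ is a finite linear combination $a=\sum_{U\in F} a_U\delta_U$ of bisection-supported sections, it suffices by linearity to treat the case $a=a_0\delta_g$, and then as in the proof of Proposition~\ref{prop:positive_FS_AP_for_actions2} one computes
\[
\m_{T^{\Phi_i}}(a_0\delta_g)=\Phi_i(a_0\delta_g)(g)\,\delta_g\longrightarrow \m_T(a_0\delta_g)(g)\,\delta_g=T_g(a_0)\,\delta_g=\m_T(a_0\delta_g),
\]
using the hypothesis $\Phi_i(a_0\delta_g)\to \m_T(a_0\delta_g)$. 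This shows that the finitely supported multipliers $\{T^{\Phi_i}\}_{i\in I}$ are $\|\cdot\|_{FS}$-bounded and approximate $\m_T$ pointwise on $\contc(\A^{(\alpha,u_\alpha)})$, hence witness the Fourier--Stieltjes approximation property of $T$.
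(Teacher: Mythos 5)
Your proof is correct, and the forward implication is essentially identical to the paper's (the paper invokes the inequality $\|\m_{T_i}^{\rd}\|_{\mathrm{dec}}\leq\|T_i\|_{FS}$ from Proposition~\ref{prop:FSmultipliers_decomposable}, which suffices, rather than the equality of Theorem~\ref{thm:FSmultipliers_decomposable_discrete}; either works). For the converse you take a genuinely different route. The paper writes each $\Phi_i$ as a linear combination $\sum_{k=0}^{3}i^k\Phi_{i,k}$ of four completely positive maps with $\|\Phi_{i,k}\|\leq\|\Phi_i\|_{\mathrm{dec}}$ (Haagerup's formulas), applies the Haagerup trick and strictification to each summand separately to land in $FS[(\alpha,u_\alpha),(\beta,u_\beta)]^+$ via Theorem~\ref{thm:FSmultiplier_positive_definite}, and recombines; this is lighter on machinery but only yields the bound $\sup\|T_i^{(j)}\|_{FS}\leq 4M$, which is all the approximation property needs. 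You instead run the $2\times 2$ completely positive dilation through the Haagerup trick and then through the matrix-correspondence decomposition (Corollary~\ref{cor:matrixcorrespondencebundles}), exactly as in the proof of Theorem~\ref{thm:FSmultipliers_decomposable_discrete}; this buys the sharper estimate $\|T^{\Phi_i}\|_{FS}\leq\|\Phi_i\|_{\mathrm{dec}}$ at the cost of invoking the heavier correspondence-splitting lemma. One point you should make explicit: Theorem~\ref{thm:FSmultiplier_positive_definite} requires the positive-definite multiplier to be \emph{strict}, and the output of the Haagerup trick need not be, so you must conjugate by an approximate unit of $\contz(\A)$ (Lemmas~\ref{lem:multiplier_approx_with strict} and~\ref{lem:multiplier_approx_with strict2}) and pass to a doubly indexed net before extracting the presentation $T_{\EE_i,L_i,\xi_i,\eta_i}$; your appeal to "precisely as in the proof of Theorem~\ref{thm:FSmultipliers_decomposable_discrete}" implicitly covers this, but the step deserves to be stated, since the final convergence argument then runs over the combined net.
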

\begin{proof} 
If $\{T_{i}\}_{i\in I} \subseteq FS[(\alpha,u_{\alpha})]$ is a net witnessing the positive Fourier--Stieltjes approximation property for $T$, then the associated maps $\{ \m_{T_i}^{\rd} \}_{i\in I}$, given by Theorem~\ref{thm:FSmultiplier_extends_to_reduced_and_full}, have the desired properties by Proposition~\ref{prop:FSmultipliers_decomposable} and Corollary~\ref{cor:compactly_supported_vs_range}. 
Conversely, assume that $\{ \Phi_{i} \}_{i\in I}$ is a net of decomposable maps such that $M := \sup_{i\in I} \| \Phi_i \|_{\mathrm{dec}} < \infty$ and $\Phi_i : \cst_{\red}(\A^{(\alpha,u_\alpha)}) \to C(\B^{(\beta,u_\beta)}|_{V_i})$ where $V_i$ is finite, $i\in I$. 
We may write each $\Phi_i$ as a linear combination $\sum_{k=0}^{3} i^k \Phi_{i,k}$ (here $i^{k}$ is the imaginary unit to the $k$th power, not an index of the set $I$) where each $\Phi_{i,k}$ is completely positive and $\|\Phi_{i,k}\| \leq \|\Phi_i\|_{\mathrm{dec}}\leq M$, see the formulas in \cite[Page 175]{Haagerupdec}. 
Applying Lemma~\ref{lem:Haagerup trick discrete} to $\Phi_{i}$ and $\Phi_{i,k}$ we obtain completely positive maps $\widetilde{\Phi}_{i}$ and $\widetilde{\Phi}_{i,k}$ coming from multipliers and such that $\|\widetilde{\Phi}_{i,k}\|\leq \|\Phi_{i,k}\|\leq M$ and $\widetilde{\Phi}_{i}=\sum_{k=0}^{3} i^k \widetilde{\Phi}_{i,k}^{(j)}$. 
Choosing an approximate unit $\{e_j\}_{j\in J}$ in $\contz(\A)$ and `strictifying' the considered maps by putting $\widetilde{\Phi}_{i}^{(j)}(a) := \widetilde{\Phi}_{i}(e_j a e_j)$ and $\widetilde{\Phi}_{i,k}^{(j)}(a) := \Phi_{i,k}(e_j a e_j)$ for $a\in  \cst_{\red}(\A^{(\alpha,u_\alpha)})$, by Lemma~\ref{lem:Haagerup trick discrete} we get that $\widetilde{\Phi}_{i,k}^{(j)}=\m_{T_{i,k}^{(j)}}$ for $T_{i,k}^{(j)}\in FS((\alpha,u_\alpha), (\beta,u_\beta) )^+$ and $\|T_{i,k}^{(j)}\|_{FS} \leq \|\widetilde{\Phi}_{i,k}\|\leq M$. 
Thus putting $T_{i}^{(j)}= \sum_{k=0}^{3} i^kT_{i,k}^{(j)}$ we obtain a bounded net $\{T_{i}^{(j)}\}_{(i,j)\in I\times J}\subseteq FS((\alpha,u_\alpha), (\beta,u_\beta) )$ with $\sup_{(i,j)\in I\times J}\| T_{i}^{(j)}\|_{FS}\leq 4M$. 
Moreover, for $a \in \contc(\A^{(\alpha,u_\alpha)})$ we have $\m_{T_{i}^{(j)}}(a)\to \m_T(a)$ because $\Phi_i(a)\to m_T(a)$ and $\widetilde{\Phi}_{i}^{(j)}(a) \to \Phi_i(a)$. 
\end{proof}

\begin{cor}\label{cor:characterisation_general_FS_AP_for_discrete}
A twisted groupoid action $(\alpha,u_{\alpha})$ of a discrete groupoid $\G$ on a \cst-bundle $\A$ has the Fourier--Stieltjes approximation property if and only if there is a $\| \cdot \|_{\mathrm{dec}}$-bounded net $\{\Phi_{i}\}_{i\in I}$ of decomposable maps on $\cst_{\red}(\A^{(\alpha,u_{\alpha})})$ that converge pointwise to the identity and that have ranges supported on finite subsets of $\G$. 
\end{cor}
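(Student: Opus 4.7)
The plan is to obtain this statement as a direct specialisation of Proposition~\ref{prop:FS_AP_for_actions2}. Recall that, by Definition~\ref{def:weak_approximation_property_for_actions}, the action $(\alpha, u_\alpha)$ has the Fourier--Stieltjes approximation property precisely when the identity multiplier of $\A^{(\alpha,u_{\alpha})}$ has the Fourier--Stieltjes AP in the sense of Definition~\ref{def:weak_approximation_property}. Setting $(\beta, u_\beta) = (\alpha, u_\alpha)$ and $T = \mathrm{id}$ in Proposition~\ref{prop:FS_AP_for_actions2}, the associated map $m_T$ on $\contc(\A^{(\alpha,u_{\alpha})})$ is simply the inclusion into $\cst_{\red}(\A^{(\alpha,u_{\alpha})})$, and the asserted equivalence becomes exactly the content of the corollary.

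Concretely, I would argue as follows. First, if $(\alpha, u_\alpha)$ has the Fourier--Stieltjes AP, then by Proposition~\ref{prop:FS_AP_for_actions2} applied to the identity multiplier there is a net $\{\Phi_i\}_{i\in I}$ of completely bounded (in particular decomposable) maps $\Phi_i : \cst_{\red}(\A^{(\alpha,u_{\alpha})}) \to \contc(\A^{(\alpha,u_{\alpha})}|_{V_i})$ with $V_i \subseteq \G$ finite, $\sup_i \|\Phi_i\|_{\mathrm{dec}} < \infty$, and $\Phi_i(a) \to a$ for every $a \in \contc(\A^{(\alpha,u_{\alpha})})$. By the usual density/approximate unit argument combined with the boundedness in $\|\cdot\|_{\mathrm{dec}}$ (which dominates the cb-norm), pointwise convergence on $\contc(\A^{(\alpha,u_{\alpha})})$ extends to pointwise convergence on all of $\cst_{\red}(\A^{(\alpha,u_{\alpha})})$. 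Conversely, any $\|\cdot\|_{\mathrm{dec}}$-bounded net of decomposable maps with finitely-supported ranges converging pointwise to the identity fulfils the hypothesis of the ``if'' direction of Proposition~\ref{prop:FS_AP_for_actions2} for the identity multiplier, yielding the Fourier--Stieltjes AP.

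The main (and only real) obstacle is therefore bookkeeping: verifying that the Haagerup-trick-based extraction of positive-type multipliers from bounded decomposable maps performed inside the proof of Proposition~\ref{prop:FS_AP_for_actions2} behaves well when the target algebra coincides with the source, and that pointwise convergence on the dense subspace $\contc(\A^{(\alpha,u_{\alpha})})$ upgrades to pointwise convergence on $\cst_{\red}(\A^{(\alpha,u_{\alpha})})$. Both are standard: the former is already handled inside Proposition~\ref{prop:FS_AP_for_actions2}, and the latter follows from an $\varepsilon/3$-argument using the uniform bound $\sup_i \|\Phi_i\|_{\mathrm{dec}} < \infty$ and the fact that $\|\Phi_i\|_{cb} \leq \|\Phi_i\|_{\mathrm{dec}}$. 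Hence the corollary follows at once.
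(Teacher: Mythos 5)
Your proposal is correct and matches the paper's (implicit) argument: the corollary is stated without proof immediately after Proposition~\ref{prop:FS_AP_for_actions2} and is plainly intended as its specialisation to the identity multiplier, exactly as Corollary~\ref{cor:characterisation_FS_AP_for_discrete} specialises Proposition~\ref{prop:positive_FS_AP_for_actions2}. Your remark that the only genuine point to check is upgrading pointwise convergence on $\contc(\A^{(\alpha,u_{\alpha})})$ to pointwise convergence on all of $\cst_{\red}(\A^{(\alpha,u_{\alpha})})$ via the uniform bound $\sup_i\|\Phi_i\|_{cb}\leq\sup_i\|\Phi_i\|_{\mathrm{dec}}<\infty$ is exactly right.
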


\section{Open questions and further perspectives}
\label{Sec:QuestionsPerspectives}

In this final section of the paper, we would like to discuss the key questions left open in our work and present some of the perspectives for further research regarding Fourier--Stieltjes algebras and their applications in the context of crossed products by twisted groupoid actions. 

\subsection*{Open questions}
For the first three questions we work with fixed $(\alpha,u_\alpha)$ and $(\beta, u_\beta)$, twisted actions of an \'etale groupoid $\G$ on $\cst$-bundles $\A$ and $\B$ over $X$. 

\begin{quest} \label{quest:norms}
If $T\in F[(\alpha,u_{\alpha}),(\beta,u_{\beta})]$ is a Fourier multiplier, is it true that $\|T\|_{FS} = \|T\|_{F}$? 
\end{quest}

Note that we always have $\|T\|_{FS} \leq \|T\|_{F}$. 
The equality $\|T\|_{FS}=\|T\|_{F}$ is known to hold in the classical case of discrete groups, see \cite{Eymard}; one of possible ways of approaching this question relates to the `functional' picture of Fourier--Stieltjes and Fourier algebras mentioned in the introduction and missing in our context.
In general we do not also know if $F(\alpha,u_{\alpha})$ is closed in $FS(\alpha,u_{\alpha})$. 
Note that the issue appears already in the context of groupoid $\cst$-algebras. 
To avoid that, in \cite[Definition 1.4]{RenaultFourier}, Renault defines the Fourier algebra to be a closure in the Fourier--Stieltjes algebra of the coefficients given by the regular representation, and this convention is also adopted by Oty~\cite{Oty}.
We decided not to do this here.

\begin{quest} \label{quest:positive}
Is it true that $F[(\alpha,u_{\alpha}),(\beta, u_\beta)] \cap FS[(\alpha,u_{\alpha})(\beta, u_\beta)]^+ = F[(\alpha,u_{\alpha}),(\beta, u_\beta)]^+$? 
\end{quest}

If $T\in F[(\alpha,u_{\alpha}),(\beta,u_{\beta})]^+$, then $T$ is positive-definite, by Theorem~\ref{thm:FSmultiplier_positive_definite}. 
However this theorem does not imply that every positive-definite Fourier multiplier is in $F[(\alpha,u_{\alpha}),(\beta,u_{\beta})]^+$, as a priori the associated cyclic equivariant representation of $\G$ might not be regular. 
On the other hand, if we denote by $FS[(\alpha,u_{\alpha}),(\beta,u_{\beta})]^+$ the set of Fourier--Stieltjes multipliers that can represented in the form $T_{\EE,L,\xi,\xi}$, then Theorem~\ref{thm:FSmultiplier_positive_definite} states that $FS[(\alpha,u_{\alpha}),(\beta,u_{\beta})]^+$ consists exactly of positive-definite, bounded, strict multipliers from $\A^{(\alpha,u_{\alpha})}$ to $\B^{(\beta,u_{\beta})}$. 
Thus 
\[
    F[(\alpha,u_{\alpha}),(\alpha, u_\alpha)]^+\subseteq  F[(\alpha,u_{\alpha}),(\alpha, u_\alpha)] \cap FS[(\alpha,u_{\alpha}),(\alpha, u_\alpha)]^+ 
\]
and we do not know whether the reverse inclusion holds.

\begin{quest} \label{quest:compact}
Is it true that  $ F[(\alpha,u_{\alpha}),(\beta, u_\beta)]$ can be described as the closure of compactly supported Fourier--Stieltjes multipliers? 
Either in $\|\cdot\|_F$ or $\|\cdot\|_{FS}$? 
\end{quest}

The statement above is true in the context of discrete groups, as shown already in \cite{Eymard}. 
The analogous statement in the context of groupoid $\cst$-algebras appears in \cite{Renault}, but without proof (only an indication of an argument based on the partitions of unity; however we do not see how to use it to obtain the desired convergence, although in principle one could consider adapting the arguments in the proof of Proposition~\ref{pr:CompactSupportFourier}).

\begin{quest} \label{quest:dec}
Can one find an example of a Fourier--Stieltjes multiplier $T$ such that $\|\m_T^{\rd}\|_{\mathrm{dec}} < \|T\|_{FS}$?
\end{quest}

Theorem~\ref{thm:FSmultipliers_decomposable_discrete} shows that this cannot happen for \emph{discrete} $\G$; the proof of that theorem shows that any counterexample would imply a failure of an even weaker form of the Haagerup trick. 
However we do not know the answer to the question above even for groupoid $\cst$-algebras. 

Note that the problems above are clearly interrelated. 
For example a positive answer to Question~\ref{quest:positive} would yield, via Remark~\ref{rem:decbim}, a positive answer to Question~\ref{quest:norms} for Herz--Schur type multipliers.

\vspace*{0.2 cm}

The following questions are related to the approximation properties of actions/crossed products, and matters discussed in Section~\ref{Sec:Applicationapproximation}.

\begin{quest} \label{quest:diagram}
In what generality can one invert the arrows in the diagram in Remark~\ref{rem:diagram}?
\end{quest}

Some of the reverse implications hold in certain special cases, for example for discrete groupoids (see Theorem \ref{thm:characterisation of nuclearity}) but are not clear in general. 
For instance, strong amenability implies Exel's AP (which is in turn equivalent to ``amenability'') for group actions on $\cst$-algebras, see \cite[Theorem~4.9]{BussEchterhoffWillett}. 
On the other hand, for group actions the implication from weak containment to any of the approximation properties is generally a big mystery although it holds in certain special cases, and there are examples of groupoids that satisfy the weak containment property but are not amenable, see \cite{Willett}.

\begin{quest}
Does nuclearity of the reduced crossed product $\cst_{\red}(\A^{(\alpha,u_{\alpha})})$ imply one of the approximation properties in the diagram in Remark~\ref{rem:diagram}?
\end{quest}

Strong amenability in general is not necessary for nuclearity of $\cst_{\red}(\A^{(\alpha,u_{\alpha})})$, which follows for instance from Suzuki's examples of group actions on unital simple $C^*$-algebras in \cite{Suzuki}.
But it might be relevant when the algebra $\contz(\A)$ is commutative, as for group actions, strong amenability and amenability are equivalent, see \cite[Th\'eor\`eme 4.9]{Clare1}.

By Theorem \ref{thm:characterisation of nuclearity} we know that nuclearity of $\cst_{\red}(\A^{(\alpha,u_{\alpha})})$ implies the positive Fourier--Stieltjes AP when $\G$ is discrete.
When $\G=G$ is a group nuclearity of the crossed product implies the formally stronger Exel's AP, see \cite[Corollary 4.11]{BussEchterhoffWillett} and Remark~\ref{rem:nuclear_vs_AP's}.
This makes us tend to think that nuclearity may imply Exel's AP for \'etale gropouid actions.


\vspace*{0.2 cm}

\subsection*{Perspectives}

One of the deepest modern applications of the Fourier and Fourier--Stieltjes algebras in the classical context of discrete groups is related to using these algebras and their properties as means of distinguishing the corresponding group von Neumann algebras. 
A key role is played here by the space of \emph{completely bounded} multipliers of the Fourier algebra of $\Gamma$, denoted $M_{cb}(A(\Gamma))$, which contains the Fourier--Stieltjes algebra $B(\Gamma)$ and can be used to define finer, often quantitative approximation properties of $\Gamma$ (see for example \cite{ch}, \cite{hk}). 
The starting point to study such questions in our context (or even in the context of actions of discrete groups on $\cst$-algebras) one would need to 
\begin{itemize}
    \item investigate the operator space structure of our Fourier and Fourier--Stieltjes algebras;
    \item analyse the corresponding space of (completely) bounded multipliers of the Banach algebra $F[(\alpha,u),(\alpha,u)]$. 
\end{itemize}
The first step is related to Question~\ref{quest:norms}; we would expect the relevant cb-multiplier norm to coincide both with the Fourier norm and Fourier--Stieltjes norm of a given element of the Fourier algebra. 
Note also \cite[Theorem 3.1]{Renault}, which in the groupoid case (trivial action) expresses completely bounded multipliers on the Fourier algebra of $\G$ via \emph{invariant} elements of the Fourier--Stieltjes algebra of the product groupoid $\G \star \G$. 
We expect that in general it might be useful in this context to use the language of \emph{(dynamical) quantum groupoids}, as introduced for example in \cite{EnockTim}.

\vspace*{0.2 cm}

Another natural direction of study concerns attempting to reconstruct a given groupoid dynamical system from the associated Fourier--Stieltjes or Fourier algebra. 
It is well known that from the Fourier algebra $A(\Gamma)$ of a discrete group $\Gamma$ one can reconstruct $\Gamma$ --- as a set --- via the spectrum (space of characters) of $A(\Gamma)$. 
In the articles \cite{RenaultFourier} or \cite{RamsayWalter}, dealing with the groupoid operator algebras, analogous questions are often phrased as the study of \emph{duality for \'etale groupoids}. 
This is closely related to the issue of identifying the Fourier or Fourier--Stieltjes algebras with the dual spaces (of some sort) associated with the relevant operator algebras. 
As we mentioned in the introduction, in general this is far from clear, and even the case of groupoid $\cst$-algebras is not straightforward, as exemplified by \cite[Theorem 2.3]{Renault}.

Concrete questions one can study are the following:
\begin{itemize}
    \item what equivalence relation between (pairs of) twisted groupoid actions leads to the isometric isomorphism of the corresponding Fourier--Stieltjes spaces/algebras? 
    \item Under what conditions one can for example indeed reconstruct a given twisted groupoid action, up to the equivalence mentioned above, knowing its associated Fourier--Stieltjes algebra?
\end{itemize}
In the first part one should mention the Morita equivalence of $\cst$-dynamical systems, discussed in \cite[Section 4]{BedosConti3} and shown to yield --- under certain restrictive assumptions --- isometric isomorphism of the associated Fourier-Stieltjes algebras. 
In our context this can be of course further enriched by looking at different, but equivalent, groupoids, in the spirit of \cite[Section 3.4]{Sims}. 
The second question is naturally far more difficult, and even partial results would be of great interest.

The two broad topics mentioned above clearly do not exhaust the scope of natural questions one can ask regarding our Fourier--Stieltjes algebras. 
Just to quote significant body of work in the group case, as presented in \cite{KaniuthLau}, one could also ask about the general form of (completely) bounded homomorphisms between Fourier--Stieltjes algebras or about the behaviour of these algebras with respect to the restriction to subgroupoids, related to classical spectral synthesis questions. 
This brings us to the possibility of dropping the \'etale assumption, and considering for example the crossed products by actions of arbitrary locally compact groups. 
The last step naturally would take us far beyond the scope of current work.

\subsection*{Acknowledgements}
AB was partially supported by CNPq/CAPES-Print - Brazil. 
AS was partially supported by the National Science Center (NCN) grant no.~2020/39/I/ST1/01566. 
BKK was partially supported by 
the National Science Centre (NCN), through the WEAVE-UNISONO grant no. 2023/05/Y/ST1/00046 as well as the National Science Center (NCN) grant no.~2019/35/B/ST1/02684.
We thank the anonymous referee for a very careful reading of our paper and many thoughtful remarks improving the presentation.

\bibliography{references}{}
\bibliographystyle{amsalpha}
 
\end{document}